\newcommand{\IF}{\ensuremath{\mathbb{F}}}
\newcommand{\N}{\ensuremath{\mathbb{N}}}
\newcommand{\R}{\ensuremath{\mathbb{R}}}
\newcommand{\RR}{\ensuremath{\mathcal{R}}}
\newcommand{\MS}{\ensuremath{\mathcal{S}}}
\newcommand{\Z}{\ensuremath{\mathbb{Z}}}
\newcommand{\CP}{\ensuremath{\mathbb{CP}}}
\newcommand{\LL}{\mathcal{L}}
\newcommand{\MM}{\mathcal{M}}
\newcommand{\NN}{\mathcal{N}}
\newcommand{\PP}{\mathcal{P}}
\newcommand{\KK}{\mathcal{K}}
\newcommand{\ba}{\begin{align*}}
\newcommand{\ea}{\end{align*}}
\newcommand{\na}{\nabla}
\newcommand{\la}{\langle}
\newcommand{\ra}{\rangle}
\newcommand{\lc}{\left(}
\newcommand{\rc}{\right)}
\newcommand{\ep}{\epsilon}
\newcommand{\ka}{K\"ahler\,}
\newcommand{\im}{\ensuremath{\mathrm{Im}}}
\newcommand{\tr}{\ensuremath{\mathrm{Tr}}}
\newcommand{\Rm}{\ensuremath{\mathrm{Rm}}}
\newcommand{\Rc}{\ensuremath{\mathrm{Rc}}}
\newcommand{\bphi}{\ensuremath{\boldsymbol{\Phi}}}
\newcommand{\mmu}{\ensuremath{\boldsymbol{\mu}}}
\newcommand{\nnu}{\ensuremath{\boldsymbol{\nu}}}
\newcommand{\isd}{\mathrm{ISD}}
\newcommand{\df}{\delta_{f}}
\newcommand{\dbf}{\delta_{\bar{f}}}
\newcommand{\ddf}{\Delta_{\bar f}}
\newcommand{\vf}{dV_{\bar f}}
\newcommand{\XX}{\mathcal{X}}
\newcommand{\Var}{\textrm{Var}}
\newcommand{\CF}{\mathfrak{C}}
\newcommand*\owedge{\mathpalette\@owedge\relax}
\newcommand*\@owedge[1]{%
\mathbin{%
\ooalign{%
$#1\m@th\bigcirc$\cr
\hidewidth$#1\m@th\wedge$\hidewidth\cr
}%
}%
}
\def\ExtendSymbol#1#2#3#4#5{\ext@arrow 0099{\arrowfill@#1#2#3}{#4}{#5}}
\def\ExtendSymbol#1#2#3#4#5{\ext@arrow 0099{\arrowfill@#1#2#3}{#4}{#5}}
\newcommand\longright[2][]{\ExtendSymbol{-}{-}{\rightarrow}{#1}{#2}}
\def\XXint#1#2#3{{\setbox0=\hbox{$#1{#2#3}{\int}$ }
\vcenter{\hbox{$#2#3$ }}\kern-.55\wd0}}
\numberwithin{equation}{section}
\newtheorem{thm}{Theorem}[section]
\newtheorem{cor}[thm]{Corollary}
\newtheorem{prop}[thm]{Proposition}
\newtheorem{lem}[thm]{Lemma}
\newtheorem{rem}[thm]{Remark}
\newtheorem{defn}[thm]{Definition}
\newtheorem{exmp}[thm]{Example}
\title{On the rigidity of Ricci shrinkers}
\author{Yu Li \quad and \quad Wenjia Zhang}
\date{\today}
\begin{document}
\maketitle

\begin{abstract}
In this paper, we establish the rigidity of the generalized cylinder $N^n \times \R^{m-n}$, or a quotient thereof, in the space of Ricci shrinkers equipped with the pointed-Gromov-Hausdorff topology. Here, $N$ is a stable Einstein manifold that has an obstruction of order $3$. The proof is based on a quantitative characterization of the rigidity of compact Ricci shrinkers, a rigidity inequality of mixed orders on generalized cylinders, and the method of contraction and extension. As an application, we prove the uniqueness of the tangent flow for general compact Ricci flows under the assumption that one tangent flow is a generalized cylinder.
\end{abstract}
\tableofcontents
\section{Introduction}
A Ricci shrinker $(M, g, f)$ is a complete Riemannian manifold $(M,g)$ together with a smooth function $f: M \to \mathbb R$ such that
\begin{equation} 
\Rc+\na^2 f=\frac{1}{2}g,
\label{E100}
\end{equation}
where the potential function $f$ is normalized such that
\begin{align} 
R+|\nabla f|^2&=f \label{E101}.
\end{align}

As the critical metrics of Perelman's $\mmu$-functional, Ricci shrinkers play a crucial role in analyzing the singularity formation of the Ricci flow. Enders-M\"uller-Topping \cite{EMT11} prove that for a Ricci flow with a type-I curvature bound, any proper blow-up sequence converges smoothly to a non-trivial Ricci shrinker. Furthermore, Bamler \cite{Bam20c} has shown that the finite-time singularities of general compact Ricci flows are modeled on Ricci shrinkers that contain a singular set, by using the theory of $\IF$-convergence developed in \cite{Bam20a, Bam20b, Bam20c}.

In dimension $2$ and $3$, all Ricci shrinkers are completely classified (cf.~\cite{Ha95}\cite{Naber}\cite{NW}\cite{CCZ}, etc). We know that $\R^2,S^2,\R^3,S^3,S^2 \times \R$ and their quotients form the complete list. However, in higher dimensions, the classification is far from complete and may even be impossible. We apply the strategy from \cite{LLW21} to consider all Ricci shrinkers of the same dimension $n$ as a moduli space $\MM_n$ equipped with the pointed-Gromov-Hausdorff distance. Here, one can always assign a minimum point of the potential function $f$ as the base point. In \cite{LLW21} and \cite{HLW21}, the weak-compactness theory is established for the moduli subspace $\MM_n(A)$, which consists of all Ricci shrinkers with $\mmu \ge -A$. More precisely, any sequence of Ricci shrinkers in $\MM_n(A)$ subconverges to a Ricci shrinker conifold, and the convergence is smooth away from the singularities. In particular, if the limiting Ricci shrinker conifold is smooth, the convergence is improved to be pointed-Cheeger-Gromov.

The question of rigidity in geometry pertains to whether a particular geometric object is uniquely determined within its moduli space, i.e., whether it admits any non-trivial deformations. In the context of Ricci shrinkers, the rigidity problem concerns whether a given Ricci shrinker is isolated in the moduli space $\MM_n$. The analogous question was studied by Koiso for compact Einstein manifolds \cite{Ko78,Ko80,Ko82}. Later, the deformation theory for compact Ricci shrinkers was established by Podest\`a-Spiro \cite{PS15} and Kr\"oncke \cite{Kr16}. In \cite{Kr16}, Kr\"oncke proved that a compact Ricci shrinker $(N^n,\bar g,\bar f)$ is rigid if and only if any symmetric $2$-tensor $h$ in the infinitesimal solitonic deformation (ISD) space (see Definition \ref{def:ISD}) is not integrable up to some order (see Definition \ref{D201}). Conversely, if $(N^n,\bar g,\bar f)$ is not rigid, then there exists a smooth family of Ricci shrinker metrics $g(t)$ starting from $\bar g$.

For example, the standard sphere $S^n$ and its quotients are rigid since $\textrm{ISD}=0$. Kr\"oncke \cite{Kr16} proved that $\CP^{2m}$ with the standard metric is also rigid by showing that any $h \in \isd$ is not integrable up to order $2$. More recently, the rigidity of $S^2 \times S^2$ and $\CP^{2m-1}$ was established by Sun-Zhu \cite{SZ21} and Li-Zhang \cite{LZ22}, respectively, by showing that any $h \in \isd$ is not integrable up to order $3$.

The first main result of this paper is the following new characterization of the rigidity of a compact Ricci shrinker.

\begin{thm} \label{thm:001}
A compact Ricci shrinker $(N^n,\bar g,\bar f)$ is rigid if and only if it satisfies the rigidity inequality of some order $k$, i.e., there exist positive constants $\epsilon$ and $C$ such that the following property holds:

For any $g \in C^{2,\alpha}(S^2(N))$ with $\|g- \bar g \|_{C^{2,\alpha}} < \epsilon$, there exists a $C^{3,\alpha}$ self-diffeomorphism $\varphi$ of $N$ such that $\|\varphi- \mathrm{Id} \|_{C^{3,\alpha}} \le C \|\dbf(g- \bar{g}) \|_{C^{1,\alpha}}$ and
\begin{align}\label{E001a}
\|\varphi^{*} g - \bar g \|^{k}_{C^{2,\alpha}} \leq C \| \Phi(g) \|_{C^{\alpha}}.
\end{align}
\end{thm}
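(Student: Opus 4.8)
The plan is to prove both directions. For the ``if'' direction, suppose the rigidity inequality of order $k$ holds but $(N,\bar g,\bar f)$ is not rigid. Then by the deformation theory of Kr\"oncke there exists a nontrivial smooth family $g(t)$ of Ricci shrinker metrics with $g(0)=\bar g$, so $\Phi(g(t)) = 0$ for all $t$ and $\|g(t)-\bar g\|_{C^{2,\alpha}} \ge c\,|t|$ for small $t$ (after subtracting the gauge direction). Applying the rigidity inequality to $g(t)$ gives a diffeomorphism $\varphi_t$ with $\|\varphi_t^* g(t) - \bar g\|^k_{C^{2,\alpha}} \le C\|\Phi(g(t))\|_{C^\alpha} = 0$, hence $\varphi_t^* g(t) = \bar g$; but the control $\|\varphi_t - \mathrm{Id}\|_{C^{3,\alpha}} \le C\|\dbf(g(t)-\bar g)\|_{C^{1,\alpha}} \to 0$ forces $g(t)$ to be, up to a small diffeomorphism, a trivial deformation, contradicting nontriviality. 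One must be slightly careful that the family $g(t)$ produced by Kr\"oncke is genuinely nontrivial in the gauge-fixed sense, i.e. not generated by diffeomorphisms alone; this is built into the statement that $(N,\bar g,\bar f)$ is non-rigid.

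For the ``only if'' direction, assume $(N,\bar g,\bar f)$ is rigid. By Kr\"oncke's theorem this means every $h \in \isd$ fails to be integrable up to some finite order, and since $\isd$ is finite-dimensional one can fix a single order $k$ that works for the whole (finite-dimensional, compact) unit sphere in $\isd$. The strategy is a Lyapunov–Schmidt / slice reduction. First, use the slice theorem for the action of the diffeomorphism group: for $g$ close to $\bar g$ in $C^{2,\alpha}$, solve for a diffeomorphism $\varphi$ with $\dbf(\varphi^* g - \bar g) = 0$ (the gauge-fixing condition), obtaining $\|\varphi - \mathrm{Id}\|_{C^{3,\alpha}} \le C\|\dbf(g - \bar g)\|_{C^{1,\alpha}}$ by the implicit function theorem applied to the linearized divergence operator, which is surjective with the stated estimates. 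Write $h := \varphi^* g - \bar g$, so now $h$ lies in the slice $\ker \dbf$.

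Next, decompose $h = h' + h''$ where $h'$ is the $L^2$-projection of $h$ onto $\isd$ (the kernel of the relevant stability operator restricted to the gauge slice) and $h''$ lies in the complementary range. On the range, the linearized operator is invertible, so the equation $\Phi(\bar g + h) = 0$ can be solved for $h''$ as an analytic function of $h'$ with $\|h''\| \lesssim \|h'\|^2 + \|\Phi(\bar g+h)\|$; substituting back yields the finite-dimensional reduced equation $\mathfrak{F}(h') = P_{\isd}\Phi(\bar g + h'+h''(h'))$, which is the obstruction map. The key point is that non-integrability up to order $k$ means precisely that the Taylor expansion of $\mathfrak{F}$ has a nonvanishing lower-order term, giving a Łojasiewicz-type lower bound $\|\mathfrak{F}(h')\| \ge c\|h'\|^k$ for $\|h'\|$ small. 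Combining $\|h'\| \le C\|\mathfrak{F}(h')\|^{1/k} = C\|P_{\isd}\Phi(\bar g+h)\|^{1/k} \le C\|\Phi(g)\|^{1/k}_{C^\alpha}$ with the Schauder estimate $\|h''\| \lesssim \|h'\|^2 + \|\Phi\| \lesssim \|h'\| + \|\Phi\|$ and $\|h\|_{C^{2,\alpha}} \le \|h'\| + \|h''\|$ gives $\|\varphi^* g - \bar g\|^k_{C^{2,\alpha}} \le C\|\Phi(g)\|_{C^\alpha}$, as desired.

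The main obstacle is the finite-dimensional reduction step and, within it, establishing the Łojasiewicz-type lower bound $\|\mathfrak{F}(h')\| \ge c\|h'\|^k$ uniformly from the hypothesis of non-integrability up to order $k$. One has to show that ``not integrable up to order $k$'' (a statement about the vanishing or not of finitely many polynomial obstruction maps $\Psi_2, \dots, \Psi_k$ in the sense of Definition \ref{D201}) translates into a genuine quantitative estimate on the full nonlinear obstruction map $\mathfrak{F}$ near $0$; this requires carefully matching the Taylor coefficients of $\mathfrak{F}$ with the obstruction polynomials and then invoking either a direct elementary argument (when the first nonvanishing $\Psi_j$ has no zeros on the unit sphere of $\isd$) or the Łojasiewicz inequality for the real-analytic map $\mathfrak{F}$ (in general). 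A secondary technical point is keeping all the implicit-function-theorem constants uniform and tracking the regularity bookkeeping ($C^{2,\alpha}$ for metrics, $C^{3,\alpha}$ for diffeomorphisms, $C^{1,\alpha}$ for the divergence, $C^\alpha$ for $\Phi$) consistently through the slice and reduction arguments.
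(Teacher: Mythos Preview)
Your proposal is essentially correct and follows a route that is conceptually close to, but more abstract than, the paper's. Both arguments begin identically: the ``if'' direction is immediate from the inequality (the paper does this in Corollary~\ref{cor:rigid2}), and for the ``only if'' direction both first fix the gauge via the slice theorem (your implicit function argument is exactly Theorem~\ref{TH202}). The divergence comes after that. You run a standard Lyapunov--Schmidt reduction to a finite-dimensional analytic obstruction map $\mathfrak F$ on $\isd$, observe that rigidity forces $\mathfrak F^{-1}(0)=\{0\}$, and then invoke the \L ojasiewicz inequality to get $\|\mathfrak F(h')\|\ge c\|h'\|^k$ for some $k$; combined with the Schauder-type estimate $\|h''\|\lesssim\|h'\|^2+\|\Phi\|$ this yields the rigidity inequality. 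The paper instead introduces an explicit ``induced sequence'' $\{h_i\}$ (Definition~\ref{def:ind}) and a quantitative ``obstruction of order $k$'' condition (Definition~\ref{def:kob}), shows by a compactness/homogeneity argument (Proposition~\ref{P202}, Corollary~\ref{cor:rigid1}) that rigidity implies this condition for some $k$, and then proves the rigidity inequality by a hands-on Taylor expansion of $\Phi$ along a specific curve (Theorem~\ref{thm:hold}).

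What each approach buys: your \L ojasiewicz route is shorter and avoids the bookkeeping of the induced sequence, but the exponent $k$ it produces is not tied to any intrinsic quantity. The paper's route is more explicit---the order $k$ in the rigidity inequality is exactly the order of the first nonvanishing obstruction---and along the way it establishes the equivalence of three natural notions (Figure~\ref{dia:1}), which is used later for the non-compact case. One caution: your sentence ``non-integrability up to order $k$ means precisely that the Taylor expansion of $\mathfrak F$ has a nonvanishing lower-order term'' conflates Kr\"oncke's notion of integrability (where $h_2,\dots,h_k$ are arbitrary) with the Taylor coefficients of $\mathfrak F$ (which correspond to the canonical choice $h_i\in\ker(\dbf)\cap\isd^\perp$, i.e., the paper's ``strong integrability''); the paper carefully separates these in Definition~\ref{D201} and the remark after it. Your \L ojasiewicz argument sidesteps this distinction entirely---since you only need $\mathfrak F^{-1}(0)=\{0\}$---so there is no actual gap, but the identification you assert in passing is not literally correct without that extra discussion.
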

In Theorem \ref{thm:001}, $\alpha \in (0,1)$ is a fixed constant, and the symbol $\Phi$ represents the Ricci shrinker operator (as defined in Definition \ref{def:op1}). The self-diffeomorphism $\varphi$ is chosen such that the gauge is fixed in such a way that $\dbf(\varphi^{*} g - \bar g)=0$ (cf. Definition \ref{def:not}). It should be noted that the ``if" part of Theorem \ref{thm:001} can be straightforwardly deduced from \eqref{E001a}. To investigate rigidity, two closely related concepts are introduced: Definition \ref{D201} and Definition \ref{def:kob}, which are equivalent to the rigidity inequality of order $k$ (see Figure \ref{dia:1}). Definition \ref{D201} states that a symmetric $2$-tensor $h_1 \in \isd$ is strongly integrable up to order $k$ if it is integrable up to order $k$ with solutions $\{h_1,h_2,\cdots, h_k\}$ such that each $h_i \in \ker(\dbf) \cap \isd^{\perp}$. The additional requirement is motivated by Theorem \ref{TH202} and has the advantage that each $h_i$, if it exists, must be unique. Even if $h_1$ may not be strongly integrable up to some order, it still induces a unique approximate sequence $\{h_i\}$ by modifying the variational equations of $\Phi$, so that each $h_i \in \ker(\dbf) \cap \isd^{\perp}$ (see Definition \ref{def:ind}). Therefore, Definition \ref{def:kob} gives a quantitative description that no $h_1 \in \isd$ is strongly integrable up to order $k$. Finally, the ``only if" part of Theorem \ref{thm:001} is proved by showing that any $h \in \isd$ is not strongly integrable up to some fixed order $k$ that is independent of $h$.

The question of rigidity is considerably more complex for non-compact Ricci shrinkers. The primary challenge arises because the concept of closeness under the pointed-Gromov-Hausdorff or pointed-Cheeger-Gromov metric is not global. By definition, a non-compact Ricci shrinker $(\bar M^m, \bar g, \bar f)$ is said to be rigid if any Ricci shrinker $(M^m,g,f)$ that is sufficiently close to $(\bar M^m, \bar g, \bar f)$ on a sufficiently large compact set must be isometric to it. This means that uniqueness radiates out from a compact set, a principle known as the shrinker principle, which was originally discovered in the context of mean curvature flow \cite{CIM15}\cite{CM20} and subsequently applied to Ricci flow by Colding-Minicozzi \cite{CM21b}.

The Gaussian soliton $(\R^m,g_E)$ is the first non-compact example of a rigid Ricci shrinker. This rigidity has been independently proved by Yokota \cite{Yo09, Yo12} and Li-Wang \cite{LW20} by using an entropy-gap argument. Recently, Li-Wang \cite{LW21} has also shown that the standard round cylinder $S^{n-1} \times \R$ is rigid. Independently, Colding-Minicozzi \cite{CM21b} have proved that any standard $S^{n-k} \times \R^k$ is also rigid.

In this paper, we focus on the rigidity of the generalized cylinder $(\bar M^m, \bar g, \bar f)=(N^n \times \R^{m-n},g_N \times g_E, |z|^2/4+n/2)$, where $(N,g_N)$ is an $H$-stable Einstein manifold (cf. Definition \ref{def:Hstable}) that has an obstruction of order $3$, and $z$ is the coordinate of $\R^{m-n}$. It is worth noting that $(\bar M^m, \bar g, \bar f)$ cannot be rigid if $(N,g_N)$ is not. Roughly speaking, the requirement on $(N,g_N)$ ensures that any potential deformation under a suitable gauge can only come from either the infinitesimal solitonic deformation on $(N,g_N)$ or the tensor $ug_N$, where $u$ is a quadratic Hermite polynomial on $\R^{m-n}$. The main result of this paper is to confirm the rigidity of generalized cylinders of this type and their quotients.

\begin{thm}[Main Theorem] \label{thm:002}
Suppose $(N^n,g_N)$ is an $H$-stable Einstein manifold with $\Rc(g_N)=g_N/2$ that has an obstruction of order $3$, and let $(\bar M^m, \bar g, \bar f)$ denote either the generalized cylinder $(N^n \times \R^{m-n},g_N \times g_E, |z|^2/4+n/2)$ or its quotient. Then any $m$-dimensional Ricci shrinker $(M,g,f,p)$ that is close to $(\bar M,\bar g,\bar f,\bar p)$ in the pointed-Gromov-Hausdorff sense must be isometric to $(\bar M,\bar g)$.
\end{thm}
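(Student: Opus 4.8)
The plan is to combine three ingredients foreshadowed in the introduction: the quantitative rigidity inequality for the compact factor $N$ (Theorem \ref{thm:001}), a mixed-order rigidity inequality on the generalized cylinder itself, and a contraction-extension scheme that propagates uniqueness from a large compact set to all of $\bar M$. First I would reduce to the case where $(M,g,f,p)$ is not merely pointed-Gromov-Hausdorff close but smoothly close to $(\bar M,\bar g,\bar f,\bar p)$ on an arbitrarily large ball $B_{\bar p}(R)$: this is exactly the content of the weak-compactness and smooth-convergence theory of \cite{LLW21,HLW21} applied on the regular part, since $\bar M$ is smooth. After pulling back by the approximating diffeomorphisms, one obtains a metric $g$ on $B(R) \subset \bar M$ with $\|g - \bar g\|_{C^{2,\alpha}(B(R))}$ as small as desired, and $\Phi(g) = 0$ because $g$ is a genuine Ricci shrinker metric.

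The core analytic step is to establish the rigidity inequality of mixed orders on $(\bar M^m,\bar g,\bar f)$: after fixing the Bianchi-type gauge $\dbf(g - \bar g) = 0$ via a self-diffeomorphism $\varphi$ close to the identity (solving the associated elliptic gauge-fixing equation on each large ball, using the decay coming from the weight $e^{-\bar f}$), one wants an estimate of the form
\begin{align*}
\|\varphi^{*}g - \bar g\|_{C^{2,\alpha}(B(r))}^{k} \;\le\; C\,\|\Phi(g)\|_{C^{\alpha}(B(2r))} + (\text{error terms decaying in } r),
\end{align*}
for a fixed order $k$ (here $k=3$, dictated by the order-$3$ obstruction hypothesis on $N$). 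The point is that, under the $H$-stability of $(N,g_N)$ and the order-$3$ obstruction, the only infinitesimal deformations of $\bar g$ in the fixed gauge that are not immediately obstructed at order $1$ or $2$ are spanned by the ISD tensors of $N$ and by tensors of the form $u\,g_N$ with $u$ a quadratic Hermite polynomial in the Euclidean variable $z$; both families are then killed at order $3$ — the ISD part by Theorem \ref{thm:001} applied to $N$ (strong non-integrability up to order $3$), and the $u g_N$ part by a direct computation of the third variation of $\Phi$ along the $\R^{m-n}$ directions. Because $\Phi(g) = 0$ identically, the inequality forces $\varphi^{*}g = \bar g$ on $B(r)$, i.e. $g$ is isometric to $\bar g$ on an exhausting sequence of balls.

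Finally I would run the contraction-and-extension (shrinker-principle) argument of \cite{CM21b}: knowing that $g$ agrees with $\bar g$ up to diffeomorphism on a fixed large ball, one extends the isometry outward by solving the gauge-fixed Ricci-shrinker equation as an ODE-like system in the radial direction, using unique continuation (the Ricci-shrinker equation together with the normalization \eqref{E101} is an overdetermined elliptic system, so two solutions agreeing to infinite order on an open set coincide) and the monotonicity of the $\mmu$-entropy to rule out nontrivial holonomy or topology at infinity; the quotient case is handled by passing to the universal cover, noting that the deck group must be preserved under the convergence. I expect the main obstacle to be the mixed-order inequality itself: one must carefully separate the "vertical" (along $N$) and "horizontal" (along $\R^{m-n}$) contributions to the linearized operator, control the continuous spectrum coming from the Euclidean factor with the correct weighted (Hermite/Ornstein--Uhlenbeck) function spaces, and verify that no cross-terms between the $N$-ISD directions and the $u g_N$ directions produce an unobstructed second-order deformation — this is where the hypothesis that $N$ has an obstruction of order exactly $3$, rather than $2$, is essential and most delicate to exploit quantitatively.
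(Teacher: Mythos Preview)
Your proposal has the right broad architecture (weak-compactness reduction, a rigidity inequality on $\bar M$, and a contraction-extension iteration), but there are two genuine gaps in the core logic that would make the argument fail as written.

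First, the claim that ``because $\Phi(g)=0$ identically, the inequality forces $\varphi^*g=\bar g$ on $B(r)$'' is incorrect and misreads what the rigidity inequality delivers. On the non-compact cylinder the inequality (Theorem~\ref{thm:sta1} / Proposition~\ref{prop:tay2}) is a weighted $L^2$ estimate applied to a \emph{cutoff} of $(h,\chi)$; even with $\bphi(\{g,f\})=0$ you pick up boundary error from the neck $\{L-1\le \bar b\le L\}$ of size roughly $e^{-L^2/4}$ times the $C^k$ bound on $h$, plus the gauge terms $\|\dbf h\|$ and the center-of-mass $|\mathcal B(h,\chi)|$ (see \eqref{eq:center}) which you never mention but which encode the translational ambiguity of $\R^{m-n}$. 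The contraction step (Theorem~\ref{thm:501}) therefore only \emph{improves} the pointwise smallness estimate on a slightly smaller ball; it never gives exact isometry at any finite stage. The iteration is: contract to sharpen the estimate, then extend to a larger ball with a slightly worse but still controlled estimate, and repeat so that the region of control exhausts $\bar M$. Your proposal inverts this by asserting isometry first and then invoking contraction-extension, which is backwards.

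Second, your extension step---``solving the gauge-fixed Ricci-shrinker equation as an ODE-like system in the radial direction, using unique continuation''---is not how the paper proceeds and would not work: unique continuation requires exact agreement on an open set, which you never have. The paper's extension (Theorem~\ref{thm:502}) instead uses pseudolocality to bound curvature on a slightly larger annulus, glues $(M,g,f)$ to $(\bar M,\bar g,\bar f)$ there to form a complete almost-shrinker $(M',g',f')$, and then uses the first $m-n$ nontrivial $\Delta_{f'}$-eigenfunctions as approximate splitting maps, with the Colding--Minicozzi polynomial growth estimate (Appendix~\ref{app:A}) controlling their Hessians on the extended region. Finally, a smaller but still substantive error: the $u g_N$ direction is obstructed at order $2$, not $3$ (this is the point of \cite[Proposition 7.2]{CM21b} and Proposition~\ref{prop:2nd}(ii) here), so the rigidity inequality is genuinely of mixed orders $\alpha^2+\beta^3$; treating both as order $3$ would lose the estimate needed to close the iteration.
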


There are several key ingredients in the proof of Theorem \ref{thm:002}. For illustration, we set $(\bar M^m, \bar g, \bar f)=(N^n \times \R^{m-n},g_N \times g_E, |z|^2/4+n/2)$.

(I) Rigidity inequality on generalized cylinders.

Similar to the rigidity of compact Ricci shrinkers, a rigidity inequality as \eqref{E001a} needs to be derived. However, there are some differences in the process. Firstly, in the definition of the Ricci shrinker operator $\Phi$ (cf. Definition \ref{def:op1}), the potential function $f$ for compact Ricci shrinkers arises naturally as the unique minimizer of $\mmu(g,1)$, while in the non-compact case such minimizer may not exist. Therefore, in the non-compact case, the deformation on $(\bar M^m, \bar g, \bar f)$ involves the pair $(g,f)$ which is nearby $(\bar g, \bar f)$.

Secondly, to derive the rigidity inequality, one needs to obtain the corresponding inequality for the stability operator $\LL$ (see Theorem \ref{thm:est1}). Hence, it is crucial to derive the necessary elliptic estimates on the non-compact Ricci shrinkers with respect to the Sobolev spaces.

Thirdly, the order of the rigidity inequality in the compact case indicates the order up to which a potential deformation tensor is strongly integrable. In the case of a generalized cylinder, the corresponding $\isd$ consists of $\isd_N$ and a particular tensor $ug_N$ (see Proposition \ref{PX303}). By our assumption on $(N,g_N)$, any $h_1 \in \isd_N$ naturally satisfies a rigidity inequality of order $3$. An important observation is that the tensor $ug_N$ also satisfies a rigidity inequality of order $2$. This fact is essential in deriving the rigidity inequality for $S^{n-k} \times \R^k$ in \cite{CM21b}, where $\isd_N=0$ if $N=S^{n-k}$. In our case, we need to show that the deformations $h_1$ and $ug_N$ do not interfere with each other in the higher-order estimates. In other words, these two types of rigidity inequalities are compatible. Eventually, we obtain a rigidity inequality of mixed orders (see Theorem \ref{thm:sta1}).

(II) The method of contraction and extension.

Unlike in the compact case, the rigidity inequality alone is insufficient to establish rigidity in the non-compact case. One reason for this is that the deformation $\{g,f\}$ from $\{\bar g, \bar f\}$ is global, which implies that the diffeomorphism type of the underlying manifold remains unchanged. Another reason is that, unlike in the compact case, we cannot always choose a nice gauge such that $\dbf(g-\bar g)=0$.

To overcome these challenges, we employ the method of contraction and extension, which is similar to the approach taken in \cite{CIM15} and \cite{CM21b}. Roughly speaking, if two regions on two Ricci shrinkers are close, then we can shrink both regions to obtain a better estimate. Conversely, we can extend both regions to obtain a slightly worse estimate. By adjusting the range of contraction and extension, we can iterate this process and conclude that the two Ricci shrinkers are identical.

We explain the method of contraction and extension in detail.

(IIa) The contraction process.

To obtain a better estimate for the pair $\{h,\chi\}=\{g-\bar g, f-\bar f\}$, one can shrink the given regions and apply the rigidity inequality to an appropriate cutoff of the pair $\{h,\chi\}$ after the action of a diffeomorphism. While it is not always possible to choose a diffeomorphism $\varphi$ such that $\dbf(\varphi^* g-\bar g)=0$, the first-order approximation can be used to solve $2\PP w= \dbf h$ (see Definition \ref{def:not}) for a $1$-form $w$, and the time-one diffeomorphism $\varphi_w$ generated by $w$ can be considered. The key point is that we are able to obtain a pointwise estimate of $w$ (see Theorem \ref{thm:diff2}), in addition to the $W^{2,2}$ estimate (see Proposition \ref{P301}), which is derived from the growth estimate of eigentensor-like tensors established by Colding-Minicozzi (cf. \cite[Section 3]{CM21b}). After performing a diffeomorphism, shrinking the region slightly and applying the rigidity inequality yields a better estimate for $\{h,\chi\}$. While some error may develop during this process, mainly from the neck part where the pair is cut off, the very small weight $e^{-\bar f}$ may render such error negligible.

(IIb) The extension process.

The extension process described here differs slightly from the one in \cite{CM21b}. To extend the corresponding regions, we first use the pseudolocality theorem for Ricci shrinkers (see \cite[Theorem 24]{LW20}) to ensure that a slightly larger region $\Omega_1\supset\Omega$ in $M$ has bounded curvature. Since the two smaller regions $\Omega\subset M$ and $\Omega'\subset \bar{M}$ are close enough, we can guarantee that two larger regions $\Omega_1\subset M$ and $\Omega_1'\subset \bar{M}$ are diffeomorphic and that their geometries on the necks $\Omega_1\setminus\Omega$ and $\Omega_1'\setminus\Omega'$ are sufficiently close. We can then glue $(\Omega_1,g,f)$ and $(\bar{M},\bar{g},\bar{f})$ at the neck to form a complete Riemannian manifold $(\bar{M},g',f')$ that is almost a Ricci shrinker.

To obtain estimates for ${h, \chi}$ on the larger region, we consider the first $m-n$ non-constant eigenfunctions ${u_i}$ with respect to $\Delta_{f'}$. Based on our assumptions, it can be shown that their corresponding eigenvalues are almost equal to $1/2$, which implies that all ${u_i}$ are almost splitting maps. In order to obtain estimates for $u_i$ and $\nabla^2 u_i$ on this larger region, we need to use a growth estimate similar to the one in \cite{CM21b} on $(\bar{M}, g', f')$, as explained in Appendix \ref{app:A}. By using ${u_i}$, we can extend the almost-splitting property near the base point outwards, resulting in only slightly weakened estimates.

The contraction and extension processes described above have the advantage of being local. Indeed, the localization in the contraction process is achieved by using a cutoff function, while in the extension process, it is ensured by using the gluing argument, as long as the pseudolocality can be applied. As a result, these processes can be naturally generalized to prove the rigidity of the quotient of a generalized cylinder, provided that the corresponding conditions are appropriately modified (see Theorem \ref{thm:rigid2}).

In \cite{Kr16} and \cite{LZ22}, the authors prove that $(\CP^n,g_{FS})$ is rigid by showing that it has an obstruction of order $3$. On the other hand, it is well-known that $(\CP^n,g_{FS})$ is $H$-stable. As a direct corollary of Theorem \ref{thm:002}, we obtain the following result. More examples of $(N,g_N)$ satisfying the condition of Theorem \ref{thm:002} can be found in \cite[Table 1, Table 2]{CH15}.

\begin{cor}
The Ricci shrinker $(\CP^{n} \times \R^{m-2n},g_{FS} \times g_E,|z|^2/4+n)$ is rigid.
\end{cor}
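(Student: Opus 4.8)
The plan is to obtain Corollary \ref{cor:002} as a direct application of the Main Theorem (Theorem \ref{thm:002}): the task reduces to checking that $(N, g_N) = (\CP^n, g_{FS})$ satisfies the three hypotheses there, namely that it is Einstein with $\Rc(g_N) = g_N/2$, that it is $H$-stable in the sense of Definition \ref{def:Hstable}, and that it has an obstruction of order $3$. First I would fix the scaling of the Fubini--Study metric so that $\Rc(g_{FS}) = g_{FS}/2$, which is the normalization implicitly forced by the shape of the potential. Since $\dim_{\R} \CP^n = 2n$, the scalar curvature is then $R(g_{FS}) = 2n \cdot \tfrac{1}{2} = n$, so the value of the potential at its minimum on the $N$-factor equals $R(g_{FS}) = n$. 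Hence the generalized cylinder over $N = \CP^n$ is precisely $(\CP^n \times \R^{m-2n}, g_{FS} \times g_E, |z|^2/4 + n)$, which is the Ricci shrinker in the statement: in the notation of Theorem \ref{thm:002} the symbol $n$ (there equal to $\dim N$) is replaced by $2n$ and $n/2$ by $n$, while the underlying manifold has real dimension $2n + (m-2n) = m$, as required.

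Next I would invoke the well-known fact that $(\CP^n, g_{FS})$ is $H$-stable; this can be read off from the spectrum of the Lichnerowicz Laplacian on $\CP^n$, equivalently from Koiso's classical analysis of Einstein deformations of compact symmetric spaces, and it also appears among the examples listed in \cite[Table 1, Table 2]{CH15}. For the obstruction condition I would cite the rigidity results for complex projective space: Kr\"oncke \cite{Kr16} proved that $\CP^{2m}$ with the Fubini--Study metric is obstructed already at order $2$, and Li--Zhang \cite{LZ22} proved that $\CP^{2m-1}$ is obstructed at order $3$. Since integrability up to order $3$ implies integrability up to order $2$ by truncation, an obstruction of order $2$ is in particular an obstruction of order $3$, and therefore every $(\CP^n, g_{FS})$ has an obstruction of order $3$.

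With all three hypotheses verified, Theorem \ref{thm:002} applies with $(N, g_N) = (\CP^n, g_{FS})$ and gives that $(\CP^n \times \R^{m-2n}, g_{FS} \times g_E, |z|^2/4 + n)$, as well as any quotient of it, is rigid: any $m$-dimensional Ricci shrinker that is pointed-Gromov--Hausdorff close to it must be isometric to it. I do not anticipate a genuine obstacle, since the corollary is designed to be an immediate specialization of the Main Theorem; the one step that demands care is the normalization and dimension bookkeeping above, which is needed to identify the generalized cylinder built from $\CP^n$ with the Ricci shrinker named in the statement.
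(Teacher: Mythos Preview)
Your proposal is correct and follows exactly the approach in the paper: verify that $(\CP^n, g_{FS})$ is $H$-stable (well-known) and has an obstruction of order $3$ (from \cite{Kr16} for even complex dimension and \cite{LZ22} for odd), then apply Theorem~\ref{thm:002}. The normalization and dimension bookkeeping you spell out is implicit in the paper but entirely accurate, and your remark that an order-$2$ obstruction implies an order-$3$ obstruction is justified by the lemma immediately following Definition~\ref{def:kob}.
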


Munteanu-Wang \cite[Corollary 4]{MW17} have proven that any Ricci shrinker with nonnegative curvature operator must be locally symmetric. Specifically in dimension $4$, $\R^4,S^4,\CP^2, S^2 \times S^2, S^2 \times \R^2, S^3 \times \R$ and their quotients are all possibilitites. By combining Theorem \ref{thm:002} with the results from \cite{Yo09}\cite{Yo12}\cite{LW20}\cite{Kr16}\cite{SZ21}\cite{LW21} and \cite{CM21b}, we can deduce the following:
\begin{cor} 
Any four-dimensional Ricci shrinker with nonnegative curvature operator is rigid.
\end{cor}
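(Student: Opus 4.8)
The plan is to enumerate the four-dimensional Ricci shrinkers with nonnegative curvature operator and check rigidity case by case, invoking the appropriate result for each. By Munteanu-Wang \cite[Corollary 4]{MW17}, any such shrinker is locally symmetric, so after passing to the universal cover it is one of $\R^4$, $S^4$, $\CP^2$ (with $g_{FS}$), $S^2 \times S^2$, $S^2 \times \R^2$, $S^3 \times \R$, and the original shrinker is a quotient of one of these by a finite group of isometries commuting with the soliton structure. Thus it suffices to verify that each model on this list, together with all its admissible quotients, is rigid in the pointed-Gromov-Hausdorff topology.

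For the compact models the rigidity is already known: $S^4$ and its quotients are rigid because $\isd = 0$; $\CP^2 = \CP^{2m}$ with $m=1$ is rigid by Kr\"oncke \cite{Kr16} (obstruction of order $2$); and $S^2 \times S^2$ together with its quotients is rigid by Sun-Zhu \cite{SZ21} (obstruction of order $3$). For the Gaussian soliton $\R^4$, rigidity is the entropy-gap result of Yokota \cite{Yo09, Yo12} and Li-Wang \cite{LW20}. For $S^3 \times \R$ and its quotients, rigidity follows from Li-Wang \cite{LW21} (or Colding-Minicozzi \cite{CM21b}). The remaining case is $S^2 \times \R^2$ and its quotients, and here I would apply Theorem \ref{thm:002} with $N = S^2$: the round $S^2$ with $\Rc(g_N) = g_N/2$ is $H$-stable and has $\isd_N = 0$, so it trivially has an obstruction of order $3$, and $(S^2 \times \R^2, g_{S^2} \times g_E, |z|^2/4 + 1)$ — which is exactly the $n=2$, $m=4$ generalized cylinder — as well as its quotients are therefore rigid by the Main Theorem.

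The only point requiring a little care is bookkeeping: one must make sure that every quotient of a model that arises as a four-dimensional Ricci shrinker is covered by the cited rigidity statement for that model. For the smooth Einstein factors this is automatic since the rigidity theorems for $S^4, \CP^2, S^2\times S^2$ are stated for quotients; for $S^3 \times \R$ and $S^2 \times \R^2$ the quotients are by finite groups acting on the spherical factor (or by a combined action preserving the soliton vector field), and these are precisely the quotients handled in \cite{CM21b} and in Theorem \ref{thm:002} respectively. I do not expect any genuine obstacle here — the corollary is a direct assembly of Theorem \ref{thm:002} with the structural classification of \cite{MW17} and the previously established rigidity results — so the ``hard part'' is merely confirming that the list of models is complete and that no exotic quotient escapes the cited theorems.
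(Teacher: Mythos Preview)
Your proposal is correct and follows essentially the same approach as the paper: the corollary is stated there as an immediate consequence of Munteanu--Wang's classification \cite{MW17} combined with Theorem~\ref{thm:002} and the previously known rigidity results \cite{Yo09,Yo12,LW20,Kr16,SZ21,LW21,CM21b}, and you have simply unpacked which citation handles which model. The only cosmetic difference is that for $S^2\times\R^2$ you invoke Theorem~\ref{thm:002} with $N=S^2$ (which is valid since $\isd_{S^2}=0$ gives an obstruction of order $1$, hence of order $3$), whereas the paper also lists \cite{CM21b}, which already covers all round cylinders $S^{n-k}\times\R^k$ directly.
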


Another application of Theorem \ref{thm:002} and related techniques is the proof of the uniqueness of the tangent flow for a compact Ricci flow, provided that one tangent flow is $(\bar M, \bar g, \bar f)$. In the case of the mean curvature flow, the uniqueness of the tangent flow at each cylindrical singular point is proven in \cite{CM15}\cite{CM19}. Moreover, under the type-I assumption on curvature, the corresponding uniqueness of tangent flow in the setting of the Ricci flow is also established; see \cite[Theorem 9.21]{CM21b}. In fact, the proof of \cite[Theorem 9.21]{CM21b} can be extended to any rigid Ricci shrinkers, since the convergence is always smooth under the type-I assumption. In general, we consider the tangent flows at a particular conjugate heat flow $(\mu_t)$ with controlled variance, which can be regarded as a ``point'' at the singular time (cf. Definition \ref{def:tangent}).

\begin{thm}[Uniqueness of the tangent flow]\label{thm:003}
Let $(M^m,g(t))_{t \in [-T,0)}$ be a compact Ricci flow solution with singular time $0$, and let $(\mu_t)_{t \in [-T,0)}$ be a conjugate heat flow satisfying $\emph{\Var}_t(\mu_t) \le H_m |t|$ for any $t \in [-T,0)$, where $H_m:=(m-1)\pi^2/2+4$. If one tangent flow at $(\mu_t)$ is $(\bar M, \bar g,\bar f)$, which satisfies the assumption of Theorem \ref{thm:002}, then any tangent flow at $(\mu_t)$ is.
\end{thm}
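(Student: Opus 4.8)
The plan is to follow the standard template for promoting rigidity of a Ricci shrinker to uniqueness of the tangent flow, as carried out in \cite[Theorem 9.21]{CM21b} for the round cylinder, but now feeding in Theorem \ref{thm:002} in place of the ad hoc rigidity argument there. First I would invoke Bamler's compactness theory: along the rescaled flows $(M, g_{\lambda_i}(t), (\mu_t)_{\lambda_i})$ with $\lambda_i \to \infty$, after passing to a subsequence, the pointed flows $\IF$-converge to a metric soliton, and by the hypothesis one such subsequential limit is the given $(\bar M, \bar g, \bar f)$ (or rather the associated self-similar flow). The variance bound $\mathrm{Var}_t(\mu_t) \le H_m|t|$ is exactly what is needed to localize the conjugate heat flow to a bounded region and to rule out a drift of the center; it guarantees that the "point" at the singular time is well defined and that the tangent flow is a genuine shrinker rather than a more degenerate object. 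Since $(\bar M, \bar g)$ is smooth (it is a product $N \times \R^{m-n}$ or a quotient, hence has no singular set), the $\IF$-convergence to $(\bar M, \bar g, \bar f)$ upgrades to smooth Cheeger-Gromov convergence on compact subsets, on a definite scale interval.

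The core of the argument is a no-return / continuity statement: once one time-slice of the rescaled flow is pointed-Gromov-Hausdorff (equivalently, smoothly on a large compact set) close to $(\bar M, \bar g, \bar f)$, it stays close — and in fact converges — as the scale is further refined. Here I would run exactly the contraction-and-extension machinery underlying Theorem \ref{thm:002}: a time-slice $(M, g_{\lambda}(t))$ that is $\epsilon$-close to $(\bar M, \bar g, \bar f)$ on the ball of radius $R$ is, by the rigidity inequality of mixed orders (Theorem \ref{thm:sta1}) combined with the contraction step (IIa) and the extension step (IIb), forced to be $\epsilon'$-close on a comparably large (or slightly larger) ball with $\epsilon' \le \tfrac12 \epsilon$, say. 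The pseudolocality theorem for Ricci shrinkers \cite[Theorem 24]{LW20} is what makes the extension step legitimate: it controls the curvature on a slightly enlarged region so that the gluing of $(\Omega_1, g, f)$ to $(\bar M, \bar g, \bar f)$ produces an almost-Ricci-shrinker on which the eigenfunction/splitting-map estimates of Appendix \ref{app:A} apply. Iterating this improvement along a sequence of scales $\lambda_i \uparrow \infty$ shows that the distance from $(M, g_{\lambda_i}(-1), (\mu_{-\lambda_i^{-2}}))$ to $(\bar M, \bar g, \bar f)$ tends to $0$, so the full (not merely subsequential) limit is $(\bar M, \bar g, \bar f)$; applying the same to every time slice and using parabolic regularity gives uniqueness of the tangent flow as a Ricci flow.

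One technical point to handle carefully is the passage between the static picture (Ricci shrinkers close in pointed-Gromov-Hausdorff, which is the setting of Theorem \ref{thm:002}) and the dynamic picture (tangent flows of the Ricci flow). I would use the correspondence between a self-similar Ricci flow and its time $-1$ slice: proximity of flows in Bamler's $\IF$-sense on a unit time interval, combined with the self-similar structure of the model and the a priori type-I-like curvature control coming from pseudolocality plus the shrinker equation on the model, yields proximity of the single slices as Ricci shrinkers with their canonical potentials, which is precisely the input Theorem \ref{thm:002} (and more quantitatively its proof via Theorem \ref{thm:rigid2}) consumes. The variance hypothesis re-enters here to ensure the base points (the minimum points of the potentials) are matched up correctly under the convergence, so that "close on a large compact set" is measured around the right center.

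The main obstacle I anticipate is the interface between Bamler's $\IF$-convergence and the quantitative, gauge-fixed, Sobolev-space estimates that the rigidity proof is phrased in. Bamler's theory delivers convergence in a weak measure-theoretic sense that must be promoted — using the smoothness of the limit $(\bar M, \bar g)$, a local curvature bound from pseudolocality, and Shi-type estimates — to honest $C^{2,\alpha}$ closeness of $(g,f)$ to $(\bar g, \bar f)$ on large balls, with an explicit dependence of the closeness scale on the error; only then can one even begin the contraction step, which needs to solve $2\PP w = \dbf h$ and apply the rigidity inequality. Making this quantitative bridge, and tracking that the errors accumulated from the neck regions in the iteration are controlled by the rapidly decaying weight $e^{-\bar f}$ so that the geometric-series improvement $\epsilon' \le \tfrac12 \epsilon$ genuinely closes, is where the real work lies; everything else is a careful assembly of Theorem \ref{thm:002}, its localized refinement Theorem \ref{thm:rigid2}, pseudolocality, and Bamler's compactness.
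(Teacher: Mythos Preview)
Your proposal has a genuine structural gap. You plan to run the contraction--extension iteration on time-slices $(M, g_\lambda(-1))$ of the rescaled Ricci flow itself, improving $\epsilon$-closeness to $\epsilon/2$-closeness as $\lambda$ increases. But those time-slices are not Ricci shrinkers: $\bphi(\{g_\lambda(-1), f_\lambda\}) \ne 0$ in general (without a type-I assumption there is no a priori smallness of this quantity), and the contraction step (Theorem \ref{thm:501}) relies essentially on $\bphi = 0$ on the inner region so that $\|\phi\|$ contributes only a neck term of size $e^{-L^2/4-\cdots}$. Your iteration therefore has no mechanism to close. The CM21b argument you cite does not iterate on flow time-slices either; under type-I it passes directly to a smooth shrinker limit and applies rigidity there.

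The paper's proof takes a different route that avoids this problem. It argues by contradiction: if two distinct tangent flows existed, a continuity argument in the $\IF$-distance produces intermediate times $s_{i,j}$ whose metric flow pairs $\IF$-converge (after a diagonal extraction) to metric solitons $(\XX^{\infty,j},\mu^{\infty,j})$ with $d_{\IF}^J(\XX^{\infty,j},\XX^\infty)=\epsilon_j\to 0$ but $\XX^{\infty,j}\neq \XX^\infty$. These limits \emph{are} Ricci shrinkers on their regular parts, so the rigidity machinery is in scope. The remaining obstacle --- which you do not identify --- is that $\XX^{\infty,j}_{-1}$ may have singularities far from the $H$-center, so Theorem \ref{thm:002} cannot be quoted directly. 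The paper handles this by first proving, via Nash-entropy continuity and Bamler's $\epsilon$-regularity, that a fixed large ball around the $H$-center is smooth with bounded curvature, and then re-running the local $(\star_L'')$/$(\dagger_L'')$ contraction--extension scheme on the possibly incomplete shrinker $(\RR_j,g_j,f_j)$; pseudolocality is available precisely because $\XX^{\infty,j}$ arises as an $\IF$-limit of compact Ricci flows. This forces $\XX^{\infty,j}_{-1}=(\bar M,\bar g,\bar f)$, a contradiction.
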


The proof of Theorem \ref{thm:003} builds on the concepts and techniques of the theory of $\IF$-convergence developed by Bamler \cite{Bam20a, Bam20b, Bam20c}. Essentially, we aim to demonstrate that a sequence of metric solitons, which are not $(\bar M, \bar g,\bar f)$, cannot $\IF$-converge to $(\bar M, \bar g,\bar f)$. Using a continuity argument, we prove that such a sequence, if it exists, must converge to $(\bar M, \bar g,\bar f)$ in the pointed-Cheeger-Gromov sense. However, while the convergence is smooth, any metric soliton in the sequence may have singularities far away from the base point, making it impossible to apply Theorem \ref{thm:002} directly. To overcome this, we use the method of contraction and extension once again since all processes can be made local, and the pseudolocality theorem is valid in this situation. Indeed, all metric solitons as tangent flows come from a compact Ricci flow. Therefore, we prove the uniqueness of the tangent flow and show that the convergence is in the smooth sense. Notice that the uniqueness of the tangent flow at infinity for the ancient solutions to the Ricci flow can be similarly obtained; see Theorem \ref{thm:602}.

This paper is organized as follows. Section $2$ focuses on the rigidity of compact Ricci shrinkers. We prove in this section that a compact Ricci shrinker is not strongly integrable up to order $k$ if and only if it has an obstruction of order $k$ if and only if it satisfies the rigidity inequality of order $k$. Moreover, we prove Theorem \ref{thm:001} as a consequence of the equivalence between these concepts. In section $3$, we develop necessary elliptic estimates for general non-compact Ricci shrinkers with respect to the Sobolev spaces. These elliptic estimates are crucial in deriving the rigidity inequality. In section $4$, we prove the rigidity inequality of mixed orders for generalized cylinders. In section 5, we prove our main result, Theorem \ref{thm:002}, by carefully applying the method of contraction and extension. In section 6, we introduce the background of $\IF$-convergence and tangent flows, and then prove Theorem \ref{thm:003}. In the Appendix, we slightly generalize the growth estimate of Colding-Minicozzi to general smooth metric measure spaces, which plays an essential role in the extension process.
\\
\\
\\
{\bf Acknowledgements}: 
Yu Li is supported by YSBR-001, NSFC-12201597, and research funds from University of Science and Technology of China and Chinese Academy of Sciences. Both authors would like to thank Prof. Bing Wang for his interest in this work.
\section{Rigidity of the compact Ricci shrinkers}
For any compact Riemannian manifold $(N^n,g)$, we recall that Perelman's celebrated entropy $\boldsymbol{\mu}(g,1)$ \cite{Pe1} is defined as
\begin{align} \label{EX201}
\boldsymbol{\mu}(g,1) := \inf_\rho \lc (4 \pi)^{-\frac{n}{2}}\int_N \lc |\nabla \rho|^2+R +\rho-n \rc e^{-\rho}\, dV_g \rc,
\end{align} 
where the infimum is taken for all smooth functions $\rho$ such that
\begin{align*}
(4 \pi)^{-\frac{n}{2}} \int_N e^{-\rho} \,dV_g=1.
\end{align*} 
Clearly, any minimizer $\rho$ of \eqref{EX201} satisfying the following Euler-Lagrange equation:
\begin{align} \label{EX201a}
2 \Delta \rho - |\nabla \rho |^2 + R+\rho- n = \boldsymbol{\mu}(g,1). 
\end{align}

Let $(N^n,g,f)$ be a compact Ricci shrinker with normalization \eqref{E101}, which can be regarded as a critical point of $\mmu$ for nearby variations. We define $C^{\infty}(S^2(N))$ as the space of smooth symmetric $2$-tensors on $N$. Additionally, we fix a constant $\alpha\in (0,1)$ and define $C^{k,\alpha}(S^2(N))$ as the completion of $C^{\infty}(S^2(N))$ with the $C^{k,\alpha}$-norm for $ k\geq 2$. We are interested in metrics that are in a small neighborhood of $g$ in $C^{k,\alpha}(S^2(N))$, equipped with the $C^{k,\alpha}$-topology.

It is proved by Sun-Wang \cite[Lemma 2.2]{SW15} (see also \cite[Lemma 4.1]{Kr14}) that there exits a small neighborhood $\mathcal{U}$ of $g$ in $C^{2,\alpha}(S^2(N))$ such that for any $\tilde g \in \mathcal U$, the minimizer $\rho_{\tilde g}$ of $\boldsymbol{\mu}(\tilde g,1)$ is unique and depends analytically on $\tilde g$. In this case, we define 
\begin{align} \label{EX201b}
f_{\tilde g}:=\rho_{\tilde g}-\boldsymbol{\mu}(\tilde g,1).
\end{align}
In particular, it follows from \cite[Lemma 3.4]{SW15} (see also \cite[Proposition 3]{LW20}) that $f=f_{g}$. It is immediate from \eqref{EX201a} that
\begin{align} \label{EX201c}
2 \Delta f_{\tilde g} - |\nabla f_{\tilde g} |^2 + R(\tilde g)+f_{\tilde g}- n =0,
\end{align}
where the underlying metric is $\tilde g$.

We define the following Ricci shrinker operator on the neighborhood $\mathcal{U}$ of $g$.
\begin{defn} \label{def:op1}
For any $\tilde g \in \mathcal U$, we define
\begin{align}\label{EX201cc}
\Phi(\tilde g) := \frac{1}{2} \tilde g - \Rc(\tilde g) - \nabla_{\tilde g}^2 f_{\tilde g},
\end{align}
where $f_{\tilde g}$ is given by \eqref{EX201b}. In particular, $\Phi(\tilde g) = 0$ if and only if $(N,\tilde g,f_{\tilde g})$ is a Ricci shrinker with normalization \eqref{E101}.
\end{defn}

Before we continue the discussion, we recall some definitions and notations for later use. 

\begin{defn} \label{def:not}
On $(N^{n}, g, f)$, we have the following definitions. 
\begin{enumerate}
\item The $L^2$-norm for $(r,s)$-tensor fields, denoted by $T^{r,s}N$, is defined with respect to the volume form $dV_f: =e^{-f} dV_{g}$, i.e., for any $T \in L^2(T^{r,s}N)$, $\|T\|_{L^2} = \int |T|^2 \,dV_f$. The Sobolev spaces $W^{k,2}(T^{r,s}N)$ are defined with respect to $dV_f$ as well.

\item The weighted divergence $\df: C^{\infty}(T^{r,s}N) \rightarrow C^{\infty}(T^{r,s-1}N)$ is defined as 
\begin{align*}
(\df T)^{i_1,\cdots,i_r}_{j_1,\dots,j_{s-1}}:=\na_i T^{i_1,\cdots,i_r}_{i,j_1,\dots,j_{s-1}}-T^{i_1,\cdots,i_r}_{i,j_1,\dots,j_{s-1}} f_i
\end{align*}
for any $T \in C^{\infty}(T^{r,s}N)$. We denote by $\df^*$ the adjoint operator of $\df$ with respect to the $L^2$-norm. 

\item The weighted Laplacian operator $\Delta_f$ is defined as 
$\Delta_f=\Delta-\la \na \cdot, \na f \ra$ on $C^{\infty}(T^{r,s}N)$. We set $\LL$ to be the stability operator $\Delta_f + 2\Rm $ on $C^{\infty}(S^2(N))$, that is, for any $h \in C^{\infty}(S^2(N))$,
\begin{align*}
\LL(h)_{ij}: =\Delta_f h_{ij} +2 R_{ikjl} h_{kl}.
\end{align*}
\item The second-order elliptic operator $\mathcal{P}$ is defined as
\begin{align*}
\mathcal{P} w = \df \df^* w
\end{align*}
for any $1$-form $w \in C^{\infty}(T^* N)$. It is clear by the definition that the kernel of $\mathcal P$, denoted by $\KK_{\PP}$, consists of all Killing forms (i.e., the dual forms of Killing fields) on $N$.
\end{enumerate}
\end{defn}

Next, we prove the following orthogonal decomposition; see also \cite[Proposition 2.2]{PS15}.

\begin{prop} \label{prop:decom1}
For any $k \ge 1$,
\begin{align} \label{EX202a}
C^{k,\alpha}(S^2(N))=\ker \lc \df\vert_{C^{k,\alpha}(S^2(N))} \rc \oplus \df^*\lc C^{k+1,\alpha}(T^* N)\rc.
\end{align}
In particular, we have
\begin{align} \label{EX202aa}
C^{\infty}(S^2(N))=\ker(\df)\oplus \im(\df^*),
\end{align}
where $\ker(\df):=\ker \lc \df\vert_{C^{\infty}(S^2(N))} \rc$ and $\im(\df^*):=\df^*\lc C^{\infty}(T^* N)\rc$. Furthermore, the decompositions are orthogonal with respect to $L^2$.
\end{prop}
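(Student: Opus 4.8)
The plan is to establish the decomposition \eqref{EX202a} by a standard Fredholm/Hodge-type argument for the weighted operator $\PP = \df\df^*$ acting on $1$-forms, and then let $k \to \infty$ to obtain the smooth version \eqref{EX202aa}. First I would show that the pair $(\df^*, \df)$ is, up to lower-order terms, an elliptic system: for a $1$-form $w$, the operator $\PP w = \df\df^* w$ has the same principal symbol as (half of) the Bochner Laplacian on $1$-forms, since $\df$ and $\df^*$ differ from the ordinary divergence and its adjoint only by terms involving $\nabla f$, which are of order zero relative to the principal part. Hence $\PP$ is a second-order, formally self-adjoint, elliptic operator on $C^\infty(T^*N)$ over the compact manifold $N$, and by elliptic theory on the compact manifold $N$ (the measure $dV_f = e^{-f}\,dV_g$ is just a smooth positive density, so all the usual Schauder and $L^2$ theory apply verbatim) one has the $L^2$-orthogonal decomposition $C^{k-1,\alpha}(T^*N) = \ker(\PP|_{C^{k+1,\alpha}}) \oplus \PP(C^{k+1,\alpha}(T^*N))$, where $\ker(\PP)$ consists of the Killing forms $\KK_\PP$ as noted in Definition \ref{def:not}. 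Crucially, $\ker(\PP) = \ker(\df^*)$: if $\PP w = 0$ then $0 = \langle \df\df^* w, w\rangle_{L^2} = \|\df^* w\|_{L^2}^2$, so $\df^* w = 0$.

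Next I would transfer this to symmetric $2$-tensors. Given $h \in C^{k,\alpha}(S^2(N))$, apply $\df$ to get $\df h \in C^{k-1,\alpha}(T^*N)$. Decompose $\df h = \PP w + \kappa$ with $w \in C^{k+1,\alpha}(T^*N)$ and $\kappa \in \ker(\PP) = \ker(\df^*)$. I claim $\kappa = 0$: pairing $\df h = \df\df^* w + \kappa$ against $\kappa$ in $L^2$, the first two terms contribute $\langle h, \df^*\kappa\rangle_{L^2}$ and $\langle \df^* w, \df^*\kappa\rangle_{L^2}$ respectively, both of which vanish because $\df^*\kappa = 0$; hence $\|\kappa\|_{L^2}^2 = 0$. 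Therefore $\df h = \df\df^* w = \df(\df^* w)$, so $h - \df^* w \in \ker(\df)$, giving the sum $C^{k,\alpha}(S^2(N)) = \ker(\df|_{C^{k,\alpha}}) + \df^*(C^{k+1,\alpha}(T^*N))$. For directness of the sum and $L^2$-orthogonality: if $h_0 = \df^* w$ with $\df h_0 = 0$, then $\|h_0\|_{L^2}^2 = \langle \df^* w, \df^* w\rangle_{L^2} = \langle \df\df^* w, w\rangle_{L^2} = \langle \df h_0, w\rangle_{L^2} = 0$, and the same computation shows $\langle h_0, \df^* w\rangle_{L^2} = \langle \df h_0, w\rangle_{L^2} = 0$ for any $h_0 \in \ker(\df)$, so the two summands are $L^2$-orthogonal. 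Regularity of the components is automatic: $\df^* w \in C^{k,\alpha}$ since $w \in C^{k+1,\alpha}$, and $\ker(\df)$ component is the difference of two $C^{k,\alpha}$ tensors.

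Finally, the smooth statement \eqref{EX202aa} follows by taking $h \in C^\infty(S^2(N))$: the construction above produces $w$ solving $\PP w = \df h$ with $\df h$ smooth, and elliptic regularity (bootstrapping the Schauder estimates for $\PP$ on the compact manifold $N$) forces $w \in C^\infty(T^*N)$, hence $\df^* w \in C^\infty$ and $h - \df^* w \in C^\infty \cap \ker(\df)$; the orthogonality passes to the limit trivially. The main obstacle I anticipate is purely bookkeeping rather than conceptual: one must verify carefully that replacing ordinary divergence by the weighted divergence $\df$ (and the ordinary adjoint by $\df^*$, taken with respect to $dV_f$) does not spoil ellipticity of $\PP = \df\df^*$ — it does not, since the weight only enters through zeroth- and first-order terms — and that the integration-by-parts identities $\langle \df T, \omega\rangle_{L^2} = \langle T, \df^*\omega\rangle_{L^2}$ hold with the correct signs against the weighted volume on the closed manifold $N$ (no boundary terms, as $N$ is compact). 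Once the ellipticity and self-adjointness of $\PP$ relative to $dV_f$ are in hand, everything reduces to the classical compact Hodge-theory argument.
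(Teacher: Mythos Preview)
Your proposal is correct and follows essentially the same approach as the paper: both solve $\PP w = \df h$ (noting that $\df h$ is $L^2$-orthogonal to $\ker(\PP) = \KK_\PP$, which you verify by pairing and the paper states directly), then write $h = (h - \df^* w) + \df^* w$ and observe orthogonality. You simply supply more detail on ellipticity, solvability, and directness than the paper does.
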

\begin{proof}
For any $h \in C^{k,\alpha}(S^2(N))$, we solve the following equation for $w \in C^{k+1,\alpha}(T^* N)$
\begin{align} \label{EX202ab}
\mathcal{P} w = \df h.
\end{align}
Notice that by the standard theory for the elliptic operator (cf. \cite[Theorem 27]{Be87}) that such a $w$ exists since $\df h$ is orthogonal to all Killing forms. Therefore, it follows from \eqref{EX202ab} that $h-\df^*w \in \ker \lc \df\vert_{C^{k,\alpha}(S^2(N))} \rc$. Clearly, the decompositions \eqref{EX202a} and \eqref{EX202aa} are orthogonal, and the proof is complete.
\end{proof}

The next theorem shows that for any small perturbation $h$ lies in $\ker(\df)$ up to the modification of a diffeomorphism. This can be regarded as an infinitesimal version of the Ebin-Palais slice theorem \cite{EB70}; see also \cite[Theorem 3.6]{Via14}. For simplicity, we denote by $\varphi_w$ the time-one diffeomorphism generated by the dual vector field of a $1$-form $w$.

\begin{thm}
\label{TH202}
For any $k \ge 1$, there exist positive constants $\ep$ and $C$ satisfying the following property.

If $\|h\|_{C^{k,\alpha}}<\ep$, then there exists a $1$-form $w \in C^{k+1,\alpha}(T^* N)$ such that $\|w\|_{C^{k+1,\alpha}} \le C\|\df h\|_{C^{k-1,\alpha}} $ and
\begin{align*}
\df (\varphi_w^{*} (g+h) - g) = 0.
\end{align*}
Moreover, $\varphi_w$ depends analytically on $g$.
\end{thm}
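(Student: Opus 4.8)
The plan is to solve the equation $\df(\varphi_w^*(g+h)-g)=0$ for $w$ by an implicit function theorem argument, treating $h$ as the small parameter and linearizing around $w=0$, $h=0$. Define a map
\begin{align*}
F: C^{k+1,\alpha}(T^*N) \times C^{k,\alpha}(S^2(N)) \to \df\big(C^{k,\alpha}(S^2(N))\big),\qquad F(w,h):=\df\big(\varphi_w^*(g+h)-g\big),
\end{align*}
where the target space is the closed subspace $\df(C^{k,\alpha}(S^2(N)))$ of $C^{k-1,\alpha}(T^*N)$ (which, by the orthogonal decomposition in Proposition \ref{prop:decom1}, is complemented and can be identified with $\PP(C^{k+1,\alpha}(T^*N))$ since $\PP w=\df\df^*w$). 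Note $F(0,0)=\df(g-g)=0$. The first step is to check that $F$ is analytic in $(w,h)$ near $(0,0)$; this follows because $w\mapsto\varphi_w$ is analytic (the time-one flow of a vector field depends analytically on the field, with a uniform time of existence on a $C^{k+1,\alpha}$-ball), the pullback $\varphi_w^*(g+h)$ is a composition of analytic operations, and $\df$ is linear and bounded.

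The second, and main, step is to compute the partial derivative $D_wF(0,0)$ and show it is an isomorphism onto the target. We have $\frac{d}{ds}\big|_{s=0}\varphi_{sw}^*g = \mathcal{L}_{w^\sharp}g = 2\,\delta_g^*w$ (the symmetrized covariant derivative, in the \emph{unweighted} sense), so
\begin{align*}
D_wF(0,0)[w] = \df\big(\mathcal{L}_{w^\sharp}g\big).
\end{align*}
A direct computation relates $\df\circ\mathcal{L}_{(\cdot)^\sharp}g$ to the weighted operator $2\PP=2\df\df^*$ up to lower-order terms that vanish by the shrinker equation: since $\delta_g^*w$ differs from $\df^*w$ only in the $f$-terms and $\Rc+\nabla^2f=\frac12 g$, one finds $D_wF(0,0)$ is, modulo a compact (lower-order) perturbation, the operator $2\PP$. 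Because $\PP=\df\df^*$ is a nonnegative self-adjoint elliptic operator whose kernel consists exactly of the Killing forms $\KK_\PP$, and since the target space $\df(C^{k,\alpha}(S^2(N)))$ is precisely the $L^2$-orthogonal complement of $\KK_\PP$ inside the relevant form space (a Killing form $w$ has $\delta_g^* w$ antisymmetric, hence $\df(\mathcal{L}_{w^\sharp}g)=0$, so $\KK_\PP\subseteq\ker D_wF(0,0)$, and standard elliptic theory gives equality), the operator $D_wF(0,0)$ restricted to a complement of $\KK_\PP$ is an isomorphism onto the target. To kill the kernel cleanly, I would work on the slice $(\KK_\PP)^\perp\cap C^{k+1,\alpha}(T^*N)$, or equivalently solve $2\PP w=\df h$ directly for $w\perp\KK_\PP$ as in the proof of Proposition \ref{prop:decom1}; this is the natural normalization since the equation $F(w,h)=0$ is invariant under adding Killing forms to $w$ at first order.

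The third step is to apply the analytic implicit function theorem on the slice: there is $\ep>0$ and an analytic map $h\mapsto w(h)$, defined for $\|h\|_{C^{k,\alpha}}<\ep$, with $w(0)=0$ and $F(w(h),h)=0$, i.e. $\df(\varphi_{w(h)}^*(g+h)-g)=0$. The norm bound $\|w\|_{C^{k+1,\alpha}}\le C\|\df h\|_{C^{k-1,\alpha}}$ is then obtained from the estimate $\|D_wF(0,0)^{-1}\|\le C$ together with the observation that $F(0,h)=\df h$, so $w(h) = -D_wF(0,0)^{-1}(\df h) + O(\|h\|\cdot\|\df h\|)$; shrinking $\ep$ absorbs the higher-order term into the constant. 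Finally, analytic dependence of $\varphi_w$ on $g$ follows by including $g$ as an additional analytic parameter in $F$ (all constructions — the minimizer $f_{\tilde g}$, hence nothing here, and the pullback and $\df$ — depend analytically on the background metric) and invoking the implicit function theorem with parameters.

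\textbf{Main obstacle.} The delicate point is the identification of $D_wF(0,0)$ with $2\PP$ up to lower-order terms and the verification that its kernel is exactly $\KK_\PP$ with image exactly the prescribed target space $\df(C^{k,\alpha}(S^2(N)))$; this requires carefully tracking how the unweighted Lie-derivative term $\mathcal{L}_{w^\sharp}g$ interacts with the weighted divergence $\df$ and using the Ricci shrinker equation to cancel the stray first-order terms, after which Fredholm theory for the self-adjoint elliptic operator $\PP$ (as already invoked via \cite{Be87} in Proposition \ref{prop:decom1}) closes the argument.
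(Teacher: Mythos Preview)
Your proposal is correct and follows essentially the same route as the paper: implicit function theorem at $(w,h)=(0,0)$, linearization equal to $\pm 2\PP$, handle the Killing kernel (the paper augments the map with Lagrange multipliers $\sum_i x_i w_i$ and constraints $\langle w,w_i\rangle_{L^2}=0$ rather than restricting to the slice $(\KK_\PP)^\perp$, but the two devices are interchangeable), and derive the norm bound from the key observation that $w(h')=0$ whenever $\df h'=0$ combined with Lipschitz continuity of $h\mapsto w(h)$.

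One simplification you have missed: your ``main obstacle'' dissolves on computation. With the paper's sign convention $(\df h)_j=\nabla_i h_{ij}-h_{ij}f_i$, a one-line integration by parts against $e^{-f}dV_g$ gives $\df^*w=-\tfrac{1}{2}(\nabla_i w_j+\nabla_j w_i)=-\tfrac{1}{2}\mathcal{L}_{w^\sharp}g$ \emph{exactly}; the $f$-terms in $\df$ are precisely canceled by the weight when computing the adjoint. Hence $D_wF(0,0)[w]=\df(\mathcal{L}_{w^\sharp}g)=-2\PP w$ on the nose, with no lower-order remainder and no appeal to the shrinker equation. The paper simply states this identity without comment.
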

\begin{proof}
Let $\{ w_1,\cdots,w_k \}$ denote a basis for $\KK_{\PP}$. 
Consider the map
\begin{align*}
\mathcal{N}=(\mathcal{N}_1,\mathcal{N}_2) : C^{k+1,\alpha}(T^{*}N) \times \mathbb{R}^{k} \times C^{k,\alpha} (S^2(N)) \rightarrow C^{k-1,\alpha} (T^{*}N) \times \mathbb{R}^{k}
\end{align*}
defined by for any $(w,x,h) \in C^{k+1,\alpha}(T^{*}N) \times \mathbb{R}^{k} \times C^{k,\alpha} (S^2(N))$,
\begin{align*}
\mathcal{N}_1 (w,x, h) :=& \df (\varphi^{*}_{w}( g + h)- g ) + \sum_{i} x_i w_i, \\
\mathcal{N}_2 (w,x,h) :=& \lc \la w,w_1 \ra_{L^2},\cdots, \la w,w_k \ra_{L^2} \rc,
\end{align*}
where $x=\lc x_1,\cdots,x_k \rc$.

Differentiating $\mathcal{N} (w,x,0)$ at $(0,0)$ with respect to $(w,x)$, we obtain from the definition that
\begin{align*}
\mathcal{N}_1' ( w, x,0) =&-2 \mathcal P w +\sum_{i} x_i w_i, \\
\mathcal{N}_2' ( w, x,0) =& \lc \la w,w_1 \ra_{L^2},\cdots, \la w,w_k \ra_{L^2} \rc.
\end{align*}

We claim that $\mathcal{N}'(w,x,0)$ is an isomorphism. By the theory of the elliptic operator, it is clear that $\mathcal{N}'$ is surjective. In addition, if $\mathcal{N}'(w,x,0)=0$, then $-2\PP w+\sum_{i} x_i w_i=0$ and consequently $\PP w=\sum_i x_i w_i=0$ since $\im(\mathcal P)$ is orthogonal to $\KK_{\PP}$. In other words, $w$ is a Killing form, and all $x_i=0$. From $\mathcal{N}_2'(w,x,0)=0$, we also conclude that $w=0$. Therefore, $\mathcal{N}'(w,x,0)$ is also injective.

Applying the implicit function theorem for Banach spaces (see, e.g., \cite[Theorem 5.9, P19]{Lang19}), for any $h \in C^{k,\alpha}(S^2(N))$ small enough, there exists a unique pair $(w,x)$ nearby $(0,0)$ such that $\mathcal N(w,x,h)=0$. 
From $\mathcal N_1(w,x,h)=0$, we obtain
\begin{align} \label{EX203b}
\df (\varphi^{*}_{w}( g+h)- g) + \sum_i x_i w_i = 0. 
\end{align}
Since the two summands in \eqref{EX203b} are orthogonal in $L^2$, we conclude that $\df (\varphi^{*}_{w}( g+h)- g)=x=0$. Moreover, it follows from $\mathcal N_2(w,x,h)=0$ that $w$ is perpendicular to all Killing forms.

In sum, there exists a $C^1$ map $w=w(h)$ from $C^{k,\alpha}(S^2(N))$ to $C^{k+1,\alpha}(S^2(N))$ for any small $h \in C^{k,\alpha}(S^2(N))$ such that $\df (\varphi^{*}_{w}(g+h)- g)=0$. On the other hand, it follows from the proof of Proposition \ref{prop:decom1} that there exists $w_1 \in C^{k+1,\alpha}(S^2(N))$ such that
\begin{align} \label{EX203ba}
\PP w_1=\df h \quad \text{and} \quad \|w_1\|_{C^{k+1,\alpha}} \le C\|\df h\|_{C^{k-1,\alpha}}.
\end{align}
Therefore, it follows from the definition of $w$ and \eqref{EX203ba} that
\begin{align*}
\|w(h)\|_{C^{k+1,\alpha}} \le \|w(h-\df^* w_1)\|_{C^{k+1,\alpha}}+\|w(\df ^* w_1)\|_{C^{k+1,\alpha}} \le C \|\df ^* w_1\|_{C^{k,\alpha}} \le C\|\df h\|_{C^{k-1,\alpha}},
\end{align*}
where we have used $w(h-\df^* w_1)=w(0)=0$. Now, the last conclusion follows from the implicit function theorem in the real analytic category (cf. \cite[Theorem 3.12]{FNSS73}).
\end{proof}

Next, we recall the following definition.
\begin{defn} \label{def:ISD}
The infinitesimal solitonic deformation space with respect to $(N^{n},g, f)$ is defined as
\begin{align*}
\mathrm{ISD}:=\{h \in C^{\infty}(S^2(N)) \mid h \in \ker(\Phi') \cap \ker(\df) \},
\end{align*}
where $\Phi'$ is the first variation of $\Phi$ in \eqref{EX201cc} at $g$.
\end{defn}

Notice that $\Phi'$ can be explicitly calculated by the following lemma, whose proof is similar to \cite[Lemma 2.5]{LZ22}; see also \cite[Theorem 1.1]{CZ12}.

\begin{lem} \label{lem:ephi}
For any $h \in C^{2,\alpha}(S^2(N))$, we have
\begin{align*}
\Phi'(h) =\frac{1}{2}\LL(h) + \df^{*} \df h +\nabla^2 (\tr_{g}(h)/2- f'),
\end{align*}
where $f'$ is determined by
\begin{align} \label{E217}
(2 \Delta_f+ 1)(\tr_{g}(h)/2 - f') = \df^2 h.
\end{align}
In particular, if $h \in \ker(\df)$, then $f' = \tr_{g}(h)/2$ and we have
\begin{align*}
\Phi'(h) =\frac{1}{2} \LL(h).
\end{align*}
Moreover, $\Phi'$ is a self-adjoint operator on $C^{2,\alpha}(S^2(N))$ and preserves the decomposition \eqref{EX202a}.
\end{lem}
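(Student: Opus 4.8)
\textbf{Proof proposal for Lemma \ref{lem:ephi}.} The plan is to compute the first variation of each of the three terms in $\Phi(\tilde g) = \tfrac12 \tilde g - \Rc(\tilde g) - \nabla_{\tilde g}^2 f_{\tilde g}$ along a path $g + th$ and assemble the pieces. First I would recall the standard variational formulas: the linearization of the Ricci tensor is $\Rc'(h) = -\tfrac12 \Delta_L h - \df^*\df h - \nabla^2(\tr_g h/2) + (\text{lower order corrections involving } \Rc)$, where $\Delta_L$ is the Lichnerowicz Laplacian; more precisely it is cleanest to use the classical identity $2\Rc'(h) = -\Delta h - \nabla^2 \tr_g h - 2\,\mathring{R}(h) + \text{div}^*\text{div}\,h + \dots$ and then convert all rough Laplacians and divergences into their $f$-weighted counterparts using the soliton equation \eqref{E100} and \eqref{E101}. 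The trick, exactly as in \cite[Lemma 2.5]{LZ22} and \cite[Theorem 1.1]{CZ12}, is that the Bianchi-type correction terms combine with $\nabla^2 f$ and $\Rc$ (via $\Rc + \nabla^2 f = g/2$) to produce precisely the weighted operators $\Delta_f$, $\df$, $\df^*$, so that $-\Rc'(h) - (\nabla^2 f)'(h)$ collapses to $\tfrac12\Delta_f h + R_{ikjl}h_{kl} + \df^*\df h + \nabla^2(\tr_g h/2) + (\text{term from varying } f)$.

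Second, I would handle the variation of the potential. Differentiating the defining relation \eqref{EX201c}, namely $2\Delta_{\tilde g} f_{\tilde g} - |\nabla f_{\tilde g}|^2_{\tilde g} + R(\tilde g) + f_{\tilde g} - n = 0$, along $g+th$ and writing $f' := \frac{d}{dt}\big|_{0} f_{g+th}$, one obtains a linear elliptic equation for $f'$. Using the first variation of the scalar curvature $R'(h) = -\Delta \tr_g h + \df^2 h + \langle h, \Rc\rangle$ (again rewriting in weighted form via the soliton identities) and the formula for the variation of $\Delta_{\tilde g} f_{\tilde g}$, the scalar equation should reduce after simplification to $(2\Delta_f + 1)(f' - \tr_g h/2) = -\df^2 h$, which is \eqref{E217} up to sign conventions. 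I would double-check the sign by testing on $h = cg$, where $f_{g+tcg}$ changes in the obvious conformal way. Combining with the tensor computation gives $\Phi'(h) = \tfrac12 \LL(h) + \df^*\df h + \nabla^2(\tr_g h/2 - f')$, as claimed. When $h \in \ker(\df)$, both $\df^*\df h = 0$ and $\df^2 h = 0$, so \eqref{E217} forces $f' = \tr_g h/2$ (using that $2\Delta_f + 1$ is invertible, which follows since $\Delta_f \le 0$ in the weighted $L^2$ sense), and $\Phi'(h) = \tfrac12\LL(h)$.

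Third, for self-adjointness of $\Phi'$ with respect to the weighted $L^2$ inner product: $\LL = \Delta_f + 2\Rm$ is self-adjoint since $\Delta_f$ is self-adjoint with respect to $dV_f$ and the curvature term is a pointwise symmetric endomorphism; $\df^*\df$ is manifestly self-adjoint; and the remaining term $h \mapsto \nabla^2(\tr_g h/2 - f')$ requires checking that the map $h \mapsto \tr_g h/2 - f'$ composed with $\nabla^2$ and then paired against $k$ is symmetric in $h,k$ — this follows from the fact that $\nabla^2$ is (weighted-$L^2$) adjoint to $-\df \circ \df$ acting on functions (integration by parts against $e^{-f}dV_g$), together with the self-adjointness of $(2\Delta_f+1)^{-1}$ appearing in the definition of $f'$. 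That the decomposition \eqref{EX202a} is preserved then follows formally: $\df^*(C^{k+1,\alpha})$ is $\Phi'$-invariant because $\Phi'$ commutes with diffeomorphisms (being the linearization of a diffeomorphism-equivariant operator after the gauge-fixing built into $f_{\tilde g}$), hence so is its $L^2$-orthogonal complement $\ker(\df)$ by self-adjointness.

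The main obstacle I anticipate is bookkeeping in the tensor computation: the classical second-variation-of-Einstein-Hilbert formulas involve several terms (rough Laplacian, Hessian of trace, a $\text{div}^*\text{div}$ term, a curvature term, and Ricci contractions), and converting every rough operator into its $f$-weighted analogue requires repeated, careful use of $\nabla^2 f = \tfrac12 g - \Rc$ and of the commutation formulas $[\Delta_f, \nabla]$, $[\df, \nabla^2]$ acting on functions and $1$-forms. Getting all the curvature correction terms to cancel except for the single $2\Rm$ term in $\LL$ is the delicate point, though it is entirely parallel to the cited computations in \cite{LZ22, CZ12}, so I would organize the proof by first stating the weighted variational formulas as a preliminary sublemma and then doing the cancellation in one clean pass.
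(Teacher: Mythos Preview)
Your approach is correct and matches what the paper does---the paper itself gives no proof, deferring to \cite[Lemma 2.5]{LZ22} and \cite[Theorem 1.1]{CZ12}, which carry out precisely the direct variational computation you outline. Two minor corrections: the adjoint of $\nabla^2$ (functions to symmetric $2$-tensors) with respect to $dV_f$ is $\df^2$, not $-\df^2$; and your justification that $2\Delta_f+1$ is invertible because ``$\Delta_f\le 0$'' is insufficient (that only bounds the spectrum above by $1$)---the correct reason is that on a compact shrinker the first nonzero eigenvalue of $-\Delta_f$ satisfies $\lambda_1>1/2$ by the weighted Bochner formula $\int |\nabla^2 u|^2\,dV_f=(\lambda-\tfrac12)\int|\nabla u|^2\,dV_f$, with equality forcing a Euclidean splitting.
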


Combining Lemma \ref{lem:ephi} and Definition \ref{def:ISD}, we immediately conclude that ISD is a finite-dimensional linear space.

\begin{lem} \label{lem:kphi}
We have the following orthogonal decomposition
\begin{align*}
\ker(\Phi')=\mathrm{ISD} \oplus \mathrm{Im}(\df^{*}).
\end{align*}
\end{lem}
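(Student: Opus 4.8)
The plan is to establish the decomposition $\ker(\Phi')=\mathrm{ISD}\oplus\mathrm{Im}(\df^*)$ by combining two facts already available: the orthogonal splitting $C^{\infty}(S^2(N))=\ker(\df)\oplus\im(\df^*)$ from Proposition \ref{prop:decom1}, and the fact from Lemma \ref{lem:ephi} that $\Phi'$ is self-adjoint on $C^{2,\alpha}(S^2(N))$ and preserves this decomposition. Since $\mathrm{ISD}=\ker(\Phi')\cap\ker(\df)$ by Definition \ref{def:ISD}, the content of the lemma is precisely that $\im(\df^*)\subset\ker(\Phi')$, i.e.\ that $\Phi'$ annihilates every Lie-derivative-type deformation. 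Granting this inclusion, the decomposition follows: for $h\in\ker(\Phi')$, write $h=h_1+h_2$ with $h_1\in\ker(\df)$ and $h_2\in\im(\df^*)$ via Proposition \ref{prop:decom1}; since $\Phi'$ preserves the two summands and $\Phi'(h)=0$, we get $\Phi'(h_1)=\Phi'(h_2)=0$, hence $h_1\in\mathrm{ISD}$ and $h_2\in\im(\df^*)$, and orthogonality of the sum is inherited from Proposition \ref{prop:decom1}. (Working on the smooth level is justified because elliptic regularity for $\Phi'$ shows $\ker(\Phi')\subset C^{\infty}(S^2(N))$.)

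So the one step requiring actual work is showing $\df^*(C^\infty(T^*N))\subset\ker(\Phi')$. First I would reduce to a pointwise statement: for a $1$-form $w$ and its dual vector field $X$, the deformation $\df^* w$ is, up to the weighted correction terms, the symmetrized covariant derivative, and the geometric meaning is that moving $g$ along the flow of $X$ does not change whether $(N,g,f)$ is a Ricci shrinker — it only changes the metric by a diffeomorphism while $\Phi$ is diffeomorphism-equivariant in the appropriate sense (recall $f_{\tilde g}$ is canonically attached to $\tilde g$, so $\Phi$ is genuinely equivariant under $\mathrm{Diff}(N)$). Concretely, one differentiates the identity $\Phi(\varphi_{tw}^*g)=\varphi_{tw}^*\Phi(g)$ at $t=0$: since $\Phi(g)=0$, the right side vanishes, and the left side is $\Phi'\big(\tfrac{d}{dt}\big|_{0}\varphi_{tw}^*g\big)=\Phi'(\LL_X g)$. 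The subtlety is that $\tfrac{d}{dt}\big|_0\varphi_{tw}^*g=\LL_X g=2\,\mathrm{sym}\,\nabla w$, whereas $\df^* w$ also carries a term involving $\nabla f$; so I would verify directly, using the explicit formula $\Phi'(h)=\tfrac12\LL(h)+\df^*\df h+\nabla^2(\tr_g h/2-f')$ from Lemma \ref{lem:ephi}, that $\Phi'$ also kills the extra piece, or equivalently that $\df^* w$ and $\LL_X g$ differ by something in $\ker(\Phi')$ — most cleanly, one checks that $\df^* w = \tfrac12\LL_X g + \nabla^2(\text{something})$ plus a term matching the gauge, and that $\Phi'$ applied to any such Hessian-plus-Lie-derivative combination vanishes because it is the derivative of a diffeomorphism orbit (the Ricci shrinker equation \eqref{E100}–\eqref{E101} being diffeomorphism invariant, with $f$ transforming correctly).

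The main obstacle I anticipate is bookkeeping the discrepancy between the \emph{weighted} divergence $\df$ (hence $\df^*$) and the ordinary Lie-derivative deformation: one must confirm that the canonical dependence $\tilde g\mapsto f_{\tilde g}$ through \eqref{EX201b}–\eqref{EX201c} makes $\Phi$ equivariant under all of $\mathrm{Diff}(N)$ — this uses that the entropy $\mmu(\cdot,1)$ and its minimizer are diffeomorphism invariant — so that the formal computation $\Phi'(\LL_X g)=\tfrac{d}{dt}\big|_0\varphi_{tw}^*\Phi(g)=0$ is rigorous, and then to match $\im(\df^*)$ with the span of such $\LL_X g$ (plus gauge-fixing Hessian terms) using Lemma \ref{lem:ephi}, which shows $\Phi'$ vanishes on $\nabla^2(\text{functions})$-type pieces that arise. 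Once equivariance is in hand, the rest is the short orthogonal-decomposition argument in the first paragraph. I would also remark that since $\dim\mathrm{ISD}<\infty$ (noted just before the lemma), the decomposition exhibits $\ker(\Phi')$ as a finite-dimensional extension of $\im(\df^*)$, consistent with the Fredholm theory for the elliptic operator $\Phi'$.
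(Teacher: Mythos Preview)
Your approach is the same as the paper's: reduce via the preserved decomposition to showing $\im(\df^*)\subset\ker(\Phi')$, and obtain this from diffeomorphism invariance $\Phi(\varphi^*g)=\varphi^*\Phi(g)=0$. The ``subtlety'' you anticipate is a phantom: a one-line integration by parts against $dV_f$ shows that the weight cancels and $\df^* w=-\tfrac12\LL_{w^\sharp}g$ exactly, so $\im(\df^*)$ \emph{is} the tangent space to the diffeomorphism orbit and no bookkeeping of extra $\nabla f$ terms is needed.
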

\begin{proof}
Since $\Phi'$ preserves the decomposition $\ker(\df) \oplus \mathrm{Im}(\df^{*})$, we only need to prove that $\mathrm{Im}(\df^{*}) \subset \ker(\Phi')$, which follows from the fact that $\Phi(\varphi^* g)=0$ for any self-diffeomorphism $\varphi$ of $N$. 
\end{proof}

We next prove that $\LL$ preserves the decomposition \eqref{EX202aa}, thanks to the following result, which can be checked by direct computation (cf. \cite[Theorem 1.32]{CM21b}).

\begin{lem}\label{lem:pre}
For any $h \in C^{\infty}(S^2(N))$ and $w \in C^{\infty}(T^*N)$, we have
\begin{align*}
\df (\LL h)=(\Delta_f+1/2) \df h \quad \text{and} \quad\LL(\df^* w)=\df^* ( \Delta_f w+w/2 ).
\end{align*}
\end{lem}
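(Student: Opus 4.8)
The plan is to note first that the two identities are formal adjoints of one another with respect to the weighted inner product $\langle\cdot,\cdot\rangle_{L^2}$ induced by $dV_f$, so it suffices to establish only one. Indeed, $\df^*$ is by definition the $L^2$-adjoint of $\df$; the drift Laplacian $\Delta_f=\Delta-\langle\na\cdot,\na f\rangle$ is self-adjoint with respect to $dV_f$ on $C^\infty(S^2(N))$ and on $C^\infty(T^*N)$ alike (integrating by parts against $e^{-f}\,dV_g$); and the zeroth-order term $h\mapsto 2R_{ikjl}h_{kl}$ is pointwise symmetric. Hence $\LL=\Delta_f+2\Rm$ and $\Delta_f+\tfrac12$ are self-adjoint, and taking the $L^2(dV_f)$-adjoint of the operator identity $\df\circ\LL=(\Delta_f+\tfrac12)\circ\df$ on $C^\infty(S^2(N))$ gives exactly $\LL\circ\df^*=\df^*\circ(\Delta_f+\tfrac12)$ on $C^\infty(T^*N)$, and conversely.

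I would then prove the first identity $\df(\LL h)=(\Delta_f+\tfrac12)\df h$ by a direct Bochner-type computation in local coordinates. Writing $(\LL h)_{ij}=\Delta_f h_{ij}+2R_{ikjl}h_{kl}$ and $(\df h)_j=\na^i h_{ij}-h_{ij}\na_i f$, one expands $(\df\LL h)_j=\na^i(\LL h)_{ij}-(\LL h)_{ij}\na_i f$ and commutes the divergence $\na^i$ past $\Delta_f$. This commutation produces, via the Ricci identity for tensors, curvature terms of the form $\Rm\cdot h$ and $\Rc\cdot\na h$, together with terms involving $\na\Rm$, $\Rc$ and $\na^2 f$ coming from the weighted part of $\Delta_f$ and from the extra term $-(\LL h)_{ij}\na_i f$. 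These are then reorganized using the Ricci-shrinker equation \eqref{E100} in the form $R_{ij}+\na_i\na_j f=\tfrac12 g_{ij}$ (hence $\na_i\na_j\na_k f=-\na_i R_{jk}$ and $\na_j\Delta f=-\na_j R$ from the trace), the contracted second Bianchi identities $\na^i R_{ij}=\tfrac12\na_j R$ and $\na^i R_{ikjl}=\na_j R_{kl}-\na_l R_{kj}$, and the standard soliton identity $\na_i R=2R_{ij}\na_j f$ (a consequence of the previous two, together with \eqref{E101}). After these substitutions the contribution of $2R_{ikjl}h_{kl}$, the curvature terms generated by commuting $\na^i$ with $\Delta_f$, and the Hessian-of-$f$ terms all cancel against each other, leaving precisely $\tfrac12\df h$; the second identity then follows from the adjointness observed above.

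The main obstacle is purely organizational bookkeeping: one must carefully track the numerous first-order-in-$f$ and curvature terms produced by the commutation $[\na^i,\Delta_f]$ and by the term $-(\LL h)_{ij}\na_i f$, and fix the sign conventions for $\Rm$ (so that the $R_{ikjl}h_{kl}$ term cancels the curvature from the Ricci identity rather than doubling it) and for $\df^*$. No conceptual difficulty enters beyond the shrinker identities listed above, all of which come from \eqref{E100}, \eqref{E101} and the Bianchi identities; this is the computation carried out in \cite[Theorem 1.32]{CM21b}, to which we refer for the detailed calculation.
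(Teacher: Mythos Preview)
Your proposal is correct and aligns with the paper's treatment: the paper gives no proof of its own, simply noting that the lemma ``can be checked by direct computation (cf.\ \cite[Theorem 1.32]{CM21b})'', which is precisely the computation you outline and the reference you cite. Your additional observation that the two identities are $L^2(dV_f)$-adjoints of each other is a clean way to halve the work, but otherwise the approach is the same.
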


Since $\LL$ preserves the decomposition \eqref{EX202aa}, one can consider the spectrum of $\LL$ restricted on $\ker(\df)$. Notice that by direct computation, $\df(\Rc)=0$ and $\LL(\Rc)-\Rc=0$ (cf. \cite[Lemma 2.1]{PW10} and \cite[Lemma 3.2]{CZ12}). Therefore, $-1 \in \mathrm{spec}(\LL \vert_{\ker(\df)})$ for any compact Ricci shrinker. Next, we recall the following definition of a linearly stable Ricci shrinker (cf. \cite{CHI04}\cite{CZ12}); see also \cite{CM12}.

\begin{defn} \label{def:stable1}
A compact Ricci shrinker $(N^n,g, f)$ is called \textbf{linearly stable} if $-1$ is the only negative eigenvalue of $\LL$ on $\ker(\df)$, and the corresponding multiplicity is one. $(N^n,g, f)$ is called \textbf{strictly linearly stable} if furthermore $\isd=0$.
\end{defn}

A compact Ricci shrinker is linearly stable if and only if the second derivative of Perelman's $\nnu$-functional is nonpositive (cf. \cite{CHI04}\cite{CZ12}). For instance, the standard $S^n$ and $\CP^n$ are linearly stable Ricci shrinkers (cf. \cite{CHI04}). We remark that even though the standard $\CP^n$ is linearly stable, it is not the local maximizer of $\nnu$ (cf. \cite{Kr20}). Unstable examples include all compact \ka-Ricci shrinkers with Hodge number $h^{1,1}>1$ (cf. \cite{CHI04}\cite{HaMu11}). In particular, all compact \ka-Ricci shrinker surfaces are linearly unstable except for $\CP^2$. In addition, all simply-connected irreducible compact symmetric spaces are known to be linearly stable or unstable; see \cite[Table 1, Table 2]{CH15}. On the other hand, if $(N,g,f)$ is strictly linearly stable, then it is dynamically stable in the sense that any normalized Ricci flow starting from a metric in a small neighborhood of $g$ converges to $g$ up to a diffeomorphism \cite[Theorem 1.2, 1.4]{Kr14}.

Notice that any linearly stable compact Ricci shrinker must be irreducible, which follows from the next Lemma.

\begin{lem} \label{lem:prounst}
Let $(N,g,f)=(N_1 \times N_2,g_1\times g_2,f_1+f_2)$ is the product of two compact Ricci shrinkers. Then $(N,g,f)$ is linearly unstable.
\end{lem}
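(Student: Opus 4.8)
The plan is to exhibit an explicit unstable direction for the product, built from the Einstein-like instability of each factor combined with the product structure. Recall that on any compact Ricci shrinker one has $\df(\Rc)=0$ and $\LL(\Rc)=\Rc$, so $\Rc_{g}$ is an eigentensor of $\LL\vert_{\ker(\df)}$ with eigenvalue $1>0$; more importantly, the metric itself plays a role: on a Ricci shrinker $\LL(g)=\Delta_f g+2\Rm(g)$ can be computed directly, and using $\Rc+\na^2 f=\tfrac12 g$ one finds that suitable multiples and combinations of $g_i$ pulled back from the factors give tensors in $\ker(\df)$ on which $\LL$ acts with a negative eigenvalue other than the canonical $-1$ — or, what is equivalent for instability, a second independent $(-1)$-eigendirection. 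Concretely, I would consider the trace-adjusted tensors $h = n_2 \, \pi_1^* g_1 - n_1 \, \pi_2^* g_2$ (so that $\tr_g h = 0$), where $n_i=\dim N_i$.

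First I would record the basic identities on the product: if $h$ is the pullback of a symmetric $2$-tensor on $N_1$ (extended by zero on $N_2$), then because $g$, $\Rc$, $\na f$ all split, $\df h$ on $N$ coincides with $\delta_{f_1} h$ on $N_1$; in particular $\df(\pi_i^* g_i)=0$, so the candidate $h$ above lies in $\ker(\df)$. Second, I would compute $\LL h$ on the product. Since the curvature operator of a product has no mixed terms, $\Rm(g)$ acting on $\pi_1^* g_1$ only sees $\Rm(g_1)$, and likewise $\Delta_f$ acting on a pullback from $N_1$ is $\Delta_{f_1}$ on $N_1$. Using the shrinker equation on each factor, $2R_{iklj}(g_1)(g_1)_{kl} = 2R_{ij}(g_1) = (g_1)_{ij} - 2\na^2 f_1$ and $\Delta_{f_1}(g_1)_{ij}=0$, one gets $\LL(\pi_1^* g_1) = \pi_1^* g_1 - 2\,\pi_1^*(\na^2 f_1)$; but $\na^2 f_1$ is $\df^*$ of $df_1$ up to lower order, hence lies in $\mathrm{Im}(\df^*)$, while $\pi_1^*g_1\in\ker(\df)$. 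Projecting onto $\ker(\df)$ (which $\LL$ preserves by Lemma \ref{lem:pre}) I would argue that the $\ker(\df)$-component of $\LL(\pi_i^*g_i)$ is $\pi_i^*g_i$ plus a correction supported in the trace part; after taking the traceless combination $h=n_2\pi_1^*g_1-n_1\pi_2^*g_2$ the troublesome $\na^2 f_i$ terms and the global constant modes cancel, leaving $\LL h = h$ modulo $\mathrm{Im}(\df^*)$. This produces a $(+1)$-eigentensor of $\LL\vert_{\ker(\df)}$ linearly independent of $\Rc_g$; but that alone is not yet instability. The cleaner route, which I would actually follow, is to test the quadratic form: compute $\int_N \langle \LL h, h\rangle\, dV_f$ for an explicit compactly-supported (here: globally defined) variation and show it is strictly positive, or alternatively produce a genuine negative-eigenvalue tensor by perturbing $h$ by a function mode $u(\pi_1^*g_1)$ with $u$ a first eigenfunction of $\Delta_{f_1}$, which on the shrinker satisfies $\Delta_{f_1}u=-\tfrac12 u$ and makes $ug_1$ nearly a $\df^*$-image — the standard mechanism by which a product of shrinkers acquires extra low eigenvalues.

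The main obstacle I anticipate is bookkeeping the interaction between the $\ker(\df)$ / $\mathrm{Im}(\df^*)$ decomposition and the product splitting: $\na^2 f_i$ is not itself in $\mathrm{Im}(\df^*)$ on the product in the naive way, because $\df$ on $N$ involves $f=f_1+f_2$, so one must check carefully that $\pi_i^*(\na^2 f_i)$ decomposes correctly and that its $\ker(\df)$-part does not secretly contribute a positive term that would kill the instability. I would handle this by working entirely with the $L^2(dV_f)$ inner product and the Lichnerowicz-type formula $\Phi'(h)=\tfrac12\LL h$ on $\ker(\df)$ from Lemma \ref{lem:ephi}: since $\Phi'$ is self-adjoint and $\Phi'(\pi_i^*(\na^2 f_i))$ can be related to the second variation on the factor, the cross terms integrate to manageable boundary-free expressions on the compact factors. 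Once the quadratic form $\int_N\langle \LL h',h'\rangle\,dV_f>0$ is established for some $h'\in\ker(\df)$ with $h'$ not a multiple of the $(-1)$-eigentensor — equivalently, once one exhibits a second $(-1)$-eigendirection or any positive-quadratic-form direction orthogonal to it — linear instability follows from Definition \ref{def:stable1}. I expect the verification that $h'\notin \mathrm{span}(\Rc_g)$ and is genuinely in $\ker(\df)$ to be routine once the product identities are set up; the real work is the clean derivation of $\LL$ on pullback tensors.
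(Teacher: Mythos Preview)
Your proposal is heading in an unnecessarily complicated direction and contains a sign confusion that obscures the actual mechanism. You correctly recall at the outset that on any compact Ricci shrinker $\df(\Rc)=0$ and $\LL(\Rc)=\Rc$; in the paper's convention (eigenvalue $\mu$ defined by $\LL h+\mu h=0$, cf.\ Proposition~\ref{PX302}) this says $\Rc$ lies in the $\mu=-1$ eigenspace, not ``eigenvalue $1>0$''. Once that is straightened out, the lemma is a one-line observation: on the product, $\pi_1^*\Rc(g_1)$ and $\pi_2^*\Rc(g_2)$ are each in $\ker(\dbf)$ (since $\dbf$ splits and $\delta_{f_i}\Rc(g_i)=0$), and each satisfies $\LL(\cdot)=\cdot$ because both $\Delta_f$ and the curvature term split with no cross terms. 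They are manifestly linearly independent. Hence the $(-1)$-eigenspace of $\LL\vert_{\ker(\dbf)}$ has multiplicity at least two, contradicting Definition~\ref{def:stable1}. This is exactly the paper's proof.

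Your chosen test tensor $h=n_2\,\pi_1^*g_1-n_1\,\pi_2^*g_2$ only works cleanly in the Einstein case; in general $\LL(\pi_i^*g_i)=2\,\pi_i^*\Rc(g_i)$ is \emph{not} a scalar multiple of $\pi_i^*g_i$, so $h$ is not an eigentensor, and the projections, $\nabla^2 f_i$ corrections, and quadratic-form arguments you outline are genuine extra work that never needs to be done. The ``main obstacle'' you anticipate (bookkeeping the $\ker(\dbf)/\mathrm{Im}(\dbf^*)$ decomposition against the product splitting) simply evaporates once you replace $\pi_i^*g_i$ by $\pi_i^*\Rc(g_i)$: these are already eigentensors in $\ker(\dbf)$, with no $\nabla^2 f_i$ terms to manage.
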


\begin{proof}
Since $\Rc(g_1)$ and $\Rc(g_2)$ are two linearly independent eigentensors of $\LL$ corresponding to $-1$, $(N,g,f)$ is linearly unstable by definition.
\end{proof}

In the special case of Einstein manifolds, we have the following definition of the $H$-stability (cf. \cite{Ko80}).

\begin{defn}\label{def:Hstable}
A compact Einstein manifold $(N^n,g)$ is called $H$-stable if the eigenvalues of $\LL$ on the TT (traceless-transverse) space is nonnegative.
\end{defn}

For a compact Einstein manifold $(N^n, g)$, $\LL\vert_{TT} \le 0$ if and only if the second derivative of the renormalized Einstein-Hilbert functional is nonpositive for all volume-preserving deformations modulo the conformal variations (cf. \cite{Ko80}). The following characterization of the $H$-stability is well-known; see also \cite[Lemma 3.5]{CH15}.

\begin{lem}
A compact Einstein manifold $(N^n, g)$ with $\Rc(g)=g/2$ is linearly stable if and only if either it is $H$-stable such that all nonzero eigenvalues of $\Delta$ on functions are no smaller than $1$, or isometric to the standard $S^n$.
\end{lem}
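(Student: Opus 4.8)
The plan is to prove Lemma \ref{lem:Einstable} by relating the spectrum of $\LL=\Delta_f+2\Rm$ on $\ker(\df)$ to the spectrum of the Lichnerowicz-type operator on the TT space together with the spectrum of $\Delta$ on functions. First I would use the fact that for an Einstein manifold with $\Rc(g)=g/2$ the potential $f$ is constant, so $\Delta_f=\Delta$, $\df=\delta$ (the ordinary divergence), and the relevant decomposition of symmetric $2$-tensors becomes the classical one. By Proposition \ref{prop:decom1} and the further splitting of $\ker(\df)$ into its TT part plus the conformal/trace directions, I would decompose $\ker(\df)$ as $\mathrm{TT} \oplus \{u\,g : u \in C^\infty(N)\}$ (with the usual caveat about the constant functions and the $\Rc$ direction), and examine how $\LL$ acts on each summand. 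On TT-tensors $\LL$ is exactly the operator whose nonnegativity defines $H$-stability. On tensors of the form $u\,g$, a direct computation (using $\Rm$ acting on $g$ as $\mathrm{id}$ up to the Einstein normalization) shows that $\LL(u\,g) = (\Delta u)\,g + u\,g$ plus a cross term, so the eigenvalue equation $\LL(ug)=\lambda\, ug$ reduces, after projecting back onto $\ker(\df)$, to an eigenvalue problem for $\Delta$ on functions; the upshot is that the only possible negative eigenvalue coming from this sector corresponds to $\Delta u = 0$ (constants), giving the eigenvalue $-1$ realized by $\Rc = g/2$, unless there is a nonconstant eigenfunction of $\Delta$ with eigenvalue $<1$ which would then contribute an extra eigenvalue $\le -1$ of $\LL$ on $\ker(\df)$.

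Next I would assemble these pieces. For the ``if'' direction: if $(N,g)$ is $H$-stable, then $\LL\vert_{\mathrm{TT}}\ge 0$; if in addition all nonzero eigenvalues of $\Delta$ on functions are $\ge 1$, then the conformal sector contributes exactly one negative eigenvalue, namely $-1$ with multiplicity one (the $\Rc$ direction). Hence $(N,g,f)$ is linearly stable in the sense of Definition \ref{def:stable1}. The alternative case $N=S^n$ must be listed separately because on the round sphere the first nonzero eigenvalue of $\Delta$ is $1/n < 1$ (for $n\ge 2$), so the eigenvalue condition fails, yet $S^n$ is still linearly stable: the extra modes produced by the first eigenfunctions turn out to be gauge directions ($\im(\df^*)$) rather than genuine elements of $\ker(\df)$, because on $S^n$ the first eigenfunctions generate conformal Killing fields, so $\nabla^2 u + (u/n) g$-type tensors are pure divergence terms and do not survive in $\ker(\df)$. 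This is the point I expect to require the most care, and where I would invoke the Obata-type rigidity: the sphere is exactly the case where the conformal sector collapses into the gauge, which is why it is the single exception.

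For the ``only if'' direction: suppose $(N,g,f)$ is linearly stable but not isometric to $S^n$. Then first, $\LL\vert_{\mathrm{TT}}\ge 0$ must hold — otherwise a negative TT-eigenvalue would be a negative eigenvalue of $\LL$ on $\ker(\df)$ different from (or in addition to) the $\Rc$ mode, contradicting Definition \ref{def:stable1} — so $(N,g)$ is $H$-stable. Second, if some nonzero eigenvalue $\lambda_u$ of $\Delta$ on functions were $<1$, the associated (suitably projected) tensor would give an eigenvalue of $\LL$ on $\ker(\df)$ that is $\le -1$; combined with the always-present $\Rc$ mode this would either violate the multiplicity-one condition or produce a second negative eigenvalue, again contradicting linear stability — unless this tensor lies in $\im(\df^*)$, which by the Obata theorem forces $(N,g)$ to be the round sphere, contradicting our assumption. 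Hence all nonzero eigenvalues of $\Delta$ are $\ge 1$, completing the proof. The main obstacle, as indicated, is handling the borderline behavior of the conformal/trace sector — precisely disentangling which $ug$-type tensors are genuine $\ker(\df)$ elements and which are gauge, and pinning down that the sphere is the unique situation where this distinction degenerates; for this I would lean on Obata's theorem and the explicit eigenvalue computation on $S^n$.
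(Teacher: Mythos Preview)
Your overall strategy matches the paper's --- analyze $\LL$ on $\ker(\delta)$ by separating the TT sector from the contribution of the trace --- but there is a concrete gap in the conformal sector. The claimed decomposition $\ker(\delta) = \mathrm{TT} \oplus \{u g : u \in C^\infty(N)\}$ is false: since $\delta(ug) = du$ (here $f$ is constant), the tensor $ug$ lies in $\ker(\delta)$ only when $u$ is constant. The hand-wave ``after projecting back onto $\ker(\delta)$'' does not repair this, and your subsequent claim that an eigenfunction with $\mu \in (0,1)$ produces an $\LL$-eigenvalue $\le -1$ is also wrong (the correct value lies in $(-1,0)$).

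The paper's device is to associate to an eigenfunction $u$ with $\Delta u + \mu u = 0$ the tensor $h := (\mu - \tfrac12)\,u g + \nabla^2 u$; a direct check gives $\delta h = 0$ and $\LL h + (\mu - 1) h = 0$. For $\mu \in (0,1)$ linear stability then forces $h = 0$, whence $\nabla^2 u = (\tfrac12 - \mu)\, u g$; tracing gives $\mu = \tfrac{n}{2(n-1)}$ (not $1/n$), and Obata's theorem yields $S^n$. So the sphere is singled out because this obstructing tensor vanishes identically, not because $ug$ becomes a gauge mode. For the converse direction, rather than decomposing $\ker(\delta)$ a priori, the paper takes an arbitrary eigentensor $h \in \ker(\delta)$ with $\LL h + (\mu - 1) h = 0$, traces to get $\Delta H + \mu H = 0$ for $H = \tr_g h$, and splits: if $\mu = 0$ then $H$ is constant and $h - (H/n)g$ is TT with eigenvalue $-1$, hence zero by $H$-stability, so $h$ is a multiple of $g$; if $\mu > 0$ then $\mu \ge 1$ by hypothesis and the eigenvalue is nonnegative.
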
 

\begin{proof}
We first assume $(N^n, g)$ is linearly stable. Then it is clear by definition that the $\LL\vert_{TT} \le 0$. If $\Delta u+\mu u=0$ for some constant $\mu \in (0,1)$, it follows from direct computation that $h:=(\mu-1/2) u g+\na^2 u$ satisfies
\begin{align*}
\delta h=0 \quad \text{and} \quad \LL h+(\mu-1)h=0.
\end{align*} 
Therefore, it follows from Definition \ref{def:stable1} that $h=0$ and hence $(\mu-1/2) u g+\na^2 u=0$. By taking the trace, we obtain
\begin{align*}
\Delta u+n(\mu-1/2)u=0.
\end{align*} 
Consequently, either $u=0$ and in this case $\mu$ is not an eigenvalue, or $\mu=\frac{n}{2(n-1)}$ and hence by Obtata's theorem \cite{Oba62}, $(N,g)$ is isometric to the standard $S^n$.

Conversely, if $(N,g)$ is isometric to the standard $S^n$, it is obviously linearly stable. Now, we assume 
\begin{align} \label{EZ201}
\delta h=0 \quad \text{and} \quad \LL h+(\mu-1) h=0.
\end{align} 
By taking the trace \eqref{EZ201}, we obtain $\Delta H+\mu H=0$, where $H:=\tr_{g}(h)$. If $\mu=0$, then $H$ is a constant and hence $h':=h-H g/n$ satisfies
\begin{align*}
\delta h'=\tr_{g}(h')=0 \quad \text{and} \quad \LL h'-h'=0.
\end{align*} 
Therefore, we conclude that $h'=0$ and hence $h$ is the multiple of $g$. If $\mu>0$, then $\mu \ge 1$ by our assumption, and thus it follows from \eqref{EZ201} that the corresponding eigenvalue of $h$ is nonnegative.

In sum, the proof is complete.
\end{proof}

Next, we discuss the rigidity of the compact Ricci shrinker. For the rest of the section, we fix a compact Ricci shrinker $(N^n,\bar g, \bar f)$, and the underlying metric is $\bar g$ unless otherwise stated. By the deformation theory of compact Ricci shrinkers established by Podest\`a-Spiro \cite{PS15} and Kr\"oncke \cite{Kr16}, if $(N,\bar g, \bar f)$ is not rigid, there exists a smooth one-parameter family of Ricci shrinker metrics $g(t)$ starting from $\bar g$. Applying Theorem $\ref{TH202}$ to $g(t)$, there exists a $C^{3,\alpha}$ one-parameter family of self-diffeomorphisms $\varphi_t$ of $N$ such that $\varphi_{t}^{*}g(t) \in \ker(\dbf)$. Clearly, $g_t:=\varphi_{t}^{*}g(t)$ is also a smooth family of Ricci shrinker metrics starting from $\bar g$, since $\Phi(g_t)=\Phi(\varphi_{t}^{*}g(t))=0$ and hence $g_t$ is smooth by the elliptic regularity. In particular, all derivatives of $g_t$ at $t=0$ belong to $\ker(\dbf)$.
If we take derivatives in succession, $d_{t^k}^{k}|_{t=0} \Phi(g_t) = 0 $ yields
\begin{align}
\label{E201}
\Phi'(h^k) + 
\sum_{l = 2}^{k} \sum_{1 \leq k_1 \leq \cdots \leq k_l, k_1 + \cdots + k_l = k} C(k,l,k_1,\cdots,k_l) \Phi^{(l)} ( h^{k_1} , \cdots, h^{k_l}) = 0,
\end{align}
where we set $h^{i}: =d_{t^i}^{i}|_{t=0} g_t \in \ker(\dbf)$ and $\Phi^{(i)}$ is the $i$-th variational operator of $\Phi$. Moreover, $C(k,l,k_1,\cdots,k_l)$ are constants depending only on $k,l,k_1,\cdots,k_l$. In particular, $h^1 \in \mathrm{ISD}$. 
For later applications, we define the following operators. 
\begin{defn}
We define $M_1=0$ and for $k \geq 2$,
\begin{align}
\label{EX204}
M_{k}(h_1 , \cdots, h_{k-1}) : = \sum_{l = 2}^{k} \sum_{1 \leq k_1 \leq \cdots \leq k_l, k_1 + \cdots + k_l = k} C(k,l,k_1,\cdots,k_l) \Phi^{(l)} (h_{k_1} , \cdots, h_{k_l}),
\end{align}
for any $h_1,\cdots,h_{k-1} \in C^{\infty}(S^{2}(N))$.
\end{defn}
For instance, $M_2(h_1)=\Phi^{(2)}(h_1,h_1)$ and $M_3(h_1,h_2)=3\Phi^{(2)}(h_1,h_2)+\Phi^{(3)}(h_1,h_1,h_1)$. With the above notation, \eqref{E201} can be written as 
\begin{align*}
\Phi'(h^{k}) + M_k (h^1,\cdots,h^{k-1}) = 0.
\end{align*}

Conversely, we have the following definition (cf. \cite[Definition 5.1]{Kr16}).
\begin{defn}
\label{D201}
For any $k \ge 2$ and nontrivial $h_1 \in \isd$, we say $h_1$ is \textbf{integrable} up to order $k$ if there exists $(h_2,\cdots,h_k)$ satisfying
\begin{align}\label{EX205a}
\Phi' (h_l) + M_l(h_{1},\cdots,h_{l-1}) = 0
\end{align}
for any $2 \le l \le k$. $h_1$ is called \textbf{strongly integrable} up to order $k$ if we further assume
\begin{align}\label{EX205b}
h_2,\cdots,h_{k} \in \ker(\dbf) \cap \isd^{\perp}.
\end{align}
$(N^{n},\bar g, \bar f)$ is \textbf{not strongly integrable} up to order $k$ if no nontrivial $h_1 \in \isd$ is strongly integrable up to order $k$.
\end{defn}

Notice that if $h_1$ is strongly integrable up to order $k$, it follows from \eqref{EX205a} and \eqref{EX205b} that the choice of $(h_2,\cdots,h_k)$ is unique. Moreover, if we multiply $h_1$ by a constant $\lambda$, the corresponding $h_l$ will be multiplied by $\lambda^{l}$ by the definition of $M_l$ \eqref{EX204}.

\begin{lem} \label{lem:perp1}
Suppose $h_1 \in \mathrm{ISD}$ and $(h_2,\cdots,h_{k})$ solves \eqref{EX205a}. Then
\begin{align*}
M_{k+1}(h_1,\cdots,h_{k}) \perp \mathrm{Im}(\dbf^*).
\end{align*}
\end{lem}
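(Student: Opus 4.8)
The plan is to show that $M_{k+1}(h_1,\cdots,h_k)$ is $L^2$-orthogonal to $\mathrm{Im}(\dbf^*)$ by exhibiting it (up to a sign and a lower-order term that is itself orthogonal) as a total derivative of $\Phi$ along a path whose tangent vectors are gauge-fixed, so that its pairing against any $\dbf^* w$ vanishes. More concretely, I would mimic the reasoning that produced \eqref{E201}: although there may be no genuine Ricci shrinker family realizing $(h_1,\cdots,h_k)$, the identities \eqref{EX205a} for $l \le k$ are exactly the Taylor coefficients up to order $k$ of the equation $\Phi(g_t)=0$ along a formal path $g_t = \bar g + t h_1 + \tfrac{t^2}{2} h_2 + \cdots + \tfrac{t^k}{k!} h_k$. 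Differentiating $\Phi(g_t)$ $k+1$ times at $t=0$ gives $d^{k+1}_{t^{k+1}}|_{t=0}\Phi(g_t) = \Phi'(h_{k+1}^{\mathrm{formal}}) + M_{k+1}(h_1,\cdots,h_k)$ where the "missing" term $\Phi'(\cdot)$ would be present only if we had an $h_{k+1}$; since we do not, the content is that $M_{k+1}(h_1,\cdots,h_k) = d^{k+1}_{t^{k+1}}|_{t=0}\Phi(g_t)$ modulo $\mathrm{Im}(\Phi')$, and by Lemma \ref{lem:ephi} together with Lemma \ref{lem:kphi}, $\mathrm{Im}(\Phi')$ when restricted to the relevant subspace is controlled by $\mathrm{ISD}$ and $\mathrm{Im}(\dbf^*)$.

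The cleaner route, which I would actually carry out, is to pair directly: for any $1$-form $w$, I want $\langle M_{k+1}(h_1,\cdots,h_k), \dbf^* w\rangle_{L^2} = 0$. Using the self-adjointness of each variational operator $\Phi^{(l)}$ (inherited from the self-adjointness of $\Phi'$ in Lemma \ref{lem:ephi} and the fact that $\Phi$ comes from a variational principle, so all its higher derivatives are symmetric multilinear forms with the appropriate adjointness), one rewrites the pairing in terms of $\langle \Phi^{(l)}(h_{k_1},\cdots,h_{k_{l-1}}, \dbf^* w), (\text{stuff})\rangle$. The key structural input is that $\dbf^* w = \tfrac{1}{2}\mathcal{L}_X \bar g$ for $X$ the dual field of $w$ (up to the weight conventions), i.e. $\mathrm{Im}(\dbf^*)$ is the tangent space to the diffeomorphism orbit through $\bar g$; since $\Phi$ is diffeomorphism-equivariant (indeed $\Phi(\varphi^*\bar g)=0$ and more generally $\Phi$ intertwines with pullback because $f_{\tilde g}$ transforms naturally), differentiating the equivariance relation $\Phi(\varphi_s^* g_t) = \varphi_s^* \Phi(g_t)$ in $s$ at $s=0$ and then $k+1$ times in $t$ at $t=0$ yields a Bianchi-type identity that forces exactly the desired orthogonality. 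This is the standard "second Bianchi identity for the Ricci soliton operator" packaged for higher variations, and Lemma \ref{lem:pre} is the first-order instance of it.

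Concretely the steps are: (1) record the diffeomorphism-equivariance of $\Phi$, namely $\Phi(\psi^* \tilde g) = \psi^*\Phi(\tilde g)$ for $\psi$ near the identity, using that $f_{\psi^*\tilde g} = f_{\tilde g}\circ\psi$ by uniqueness of the minimizer in \eqref{EX201b}; (2) differentiate this in the diffeomorphism direction to get, for each $j$, a linear relation expressing $\Phi^{(j)}(h_{i_1},\cdots,h_{i_{j-1}}, \dbf^* w)$ as a combination of $\mathcal{L}_X(\Phi^{(j-1)}(\cdots))$ plus terms where $X$ hits fewer slots — in other words a graded Leibniz rule; (3) substitute into $\langle M_{k+1}(h_1,\cdots,h_k),\dbf^* w\rangle$, use that $\langle \Phi^{(l)}(h_{k_1},\cdots,h_{k_l}), \dbf^*w\rangle = \langle \dbf \Phi^{(l)}(h_{k_1},\cdots,h_{k_l}), w\rangle$ reduces everything to $\dbf$ applied to the lower-order equations \eqref{EX205a}; (4) invoke \eqref{EX205a} for all $l \le k$, the fact that $h_1 \in \mathrm{ISD}$ so $\Phi'(h_1)=0$, and that all $h_i \in \ker(\dbf)$, to telescope the sum to zero. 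The main obstacle I anticipate is purely bookkeeping: keeping track of the combinatorial constants $C(k,l,k_1,\cdots,k_l)$ and the multinomial factors produced when the vector field $X$ is distributed across the slots in step (2), and matching them against the definition \eqref{EX204} so that the telescoping is exact rather than merely up to a nonzero multiple. A secondary technical point is justifying the "formal path" differentiation rigorously — either by working entirely with the multilinear operators $\Phi^{(l)}$ and their algebraic identities (cleanest, avoids convergence issues), or by appealing to the real-analytic dependence of $f_{\tilde g}$ on $\tilde g$ established via \cite{SW15} so that a polynomial-in-$t$ substitution is legitimate. I expect the former to be the right level of rigor for this lemma.
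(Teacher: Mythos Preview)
Your route via equivariance of $\Phi$ and slot-swapping in the higher derivatives has two concrete problems. First, the move $\langle \Phi^{(l)}(h_{k_1},\ldots,h_{k_l}), \dbf^*w\rangle \to \langle \Phi^{(l)}(h_{k_1},\ldots,h_{k_{l-1}}, \dbf^*w), h_{k_l}\rangle$ you invoke in step (3) is not valid: $\Phi^{(l)}$ is symmetric in its $l$ inputs (as the $l$-th Fr\'echet derivative of a map), but the $(l{+}1)$-linear form $(a_1,\ldots,a_l,b)\mapsto \langle \Phi^{(l)}(a_1,\ldots,a_l), b\rangle_{L^2(dV_{\bar f})}$ is \emph{not} totally symmetric. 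The $(l{+}1)$-st derivative of $\boldsymbol{\mu}$ at $\bar g$ differs from this pairing by correction terms coming from the $g$-dependence of the measure $e^{-\rho_g}dV_g$ and from lower-order $\Phi^{(j)}$'s; these corrections do not vanish just because $\Phi(\bar g)=0$. Second, your step (4) uses $h_i \in \ker(\dbf)$, which is not part of the hypothesis: the lemma is stated for arbitrary solutions of \eqref{EX205a}, and is later applied that way.

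The paper's proof bypasses all of this by working one level up, with $\boldsymbol{\mu}$ rather than with $\Phi$. For $h = L_X \bar g \in \mathrm{Im}(\dbf^*)$, set $g(s,t)=\varphi_t^*\big(\bar g+\sum_{i=1}^k \frac{s^i}{i!}h_i\big)$ with $\varphi_t$ the flow of $X$; then $\mu(s,t):=\boldsymbol{\mu}(g(s,t),1)$ is independent of $t$ by diffeomorphism invariance, so $\partial_{s^{k+1}}^{k+1}\partial_t\mu\big|_{s=t=0}=0$. On the other hand the first variation formula gives $\partial_t\mu = (4\pi)^{-n/2}\int \langle \Phi(g),\partial_t g\rangle e^{-\rho_g}\,dV_g$. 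Since \eqref{EX205a} forces the first $k$ $s$-derivatives of $\Phi(g(s,0))$ to vanish at $s=0$, and since $g(s,0)$ is a degree-$k$ polynomial in $s$, the only term surviving after $\partial_s^{k+1}$ has all derivatives landing on $\Phi$; this produces exactly $\int \langle M_{k+1}(h_1,\ldots,h_k), h\rangle\, e^{-\rho_{\bar g}}dV_{\bar g}=0$. No slot-swapping, no Bianchi bookkeeping, and no gauge condition on the $h_i$ are needed.
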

\begin{proof}
For any $h =L_{X} g \in \mathrm{Im}(\dbf^*)$, we assume $\varphi_t$ is a family of diffeomorphisms generated by $X$. We define
\begin{align*}
g=g(s,t) := \varphi_t^{*} \lc \bar g+ \sum_{i=1}^{k} \frac{s^i}{i!} h_i \rc \quad \text{and} \quad \mu (s,t) = \boldsymbol{\mu}(g(s,t),1).
\end{align*} 
Notice that by the invariance of $\boldsymbol{\mu}$ up to a diffeomorphism, $ \mu (s,t)$ depends only on $s$. The first variational formula of $\boldsymbol{\mu}$ \cite{Pe1} yields
\begin{align*}
\partial_t \mu(s,t) = \frac{1}{(4 \pi)^{\frac{n}{2}}} \int_{N} \langle \Phi(g),\partial_t g \rangle e^{-\rho_g} \,dV_g,
\end{align*}
where $\rho_g$ is the minimizer of $\boldsymbol{\mu}(g,1)$. Therefore, it follows from our assumption and definitions that
\begin{align} \label{EX206a}
0=\left. \partial_{s^{k+1}}^{k+1} \partial_t \mu(s,t) \right\vert_{s=t=0}=\frac{1}{(4 \pi)^{\frac{n}{2}}} \int_{N} \langle M_{k+1}(h_1,\cdots,h_{k}) ,h \rangle e^{-\rho_{\bar g}} \,dV_{\bar g}.
\end{align}
Here, we have used the fact that $\left. \partial_{s^{k+1}}^{k+1} g(s,t) \right\vert_{s=t=0}=0$. From \eqref{EX206a}, we conclude 
\begin{align*}
\int_{N} \langle M_{k+1}(h_1,\cdots,h_{k}), h \rangle \,\vf=0
\end{align*}
and hence the proof is complete.
\end{proof}

\begin{rem}
If $h_1 \in \isd$ is integrable up to order $2$, then $h_1$ is strongly integrable up to order $2$. Indeed, it follows from Lemma \ref{lem:perp1} that $M_2(h_1) \perp \im(\dbf^*)$ and hence any solution $h_2$ of \eqref{EX205a} for $l=2$ satisfies $h_2 \in \ker(\dbf)$. Consequently, $h_2-\pi_{\isd}(h_2)$ satisfies \eqref{EX205b}.
\end{rem}

In general, $h_1 \in \isd$ may not be strongly integrable even though $h_1$ is integrable up to order $k$. Motivated by this, we have the following definition.
\begin{defn} \label{def:ind}
Given $h_1 \in \isd$, we say the sequence of symmetric $2$-tensors $\{h_i\}$ is \textbf{induced} by $h_1$ if $h_i \in (\ker{\Phi'})^{\perp}$ and for any $l \ge 2$,
\begin{align}\label{E203}
\Phi'(h_{l}) + \pi_{(\ker{\Phi'})^{\perp}}(M_l(h_{1},\cdots,h_{l-1})) = 0,
\end{align}
where $\pi_{(\ker{\Phi'})^{\perp}}$ is the projection onto $(\ker{\Phi'})^{\perp}$ in $L^2$.
\end{defn}

It follows from Lemma \ref{lem:kphi} that $(\ker{\Phi'})^{\perp}=\ker(\dbf) \cap \isd^{\perp}$. Therefore, one can solve $h_2,h_3,\cdots$ recursively from \eqref{E203}, and the solutions are unique. In other words, there exists a unique sequence $\{h_i\}$ induced by $h_1$, even though $h_1$ may not be strongly integrable. In addition, by the uniqueness of the solution to the equation \eqref{E203} in $(\ker{\Phi'})^{\perp}$, $\{\lambda^i h_i\}$ is the sequence induced by $\lambda h_1$ for any constant $\lambda$.

For $k \ge 2$, we set $I_k$ to be the linear space spanned by all possible $h_k$ as $h_1$ varies in $\isd$. Next, we prove
\begin{lem}
For any $k \ge 2$, $I_k$ is finite-dimensional.

\end{lem}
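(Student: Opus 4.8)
The plan is to prove finite-dimensionality of $I_k$ by induction on $k$, tracking along the way that each $h_k$ depends in a fixed linear-algebraic way on $h_1 \in \isd$. For the base case $k=2$, note that given $h_1 \in \isd$, the tensor $h_2$ is determined by the equation $\Phi'(h_2) = -\pi_{(\ker \Phi')^{\perp}}(M_2(h_1))$ together with the requirement $h_2 \in (\ker \Phi')^{\perp}$; since $\Phi'$ restricted to $(\ker \Phi')^{\perp} = \ker(\dbf) \cap \isd^{\perp}$ is an isomorphism onto $(\ker \Phi')^{\perp}$ (it is a self-adjoint elliptic operator with this kernel, by Lemma \ref{lem:ephi} and Lemma \ref{lem:kphi}), there is a unique such $h_2$, and the map $h_1 \mapsto h_2$ factors as $h_1 \mapsto M_2(h_1) = \Phi^{(2)}(h_1,h_1) \mapsto -(\Phi'|_{(\ker\Phi')^\perp})^{-1}\pi_{(\ker \Phi')^{\perp}}(\cdot)$. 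The first arrow is a quadratic (hence not linear) map, but its image spans a finite-dimensional space because $\isd$ is finite-dimensional (Lemma \ref{lem:ephi}): the image of a polynomial map defined on a finite-dimensional vector space lies in a finite-dimensional space, and in fact $I_2$ is contained in the span of $\{(\Phi'|_{(\ker\Phi')^\perp})^{-1}\pi_{(\ker \Phi')^{\perp}}\Phi^{(2)}(e_a,e_b)\}$ where $\{e_a\}$ is a basis of $\isd$ — a finite set.

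For the inductive step, suppose $I_2, \dots, I_{k-1}$ are all finite-dimensional. The tensor $h_k$ induced by $h_1$ solves $\Phi'(h_k) = -\pi_{(\ker\Phi')^\perp}(M_k(h_1,\dots,h_{k-1}))$ in $(\ker\Phi')^\perp$, so again $h_k = -(\Phi'|_{(\ker\Phi')^\perp})^{-1}\pi_{(\ker\Phi')^\perp}(M_k(h_1,\dots,h_{k-1}))$, uniquely. By the definition \eqref{EX204}, $M_k(h_1,\dots,h_{k-1})$ is a sum of terms $\Phi^{(l)}(h_{k_1},\dots,h_{k_l})$ with $k_1 + \cdots + k_l = k$, each $k_i \le k-1$. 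Each argument $h_{k_i}$ is either $h_1 \in \isd$ (finite-dimensional) or $h_{k_i} \in I_{k_i}$ with $2 \le k_i \le k-1$ (finite-dimensional by the inductive hypothesis). Therefore every argument ranges over a fixed finite-dimensional space as $h_1$ varies, and $\Phi^{(l)}$ is a fixed multilinear differential operator, so each term $\Phi^{(l)}(h_{k_1},\dots,h_{k_l})$ ranges over a finite-dimensional space; hence so does $M_k(h_1,\dots,h_{k-1})$, and applying the bounded linear operator $-(\Phi'|_{(\ker\Phi')^\perp})^{-1}\pi_{(\ker\Phi')^\perp}$ keeps the image finite-dimensional. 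This shows $I_k$ is finite-dimensional, completing the induction.

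One technical point worth making explicit is the invertibility of $\Phi'$ on $(\ker\Phi')^\perp$: since $\Phi'$ is a self-adjoint second-order elliptic operator on the compact manifold $N$ preserving the decomposition \eqref{EX202aa} (Lemma \ref{lem:ephi}), it has finite-dimensional kernel and closed range equal to $(\ker\Phi')^\perp$, so $\Phi'|_{(\ker\Phi')^\perp} : (\ker\Phi')^\perp \to (\ker\Phi')^\perp$ is a bijection with bounded inverse (in the relevant Hölder or Sobolev spaces; smoothness of $h_k$ then follows from elliptic regularity since $M_k$ of smooth tensors is smooth). The genuinely nontrivial content of the lemma is therefore mild — it is essentially bookkeeping — and the only thing that could go wrong is a failure to keep the arguments of the multilinear operators confined to finite-dimensional spaces; but the induction is designed precisely so that at stage $k$ all lower $h_{k_i}$ live in $\isd$ or in the already-controlled $I_{k_i}$. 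I do not anticipate a substantive obstacle; the main care needed is simply to state the recursion cleanly and invoke finite-dimensionality of $\isd$ (from Lemma \ref{lem:ephi}) as the anchoring fact.
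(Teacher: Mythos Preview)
Your proof is correct and follows essentially the same approach as the paper: both argue by induction on $k$, using that $\Phi^{(l)}$ is multilinear and its arguments $h_{k_i}$ lie in the finite-dimensional spaces $\isd$ or $I_{k_i}$ (by the inductive hypothesis), so that $M_k$ ranges over a finite-dimensional space and applying $(\Phi'|_{(\ker\Phi')^\perp})^{-1}\pi_{(\ker\Phi')^\perp}$ preserves finite-dimensionality. The paper spells this out by fixing bases $\{\alpha^i_s\}$ for each $I_i$ and exhibiting an explicit finite spanning set for $I_k$, which is exactly what your argument unpacks to.
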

\begin{proof}
Let $\{\alpha^1_1,\cdots,\alpha^1_{n_1}\}$ be a basis of $\isd$, where $n_1=\text{dim}(\isd)$. It is clear from the definition of $M_2$ that all possible $h_2$ in \eqref{E203} are linearly generated by the unique solution $\beta_{i,j}$ to 
\begin{align*}
\Phi'(\beta_{i,j}) + \pi_{(\ker{\Phi'})^{\perp}}(\Phi^{(2)}(\alpha^1_i,\alpha^1_j) )= 0
\end{align*}
such that $\beta_{i,j} \in (\ker{\Phi'})^{\perp}$, where $1 \le i,j \le n_1$. Consequently, $I_2$ is finite-dimensional.

In general, we assume $I_k$ is finite-dimensional and spanned by $\{\alpha^i_1,\cdots,\alpha^i_{n_i}\}$ for $2 \le i \le k-1$, where $n_i=\text{dim}(I_i)$. Then it follows from the definition of $M_k$ \eqref{EX204} that all possible $h_k$ are linearly generated by the unique solution $\beta_{l,k_1,\cdots, k_l,s_1,\cdots, s_l}$ to
\begin{align*}
\Phi'(\beta_{l,k_1,\cdots, k_l,s_1,\cdots, s_l}) + \pi_{(\ker{\Phi'})^{\perp}}(\Phi^{(l)}(\alpha^{k_1}_{s_1}, \cdots, \alpha^{k_l}_{s_l}) )= 0
\end{align*}
such that $\beta_{l,k_1,\cdots, k_l,s_1,\cdots, s_l} \in (\ker{\Phi'})^{\perp}$, where $2 \le l \le k,1 \le k_1 \le \cdots \le k_l, k_1+\cdots+k_l=k$ and $1 \le s_i \le n_{k_i}$. Therefore, $I_k$ is also finite-dimensional. By induction, the proof is complete.
\end{proof}
In particular, all norms on $I_i$ are equivalent. Next, we prove
\begin{prop}
\label{P01}
Suppose $h_1 \in \isd$ and $\{h_i\}$ is the sequence induced by $h_1$. Then for any $i \ge 1$,
\begin{align} \label{EX207a}
\|h_i\|_{C^{2,\alpha}} \leq C \|h_1\|^i_{C^{\alpha}}.
\end{align}
\end{prop}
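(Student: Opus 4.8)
The plan is to prove \eqref{EX207a} by induction on $i$, using the finite-dimensionality of $\isd$ and of the spaces $I_i$ to pass between the $C^{2,\alpha}$- and $L^2$-norms, a spectral gap estimate for the stability operator $\LL$, and the fact that each variational operator $\Phi^{(l)}$ is a bounded multilinear map on Hölder spaces.

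The case $i=1$ is immediate: $\isd$ is a fixed finite-dimensional space of smooth tensors, so $\|h_1\|_{C^{2,\alpha}}\le C\|h_1\|_{C^\alpha}$. Now fix $i\ge 2$ and assume \eqref{EX207a} holds for all smaller indices. By the recursion \eqref{E203} and elliptic regularity, each $h_j$ is smooth; moreover $h_i$ lies in $I_i$, which is finite-dimensional by Lemma \ref{lem:finited}, so all norms on $I_i$ are equivalent and in particular $\|h_i\|_{C^{2,\alpha}}\le C\|h_i\|_{L^2}$. To estimate $\|h_i\|_{L^2}$, recall from Lemma \ref{lem:kphi} that $h_i\in(\ker\Phi')^\perp=\ker(\dbf)\cap\isd^\perp$, so Lemma \ref{lem:ephi} gives $\LL h_i=2\Phi'(h_i)$. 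Since $\LL$ preserves the orthogonal decomposition $\ker(\dbf)\oplus\im(\dbf^*)$ of Proposition \ref{prop:decom1} (see Lemma \ref{lem:pre}) and $\ker\LL\cap\ker(\dbf)=\isd$, one checks that $h_i$ is $L^2$-orthogonal to the whole kernel of $\LL$; as $\LL$ is elliptic and self-adjoint, it has a positive spectral gap on $(\ker\LL)^\perp$, and therefore $\|h_i\|_{L^2}\le C\|\LL h_i\|_{L^2}=2C\|\Phi'(h_i)\|_{L^2}$. Finally, \eqref{E203} reads $\Phi'(h_i)=-\pi_{(\ker\Phi')^\perp}(M_i(h_1,\dots,h_{i-1}))$, and an $L^2$-orthogonal projection is a contraction, so $\|\Phi'(h_i)\|_{L^2}\le\|M_i(h_1,\dots,h_{i-1})\|_{L^2}\le C\|M_i(h_1,\dots,h_{i-1})\|_{C^\alpha}$.

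It remains to bound $\|M_i(h_1,\dots,h_{i-1})\|_{C^\alpha}$. By \eqref{EX204}, this is a finite linear combination of terms $\Phi^{(l)}(h_{k_1},\dots,h_{k_l})$ with $l\ge 2$ and $k_1+\dots+k_l=i$. Since $\Phi$ is real-analytic as a map from a neighborhood of $\bar g$ in $C^{2,\alpha}(S^2(N))$ into $C^\alpha(S^2(N))$ --- which follows from Definition \ref{def:op1} together with the analytic dependence of $f_{\tilde g}$ on $\tilde g$ established in \cite{SW15} --- each $\Phi^{(l)}$ is a bounded symmetric $l$-linear form on $C^{2,\alpha}(S^2(N))$ with values in $C^\alpha(S^2(N))$, whence $\|\Phi^{(l)}(h_{k_1},\dots,h_{k_l})\|_{C^\alpha}\le C\prod_{j=1}^l\|h_{k_j}\|_{C^{2,\alpha}}$. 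Applying the inductive hypothesis to each factor (legitimate since every $k_j<i$) and using $k_1+\dots+k_l=i$ gives $\|\Phi^{(l)}(h_{k_1},\dots,h_{k_l})\|_{C^\alpha}\le C\|h_1\|_{C^\alpha}^i$; summing the finitely many terms and combining with the estimates of the previous paragraph yields $\|h_i\|_{C^{2,\alpha}}\le C\|h_1\|_{C^\alpha}^i$, completing the induction.

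The only step that is not purely formal is the claim that $h_i$ is orthogonal to the full kernel of $\LL$ and not merely to $\isd$: since $\ker\LL$ may be strictly larger than $\isd$, one genuinely needs the $L^2$-orthogonality of $\ker(\dbf)$ and $\im(\dbf^*)$ together with the $\LL$-invariance of this splitting in order to run the spectral estimate. Beyond that, the argument is bookkeeping with the finite-dimensional spaces $\isd$ and $I_i$ and with the multilinear bounds for $\Phi^{(l)}$; I would also note that a more careful tracking of constants in fact gives a bound of the form $\|h_i\|_{C^{2,\alpha}}\le C_0^i\|h_1\|_{C^\alpha}^i$ with $C_0$ independent of $i$, though \eqref{EX207a} as stated only asks for an $i$-dependent constant.
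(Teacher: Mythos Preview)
Your proof is correct and takes a genuinely different route from the paper. The paper argues by homogeneity and contradiction: rescaling so that $\|h_1\|_{C^\alpha}=1$, it supposes a blowing-up sequence $h_1^j\in\isd$ with $\|h_k^j\|_{C^{2,\alpha}}\to\infty$, passes to a limit $h_1^\infty$ by compactness of the unit sphere in the finite-dimensional space $\isd$, and obtains a contradiction from the continuous dependence $h_1\mapsto h_k$ (via the elliptic estimate for \eqref{E203}). Your argument is instead a direct induction: you use the finite-dimensionality of $I_i$ to swap $C^{2,\alpha}$ for $L^2$, then a spectral gap for $\LL$ on $(\ker\LL)^\perp$ (correctly checking that $h_i\perp\ker\LL$, not just $h_i\perp\isd$, via the $\LL$-invariant splitting $\ker(\dbf)\oplus\im(\dbf^*)$), and finally the boundedness of the multilinear maps $\Phi^{(l)}$. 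The paper's approach is shorter and avoids invoking the spectral gap or the multilinear bounds explicitly; your approach is more constructive and makes the mechanism transparent. One small caveat: your closing remark that the constants can be taken of the form $C_0^i$ with $C_0$ independent of $i$ is not supported by the argument as written, since the step $\|h_i\|_{C^{2,\alpha}}\le C\|h_i\|_{L^2}$ relies on the equivalence of norms on $I_i$ and gives a constant depending on $I_i$ in an uncontrolled way; this does not affect the proposition itself, which only asks for an $i$-dependent constant.
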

\begin{proof}
As observed before, $\{\lambda^i h_i\}$ is the sequence induced by $\lambda h_1$ for any constant $\lambda$, so we may assume $\|h_1\|_{C^{\alpha}}=1$.
Suppose the conclusion fails, there exist $k \ge 2$ and $h_1^j \in \isd$ with $\|h_1^j\|_{C^{\alpha}}=1$ such that
\begin{align*}
\|h^j_k\|_{C^{2,\alpha}} \overset{j \to \infty}{\longrightarrow} \infty,
\end{align*}
where $\{h^j_i\}$ is the sequence induced by $h_1^j$. Since $\isd$ is finite-dimensional, by taking a subsequence if necessary, we assume $h_1^j \to h_1^{\infty}$ in $C^{\alpha}$. Let $\{h_i^{\infty}\}$ be the sequence induced by $h_1^{\infty}$. Then it follows from the ellipic estimate of \eqref{E203} that $h_k^j$ converges to $h_k^{\infty}$ in $I_k$ in $C^{2,\alpha}$-norm. Therefore, we obtain a contradiction.
\end{proof}

\begin{defn} \label{def:kob}
We say $(N^{n},\bar g, \bar f)$ has an \textbf{obstruction of order $k$} if there exists $\epsilon>0$ such that for any $h_1\in \isd$ with $\|h_1\|_{L^2} < \epsilon$, 
\begin{align} \label{E205}
\| \pi_{\isd}( \sum _{l=2}^{k} \frac{1}{l!} M_l (h_{1},\cdots,h_{l-1}))\|_{L^2} \geq \ep \| h_1 \|_{L^2}^{k},
\end{align}
where $\{h_i\}$ is the sequence induced by $h_1$. $(N^{n},\bar g, \bar f)$ has an obstruction of order $1$ if $\isd=0$.
\end{defn}

For instance, $(N^{n},\bar g, \bar f)$ has an obstruction of order $3$ if for all $h_1 \in \isd$ small enough,
\begin{align} \label{E205a}
\| \pi_{\isd} (\frac{1}{6}\Phi^{(3)} (h_1 ,h_1 ,h_1 ) + \frac{1}{2} \Phi^{(2)} (h_1 ,h_2) + \frac{1}{2} \Phi^{(2)} (h_1 ,h_1 )) \|_{L^2} \geq \ep \| h_1 \|_{L^2}^3.
\end{align}

\begin{lem}
Suppose $(N^{n},\bar g, \bar f)$ has an obstruction of order $k_0$, then $(N^{n},\bar g, \bar f)$ has an obstruction of order $k$ for any $k>k_0$.
\end{lem}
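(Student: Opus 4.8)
The plan is to isolate, inside the quantity that the order-$k$ obstruction controls, the order-$k_0$ part — for which the hypothesis applies verbatim — and then to show that the remaining higher-order terms are negligible once $\|h_1\|_{L^2}$ is small. First, if $\isd=0$ then $(N^n,\bar g,\bar f)$ has an obstruction of every order by definition, so we may assume $\isd\neq 0$; in particular $k_0\ge 2$. Fix a nontrivial $h_1\in\isd$ and let $\{h_i\}$ be the sequence induced by $h_1$ as in Definition \ref{def:ind}; note that this sequence depends only on $h_1$ through \eqref{E203} and not on the order $k$. For $j\ge 2$ write
\begin{align*}
S_j := \pi_{\isd}\Bigl( \sum_{l=2}^{j} \tfrac{1}{l!}\, M_l(h_1,\cdots,h_{l-1}) \Bigr),
\end{align*}
so that $S_k = S_{k_0} + \pi_{\isd}\bigl( \sum_{l=k_0+1}^{k} \tfrac{1}{l!}\, M_l(h_1,\cdots,h_{l-1}) \bigr)$. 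By the assumption that $(N^n,\bar g,\bar f)$ has an obstruction of order $k_0$, there is $\epsilon_0\in(0,1)$ such that $\|S_{k_0}\|_{L^2}\ge \epsilon_0\|h_1\|_{L^2}^{k_0}$ whenever $\|h_1\|_{L^2}<\epsilon_0$.

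Next I would bound the tail $S_k-S_{k_0}$. For each $l$ with $k_0+1\le l\le k$, the tensor $M_l(h_1,\cdots,h_{l-1})$ is, by \eqref{EX204}, a finite linear combination of terms $\Phi^{(j)}(h_{k_1},\cdots,h_{k_j})$ with $j\ge 2$, $1\le k_1\le\cdots\le k_j$ and $k_1+\cdots+k_j=l$. Each $\Phi^{(j)}$ is a bounded $j$-linear operator from $C^{2,\alpha}(S^2(N))$ into $C^{\alpha}(S^2(N))$, and on the compact manifold $N$ one has $\|\cdot\|_{L^2}\le C\|\cdot\|_{C^{\alpha}}$; moreover $\|h_1\|_{C^{\alpha}}\le C\|h_1\|_{L^2}$ since $\isd$ is finite-dimensional. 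Hence Proposition \ref{P01} yields
\begin{align*}
\|\Phi^{(j)}(h_{k_1},\cdots,h_{k_j})\|_{L^2} \le C \prod_{i=1}^{j} \|h_{k_i}\|_{C^{2,\alpha}} \le C \prod_{i=1}^{j} \|h_1\|_{L^2}^{k_i} = C\,\|h_1\|_{L^2}^{l},
\end{align*}
and therefore $\|M_l(h_1,\cdots,h_{l-1})\|_{L^2}\le C\|h_1\|_{L^2}^{l}$. Summing over $l$ and using $\|h_1\|_{L^2}\le 1$,
\begin{align*}
\Bigl\| \pi_{\isd}\Bigl( \sum_{l=k_0+1}^{k} \tfrac{1}{l!}\, M_l(h_1,\cdots,h_{l-1}) \Bigr) \Bigr\|_{L^2} \le C \sum_{l=k_0+1}^{k} \|h_1\|_{L^2}^{l} \le C_1 \|h_1\|_{L^2}^{k_0+1},
\end{align*}
where $C_1$ depends only on $k$ and the geometry of $(N^n,\bar g,\bar f)$.

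Finally I would combine the two bounds. For $\|h_1\|_{L^2}<\epsilon_0$ (hence $\le 1$),
\begin{align*}
\|S_k\|_{L^2} \ge \|S_{k_0}\|_{L^2} - C_1\|h_1\|_{L^2}^{k_0+1} \ge \|h_1\|_{L^2}^{k_0}\bigl( \epsilon_0 - C_1\|h_1\|_{L^2} \bigr).
\end{align*}
Set $\epsilon := \min\{ \epsilon_0/2,\ \epsilon_0/(2C_1),\ 1 \}>0$. Then for any nontrivial $h_1\in\isd$ with $\|h_1\|_{L^2}<\epsilon$ one has $\epsilon_0 - C_1\|h_1\|_{L^2}\ge \epsilon_0/2$, so
\begin{align*}
\|S_k\|_{L^2} \ge \tfrac{\epsilon_0}{2}\|h_1\|_{L^2}^{k_0} \ge \tfrac{\epsilon_0}{2}\|h_1\|_{L^2}^{k} \ge \epsilon\,\|h_1\|_{L^2}^{k},
\end{align*}
the middle inequality using $k>k_0$ and $\|h_1\|_{L^2}<\epsilon\le 1$. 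This is precisely inequality \eqref{E205} at order $k$, so $(N^n,\bar g,\bar f)$ has an obstruction of order $k$. The argument is essentially a perturbation estimate, and the only point that needs a little care is invoking Proposition \ref{P01} together with the finite-dimensionality of $\isd$ to upgrade the bounds on the induced tensors to the homogeneous estimate $\|h_{k_i}\|_{C^{2,\alpha}}\le C\|h_1\|_{L^2}^{k_i}$, which is what makes the tail genuinely of order $k_0+1$.
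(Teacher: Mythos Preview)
Your proof is correct and follows essentially the same approach as the paper: split $S_k$ into the order-$k_0$ part (controlled by hypothesis) plus a tail, bound the tail by $C\|h_1\|_{L^2}^{k_0+1}$ via Proposition~\ref{P01} and the multilinearity of $\Phi^{(j)}$, and absorb the tail for small $\|h_1\|_{L^2}$. Your write-up is somewhat more explicit than the paper's about why the tail estimate holds, but the argument is the same.
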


\begin{proof}
By our assumption, there exists $\ep>0$ such that for any $h_1\in \isd$ with $\|h_1\|_{L^2} < \epsilon$, 
\begin{align} \label{EX208a}
\| \pi_{\isd}( \sum _{l=2}^{k_0} \frac{1}{l!} M_l (h_{1},\cdots,h_{l-1}))\|_{L^2} \geq \ep \| h_1 \|_{L^2}^{k_0},
\end{align}
where $\{h_i\}$ is the sequence induced by $h_1$. From \eqref{EX207a} and the definition of $M_l$, it is clear that
\begin{align}\label{EX208b}
\| \pi_{\isd}( \sum _{l=k_0+1}^{k} \frac{1}{l!} M_l (h_{1},\cdots,h_{l-1}))\|_{L^2} \le \| \sum _{l=k_0+1}^{k} \frac{1}{l!} M_l (h_{1},\cdots,h_{l-1})\|_{L^2} \le C \|h_1\|^{k_0+1}_{L^2}.
\end{align}
Combining \eqref{EX208a} and \eqref{EX208b}, we obtain
\begin{align*}
\| \pi_{\isd}( \sum _{l=2}^{k} \frac{1}{l!} M_l (h_{1},\cdots,h_{l-1}))\|_{L^2} \geq \ep \| h_1 \|_{L^2}^{k_0}-C \|h_1\|^{k_0+1}_{L^2} \ge 
\frac{\ep}{2} \| h_1 \|_{L^2}^{k_0} \ge \frac{\ep}{2} \| h_1 \|_{L^2}^{k},
\end{align*}
if $ \| h_1 \|_{L^2}$ is sufficiently small. In particular, it implies that $(N^{n},\bar g, \bar f)$ has an obstruction of order $k$.
\end{proof}

Next, we compare Definition \ref{D201} and Definition \ref{def:kob}.

\begin{prop}
\label{P202}
$(N^{n},\bar g, \bar f)$ has an obstruction of order $k$ if and only if it is not strongly integrable up to order $k$.
\end{prop}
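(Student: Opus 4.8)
The plan is to prove the two directions separately, exploiting the fact that the ISD space is finite-dimensional (so all norms on it are equivalent) and that the induced sequence $\{h_i\}$ is uniquely determined by $h_1$ with the polynomial bound $\|h_i\|_{C^{2,\alpha}} \le C\|h_1\|^i_{C^{\alpha}}$ from Proposition \ref{P01}.

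First I would prove the contrapositive of ``obstruction of order $k$ $\Rightarrow$ not strongly integrable up to order $k$''. Suppose some nontrivial $h_1 \in \isd$ is strongly integrable up to order $k$, with solutions $\{h_2,\dots,h_k\}$ in $\ker(\dbf)\cap\isd^\perp$. By uniqueness of such solutions (noted after Definition \ref{D201}) these coincide with the induced sequence from Definition \ref{def:ind}, since $(\ker\Phi')^\perp = \ker(\dbf)\cap\isd^\perp$ and the equations \eqref{EX205a} and \eqref{E203} agree on $(\ker\Phi')^\perp$ once we know $M_l(h_1,\dots,h_{l-1}) \perp \im(\dbf^*)$ — which is exactly Lemma \ref{lem:perp1} — and (for the $\isd$-component) the strong integrability condition \eqref{EX205a} forces $\pi_{\isd}(M_l(h_1,\dots,h_{l-1})) = 0$ for each $2\le l \le k$. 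Hence in particular $\pi_{\isd}(\sum_{l=2}^k \frac{1}{l!}M_l(h_1,\dots,h_{l-1})) = 0$, and by rescaling $h_1$ to have arbitrarily small norm this contradicts \eqref{E205}. (One must be slightly careful: \eqref{EX205a} gives $\Phi'(h_l) = -M_l$, and since $\Phi'$ annihilates $\isd$, applying $\pi_{\isd}$ gives $\pi_{\isd}(M_l) = 0$ provided $h_l \in \ker\Phi'$... but $h_l \in \isd^\perp \cap \ker(\dbf)$, on which $\Phi'$ is injective, so $\pi_{\isd}(M_l)$ need not vanish a priori — rather, decompose $M_l = \pi_{\isd}(M_l) + (\text{rest})$ and observe \eqref{EX205a} is solvable in $(\ker\Phi')^\perp$ only if $\pi_{\isd}(M_l) = 0$ since $\im\Phi' = (\ker\Phi')^\perp$. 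This is the one subtle point and should be spelled out.)

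For the converse — ``not strongly integrable up to order $k$ $\Rightarrow$ has an obstruction of order $k$'' — I would argue by contradiction using compactness of the unit sphere in the finite-dimensional space $\isd$. If $(N,\bar g,\bar f)$ does not have an obstruction of order $k$, then for every $\epsilon>0$ there is $h_1 \in \isd$ with $\|h_1\|_{L^2}<\epsilon$ violating \eqref{E205}; after rescaling (using that $\{\lambda^i h_i\}$ is induced by $\lambda h_1$ and $M_l$ is homogeneous of degree $l$, so both sides of \eqref{E205} scale by $\lambda^k$) we get a sequence $h_1^j \in \isd$ with $\|h_1^j\|_{L^2}=1$ and $\|\pi_{\isd}(\sum_{l=2}^k \frac{1}{l!}M_l(h_1^j,\dots,h_{l-1}^j))\|_{L^2} \to 0$. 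By finite-dimensionality pass to a subsequence $h_1^j \to h_1^\infty$ with $\|h_1^\infty\|_{L^2}=1$; the elliptic estimates for \eqref{E203} (as in the proof of Proposition \ref{P01}) give $h_l^j \to h_l^\infty$ in $C^{2,\alpha}$, where $\{h_l^\infty\}$ is induced by $h_1^\infty$. Passing to the limit, $\pi_{\isd}(\sum_{l=2}^k \frac{1}{l!}M_l(h_1^\infty,\dots,h_{l-1}^\infty)) = 0$. Now I claim this forces $h_1^\infty$ to be strongly integrable up to order $k$: one constructs $h_l$ inductively by solving $\Phi'(h_l) = -M_l(h_1^\infty,\dots,h_{l-1}^\infty)$; this is solvable in $(\ker\Phi')^\perp$ precisely because $M_l \perp \im(\dbf^*)$ (Lemma \ref{lem:perp1}) and $\pi_{\isd}(M_l) = 0$, the latter following from the vanishing of $\pi_{\isd}$ applied to the full sum together with the polynomial scaling that lets one isolate each degree-$l$ piece by varying $h_1^\infty \mapsto \lambda h_1^\infty$ and matching powers of $\lambda$. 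The resulting $\{h_l\}$ automatically lie in $\ker(\dbf)\cap\isd^\perp$, giving strong integrability up to order $k$ — contradiction.

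The main obstacle I anticipate is the bookkeeping in the converse direction: showing that the vanishing of $\pi_{\isd}$ applied to the \emph{single aggregate} $\sum_{l=2}^k \frac{1}{l!}M_l$ is equivalent to the vanishing of $\pi_{\isd}(M_l)$ at each order $l$ separately (which is what is needed to recursively solve \eqref{EX205a}). This is where the homogeneity observation — $M_l(\lambda h_1,\dots,(\lambda)^{l-1}h_{l-1}) = \lambda^l M_l(h_1,\dots,h_{l-1})$ when $\{h_i\}$ is induced by $h_1$ — does the work: substituting $\lambda h_1^\infty$ and its induced sequence into the aggregate gives a polynomial in $\lambda$ each of whose coefficients must vanish, hence $\pi_{\isd}(M_l) = 0$ for all $l$. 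I would present this homogeneity/polynomial-matching lemma explicitly before running the induction, as it is the linchpin that makes both directions go through cleanly.
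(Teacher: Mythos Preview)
Your first direction (obstruction $\Rightarrow$ not strongly integrable) is correct and matches the paper. The parenthetical about the ``subtle point'' is the right observation: if $h_1$ is strongly integrable with $\{h_l\}\subset(\ker\Phi')^\perp$, then $M_l = -\Phi'(h_l)\in\im(\Phi')=(\ker\Phi')^\perp$, so $\pi_{\isd}(M_l)=0$ for every $l$ and the aggregate vanishes, contradicting \eqref{E205} after rescaling.

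The converse direction has a genuine gap. Your rescaling claim ``both sides of \eqref{E205} scale by $\lambda^k$'' is false: the right side does, but the left side is $\sum_{l=2}^k\frac{1}{l!}M_l$, and $M_l$ is homogeneous of degree $l$, not $k$. So after normalising $h_1^j$ to unit norm you do \emph{not} get $\|\pi_{\isd}(\sum\frac{1}{l!}M_l(\tilde h_1^j,\dots))\|\to 0$; what you actually get is
\[
\Bigl\|\pi_{\isd}\Bigl(\sum_{l=2}^k \frac{1}{l!\,\|h_1^j\|_{L^2}^{\,k-l}}\,M_l(\tilde h_1^j,\dots,\tilde h_{l-1}^j)\Bigr)\Bigr\|_{L^2}\to 0,
\]
with the original norms $\|h_1^j\|_{L^2}\to 0$ appearing as weights. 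Your ``polynomial in $\lambda$'' argument at the end does not repair this: you would need the aggregate to vanish for \emph{all} $\lambda$ (or infinitely many), but the limiting statement gives vanishing only at a single point, and the hypothesis gives no reason why the specific rays $\lambda h_1^\infty$ should also violate \eqref{E205}.

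The paper's fix is precisely to retain the parameter $\|h_1^j\|_{L^2}\to 0$ rather than discarding it. In the display above the coefficients $\|h_1^j\|_{L^2}^{\,l-k}$ blow up at different rates, so the $l=2$ term dominates and one reads off $\pi_{\isd}(M_2(h_1^\infty))=0$; subtracting and iterating gives $\pi_{\isd}(M_l(h_1^\infty,\dots,h_{l-1}^\infty))=0$ for every $2\le l\le k$. From that point on your outline is right: Lemma~\ref{lem:perp1} applied inductively shows each $M_l\perp\im(\dbf^*)$ as well, hence $M_l\in(\ker\Phi')^\perp$, so the induced $h_l^\infty$ solve the genuine equations $\Phi'(h_l^\infty)+M_l=0$, and $h_1^\infty$ is strongly integrable up to order $k$.
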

\begin{proof}
First, we assume that $(N^{n},\bar g, \bar f)$ has an obstruction of order $k$. Suppose $h_1 \in \isd$ is strongly integrable up to order $k$ and $\{h_i\}$ is the sequence induced by $h_1$. Then we have
\begin{align} \label{EX209a}
\Phi' (h_l) + M_l(h_{1},\cdots,h_{l-1}) = 0
\end{align}
for any $2 \le l \le k$. In particular, it follows from \eqref{EX209a} that
\begin{align*}
\pi_{\isd}( M_l (h_{1},\cdots,h_{l-1}))=0
\end{align*}
and hence it violates \eqref{E205}. Therefore, we obtain a contradiction.

Conversely, we assume no $h_1 \in \isd$ is strongly integrable up to order $k$. Suppose $(N^{n},\bar g, \bar f)$ does not have an obstruction of order $k$, then it follows from Definition \ref{def:kob} that there exist $h^j_1$ with $\|h^j_1\|_{L^2} \to 0$ such that
\begin{align} \label{EX209b}
\| \pi_{\isd}( \sum _{l=2}^{k} \frac{1}{l!} M_l (h^j_{1},\cdots,h^j_{l-1}))\|_{L^2}/\|h^j_1\|^k_{L^2} \overset{j \to \infty}{\longrightarrow} 0,
\end{align}
where $\{h^j_i\}$ is the sequence induced by $h^j_1$. We define $\tilde{h}_i^j:=h_i^j/\|h^j_1\|^i_{L^2}$. Then by taking a subsequence if necessary, we assume $\tilde{h}^j_1$ converges to $h^{\infty}_1$ in $\isd$ and hence it follows from Proposition \ref{P01} that $\tilde{h}_i^j$ converges to $h_i^{\infty}$ in $I_i$. In particular, $\{h^{\infty}_i\}$ is the sequence induced by $h^{\infty}_1$. It follows from \eqref{EX209b} that
\begin{align}\label{EX209c}
\| \pi_{\isd} (\sum_{l=2}^{k} \frac{1}{l! \|h^j_1\|^{k-l}_{L^2}} M_l (h^{\infty}_1,\cdots,h^{\infty}_{l-1}))\|_{L^2} \overset{j \to \infty}{\longrightarrow} 0,
\end{align} 
where we have used the homogeneity of $M_l$. It is easy to derive from \eqref{EX209c} that
\begin{align} \label{EX209d}
\pi_{\isd}( M_l (h^{\infty}_{1},\cdots,h^{\infty}_{l-1}))=0
\end{align}
for any $2 \le l \le k$. 
We claim that for any $2 \leq l \leq k$,
\begin{align} \label{EX209e}
M_l (h^{\infty}_1,\cdots,h^{\infty}_{l-1}) \perp \im(\dbf^*).
\end{align}
Indeed, \eqref{EX209e} for $l=2$ follows from Lemma \ref{lem:perp1}. Combining \eqref{EX209d} and \eqref{EX209e} for $l=2$, we conclude that 
\begin{align*}
\pi_{(\ker \Phi')^{\perp}}( M_2 (h^{\infty}_1))=M_2 (h^{\infty}_1)
\end{align*}
since $(\ker \Phi')^{\perp}=\ker(\dbf) \cap \isd^{\perp}$. Therefore, it follows from \eqref{E203} that
\begin{align} \label{EX209f}
\Phi'(h^{\infty}_2)+ M_2 (h^{\infty}_1)=0.
\end{align}
Consequently, it follows from \eqref{EX209f} and Lemma \ref{lem:perp1} that \eqref{EX209e} holds for $l=3$. By induction, \eqref{EX209e} holds for any $2 \le l \le k$ and hence
\begin{align*}
\Phi' (h^{\infty}_l) + M_l(h^{\infty}_{1},\cdots,h^{\infty}_{l-1}) = 0
\end{align*}
for any $2 \le l \le k$. Therefore, it implies that $h^{\infty}_1$ is strongly integrable up to order $k$, which leads to a contradiction.
\end{proof}
\begin{cor} \label{cor:rigid1}
$(N^{n},\bar g, \bar f)$ is rigid only if there exists $k \in \mathbb{N}$ such that $(N^{n},\bar g, \bar f)$ has an obstruction of order $k$.
\end{cor}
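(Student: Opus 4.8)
The plan is to prove the contrapositive: assuming $(N^n,\bar g,\bar f)$ has no obstruction of order $k$ for \emph{every} $k\in\N$, I will produce a nontrivial real-analytic family of Ricci shrinker metrics through $\bar g$, so that $\bar g$ is not isolated in the moduli space and $(N^n,\bar g,\bar f)$ is not rigid. First I would use Proposition~\ref{P202} to translate the hypothesis: for each $k\ge 2$ there is a nontrivial $h_1^{(k)}\in\isd$, which we rescale so that $\|h_1^{(k)}\|_{C^{\alpha}}=1$, that is strongly integrable up to order $k$. As in the proof of Proposition~\ref{P202}, since each $h_l^{(k)}$ lies in $(\ker\Phi')^{\perp}=\ker(\dbf)\cap\isd^{\perp}$ and $\Phi'$ preserves this subspace, the tensors $h_2^{(k)},\dots,h_k^{(k)}$ must coincide with the sequence induced by $h_1^{(k)}$, and they satisfy $\Phi'(h_i^{(k)})+M_i(h_1^{(k)},\dots,h_{i-1}^{(k)})=0$ for $2\le i\le k$.

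Next I would pass to a limit. Since $\isd$ is finite-dimensional, after taking a subsequence $k_j\to\infty$ we have $h_1^{(k_j)}\to h_1^{\infty}\in\isd$ with $\|h_1^{\infty}\|_{C^{\alpha}}=1$, so $h_1^{\infty}\ne 0$. Running the elliptic-estimate/continuity argument in the proof of Proposition~\ref{P01}, the induced sequences converge, $h_i^{(k_j)}\to h_i^{\infty}$ in $C^{2,\alpha}$, where $\{h_i^{\infty}\}$ is the sequence induced by $h_1^{\infty}$; passing to the limit in the equations above (for each fixed $i$, valid once $k_j\ge i$) gives $\Phi'(h_i^{\infty})+M_i(h_1^{\infty},\dots,h_{i-1}^{\infty})=0$ with $h_i^{\infty}\in\ker(\dbf)\cap\isd^{\perp}$ for \emph{all} $i\ge 2$. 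Thus $h_1^{\infty}$ is strongly integrable up to every order; equivalently, $g(t)=\bar g+\sum_{i\ge 1}\tfrac{t^i}{i!}h_i^{\infty}$ is a formal power-series solution of $\Phi=0$.

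Finally I would upgrade this formal solution to an honest curve. Restrict the operator $\Phi$ of Definition~\ref{def:op1} --- which is real-analytic because $f_{\tilde g}$ depends analytically on $\tilde g$, by \cite[Lemma~2.2]{SW15} --- to the slice $\bar g+\ker(\dbf)$, where $\Phi'=\tfrac12\LL$ has finite-dimensional kernel $\isd$ and restricts to an isomorphism of $\ker(\dbf)\cap\isd^{\perp}$. A Lyapunov--Schmidt reduction, solving $\pi_{\isd^{\perp}}\Phi(\bar g+v+w)=0$ for an analytic $w=w(v)$, $v\in\isd$, $w(0)=0$, via the analytic implicit function theorem (\cite[Theorem~3.12]{FNSS73}), reduces $\Phi=0$ on the slice to the finite-dimensional analytic equation $\Psi(v):=\pi_{\isd}\Phi(\bar g+v+w(v))=0$ near $0$. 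By uniqueness of the formal solution $w$, the formal curve forces $\Psi(t\,h_1^{\infty})$ to vanish to infinite order at $t=0$; but $t\mapsto\Psi(t\,h_1^{\infty})$ is a real-analytic $\isd$-valued function of one variable, hence vanishes identically for $|t|$ small. Therefore $g(t)=\bar g+t\,h_1^{\infty}+w(t\,h_1^{\infty})$ is a genuine real-analytic family with $\Phi(g(t))=0$; it is nontrivial in the moduli space since $\dot g(0)=h_1^{\infty}\in\ker(\dbf)$ is nonzero while the tangent to any $\mathrm{Diff}(N)$-orbit lies in $\im(\dbf^{*})$ and $\ker(\dbf)\cap\im(\dbf^{*})=0$ by Proposition~\ref{prop:decom1}. (One may instead invoke the deformation theory of Podest\`a--Spiro \cite{PS15} and Kr\"oncke \cite{Kr16} for exactly this promotion step.)

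I expect the main obstacle to be precisely this last step --- passing from a purely formal deformation to a convergent one --- which is where real-analyticity of $\Phi$ (through the analytic dependence of the minimizer $f_{\tilde g}$) and finite-dimensionality of $\isd$ are essential, ultimately reducing matters to the elementary fact that a one-variable real-analytic function vanishing to infinite order at a point vanishes near it. The compactness step producing a \emph{single} $h_1^{\infty}$ integrable to all orders from the order-by-order hypotheses is softer, but indispensable: a priori the $h_1^{(k)}$ could differ for different $k$, whereas the deformation theory needs one formally integrable tensor as input.
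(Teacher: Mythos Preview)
Your proof is correct and follows essentially the same route as the paper: prove the contrapositive, use Proposition~\ref{P202} and compactness of the unit sphere in $\isd$ to extract a single $h_1^{\infty}\in\isd$ that is strongly integrable to every order, and then promote the resulting formal deformation to a genuine one. The only difference is presentational: the paper packages the entire promotion step into a one-line citation of \cite[Lemma~5.2]{Kr16}, whereas you unpack that lemma by carrying out the Lyapunov--Schmidt reduction and invoking real-analyticity of $\Phi$ explicitly (and then note at the end that one could cite \cite{PS15,Kr16} instead).
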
 
\begin{proof}
Suppose $(N^{n},\bar g, \bar f)$ does not have an obstruction of order $k$ for any $k \in \N^+$. It follows from Proposition \ref{P202} that there exists $h_1^k \in \isd$, which is strongly integrable up to order $k$. By taking a subsequence, $h_1^k$ converges to $h_1$ in $\isd$ such that $h_1$ is strongly integrable up to any order. Then it follows from \cite[Lemma 5.2]{Kr16} that $(N^{n},\bar g, \bar f)$ is not rigid, which contradicts our assumption.
\end{proof}
Next, we introduce another equivalent definition. 

\begin{defn} \label{def:hol}
We say $(N^{n},\bar g, \bar f)$ satisfies the \textbf{rigidity inequality of order $k$} if there exist positive constants $\ep$ and $C$ satisfying the following property.

For any $g \in C^{2,\alpha}(S^2(N))$ with $\|g- \bar{g} \|_{C^{2,\alpha}} < \epsilon$, there exists a $C^{3,\alpha}$ self-diffeomorphism $\varphi$ of $N$ such that $\|\varphi- \mathrm{Id} \|_{C^{3,\alpha}} \le C \|\dbf(g- \bar{g}) \|_{C^{1,\alpha}}$ and
\begin{align}\label{EX210a}
\|\varphi^{*} g - \bar{g} \|^{k}_{C^{2,\alpha}} \leq C \| \Phi(g) \|_{C^{\alpha}}.
\end{align}
\end{defn}
\begin{thm}
\label{thm:hold}
$(N^{n},\bar g, \bar f)$ has an obstruction of order $k$ if and only if $(N^{n},\bar g, \bar f)$ satisfies the rigidity inequality of order $k$.
\end{thm}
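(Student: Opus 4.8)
The plan is to prove the two directions separately, using the Lyapunov–Schmidt reduction attached to the operator $\Phi$ near $\bar g$. First I fix, via Theorem \ref{TH202}, the gauge: given $g$ with $\|g-\bar g\|_{C^{2,\alpha}}<\epsilon$, we produce $\varphi$ with $\|\varphi-\mathrm{Id}\|_{C^{3,\alpha}}\le C\|\dbf(g-\bar g)\|_{C^{1,\alpha}}$ and $\dbf(\varphi^*g-\bar g)=0$; set $h:=\varphi^*g-\bar g\in\ker(\dbf)$. Since $\Phi(\varphi^*g)=\Phi(g)$ up to the diffeomorphism-invariance of the construction, it suffices to relate $\|h\|_{C^{2,\alpha}}^k$ to $\|\Phi(\bar g+h)\|_{C^\alpha}$ for $h\in\ker(\dbf)$ small. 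On $\ker(\dbf)$ we have $\Phi'=\tfrac12\LL$ by Lemma \ref{lem:ephi}, and $\ker(\Phi')\cap\ker(\dbf)=\isd$ with $L^2$-complement $(\ker\Phi')^\perp=\ker(\dbf)\cap\isd^\perp$ on which $\LL$ is invertible. Write $h=a+b$ with $a=\pi_{\isd}(h)$, $b\in(\ker\Phi')^\perp$. The Lyapunov–Schmidt argument (as in Kröncke \cite{Kr16}, or standard implicit-function theory on the complement) shows that, modulo the gauge already fixed, $b$ is determined by $a$ up to order-$k$ error: precisely, if $\{h_i\}$ is the sequence induced by $a=h_1$ in the sense of Definition \ref{def:ind}, then $b=\sum_{i=2}^{k}\tfrac1{i!}h_i+O(\|a\|^{k+1})$ whenever $\|\Phi(\bar g+h)\|$ is small compared with $\|a\|^k$, and moreover the $\isd$-component of $\Phi(\bar g+h)$ equals $\pi_{\isd}\big(\sum_{l=2}^k\tfrac1{l!}M_l(h_1,\dots,h_{l-1})\big)+O(\|a\|^{k+1})$ while the $(\ker\Phi')^\perp$-component controls $\|b-\sum\tfrac1{i!}h_i\|$. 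I will record this as a lemma, obtained by Taylor-expanding $\Phi(\bar g+a+b)$ to order $k$ using the operators $M_l$ from \eqref{EX204}, the invertibility of $\LL$ on the complement (elliptic estimates on the compact $N$), and Proposition \ref{P01} to bound the induced tensors.

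For the "obstruction $\Rightarrow$ rigidity inequality" direction: assume the obstruction inequality \eqref{E205}. Decompose $\Phi(\bar g+h)=\pi_{\isd}\Phi(\bar g+h)+\pi_{(\ker\Phi')^\perp}\Phi(\bar g+h)$ (the $\im(\dbf^*)$-component vanishes since $h\in\ker(\dbf)$ — cf. Lemma \ref{lem:perp1} and Lemma \ref{lem:kphi}). The second component, by invertibility of $\LL$ on $(\ker\Phi')^\perp$ and the reduction lemma, controls $\|b-\sum_{i\ge2}\tfrac1{i!}h_i\|_{C^{2,\alpha}}$ up to $O(\|a\|^{k+1})$; hence if $\|\Phi(\bar g+h)\|_{C^\alpha}$ is very small then $b$ is forced to be the induced tail plus a negligible error, so $\|b\|\lesssim\|a\|^2$ and in particular $\|h\|\sim\|a\|$. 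The first component, by \eqref{E205} together with the reduction lemma, satisfies $\|\pi_{\isd}\Phi(\bar g+h)\|_{C^\alpha}\gtrsim\|a\|_{L^2}^k-C\|a\|^{k+1}\gtrsim\|a\|^k\sim\|h\|^k$ (using equivalence of norms on the finite-dimensional $\isd$). Combining, $\|\Phi(\bar g+h)\|_{C^\alpha}\ge c\|h\|_{C^{2,\alpha}}^k$, which is \eqref{EX210a}; the borderline case where $\|\Phi\|$ is not small compared to $\|h\|^k$ is immediate. Translating back through the gauge gives the stated inequality for $g$.

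For the converse "rigidity inequality $\Rightarrow$ obstruction": suppose the inequality \eqref{EX210a} holds but the obstruction fails at order $k$. Then, as in the proof of Proposition \ref{P202}, there are $h_1^j\in\isd$ with $\|h_1^j\|_{L^2}\to0$ and $\|\pi_{\isd}(\sum_{l=2}^k\tfrac1{l!}M_l(h_1^j,\dots,h_{l-1}^j))\|_{L^2}=o(\|h_1^j\|_{L^2}^k)$, where $\{h_i^j\}$ is induced by $h_1^j$. Form the candidate metric $g^j:=\bar g+\sum_{i=1}^k\tfrac1{i!}h_i^j$ (this is already in the gauge $\ker(\dbf)$ since all $h_i^j\in\ker(\dbf)$, so the diffeomorphism $\varphi$ afforded by \eqref{EX210a} can be taken to be $\mathrm{Id}$ up to a controlled correction, because $\dbf(g^j-\bar g)=0$). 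By the Taylor expansion of $\Phi$ in the operators $M_l$, $\Phi(g^j)=\pi_{\isd}(\sum_{l=2}^k\tfrac1{l!}M_l(h_1^j,\dots))+O(\|h_1^j\|^{k+1})$, which by hypothesis is $o(\|h_1^j\|_{L^2}^k)=o(\|g^j-\bar g\|_{C^{2,\alpha}}^k)$; but \eqref{EX210a} then forces $\|g^j-\bar g\|_{C^{2,\alpha}}^k\le C\|\Phi(g^j)\|_{C^\alpha}=o(\|g^j-\bar g\|^k)$, a contradiction for $j$ large. (One should check the diffeomorphism in \eqref{EX210a} cannot destroy the lower bound: since $\|\varphi-\mathrm{Id}\|_{C^{3,\alpha}}\le C\|\dbf(g^j-\bar g)\|_{C^{1,\alpha}}=0$, in fact $\varphi=\mathrm{Id}$, so $\varphi^*g^j=g^j$.)

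The main obstacle I expect is the reduction lemma itself — namely making precise and uniform (over small $h\in\ker(\dbf)$) the statement that $\Phi(\bar g+h)$ splits, modulo $O(\|\pi_{\isd}h\|^{k+1})$, into the finite-dimensional obstruction polynomial $\pi_{\isd}(\sum\tfrac1{l!}M_l)$ plus a term equivalent to $\|b-(\text{induced tail})\|$. This requires carefully matching the formal Taylor coefficients $\Phi^{(l)}$ against the combinatorial constants $C(k,l,k_1,\dots,k_l)$ in \eqref{EX204}, using the self-adjointness of $\Phi'$ (Lemma \ref{lem:ephi}), the invertibility of $\LL=2\Phi'$ on $(\ker\Phi')^\perp$ with the attendant Schauder estimates on the compact manifold $N$, Proposition \ref{P01} for the size of the induced tensors, and Lemma \ref{lem:perp1} to kill the $\im(\dbf^*)$ components at each order. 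The bookkeeping is exactly that of a Lyapunov–Schmidt/Kuranishi reduction, but one must be vigilant that all constants are independent of the particular $h_1\in\isd$, which is where the finite-dimensionality of $\isd$ (and of each $I_k$, Lemma \ref{lem:finited}) is used.
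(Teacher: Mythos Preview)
Your forward direction (obstruction $\Rightarrow$ rigidity inequality) is essentially the paper's Steps~2--6, phrased in Lyapunov--Schmidt language, and the outline is correct. One side remark: your assertion that ``the $\im(\dbf^*)$-component [of $\Phi(\bar g+h)$] vanishes since $h\in\ker(\dbf)$'' via Lemma~\ref{lem:perp1} is not quite what that lemma says, but it is also not needed --- the paper simply bounds $\|\pi_{\isd}(\cdot)\|$ by the full $\|\Phi(g)\|$.

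The converse direction has a genuine gap. For the induced sequence $\{h_i^j\}$ one has, by Definition~\ref{def:ind}, $\Phi'(h_l^j)+M_l=\pi_{\ker\Phi'}(M_l)$, so the Taylor expansion of your test metric gives
\[
\Phi(g^j)=\sum_{l=2}^{k}\tfrac{1}{l!}\,\pi_{\ker\Phi'}\bigl(M_l(h_1^j,\dots,h_{l-1}^j)\bigr)+O(\|h_1^j\|^{k+1}),
\]
with $\pi_{\ker\Phi'}$, \emph{not} $\pi_{\isd}$, on the right. Since $\ker\Phi'=\isd\oplus\im(\dbf^*)$ (Lemma~\ref{lem:kphi}), there is an $\im(\dbf^*)$-component you have not controlled. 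Lemma~\ref{lem:perp1} gives $M_{l}\perp\im(\dbf^*)$ only when the \emph{full} equations $\Phi'(h_i)+M_i=0$ hold for all $i<l$; for the merely induced sequence this would require $\pi_{\isd}(M_i)=0$ for each $i<l$ separately, which your hypothesis $\|\pi_{\isd}(\sum_l M_l/l!)\|=o(\|h_1^j\|^k)$ does not give. Thus $\|\Phi(g^j)\|=o(\|h_1^j\|^k)$ is unjustified.

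The paper sidesteps this by routing through Proposition~\ref{P202}: failure of the obstruction of order $k$ produces a nonzero $h_1\in\isd$ that is \emph{strongly integrable} up to order $k$; for such $h_1$ the full equations $\Phi'(h_l)+M_l=0$ hold for $l\le k$, so $d_t^l|_{t=0}\Phi(g(t))=0$ exactly and $\|\Phi(g(t))\|_{C^\alpha}\le Ct^{k+1}$ with no $\im(\dbf^*)$ residue. The contradiction with \eqref{EX210a} then follows as you wrote (and your observation that $\dbf(g^j-\bar g)=0$ forces $\varphi=\mathrm{Id}$ is correct and slightly cleaner than the paper's argument). An alternative repair would be to use the diffeomorphism-invariance of $\boldsymbol\mu$ to get $\Phi(g^j)\perp_{g^j,f_{g^j}}\im(\delta_{g^j}^*)$ exactly, whence $\|\pi_{\im(\dbf^*)}\Phi(g^j)\|\le C\|h_1^j\|\,\|\Phi(g^j)\|$ and absorb; but this is a separate argument, not a consequence of Lemma~\ref{lem:perp1}.
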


\begin{proof}
The proof is divided into several intermediate steps.

\textbf{Step 1}: We first assume $(N^{n},\bar g, \bar f)$ satisfies the rigidity inequality of order $k$. Suppose $(N^{n},\bar g, \bar f)$ does not have an obstruction of order $k$, then it follows from Proposition \ref{P202} that there exists nonzero $h_1 \in \isd$ which is strongly integrable up to order $k$.

We define 
\begin{align*}
g(t)=\bar g+\sum_{l=1}^{k}\frac{t^l}{l!} h_l,
\end{align*}
where $\{h_i\}$ is induced by $h_1$. It is clear by our assumption that for any $1 \le l \le k$,
\begin{align*}
d^l_{t^l}\vert_{t=0} \Phi(g(t)) = \Phi'(h_{l})+M_{l}(h_{1},\cdots,h_{l-1}) = 0.
\end{align*}
Since $\Phi$ is analytic from $C^{2,\alpha}(S^2(N))$ to $C^{\alpha}(S^2(N))$, there exists $C>0$ such that 
\begin{align} \label{EX211a}
\|\Phi(g(t))\|_{C^{\alpha}} \leq C t^{k+1}
\end{align}
for all $t$ sufficiently small. We claim that there exists a small constant $c_0>0$ such that if $t$ is sufficiently small,
\begin{align} \label{EX211b}
\|\varphi^{*} g(t) - \bar{g} \|_{C^{2,\alpha}} \ge c_0 t
\end{align}
for any $C^{3,\alpha}$ self-diffeomorphism $\varphi$ of $N$ such that $\|\varphi- \mathrm{Id} \|_{C^{3,\alpha}} \le C \|\dbf(g(t)- \bar{g}) \|_{C^{1,\alpha}}$. Indeed, if $t$ is small, $\varphi^* \bar g- \bar g$ almost lies in $\dbf^*\lc C^{3,\alpha}(T^* N)\rc$. Consequently, $\varphi^* \bar g- \bar g$ is almost perpendicular to $g(t)-\bar g$ by \eqref{EX202a} since all $h_i \in \ker(\dbf)$. Therefore,
\begin{align*}
\|\varphi^{*} g(t) - \bar{g} \|_{C^{2,\alpha}} \ge c_1 \|g(t) - (\varphi^{-1})^*\bar{g} \|_{C^{2,\alpha}} \ge \frac{c_1}{2} \|g(t)-\bar g \|_{L^2} \ge c_0t.
\end{align*}
However, \eqref{EX211a} and \eqref{EX211b} contradict the rigidity inequality of order $k$, if $t$ is sufficiently small.

From now on, we assume $(N^{n},\bar g, \bar f)$ has an obstruction of order $k$. By Theorem \ref{TH202}, we may modify any $g$ in a small $C^{2,\alpha}$-neighborhood of $\bar g$ by a diffeomorphism such that $\dbf(g-\bar g)=0$. We set $h=g-\bar g$ and $[\cdot]_l:=\sum_{0 \le i \le l}|\na^i \cdot|$ for simplicity. In addition, we define
\begin{align} \label{EX212a}
h_1=\pi_{\isd}(h) \quad \text{and} \quad \frac{r_l}{l!}:=h-\sum_{i=1}^{l-1} \frac{h_i}{i !},
\end{align}
where $\{h_i\}$ is induced by $h_1$. It is clear from \eqref{EX212a} that $r_l \in \ker(\dbf) \cap \isd^{\perp}$ for any $l \ge 1$.

We set
\begin{align*}
g(t)=\bar g+\sum_{l=1}^{k-1} \frac{t^l}{l!}h_l+\frac{t^k}{k!}r_k, \quad f(t):=f_{g(t)}, \quad \phi(t):=\Phi(g(t)) \quad \text{and} \quad \tau:=\|h_1\|_{L^2},
\end{align*}
where $f_{g(t)}$ is from \eqref{EX201b}. By our assumption, $\tau$ is sufficiently small and hence $\|h_i\|_{C^{2,\alpha}} \le C \tau^i$ by \eqref{EX207a} for $1 \le i \le k$. In particular, it implies that $\|r_i\|_{C^{2,\alpha}}$ is small for any $1 \le i \le k$. From the definition of $g(t)$, we may assume $\|g(t)-\bar g\|_{C^{2,\alpha}}$ is sufficiently small for all $t \in [0,1]$.

\textbf{Step 2}: We next prove
\begin{align}
\|f^{(l)}(t) \|_{W^{2,2}} &\le C \tau^l, \quad \forall 1 \le l \le k-1, \label{EX213a}\\
\|f^{(k)}(t) \|_{W^{2,2}} &\le C (\tau^k+\|r_k\|_{W^{2,2}}), \label{EX213b} \\
\|f^{(k+1)}(t) \|_{W^{2,2}} &\le C \tau (\tau^k+\|r_k\|_{W^{2,2}}), \label{EX213c}
\end{align}
uniformly for all $t \in [0,1]$.

From \eqref{EX201b} and \eqref{EX201c}, we obtain
\begin{align}\label{EX214a}
2\Delta f-|\na f|^2+R+f-n=0,
\end{align}
where the metric and $f$ are taken with respect to $g(t)$ and $f(t)$. By taking the derivative of $\eqref{EX214a}$ regarding $t$, we obtain
\begin{align}\label{EX214b}
|(2\Delta_f+1)f'| \le C \|h_1\|_{C^2} \le C\tau.
\end{align}
Since $1/2$ is not an eigenvalue of $\ddf$ and $g-\bar g$ is small in $C^{2,\alpha}$, we conclude from \eqref{EX214b} and the standard elliptic estimate that
\begin{align*}
\|f'\|_{W^{2,2}} \le C \|(2\Delta_f+1)f'\|_{L^2} \le C \tau.
\end{align*}
Therefore, \eqref{EX213a} holds for $l=1$. In general, if \eqref{EX213a} holds for $l-1$, it follows from the $l$-th derivative of \eqref{EX214a} that
\begin{align*}
|(2\Delta_f+1)f^{(l)}| \le C\tau^l
\end{align*}
and the same argument as before yields \eqref{EX213a} for $l$. In addition, \eqref{EX213b} and \eqref{EX213c} also hold by noting that
\begin{align*}
|(2\Delta_f+1)f^{(k)}| \le C(\tau^k+[r_k]_2) \quad \text{and} \quad |(2\Delta_f+1)f^{(k+1)}| \le C\tau (\tau^k+[r_k]_2).
\end{align*}

\textbf{Step 3}: For all $t \in [0,1]$, we show
\begin{align}
\|\phi^{(k+1)}(t) \|_{L^2} &\le C \tau (\tau^k+\|r_k\|_{W^{2,2}}). \label{EX215a}
\end{align}

By our definition, 
\begin{align*}
\phi(t)=\frac{g}{2}-\Rc(g)-\na^2_g f.
\end{align*}
Therefore, it is clear from \eqref{EX213a}, \eqref{EX213b} and \eqref{EX213c} that
\begin{align*}
|\phi^{(k+1)}(t) | \le C\tau (\tau^k+[r_k]_2)
\end{align*}
and hence \eqref{EX215a} follows.

\textbf{Step 4}: We next prove
\begin{align}\label{EX216a}
\| r_k \|_{W^{2,2}} \leq C ( \|\Phi(g)\|_{L^2} + \tau^k).
\end{align}
From the Taylor expansion and \eqref{EX215a}, we obtain
\begin{align}
\label{EX216b}
\|\phi(1) - \sum_{l=1}^{k} \frac{\phi^{(l)}(0)}{l!} \|_{L^2} \leq C\max_{t \in [0,1]} \| \phi^{(k+1)}(t)\|_{L^2} \le C\tau (\tau^{k}+\|r_k\|_{W^{2,2}}).
\end{align}
For any $1 \leq l \leq k-1$, it follows from \eqref{E203} that
\begin{align} \label{EX216bb}
\phi^{(l)}(0) = \Phi'(h_l) + M_l(h_1,\cdots,h_{l-1})=\pi_{\ker(\Phi')}(M_l(h_1,\cdots,h_{l-1})) \in \ker(\Phi').
\end{align}
Moreover, we have
\begin{align} \label{EX216c}
\phi^{(k)}(0) = \Phi'(r_k) + M_{k}(h_1,\cdots,h_{k-1})
\end{align}
Since $\|M_{k}(h_1,\cdots,h_{k-1})\|_{C^0} \le C \tau^k$, it follows from \eqref{EX216c} that
\begin{align} \label{EX216d}
\|\phi^{(k)}(0) - \Phi'(r_k) \|_{C^0} \le C \tau^k.
\end{align}
Combining \eqref{EX216b}, \eqref{EX216bb} and \eqref{EX216d}, we obtain 
\begin{align} \label{EX216e}
\|\Phi'(r_k) \|_{L^2} \le C (\tau^k+\tau \|r_k\|_{W^{2,2}}+\|\Phi(g)\|_{L^2}),
\end{align} 
where we have used the fact that $\Phi' (r_k) \perp \ker(\Phi')$. Since $r_k \in \ker(\dbf) \cap \isd^{\perp}$, it follows from \eqref{EX216e} and the standard elliptic estimate that
\begin{align*}
\|r_k\|_{W^{2,2}} \leq \|\Phi'(r_k) \|_{L^2} \le C (\tau^k+\tau \|r_k\|_{W^{2,2}}+\|\Phi(g)\|_{L^2}),
\end{align*} 
and consequently \eqref{EX216a} holds if $\tau$ is sufficiently small. 
Notice that the same argument also yields
\begin{align}\label{EX217aa}
\|r_2\|_{C^{2,\alpha}} \leq C (\|\Phi(g)\|_{C^{\alpha}}+\tau^2).
\end{align} 
since by the Taylor expansion for $\tilde \phi(t):=\Phi(\bar g+h_1t+r_2t/2)$
\begin{align*}
\|\tilde \phi(1) - \tilde \phi'(0) \|_{C^{2,\alpha}}=\|\tilde \phi(1) - \Phi'(r_2) \|_{C^{2,\alpha}} \leq C\max_{t \in [0,1]} \| \tilde \phi^{(2)}(t)\|_{C^{\alpha}} \le C\tau (\tau+\|r_2\|_{C^{2,\alpha}}).
\end{align*}
\textbf{Step 5}: We next prove
\begin{align}\label{EX217a}
\|h_1\|^k_{L^2}+\|h\|^k_{W^{2,2}} \leq C \|\Phi(g)\|_{L^2}.
\end{align} 

From Definition \ref{def:kob}, we have
\begin{align}
\|h_1\|^{k}_{L^2} \le& C \| \pi_{\isd}(\sum_{l=2}^{k} \frac{1}{l!} M_l (h_1,\cdots,h_{l-1})) \|_{L^2} \notag \\
=& C \| \pi_{\isd}(\sum_{l=1}^{k} \frac{\phi^{(l)}(0)}{l!}) \|_{L^2} \le C \| \sum_{l=1}^{k} \frac{\phi^{(l)}(0)}{l!}\|_{L^2} \notag \\
\le& C\|\Phi(g)\|_{L^2}+C\tau(\tau^k+\|r_k\|_{W^{2,2}}) \le C\|\Phi(g)\|_{L^2}+C\tau(\tau^k+\|\Phi(g)\|_{L^2}), \label{EX217b}
\end{align} 
where we have used \eqref{EX216b}, \eqref{EX216bb}, \eqref{EX216c} and \eqref{EX216a}. Therefore, it follows from \eqref{EX217b} that
\begin{align}\label{EX217c}
\|h_1\|^k_{L^2} \leq C \|\Phi(g)\|_{L^2}.
\end{align} 
In addition,
\begin{align*}
\| h \|^k_{W^{2,2}}\le C \lc \sum_{i=1}^{k-1} \|h_i\|^k_{W^{2,2}} + \| r_k \|^k_{W^{2,2}} \rc \le C \lc \|h_1\|^k_{W^{2,2}} + \| r_k \|_{W^{2,2}} \rc \le C \| \Phi(g)\|_{L^2}
\end{align*}
from \eqref{EX217c} and \eqref{EX216a}. Therefore, \eqref{EX217a} follows.

\textbf{Step 6}: We are ready to prove the rigidity inequality.

We estimate from \eqref{EX217aa} and \eqref{EX217a} that
\begin{align*}
\|h\|_{C^{2,\alpha}} \leq \|h_1\|_{C^{2,\alpha}} + \|r_2\|_{C^{2,\alpha}} \leq C \tau + C \|\Phi(g)\|_{C^{\alpha}} + C \tau^2 \leq C \|\Phi(g)\|_{L^2}^{\frac{1}{k}} + C \|\Phi(g)\|_{C^{\alpha}} \le C \|\Phi(g)\|^{\frac 1 k}_{C^{\alpha}}.
\end{align*}

In sum, the proof is complete.
\end{proof}

\begin{rem}
From the proof of Theorem \ref{thm:hold} \emph{(cf. \eqref{EX217a})}, we obtain that if $(N^{n},\bar g, \bar f)$ has an obstruction of order $k$, then there exist positive constants $\ep$ and $C$ satisfying the following property.

For any $g \in C^{2,\alpha}(S^2(N))$ with $\|g- \bar{g} \|_{C^{2,\alpha}} < \epsilon$, there exists a $C^{3,\alpha}$ self-diffeomorphism $\varphi$ of $N$ such that $\|\varphi- \mathrm{Id} \|_{C^{3,\alpha}} \le C \|\dbf(g- \bar{g}) \|_{C^{1,\alpha}}$ and
\begin{align}\label{EX217d}
\|\varphi^{*} g - \bar{g} \|^{k}_{W^{2,2}} \leq C \| \Phi(g) \|_{L^2}.
\end{align}
\eqref{EX217d} can be regarded as the Sobolev version of the rigidity inequality on $(N^{n},\bar g, \bar f)$.
\end{rem}

\begin{rem}
If $(N^{n},\bar g, \bar f)$ is Einstein \emph{(i.e., $\bar f \equiv R(\bar g) \equiv n/2$)} and only the rigidity of the Einstein metrics are concerned, one can modify the concepts of obstruction of order $k$, not strongly integrable up to order $k$ and rigidity inequality of order $k$ by using the operator 
\begin{align*}
\tilde \Phi(g):=\frac{g}{2}-\Rc(g)
\end{align*}
instead of $\Phi(g)$. By the same proofs as before, all three concepts are also equivalent.
\end{rem}

As an application of Theorem \ref{thm:hold}, we prove the converse statement of Corollary \ref{cor:rigid1}.

\begin{cor} \label{cor:rigid2}
$(N^{n},\bar g, \bar f)$ is rigid if there exists $k \in \mathbb{N}$ such that $(N^{n},\bar g, \bar f)$ has an obstruction of order $k$.
\end{cor}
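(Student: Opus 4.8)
The goal is to prove Corollary \ref{cor:rigid2}: if $(N^n,\bar g,\bar f)$ has an obstruction of order $k$, then it is rigid. The natural plan is to run a contradiction argument using the rigidity inequality of order $k$ supplied by Theorem \ref{thm:hold}. Suppose $(N^n,\bar g,\bar f)$ is not rigid. Then by the Podestà–Spiro / Kröncke deformation theory there exists a nontrivial smooth one-parameter family $g(s)$ of Ricci shrinker metrics with $g(0)=\bar g$; applying Theorem \ref{TH202} we may pull back by a smooth family of diffeomorphisms $\varphi_s$ so that $g_s:=\varphi_s^*g(s)$ satisfies $\dbf(g_s-\bar g)=0$ and is still a family of Ricci shrinker metrics, i.e.\ $\Phi(g_s)\equiv 0$. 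Since the family is nontrivial and lies in the gauge $\ker(\dbf)$, we may reparametrize so that $\|g_s-\bar g\|_{C^{2,\alpha}}$ is comparable to $s$ for $s$ small (the first nonvanishing derivative of $g_s$ at $0$ lies in $\mathrm{ISD}$ by the discussion preceding Definition \ref{def:kob}).

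Then I would feed each $g_s$ into the rigidity inequality of order $k$ from Definition \ref{def:hol}, which Theorem \ref{thm:hold} guarantees holds under our obstruction hypothesis. For each small $s$ there is a $C^{3,\alpha}$ diffeomorphism $\psi_s$ with $\|\psi_s-\mathrm{Id}\|_{C^{3,\alpha}}\le C\|\dbf(g_s-\bar g)\|_{C^{1,\alpha}}=0$, so in fact $\psi_s=\mathrm{Id}$ is forced (or at worst $\psi_s$ is an isometry, which can be absorbed), and therefore
\begin{align*}
\|g_s-\bar g\|^k_{C^{2,\alpha}}\le C\|\Phi(g_s)\|_{C^{\alpha}}=0.
\end{align*}
Hence $g_s=\bar g$ for all small $s$, contradicting the nontriviality of the family. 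This contradiction shows $(N^n,\bar g,\bar f)$ must be rigid. An alternative, slightly cleaner route that avoids invoking the full deformation theory is to argue directly: if $(N,\bar g,\bar f)$ were not isolated in the moduli space, one could produce a sequence of genuine Ricci shrinker metrics $g_j\to\bar g$ in $C^{2,\alpha}$ (after gauge-fixing via Theorem \ref{TH202}) with $g_j\ne\bar g$, and the rigidity inequality again forces $g_j=\bar g$, a contradiction.

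The main subtlety — and the step I expect to require the most care — is handling the gauge diffeomorphism $\varphi$ (resp.\ $\psi_s$) in the rigidity inequality and making sure that ``$\varphi^*g_s=\bar g$'' genuinely implies ``$g_s$ is isometric to $\bar g$,'' and conversely that the deformations produced by the non-rigidity assumption can be put into the gauge $\dbf(g_s-\bar g)=0$ without losing smoothness or nontriviality. Once $\dbf(g_s-\bar g)=0$ the estimate $\|\varphi-\mathrm{Id}\|_{C^{3,\alpha}}\le C\|\dbf(g_s-\bar g)\|_{C^{1,\alpha}}$ collapses to $\varphi=\mathrm{Id}$, which is exactly what makes the argument close cleanly; the only thing to double-check is the reparametrization ensuring $\|g_s-\bar g\|_{C^{2,\alpha}}\gtrsim s$ so that the left-hand side of \eqref{EX210a} does not degenerate. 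Combining this corollary with Corollary \ref{cor:rigid1} then yields the full equivalence ``rigid $\iff$ obstruction of some order $\iff$ rigidity inequality of some order,'' which is precisely the content of Theorem \ref{thm:001}.
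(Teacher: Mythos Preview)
Your proposal is correct and follows essentially the same route as the paper: invoke Theorem \ref{thm:hold} to obtain the rigidity inequality, then observe that any nearby Ricci shrinker metric $g$ has $\Phi(g)=0$, whence \eqref{EX210a} forces $\varphi^*g=\bar g$ for some diffeomorphism $\varphi$, i.e.\ $g$ is isometric to $\bar g$. The paper's proof is literally this two-line argument; your detour through the Podest\`a--Spiro/Kr\"oncke one-parameter family and the pre-gauge-fixing via Theorem \ref{TH202} is unnecessary, since the diffeomorphism $\varphi$ in Definition \ref{def:hol} already handles the gauge --- your ``alternative, slightly cleaner route'' is exactly what the paper does.
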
 
\begin{proof}
From Theorem \ref{thm:hold}, $(N^{n},\bar g, \bar f)$ satisfies the rigidity inequality of order $k$. Suppose $(N^{n},\bar g, \bar f)$ is not rigid, there exists a nearby metric $g$ with $\Phi(g)=0$. However, \eqref{EX210a} implies that $g$ is isometric to $\bar g$, which is a contradiction.
\end{proof}

To summarize, it follows from Proposition \ref{P202} and Theorem \ref{thm:hold} that Definition \ref{D201}, Definition \ref{def:kob} and Definition \ref{def:hol} are equivalent. Moreover, by Corollary \ref{cor:rigid1} and Corollary \ref{cor:rigid2}, $(N^{n},\bar g, \bar f)$ is rigid if and only if $(N^{n},\bar g, \bar f)$ has an obstruction of order $k$ for some $k \in \mathbb{N}$. As a consequence, Theorem \ref{thm:001} follows immediately. See Figure \ref{dia:1} and Figure \ref{dia:2} below for illustration.

\newpage
\begin{center}
\captionof{figure}{Three equivalent definitions}
\vspace*{8mm}
\smartdiagramset{uniform color list=gray!20 for 3 items, arrow color=gray, uniform arrow color=true, text width=5.5cm,}

\smartdiagram[circular diagram:clockwise]{%
Rigidity inequality of order $k$, Not strongly integrable up to order $k$, Obstruction of order $k$}
\label{dia:1}
\end{center}

\begin{center}
\captionof{figure}{Characterization of the rigidity}
\vspace*{5mm}
\smartdiagramset{uniform color list=gray!20 for 2 items, arrow style=<->, back arrow disabled, arrow color=gray, uniform arrow color=true, text width=5.8cm, module x sep=7cm}

\smartdiagram[flow diagram:horizontal]{Rigidity, Rigidity inequality of some order $k$}
\label{dia:2}
\end{center}

For later applications, we define another auxiliary sequence similar to Definition \ref{def:ind}.

For any curve $g_t$ starting from $\bar g$, we set $h^{i}: =d_{t^i}^{i}|_{t=0} g_t$, $f_t:=f_{g_t}$ from \eqref{EX201b} and $f^{i}: =d_{t^i}^{i}|_{t=0} f_t$. Then it follows from \eqref{EX201c} that
\begin{align} \label{EX218a}
2 \Delta f_t - |\nabla f_t |^2 + R+f_t- n =0,
\end{align}
where the metric is taken for $g_t$. If we take $k$-th derivative at $t=0$ for \eqref{EX218a}, we obtain
\begin{align*}
(2 \ddf +1) f^k+L_k(h^1,\cdots, h^{k},f^1,\cdots, f^{k-1})=0,
\end{align*}
where $L_k$ is an operator. For instance, if $k=1$, then it follows from \eqref{E217} that
\begin{align*}
L_1(h^1)=\dbf^2 h^1-(\ddf+1/2)\tr_{\bar g}(h^1).
\end{align*}
In particular, $f^1=\tr_{\bar g}(h^1)/2$ if $h^1 \in \ker(\dbf)$. Next, we define

\begin{defn} \label{def:ind1}
Given $h_1 \in \isd$, we say the sequence of functions $\{f_i\}$ is \textbf{induced} by $h_1$ if $f_1=\tr_{\bar g}(h_1)/2$ and for any $l \ge 2$,
\begin{align}\label{EX218d}
(2 \ddf +1) f_l+L_l(h_1,\cdots, h_{l},f_1,\cdots, f_{l-1})=0,
\end{align}
where $\{h_i\}$ is the sequence of $2$-tensors induced by $h_1$.
\end{defn}
Since $1/2$ is not an eigenvalue of $\ddf$, one can solve $f_2,f_3,\cdots$ recursively from \eqref{EX218d}, and the solutions are unique. Therefore, it follows from \eqref{EX218a} that for any constant $\lambda$, $\{\lambda^i f_i\}$ is induced by $\lambda h_1$. From the homogeneity, the same argument as before indicates that all possible $f_i$ lies in a finite-dimensional linear space as $h_1$ varies in $\isd$. Moreover, the following result can be proved similarly to Proposition \ref{P01}.

\begin{prop}
\label{P01X}
Suppose $h_1 \in \isd$ and $\{f_i\}$ is the sequence of functions induced by $h_1$. Then for any $i \ge 1$,
\begin{align*}
\|f_i\|_{C^{2,\alpha}} \leq C \|h_1\|^i_{C^{\alpha}}.
\end{align*}
\end{prop}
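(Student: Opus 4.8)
The plan is to follow the compactness scheme in the proof of Proposition \ref{P01}, now using in addition the bounds \eqref{EX207a} for the induced $2$-tensors $\{h_i\}$. Two preliminary observations make the argument short. First, \emph{homogeneity}: since $1/2$ is not an eigenvalue of $\ddf$, the recursion \eqref{EX218d} determines each $f_l$ uniquely from $(h_1,\dots,h_l,f_1,\dots,f_{l-1})$, and $L_l$ is a finite sum of terms each homogeneous of total degree $l$ when weight $i$ is assigned to $h_i$ and to $f_i$ (because $L_l$ is the $l$-th $t$-derivative at $t=0$ of the fixed relation \eqref{EX218a}); hence one checks inductively, exactly as for $\{h_i\}$, that $\{\lambda^i f_i\}$ is the sequence induced by $\lambda h_1$. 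It therefore suffices to bound $\|f_i\|_{C^{2,\alpha}}$ uniformly over $h_1\in\isd$ with $\|h_1\|_{C^{\alpha}}=1$. Second, \emph{continuity}: each $L_l$ is a polynomial differential operator with coefficients built from the metric, connection and curvature of $\bar g$ (so a sum of contractions of $\na^{\le 2}h^i$ and $\na^{\le 2}f^i$ against $\bar g$-tensors), hence continuous from $C^{2,\alpha}$ to $C^{\alpha}$ in all its arguments; and $2\ddf+1=2(\ddf+1/2)$ is an isomorphism of $C^{2,\alpha}(N)$ onto $C^{\alpha}(N)$ with continuous inverse.

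With these in hand I would show by induction on $i$ that the map $h_1\mapsto f_i$ is continuous from $\isd$ (finite-dimensional, so all norms on it are equivalent) into $C^{2,\alpha}(N)$. The case $i=1$ is immediate from $f_1=\tr_{\bar g}(h_1)/2$. Assuming the claim for all indices $<k$, the right-hand side $-L_k(h_1,\dots,h_k,f_1,\dots,f_{k-1})$ of \eqref{EX218d} depends continuously on $h_1$: the tensors $h_1,\dots,h_k$ do so by (the proof of) Proposition \ref{P01}, which gives continuity of $h_1\mapsto h_i$ into $C^{2,\alpha}$, and $f_1,\dots,f_{k-1}$ do so by the inductive hypothesis; composing with the continuous inverse of $2\ddf+1$ then yields continuity of $h_1\mapsto f_k$. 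A continuous, degree-$i$-homogeneous map satisfies $\|f_i\|_{C^{2,\alpha}}\le\big(\sup_{\|h_1\|_{C^{\alpha}}=1}\|f_i\|_{C^{2,\alpha}}\big)\|h_1\|_{C^{\alpha}}^{\,i}$, and the supremum is finite by compactness of the unit sphere of $\isd$; this is the asserted estimate. Equivalently, one may simply transcribe the contradiction/compactness argument of Proposition \ref{P01}, using that all the $f_i$ arising this way lie in a fixed finite-dimensional space.

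I do not anticipate a real obstacle. The only step with any content is the continuity of $L_k$ in the $C^{2,\alpha}\to C^{\alpha}$ topology, which is a routine inspection of the form of the $t$-derivatives of \eqref{EX218a}. Everything else is already in place: $\isd$ is finite-dimensional, $2\ddf+1$ is invertible because $1/2\notin\mathrm{spec}(\ddf)$, and the tensors $\{h_i\}$ are controlled by \eqref{EX207a}.
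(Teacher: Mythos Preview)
Your proposal is correct and follows essentially the same route the paper indicates: homogeneity reduces to $\|h_1\|_{C^{\alpha}}=1$, and then a compactness/continuity argument on the finite-dimensional $\isd$ gives the uniform bound, exactly as in Proposition~\ref{P01}. The paper phrases this via the observation (analogous to Lemma~\ref{lem:finited}) that all possible $f_i$ span a finite-dimensional space, while you instead argue directly that $h_1\mapsto f_i$ is continuous into $C^{2,\alpha}$; these are equivalent formulations of the same argument, and you note as much in your final sentence.
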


Given $h_1 \in \isd$ with induced sequences $\{h_i\}$ and $\{f_i\}$, if we define
\begin{align*}
g(t):=\bar g+\sum_{l=1}^k \frac{t^l}{l!}h_l \quad \text{and} \quad f(t):=f_{g(t)}.
\end{align*}
Then it is clear from the uniqueness of the solution $f_l$ in \eqref{EX218d} that
\begin{align} \label{EX219c}
d_{t^i}^{i}|_{t=0} f(t)=f_i
\end{align}
for any $1 \le i \le k$. In other words, $f(t)$ can be approximated by $\bar f+\sum_{l=1}^k \dfrac{t^l}{l!}f_l$.


\section{Elliptic estimates on non-compact Ricci shrinkers}

In this section, we consider an $m$-dimensional non-compact Ricci shrinker $(M^m,g,f)$ with normalization \eqref{E101}. The notations and operators in Section 2 (cf. Definition \ref{def:not}) can be defined similarly on $(M,g, f)$.

If $(M,g)$ has bounded curvature, then $|\na^k \Rm|$ is also bounded by Shi's local estimates \cite{Shi89A}. Moreover, it follows from the no-local-collapsing theorem (see \cite[Theorem 4.2]{Naber} \cite[Theorem 23]{LW20}) that each unit ball of $M$ has a uniformly positive volume lower bound. In particular, $(M,g)$ has bounded geometry. 

First, we recall the following well-known estimate of $f$.

\begin{lem}[\cite{CZ10}\cite{HM11}]
\label{lem:pot}
There exists a point $p \in M$ where $f$ attains its infimum and $f$ satisfies the quadratic growth estimate
\begin{align*}
\frac{1}{4}\left(d(x,p)-5 m \right)^2_+ \le f(x) \le \frac{1}{4} \left(d(x,p)+\sqrt{2m} \right)^2
\end{align*}
for all $x\in M$, where $a_+ :=\max\{0,a\}$.
\end{lem}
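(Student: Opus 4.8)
## Proof Plan for Lemma \ref{lem:pot}

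The plan is to follow the now-classical argument of Cao--Zhou \cite{CZ10}, refined by Haslhofer--M\"uller \cite{HM11}, which derives the quadratic growth of $f$ directly from the shrinker equation \eqref{E100} together with the normalization \eqref{E101}. The starting point is the well-known identities for Ricci shrinkers: tracing \eqref{E100} gives $R + \Delta f = \tfrac{m}{2}$, and a standard computation using the contracted second Bianchi identity yields $\na R = 2\Rc(\na f, \cdot)$, hence $R + |\na f|^2 = f$ (which is exactly \eqref{E101}, the chosen normalization). From these one obtains the Hamilton-type identity $|\na f|^2 = f - R \le f$, and since $R \ge 0$ on any Ricci shrinker (a result of Chen), one also gets $|\na f|^2 \le f$ and $R \le f$ everywhere. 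The existence of a minimum point $p$ will follow once we show $f$ is proper and bounded below, which is a byproduct of the growth estimate itself.

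The key steps, in order, are as follows. First I would establish the two differential inequalities for the function $\rho(x) := 2\sqrt{f(x)}$ (defined where $f > 0$): from $|\na f|^2 \le f$ we get $|\na \rho| \le 1$, so $\rho$ is $1$-Lipschitz; this immediately gives the upper bound by integrating along a minimizing geodesic from the minimum point. To make this precise one first argues, following Cao--Zhou, that $f$ attains a minimum: using $|\na f|^2 = f - R$ and integrating the gradient estimate along geodesics emanating from any point shows $f$ grows at least linearly, hence $f$ is proper, hence (being continuous and bounded below, since $f = R + |\na f|^2 \ge 0$) attains its infimum at some point $p$; at $p$ one has $\na f(p) = 0$, so $f(p) = R(p) \ge 0$, and in fact a more careful analysis at the minimum gives the sharper normalization needed. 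Second, for the upper bound: along a unit-speed minimizing geodesic $\gamma$ from $p$ to $x$, $\tfrac{d}{ds}\sqrt{f(\gamma(s))} = \tfrac{\la \na f, \dot\gamma\ra}{2\sqrt{f}} \le \tfrac{|\na f|}{2\sqrt f} \le \tfrac12$, so $\sqrt{f(x)} \le \sqrt{f(p)} + \tfrac12 d(x,p)$; bounding $f(p)$ by $m/2$ (from $R(p) + \Delta f(p) = m/2$ and $\Delta f(p) \ge 0$ at a minimum, giving $f(p) = R(p) \le m/2$) and expanding yields $f(x) \le \tfrac14(d(x,p) + \sqrt{2m})^2$.

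Third, and this is the harder direction, the lower bound $f(x) \ge \tfrac14(d(x,p) - 5m)_+^2$. Here one cannot just integrate the Lipschitz bound on $\rho$, because $\na f$ could in principle vanish along the geodesic. The Cao--Zhou argument integrates the second variation / index form inequality: using \eqref{E100} in the form $\Rc = \tfrac12 g - \na^2 f$ along a minimizing geodesic $\gamma$ of length $r_0$, one plugs suitable test vector fields into the index inequality $\int_0^{r_0} (|\na_{\dot\gamma} V|^2 - \la \Rm(V,\dot\gamma)\dot\gamma, V\ra)\,ds \ge 0$, sums over an orthonormal frame, and obtains $\int_0^{r_0} \Rc(\dot\gamma,\dot\gamma)\,ds \le $ (an explicit constant, of order $m$). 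Combining with $\int_0^{r_0} \Rc(\dot\gamma,\dot\gamma)\,ds = \tfrac{r_0}{2} - \tfrac{d^2 f}{ds^2}$-type integrated identities — more precisely, $\tfrac{d}{ds}(f\circ\gamma) = \la \na f, \dot\gamma\ra$ and $\tfrac{d^2}{ds^2}(f\circ\gamma) = \na^2 f(\dot\gamma,\dot\gamma) = \tfrac12 - \Rc(\dot\gamma,\dot\gamma)$ — one integrates twice to get $f(\gamma(r_0)) \ge \tfrac{r_0^2}{4} - c\, m\, r_0 - C$, which after completing the square becomes the stated bound with the explicit constant $5m$. The main obstacle is bookkeeping the dimensional constants in the index-form estimate sharply enough to land the clean constant $5m$; conceptually there is nothing subtle beyond the standard comparison-geometry manipulation, but it must be done carefully — and of course, since the lemma is quoted verbatim from \cite{CZ10} and \cite{HM11}, the cleanest route is simply to cite those references and sketch only the gradient-estimate computation above.
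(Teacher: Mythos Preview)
Your proposal is correct and follows exactly the standard Cao--Zhou/Haslhofer--M\"uller argument; since the paper simply cites \cite{CZ10} and \cite{HM11} without giving its own proof, there is nothing further to compare. One minor clarification: the existence of the minimum point $p$ should logically come \emph{after} (or simultaneously with) the second-variation lower bound applied from an arbitrary basepoint, since the Lipschitz estimate $|\na\sqrt f|\le\tfrac12$ alone gives only an upper bound and cannot establish properness of $f$; you allude to this but the ordering in your sketch is slightly circular.
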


Lemma \ref{lem:pot} indicates $f$ grows like $d^2(p ,\cdot)/4$. For this reason, we define $b:=2\sqrt f$, which can be regarded as a replacement for the distance function. Moreover, we have the following volume estimate.
\begin{lem}[\cite{CZ10}\cite{HM11}]
\label{lem:volume}
There exists a constant $C=C(m)>0$ such that for any $r >0$,
\begin{align*}
|B(p,r)| \le Cr^m.
\end{align*}
\end{lem}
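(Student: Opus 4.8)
The plan is to reduce the statement to a volume bound for the sublevel sets of $f$ and then run the differential-inequality argument of Cao--Zhou. By Lemma \ref{lem:pot} we have, for every $r>0$,
\begin{align*}
B(p,r)\subseteq \Big\{x\in M:\ f(x)\le \tfrac14\big(r+\sqrt{2m}\big)^2\Big\},
\end{align*}
so it suffices to prove that the function $V(s):=\big|\{f\le s\}\big|$ satisfies $V(s)\le C(m)\, s^{m/2}$ for all $s$ large; for $r\le r_0$ one simply estimates $|B(p,r)|\le |B(p,r_0)|<\infty$ and enlarges the constant. First I would record the standard soliton identities obtained from \eqref{E100} and \eqref{E101}: tracing \eqref{E100} gives $\Delta f=\tfrac m2-R$, while \eqref{E101} gives $|\na f|^2=f-R$; together with the fact that $R\ge 0$ on a Ricci shrinker (Chen), so that $|\na f|^2\le f$ and in particular $|\na f|^2\le s$ on the level set $\{f=s\}$.

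Next I would set up the two quantities $V(s)=|\{f\le s\}|$ and $\chi(s):=\int_{\{f\le s\}}R\,dV$, and, for almost every regular value $s$, apply the divergence theorem on $\{f\le s\}$ (the outward conormal along $\{f=s\}$ being $\na f/|\na f|$) to get
\begin{align*}
\int_{\{f=s\}}|\na f|\,dA=\int_{\{f\le s\}}\Delta f\,dV=\tfrac m2 V(s)-\chi(s).
\end{align*}
Combining this with the coarea identity $V'(s)=\int_{\{f=s\}}|\na f|^{-1}\,dA$, the Cauchy--Schwarz inequality $\mathrm{Area}(\{f=s\})^2\le \big(\int_{\{f=s\}}|\na f|\big)\big(\int_{\{f=s\}}|\na f|^{-1}\big)$, and the pointwise bound $|\na f|\le \sqrt s$ on $\{f=s\}$, one arrives at a closed differential inequality for $V$ (and $\chi$) of the form produced in \cite{CZ10}, whose integration forces the polynomial bound $V(s)\le C s^{m/2}$ for $s\ge s_0$.

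The main obstacle is precisely the bookkeeping of the scalar-curvature term $\chi(s)$ in closing this differential inequality: one cannot simply discard it, and the argument hinges on the monotonicity/comparison built into $\tfrac m2 V(s)-\chi(s)=\int_{\{f=s\}}|\na f|\ge 0$ together with the quadratic growth of $f$ from Lemma \ref{lem:pot}, exactly as carried out by Cao--Zhou \cite{CZ10} and Haslhofer--M\"uller \cite{HM11}. Once $V(s)\le Cs^{m/2}$ is established, substituting $s=\tfrac14(r+\sqrt{2m})^2$ and absorbing the resulting constant yields $|B(p,r)|\le C(m)\,r^m$ for all $r>0$, completing the proof.
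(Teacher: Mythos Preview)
The paper does not supply a proof of this lemma; it is stated with citation to \cite{CZ10} and \cite{HM11} and immediately used. Your outline correctly reconstructs the Cao--Zhou argument: reduce via Lemma~\ref{lem:pot} to a sublevel-set volume bound, set $V(s)=|\{f\le s\}|$ and $\chi(s)=\int_{\{f\le s\}}R$, and combine the divergence theorem with the coarea formula and the soliton identities to obtain a first-order relation between $V$, $V'$, $\chi$, $\chi'$.

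One small correction on how the inequality closes: the Cauchy--Schwarz step you mention is not the route Cao--Zhou take. On $\{f=s\}$ one has the exact identity $|\na f|^2=f-R=s-R$, so
\[
\int_{\{f=s\}}|\na f|\,dA=\int_{\{f=s\}}\frac{|\na f|^2}{|\na f|}\,dA=sV'(s)-\chi'(s),
\]
which, combined with your divergence identity $\int_{\{f=s\}}|\na f|=\tfrac m2 V(s)-\chi(s)$, gives the closed ODE $\tfrac m2 V-\chi=sV'-\chi'$ directly, without any inequality. From this and $R\ge 0$ (hence $0\le\chi\le \tfrac m2 V$ and $\chi'\ge 0$) one integrates to get $V(s)\le Cs^{m/2}$ for $s\ge s_0$. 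Your handling of small $r$ and the final substitution are fine.
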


In particular, Lemma \ref{lem:pot} and Lemma \ref{lem:volume} imply that any tensor $T$ with polynomial growth belongs to $L^2$.

To deal with the integration by parts on non-compact Ricci shrinkers, we recall that there exists a family of cutoff functions constructed in \cite[Section 3]{LW20}. Specifically, we define
\begin{align*}
\phi^r:=\eta \lc \frac{f}{r} \rc,
\end{align*}
where $\eta$ is a fixed smooth function on $\R$ such that $\eta=1$ on $(-\infty,1]$ and $\eta=0$ on $[2,\infty)$. In particular, $\phi^r$ is identical $1$ if $f \le r$ and supported on $f \le 2r$. Moreover, we have
\begin{align} \label{eq:cutoff}
|\na \phi^r|^2 \le C r^{-1} \quad \text{and} \quad |\Delta_f \phi^r| \le C
\end{align}
for a constant $C=C(m)$. Here, the identity $\Delta_f f=m/2-f$ is crucial.

If we further assume $(M,g)$ has bounded curvature, it follows from the bound of $|\na^i \Rm|$ and the Ricci shrinker equation \eqref{E100} that for any $i \ge 2$,
\begin{align} \label{eq:cutoffaa}
|\na^i \phi^r| \le C_i r^{-i}.
\end{align}

We first derive the following formula for integration by parts for $(r,s)$-tensor fields, denoted by $T^{r,s}M$.

\begin{lem}[Integration by parts] \label{lem:ibp}
Let $S,T \in T^{r,s}M$ such that $S,T,\Delta_f S, \Delta_f T \in L^2$. Then $|\na S|, |\na T| \in L^2$ and 
\begin{align} \label{EY301}
\int \la \Delta_f S,T \ra \,dV_f=\int \la S, \Delta_f T \ra \,dV_f=-\int \la \na S, \na T \ra \,dV_f.
\end{align} 
\end{lem}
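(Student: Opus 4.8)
The strategy is the standard cutoff-and-limit argument, using the family $\phi^r$ recalled just before the statement. First I would establish the integrated-by-parts identity with the cutoff inserted: for fixed $r$, on the compact region $\{f \le 2r\}$ one has, since $\phi^r$ has compact support,
\begin{align*}
\int (\phi^r)^2 \la \Delta_f S, T \ra \, dV_f = -\int \la \na((\phi^r)^2 T), \na S \ra \, dV_f = -\int (\phi^r)^2 \la \na S, \na T \ra \, dV_f - 2\int \phi^r \la \na \phi^r \otimes T, \na S \ra \, dV_f,
\end{align*}
which is just $\int \la \Df u, v\ra\, dV_f = -\int \la \na u, \na v\ra\, dV_f$ applied with $u = S$, $v = (\phi^r)^2 T$, valid because the weighted divergence theorem holds for compactly supported sections against the measure $dV_f = e^{-f}\,dV_g$.

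The key analytic point, which I expect to be the main obstacle, is showing $|\na S| \in L^2$ (and likewise $|\na T|$) and that the error terms vanish as $r \to \infty$. Taking $S = T$ in the identity above and using Cauchy--Schwarz on the cross term, $2\int \phi^r |\la \na\phi^r \otimes S, \na S\ra| \le \frac12 \int (\phi^r)^2 |\na S|^2 + 2\int |\na \phi^r|^2 |S|^2$, one gets
\begin{align*}
\frac12 \int (\phi^r)^2 |\na S|^2 \, dV_f \le \left| \int (\phi^r)^2 \la \Df S, S\ra \, dV_f \right| + 2\int |\na \phi^r|^2 |S|^2 \, dV_f \le \|\Df S\|_{L^2}\|S\|_{L^2} + C r^{-1}\|S\|_{L^2}^2,
\end{align*}
using $|\na\phi^r|^2 \le Cr^{-1}$ from \eqref{eq:cutoff} and $0 \le \phi^r \le 1$. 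Letting $r \to \infty$ and invoking the monotone convergence theorem gives $\int |\na S|^2 \, dV_f \le \|\Df S\|_{L^2}\|S\|_{L^2} < \infty$, so $|\na S| \in L^2$; the same for $T$.

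With $|\na S|, |\na T| \in L^2$ and $S, T \in L^2$ in hand, I would pass to the limit in the first displayed identity. The error term is bounded by $2\big(\int |\na\phi^r|^2 |T|^2\big)^{1/2}\big(\int (\phi^r)^2|\na S|^2\big)^{1/2} \le 2(Cr^{-1})^{1/2}\|T\|_{L^2}\|\na S\|_{L^2} \to 0$, while the two main terms converge by dominated convergence since $(\phi^r)^2 \to 1$ pointwise and the integrands $\la \Df S, T\ra$ and $\la \na S, \na T\ra$ are in $L^1$ by Cauchy--Schwarz. This yields $\int \la \Df S, T\ra \, dV_f = -\int \la \na S, \na T\ra \, dV_f$, and the symmetric identity $\int \la S, \Df T\ra\, dV_f = -\int \la\na S,\na T\ra\, dV_f$ follows by interchanging the roles of $S$ and $T$. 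Combining the two gives \eqref{EY301}, completing the proof.
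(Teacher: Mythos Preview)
Your proof is correct and follows essentially the same cutoff-and-limit strategy as the paper. The only cosmetic difference is that the paper first establishes the $S=T$ case (writing the computation via $\Delta_f|T|^2$ and using both the bounds $|\nabla\phi^r|^2\le Cr^{-1}$ and $|\Delta_f\phi^r|\le C$) and then polarizes, whereas you prove the general bilinear identity directly using only the gradient bound on the cutoff.
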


\begin{proof}
We first prove that $|\na T| \in L^2$. 
Applying the integration by parts, we obtain
\begin{align} \label{ex301a}
2\int (\phi^r)^2 (\la \Delta_f T,T \ra+|\na T|^2) \,dV_f= \int (\phi^r)^2 \Delta_f |T|^2\,dV_f=-2 \int \la \na \phi^r,\na |T|^2 \ra \phi^r \,dV_f.
\end{align}
By using Cauchy–Schwarz inequality, it follows from \eqref{ex301a} that
\begin{align*}
\int (\phi^r)^2 (\la \Delta_f T,T \ra+|\na T|^2) \,dV_f \le \frac{1}{2} \int (\phi^r)^2|\na T|^2\,dV_f+2 \int |\na \phi^r|^2 |T|^2\,dV_f
\end{align*}
and hence
\begin{align} \label{ex301b}
\int (\phi^r)^2 |\na T|^2\,dV_f \le -2\int (\phi^r)^2 \la \Delta_f T,T \ra \,dV_f+4 \int |\na \phi^r|^2 |T|^2\,dV_f.
\end{align}
Since $T,\Delta_f T \in L^2$, it is clear from \eqref{ex301b} by taking $r \to \infty$ that $|\na T| \in L^2$.

Applying the integration by parts again, we have
\begin{align} \label{ex301c}
2\int \phi^r (\la \Delta_f T,T \ra+|\na T|^2) \,dV_f =\int \phi^r \Delta_f |T|^2\,dV_f= \int |T|^2 \Delta_f \phi^r \,dV_f.
\end{align}

By using \eqref{eq:cutoff}, we immediately conclude from \eqref{ex301c} that
\begin{align} \label{ex301d}
\int |\na T|^2+\la \Delta_f T,T \ra \,dV_f=0.
\end{align}

Now, the general identity \eqref{EY301} follows by polarizing \eqref{ex301d}.
\end{proof}

\begin{rem} \label{rem:appr}
It is a standard fact that the space of smooth tensors with compact support is dense in $W^{1,2}$. Indeed, it follows from \eqref{eq:cutoff} that any $T \in W^{1,2}$ can be approximated by $\phi^r T$. Moreover, $\phi^r T$ can be further approximated by smooth tensors with compact support through a standard convolution argument. Futhermore, if we assume $|\Rm|$ is bounded, then the space of smooth tensors with compact support is dense in $W^{k,2}$ for any $k \ge 2$ by using \eqref{eq:cutoffaa}.
\end{rem}

Applying \eqref{EY301}, we immediately obtain
\begin{cor}\label{cor:w1}
For any $\ep>0$ and tensor field $T \in L^2$ with $\Delta_f T \in L^2$,
\begin{align} \label{EY303a}
\|T \|^2_{W^{1,2}} \le \ep \|\Delta_f T\|^2_{L^2}+\lc 1+\frac{1}{4\ep} \rc\|T\|_{L^2}^2.
\end{align} 
\end{cor}

Next, we have the following elementary estimate (cf. \cite[Lemma 1.50]{CM21b}).

\begin{lem} \label{lem:esf}
For any $k \ge 0$, there exists a constant $C_k=C_k(m)$ such that for any tensor $T \in W^{k,2}(T^{r,s}M)$,
\begin{align} \label{EX302aa}
\int f^k |T|^2 \,dV_f \le C_k \|T\|_{W^{k,2}}^2.
\end{align} 
\end{lem}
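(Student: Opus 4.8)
The plan is to prove the weighted estimate \eqref{EX302aa} by induction on $k$, using the quadratic growth of $f$ together with the identity $\Delta_f f = m/2 - f$ (equivalently $\Rc + \na^2 f = g/2$ traced) to convert powers of $f$ into manageable derivative terms via integration by parts. By Remark \ref{rem:appr}, it suffices to prove \eqref{EX302aa} for smooth tensors $T$ with compact support, and then pass to the limit; this justifies all the integrations by parts below without boundary terms.

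The base case $k=0$ is trivial. For the inductive step, suppose \eqref{EX302aa} holds for $k-1$. The key computation is to integrate by parts against $f^{k-1}$ times the weight: writing
\begin{align*}
\int f^k |T|^2 \,dV_f = \int f^{k-1} (f |T|^2) \,dV_f,
\end{align*}
I would use $f = -\Delta_f f + m/2$ to replace one factor of $f$, giving
\begin{align*}
\int f^k |T|^2 \,dV_f = -\int f^{k-1} (\Delta_f f) |T|^2 \,dV_f + \frac{m}{2}\int f^{k-1} |T|^2 \,dV_f.
\end{align*}
The second term is controlled by $\frac{m}{2} C_{k-1}\|T\|_{W^{k-1,2}}^2$ by the inductive hypothesis. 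For the first term, integrate by parts moving $\Delta_f$ onto $f^{k-1}|T|^2$: since $\int \la \Delta_f u, v\ra \,dV_f = -\int \la \na u, \na v\ra\,dV_f$ for compactly supported data (Lemma \ref{lem:ibp} in the scalar form), one gets
\begin{align*}
-\int f^{k-1}(\Delta_f f)|T|^2\,dV_f = \int \la \na f, \na(f^{k-1}|T|^2)\ra\,dV_f = (k-1)\int f^{k-2}|\na f|^2 |T|^2\,dV_f + \int f^{k-1}|\na f|^2\langle\na|T|^2,\na f\rangle/|\na f|^2\cdots
\end{align*}
which after expanding $\na(f^{k-1}|T|^2) = (k-1)f^{k-2}|T|^2\na f + f^{k-1}\na|T|^2$ yields two terms: $(k-1)\int f^{k-2}|\na f|^2|T|^2\,dV_f$ and $\int f^{k-1}\la\na f,\na|T|^2\ra\,dV_f$. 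For the first of these, use $|\na f|^2 = f - R \le f$ (from \eqref{E101}, since $R \ge 0$ on Ricci shrinkers) so it is bounded by $(k-1)\int f^{k-1}|T|^2\,dV_f \le (k-1)C_{k-1}\|T\|_{W^{k-1,2}}^2$. For the second, $\la\na f,\na|T|^2\ra = 2\la \na_{\na f}T, T\ra \le 2|\na f||\na T||T| \le f^{1/2}\cdot 2|\na T||T|$, so Cauchy–Schwarz gives $\int f^{k-1}\la\na f,\na|T|^2\ra \le \frac12\int f^k|T|^2 + 2\int f^{k-1}|\na T|^2$, and the latter is $\le 2C_{k-1}\|\na T\|_{W^{k-1,2}}^2 \le 2C_{k-1}\|T\|_{W^{k,2}}^2$ by applying the inductive hypothesis to $\na T \in W^{k-1,2}$.

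Collecting terms, the $\frac12\int f^k|T|^2$ piece is absorbed into the left side, and everything else is bounded by a constant (depending only on $m$ and $k$) times $\|T\|_{W^{k,2}}^2$, which closes the induction. The main obstacle — really the only subtlety — is bookkeeping: making sure the absorbed term genuinely has coefficient strictly less than $1$ (it is exactly $1/2$ here, which is fine), and confirming that each intermediate integral is finite for compactly supported $T$ so that the rearrangement is legitimate before taking $C_k$ to depend only on $C_{k-1}$ and $m$; the density statement from Remark \ref{rem:appr} then extends the bound to all of $W^{k,2}(T^{r,s}M)$.
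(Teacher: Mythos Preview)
Your proof is correct and follows essentially the same route as the paper: both arguments use the identity $\Delta_f f = m/2 - f$ and $|\na f|^2 \le f$, integrate by parts to produce a term $\tfrac12\int f^k|T|^2\,dV_f$ that is absorbed on the left, and close the induction by applying the hypothesis to both $T$ and $\na T$. The only cosmetic difference is that the paper computes $\Delta_f f^k$ directly rather than substituting $f = m/2 - \Delta_f f$ for one factor, which yields the same identity after expansion.
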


\begin{proof}
The case $k=0$ is obvious. Suppose \eqref{EX302aa} holds for $k-1$.

We compute from $\Delta_f f=m/2-f$ that
\begin{align}\label{EY302a}
\Delta_f f^k=kf^{k-1} \Delta_f f+k(k-1)f^{k-2} |\na f|^2= \frac{km}{2}f^{k-1}-kf^k+k(k-1)f^{k-2} |\na f|^2.
\end{align}

Using integration by parts, we obtain
\begin{align}
&\int (\Delta_f f^k) |T|^2 \,dV_f \notag \\
=& -\int \la \na f^k,\na |T|^2 \ra \,dV_f \ge -2k \int f^{k-1}|\na f| |T| |\na T| \,dV_f \notag \\
\ge & -2k \int f^{k-\frac1 2} |T| |\na T| \,dV_f \ge -\frac{k}{2} \int f^k |T|^2 \,dV_f-2k \int f^{k-1} |\na T|^2 \,dV_f, \label{EY302b}
\end{align}
where we have used $|\na f|^2 \le f$.

Combining \eqref{EY302a} and \eqref{EY302b}, we have
\begin{align*}
\frac{1}{2} \int f^k |T|^2 \,dV_f \le & \frac{m}{2} \int f^{k-1} |T|^2 \,dV_f+(k-1) \int f^{k-2}|\na f|^2 |T|^2 \,dV_f+2 \int f^{k-1} |\na T|^2 \,dV_f \\
\le& (m/2+k-1) \int f^{k-1} |T|^2 \,dV_f+2 \int f^{k-1} |\na T|^2 \,dV_f \le C \|T\|_{W^{k,2}}^2,
\end{align*}
where we have used the inductive assumption for $T$ and $\na T$.
\end{proof}

Next, we prove the following elliptic estimate for $\Delta_f$ on tensors.
\begin{prop}
\label{prop:ellip1}
Suppose $(M,g)$ has bounded curvature. For any $k \ge 2$, there exists a constant $C$ so that if $T \in L^2(T^{r,s}M)$ and $\Delta_f T \in W^{k-2,2}(T^{r,s}M)$, then $T \in W^{k,2}(T^{r,s}M)$ and
\begin{align} \label{EY303}
\|T \|_{W^{k,2}} \le C \lc \|\Delta_f T\|_{W^{k-2,2}}+\|T\|_{L^2} \rc.
\end{align} 
\end{prop}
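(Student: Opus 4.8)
The plan is to prove the estimate by induction on $k$, bootstrapping from the base case $k=2$ using the weighted Bochner formula together with the cutoff functions $\phi^r$ and the integration-by-parts machinery of Lemma \ref{lem:ibp} and Corollary \ref{cor:w1}. For the base case $k=2$, I would start from the Weitzenb\"ock-type identity for $\Delta_f$ acting on $(r,s)$-tensors: schematically $\Delta_f \na T = \na \Delta_f T + \Rm * \na T + \na \Rm * T + \tfrac12 \na T$, where the last term comes from commuting $\na$ past the drift term $\la \na\,\cdot,\na f\ra$ and using the soliton equation $\Rc + \na^2 f = \tfrac12 g$. Since $(M,g)$ has bounded curvature, Shi's estimates give $|\na \Rm|$ bounded as well, so the right-hand side is controlled by $|\na \Delta_f T| + C(|\na T| + |T|)$. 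The idea is then to integrate $|\na^2 T|^2$ against $(\phi^r)^2\,dV_f$, use the Bochner formula for $\na T$ in place of $T$ in the computation of Lemma \ref{lem:ibp}, absorb the cutoff error terms using \eqref{eq:cutoff} and \eqref{eq:cutoffaa}, and let $r \to \infty$. The already-established fact (Corollary \ref{cor:w1}) that $\|T\|_{W^{1,2}} \le \ep\|\Delta_f T\|_{L^2}^2 + C\|T\|_{L^2}^2$ handles the lower-order terms, and one obtains $\|\na^2 T\|_{L^2} \le C(\|\Delta_f T\|_{L^2} + \|T\|_{L^2})$, hence $T \in W^{2,2}$ with the desired bound.

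For the inductive step, assuming the estimate holds up to order $k-1$, I would apply it to $\na T$ in place of $T$: one needs $\na T \in L^2$ (which follows from $T \in W^{k-1,2}$ by induction, noting $\Delta_f T \in W^{k-2,2} \subset W^{k-3,2}$) and $\Delta_f(\na T) \in W^{k-3,2}$. The latter follows from the commutator formula above: $\Delta_f \na T = \na \Delta_f T + \Rm * \na T + \na\Rm * T + \tfrac12\na T$, and since $\Delta_f T \in W^{k-2,2}$ we have $\na \Delta_f T \in W^{k-3,2}$, while the curvature terms are controlled in $W^{k-3,2}$ by $\|\na T\|_{W^{k-3,2}} + \|T\|_{W^{k-2,2}}$ using boundedness of $|\na^j \Rm|$ for all $j$. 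Applying the order-$(k-1)$ estimate to $\na T$ then gives $\na T \in W^{k-1,2}$, i.e. $T \in W^{k,2}$, with
\begin{align*}
\|\na T\|_{W^{k-1,2}} \le C\big(\|\Delta_f \na T\|_{W^{k-3,2}} + \|\na T\|_{L^2}\big) \le C\big(\|\Delta_f T\|_{W^{k-2,2}} + \|T\|_{W^{k-1,2}}\big),
\end{align*}
and then invoking the order-$(k-1)$ estimate once more to bound $\|T\|_{W^{k-1,2}}$ by $\|\Delta_f T\|_{W^{k-3,2}} + \|T\|_{L^2} \le \|\Delta_f T\|_{W^{k-2,2}} + \|T\|_{L^2}$ closes the induction and yields \eqref{EY303}.

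There is a technical subtlety worth flagging: throughout, the integration-by-parts identities require a priori knowledge that the relevant tensors and their weighted Laplacians lie in $L^2$, which is exactly the issue Lemma \ref{lem:ibp} addresses at the first order. When running the argument at higher order I would either cite Remark \ref{rem:appr} (density of compactly supported smooth tensors in $W^{k,2}$ under the bounded-curvature assumption, via \eqref{eq:cutoffaa}) to justify all integrations by parts, or work throughout with the cutoff $\phi^r$ inserted and pass to the limit at the end. The main obstacle I anticipate is the bookkeeping in the base case $k=2$: making the Bochner/cutoff argument rigorous requires carefully tracking the terms $\la \na\phi^r \cdot \na|\na T|^2\ra$ and $|\na^2\phi^r||T|\,|\na T|$ type errors, showing they vanish as $r\to\infty$ using only $\Delta_f T \in L^2$ and $T \in L^2$ (not yet knowing $\na^2 T \in L^2$), which is the standard but delicate self-improvement step; once $k=2$ is in hand, the induction is essentially formal given the commutator identity and the curvature bounds.
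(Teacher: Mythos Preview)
Your overall strategy — induction on $k$ using the commutator between $\Delta_f$ and $\na$ — is the same as the paper's, but your commutator formula is missing a crucial term. When you commute $\na_k$ past the drift operator $f^j\na_j$, you get $(\na_k f^j)\na_j T + f^j[\na_k,\na_j]T$. You correctly extract the first piece (via $\na^2 f = \tfrac12 g - \Rc$ this gives your $\tfrac12\na T$ plus a bounded curvature term), but you drop the second piece $f^j[\na_k,\na_j]T = \na f * \Rm * T$. The correct identity is the paper's \eqref{EY303e}:
\[
\Delta_f \na^k T = \na^k \Delta_f T + 1*\na^k T + \sum_{i=0}^k \na^i\Rm * \na^{k-i}T + \na f * \Big(\sum_{i=0}^{k-1}\na^i\Rm * \na^{k-1-i}T\Big).
\]
Your claim that the right-hand side is pointwise controlled by $|\na\Delta_f T| + C(|\na T| + |T|)$ is therefore false: the term $\na f * \Rm * T$ carries an unbounded factor $|\na f| \le \sqrt{f}$, which grows linearly in the distance on a non-compact shrinker. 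This is exactly the point where the non-compact case differs from the compact one.

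The fix is Lemma~\ref{lem:esf}: $\int f^k|T|^2\,dV_f \le C_k\|T\|_{W^{k,2}}^2$. With this in hand, $\|\na f * \Rm * T\|_{W^{k-3,2}} \le C\|T\|_{W^{k-2,2}}$, and your induction closes. But you never invoke Lemma~\ref{lem:esf}, and without it the inductive step (checking $\Delta_f(\na T)\in W^{k-3,2}$ and bounding its norm) breaks down. The paper's proof computes the full commutator \eqref{EY303e}, reduces to compactly supported $T$ via Remark~\ref{rem:appr} (this also cleanly sidesteps your flagged cutoff bookkeeping in the base case), and explicitly uses Lemma~\ref{lem:esf} to absorb the $\na f$-weighted terms; see \eqref{EY303f}. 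Once you add the missing term and appeal to Lemma~\ref{lem:esf}, your argument is essentially equivalent to the paper's.
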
 
\begin{proof}
From Remark \ref{rem:appr}, we only need to prove \eqref{EY303} for smooth $T$ with compact support. 

For any $k \ge 1$, it is clear by direct computation that
\begin{align}
\Delta \na^k T=&\na^k \Delta T+\sum_{i=0}^{k} \na^i \Rm * \na^{k-i} T, \label{EY303b} \\
f_i \na_i \na^k T=&f_i \na^k \na_i T+\na f *\lc \sum_{i=0}^{k-1}\na^i \Rm * \na^{k-1-i} T \rc, \label{EY303c}
\end{align} 
where $A*B$ denotes a linear combination of contractions of the tensor product of $A$ and $B$.

In addition, we compute
\begin{align} 
\na^k (f_i \na_i T)=& f_i \na^k \na_i T+\sum_{i=2}^{k+1} \na^i f *\na^{k+2-i} T=f_i \na^k \na_i T+1*\na^k T+\sum_{i=0}^{k-1} \na^i \Rm* \na^{k-i} T, \label{EY303d}
\end{align} 
where we have used $\na^2f=g/2-\Rc$. Combining \eqref{EY303b}, \eqref{EY303c} and \eqref{EY303d}, we obtain
\begin{align} \label{EY303e}
\Delta_f \na^k T= \na^k \Delta_f T+1*\na^k T+\sum_{i=0}^{k} \na^i \Rm * \na^{k-i} T+\na f *\lc \sum_{i=0}^{k-1}\na^i \Rm * \na^{k-1-i} T \rc.
\end{align} 

Based on \eqref{EY303e} and the integration by parts, we have
\begin{align} 
&\int |\na (\na^k T)|^2\,dV_f=-\int \la \Delta_f \na^k T,\na^k T \ra \,dV_f \notag \\
\le& -\int \la \na^k (\Delta_f T), \na^k T \ra \,dV_f+C\int |\na^k T|^2+\sum_{i=0}^{k} |\na^i \Rm| |\na^{k-i} T| |\na^k T|+\sum_{i=0}^{k-1} |\na f ||\na^i \Rm|| \na^{k-1-i} T| |\na^k T|\,dV_f \notag \\
\le& -\int \la \na^k (\Delta_f T), \na^k T \ra \,dV_f+C\int \sum_{i=0}^{k}|\na^i T|^2+\sum_{i=0}^{k-1} f | \na^{i} T|^2\,dV_f \notag \\
\le& -\int \la \na^k (\Delta_f T), \na^k T \ra \,dV_f+C \| T \|_{W^{k,2}}^2, \label{EY303f}
\end{align}
where we have used Lemma \ref{lem:esf} for the last inequality. Similarly, we compute
\begin{align} 
&-\int \la \na^k (\Delta_f T), \na^k T \ra \,dV_f=\int \la \na^{k-1} (\Delta_f T), \Delta_f \na^{k-1} T \ra \,dV_f \notag \\
=& \int |\na^{k-1} (\Delta_f T)|^2 \,dV_f+C\int |\na^{k-1} (\Delta_f T)| \lc \sum_{i=0}^{k-1} |\na^{i} T|+\sum_{i=0}^{k-2} |\na f || \na^{i} T| \rc \,dV_f \notag \\
\le& C \int |\na^{k-1} (\Delta_f T)|^2 \,dV_f+C \| T \|_{W^{k-1,2}}^2 \le C\lc \|\Delta_f T\|_{W^{k-1,2}}^2+ \| T \|_{W^{k-1,2}}^2 \rc. \label{EY303g}
\end{align} 

Combining \eqref{EY303f} and \eqref{EY303g}, we conclude that
\begin{align}
\int |\na^{k+1} T|^2\,dV_f \le C\lc \|\Delta_f T\|_{W^{k-1,2}}^2+ \| T \|_{W^{k,2}}^2 \rc. \label{EY303h}
\end{align} 
Therefore, the conclusion follows from \eqref{EY303a}, \eqref{EY303h} and an inductive argument.
\end{proof} 

We investigate the spectral properties of $\Delta_f$ acting on $T^{r,s}M$.

\begin{prop}
\label{prop:dis}
$\Delta_f$ has discrete eigenvalues $0=\mu_0\le \mu_1\le \cdots \to +\infty$ with finite-dimensional eigenspaces $E_{\mu_i} \subset W^{1,2}(T^{r,s}M)$. Moreover, there exists a constant $C$ such that for any $T \in W^{1,2}(T^{r,s}M)$, we have
\begin{align} \label{EX304aa}
\|T-\pi_{E_0} (T)\|_{W^{1,2}} \le C\| \Delta_f T\|_{L^2}.
\end{align}
If we assume $(M,g)$ has bounded curvature, then for any $T \in W^{k,2}(T^{r,s}M)$ with $k \ge 2$, we have
\begin{align} \label{EX304aax}
\|T-\pi_{E_0} (T)\|_{W^{k,2}} \le C\| \Delta_f T\|_{W^{k-2,2}}.
\end{align}
\end{prop}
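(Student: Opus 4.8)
The plan is to establish the three assertions of Proposition \ref{prop:dis} in order: discreteness of the spectrum with finite-dimensional eigenspaces, the $W^{1,2}$ spectral-gap estimate \eqref{EX304aa}, and finally its higher-regularity upgrade \eqref{EX304aax}. For the first part, I would work with the bilinear form $Q(S,T) = \int \la \na S,\na T\ra\,\vf$ on $W^{1,2}(T^{r,s}M)$, which by Lemma \ref{lem:ibp} represents $-\Delta_f$. The key analytic input is a Rellich-type compact embedding $W^{1,2}(T^{r,s}M) \hookrightarrow L^2(T^{r,s}M)$ with respect to the weighted measure $\vf$; this is where the weight does real work, since the potential $f$ grows quadratically (Lemma \ref{lem:pot}) and the weighted volume decays fast (Lemma \ref{lem:volume}), so a bounded-in-$W^{1,2}$ sequence has uniformly small $L^2$-mass outside $\{f \le r\}$ (quantitatively via Lemma \ref{lem:esf} with $k=1$, which gives $\int f|T|^2 \le C\|T\|_{W^{1,2}}^2$), while on each compact set $\{f\le 2r\}$ the ordinary Rellich theorem applies. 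Once compactness is in hand, the spectral theorem for the self-adjoint operator associated to $Q$ gives a discrete spectrum $0 = \mu_0 \le \mu_1 \le \cdots \to \infty$ with finite-dimensional eigenspaces, and elliptic regularity (bootstrapping with Proposition \ref{prop:ellip1} when curvature is bounded) places eigentensors in $W^{1,2}$ and, indeed, in all $W^{k,2}$.

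For \eqref{EX304aa}, write $T = \pi_{E_0}(T) + T'$ where $T' \perp E_0$ in $L^2$. Expanding $T'$ in the orthonormal eigenbasis, $\|T'\|_{L^2}^2 = \sum_{\mu_i>0} a_i^2$ and $\|\na T'\|_{L^2}^2 = Q(T',T') = \sum_{\mu_i>0}\mu_i a_i^2$, while $\|\Delta_f T'\|_{L^2}^2 = \sum_{\mu_i>0}\mu_i^2 a_i^2$. Since $\mu_i \ge \mu_1 > 0$ for all the surviving modes, $\|T'\|_{L^2}^2 \le \mu_1^{-1}\sum \mu_i a_i^2$ and $\|\na T'\|_{L^2}^2 \le \sum \mu_i a_i^2 \le \mu_1^{-1}\sum \mu_i^2 a_i^2$, so $\|T'\|_{W^{1,2}}^2 \le (\mu_1^{-1}+\mu_1^{-2})\|\Delta_f T'\|_{L^2}^2$; since $\Delta_f \pi_{E_0}(T)=0$, this is \eqref{EX304aa} with $C$ depending only on $\mu_1$. (One must note $\Delta_f T \in L^2$ is implicitly assumed — or equivalently the inequality is vacuous otherwise — and that $\pi_{E_0}$ is the $L^2$-orthogonal projection onto the kernel, which is exactly the quantity appearing in \eqref{EX304aa}.)

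For \eqref{EX304aax}, under the bounded-curvature hypothesis I would combine the previous step with the elliptic estimate of Proposition \ref{prop:ellip1}. Applying \eqref{EY303} to $T' = T - \pi_{E_0}(T)$ gives $\|T'\|_{W^{k,2}} \le C(\|\Delta_f T'\|_{W^{k-2,2}} + \|T'\|_{L^2})$; the first term is $\|\Delta_f T\|_{W^{k-2,2}}$ since $\Delta_f$ kills $\pi_{E_0}(T)$, and the second is controlled by $\|\Delta_f T\|_{L^2} \le \|\Delta_f T\|_{W^{k-2,2}}$ via \eqref{EX304aa}. This yields \eqref{EX304aax}. One subtlety worth recording is that the eigentensors themselves lie in $W^{k,2}$ for every $k$ (needed so that $\pi_{E_0}(T)$ and hence $T'$ are genuinely $W^{k,2}$ when $T$ is), which again follows from Proposition \ref{prop:ellip1} applied to the eigenvalue equation $\Delta_f \psi = -\mu \psi$.

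The main obstacle is the first step: proving the weighted Rellich compactness and thereby the discreteness of the spectrum on a noncompact manifold. Everything downstream is soft functional analysis plus the already-established elliptic estimates. The compactness argument must genuinely exploit the quadratic growth of $f$ and the weighted-volume bound to control the tail, so that a $W^{1,2}$-bounded sequence is tight in weighted $L^2$; this is precisely the content that the weighted inequality of Lemma \ref{lem:esf} was set up to deliver, and assembling it with a diagonal/Rellich argument on exhausting compact sublevel sets $\{f \le r\}$ is the technical heart of the proof.
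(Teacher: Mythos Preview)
Your proposal is correct and structurally matches the paper's proof: discreteness via a Rellich compactness argument, the $L^2$ spectral gap via eigenbasis expansion, and the upgrade to $W^{1,2}$ and $W^{k,2}$ via Corollary~\ref{cor:w1} and Proposition~\ref{prop:ellip1}.

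The one genuine difference lies in how the Rellich compactness $W^{1,2}\hookrightarrow L^2$ is established. You control the tail directly: Lemma~\ref{lem:esf} with $k=1$ gives $\int f|T|^2\,\vf \le C\|T\|_{W^{1,2}}^2$, so $\int_{f\ge r}|T|^2\,\vf \le C r^{-1}$ uniformly along a $W^{1,2}$-bounded sequence, and then local Rellich on the compact sublevel sets $\{f\le r\}$ plus a diagonal argument finishes. The paper instead follows \cite{CHZ13}: after passing to a weak $W^{1,2}$-limit and a.e.\ convergence, it applies the log-Sobolev inequality on Ricci shrinkers (via Kato's inequality) to bound $\int |T_i-T|^2\log|T_i-T|^2\,\vf$ uniformly, deducing uniform integrability and hence strong $L^2$-convergence. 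Your route is more elementary and entirely self-contained within the section's estimates; the paper's route uses a deeper Ricci-shrinker-specific functional inequality but connects to an existing reference. Either argument is perfectly adequate here.
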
 
\begin{proof}

The discreteness of the spectrum follows from Corollary \ref{cor:w1} and the Rellich compactness from $W^{1,2}(T^{r,s}M)$ to $L^2(T^{r,s}M)$. Notice that the Rellich compactness for tensor fields can be proved similarly as in \cite[Appendix]{CHZ13}. We sketch the proof for readers' convenience. Suppose $T_i$ is a sequence of tensor fields with uniformly bounded $W^{1,2}$. By taking a subsequence, we may assume $T_i \to T$ weakly in $W^{1,2}$ and $T_i \to T$ a.e. In particular, $T \in W^{1,2}$ by the weak convergence. By using the log-Sobolev inequality on Ricci shrinkers (cf. \cite[Theorem 11,12]{LW20}), we obtain
\begin{align*}
&\int |T_i-T|^2 \log |T_i-T|^2 \,dV_f-\lc \int |T_i-T|^2 \,dV_f \rc \log \lc \int |T_i-T|^2 \,dV_f\rc \\
\le & 4\int |\na |T_i-T||^2 \,dV_f \le 4\int |\na (T_i-T)|^2 \,dV_f,
\end{align*}
where we have used Kato's inequality for the last inequality. Clearly, this implies that
\begin{align*}
\int |T_i-T|^2 \log |T_i-T|^2 \,dV_f \le C
\end{align*}
for a uniform constant $C$. Then $|T_i-T|$ is uniformly integrable and hence $T_i \to T$ in $L^2$ (cf. \cite[Lemma 1]{CHZ13}).

The estimate $\|T-\pi_{E_0} (T)\|_{L^2} \le C\| \Delta_f T\|_{L^2}$ follows from a standard argument by using the decomposition of $T$ into the sum of eigentensors. In general, \eqref{EX304aa} and \eqref{EX304aax} follow from Corollary \ref{cor:w1} and Proposition \ref{prop:ellip1}, respectively.
\end{proof} 

As a corollary of Proposition \ref{prop:dis}, we have the following characterization of the Sobolev spaces, which is well-known for compact manifolds (cf. \cite[Chapter 5]{Tay11}).

\begin{cor}\label{cor:sobo}
Suppose $(M,g)$ has bounded curvature. Let $\{U_i\}$ be an orthonormal basis of $L^2(T^{r,s}M)$ such that $\Delta_f U_i+\mu_i U_i=0$. For any $T \in L^2(T^{r,s}M)$, we set $x_i:=\la T,U_i\ra_{L^2}$. Then for any $k \ge 0$, $T \in W^{k,2}$ if and only if
\begin{align*}
\|T\|^2_{D^k}:=\sum_{i} \mu_i^k x_i^2<\infty. 
\end{align*} 
Moreover, the norms $\| \cdot\|_{D^k}$ and $\|\cdot\|_{W^{k,2}}$ are comparable.
\end{cor}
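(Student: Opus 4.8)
The plan is to deduce the equivalence $\|\cdot\|_{D^k}\sim\|\cdot\|_{W^{k,2}}$ from Proposition \ref{prop:dis} and Proposition \ref{prop:ellip1} by an induction on $k$, exactly mimicking the compact-manifold argument (cf.~\cite[Chapter 5]{Tay11}) but taking care that all the integrations by parts are legitimized by Lemma \ref{lem:ibp} and Remark \ref{rem:appr}. The base case $k=0$ is Parseval: $\|T\|_{D^0}^2=\sum_i x_i^2=\|T\|_{L^2}^2$. For the case $k=1$, if $T\in W^{1,2}$ then by Lemma \ref{lem:ibp} applied to the partial sums $T_n:=\sum_{i\le n}x_iU_i$ (which are smooth, but one should instead polarize against $\phi^r$-truncations and let $r\to\infty$) one gets $\int|\nabla T|^2\,dV_f=-\int\langle\Delta_f T,T\rangle\,dV_f$, and expanding $\Delta_f T$ in the eigenbasis gives $\int|\nabla T|^2\,dV_f=\sum_i\mu_ix_i^2=\|T\|_{D^1}^2$; conversely, if $\|T\|_{D^1}<\infty$ the partial sums $T_n$ form a Cauchy sequence in $W^{1,2}$ by the same identity, so $T\in W^{1,2}$ with the stated norm. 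Thus $\|T\|_{W^{1,2}}^2=\|T\|_{L^2}^2+\|T\|_{D^1}^2$ and the two norms are comparable for $k=0,1$.

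For the inductive step $k\ge 2$, assume the equivalence holds for all exponents up to $k-1$. If $T\in W^{k,2}$, then $\Delta_f T\in W^{k-2,2}$, so by the inductive hypothesis $\|\Delta_f T\|_{D^{k-2}}^2=\sum_i\mu_i^{k-2}(\mu_i x_i)^2=\sum_i\mu_i^k x_i^2=\|T\|_{D^k}^2<\infty$, and combining with $\|T\|_{D^{k-1}}^2\le\|T\|_{D^k}^{(k-1)/k}\|T\|_{D^0}^{1/k}<\infty$ (Hölder on the sequence, or just $\mu_i^{k-1}\le 1+\mu_i^k$) we get $\|T\|_{D^k}\lesssim\|T\|_{W^{k,2}}$. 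Conversely, suppose $\|T\|_{D^k}<\infty$; then $\|\Delta_f T\|_{D^{k-2}}=\|T\|_{D^k}<\infty$, so by the inductive hypothesis $\Delta_f T\in W^{k-2,2}$ with $\|\Delta_f T\|_{W^{k-2,2}}\lesssim\|T\|_{D^k}$, and $T\in L^2$ since $\|T\|_{D^0}\le\|T\|_{D^k}$ when $\mu_i\ge 1$ for $i$ large (handle the finitely many small eigenvalues separately). Proposition \ref{prop:ellip1} then yields $T\in W^{k,2}$ with $\|T\|_{W^{k,2}}\lesssim\|\Delta_f T\|_{W^{k-2,2}}+\|T\|_{L^2}\lesssim\|T\|_{D^k}$. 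This closes the induction.

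The main technical obstacle is ensuring the $k=1$ identity $\int|\nabla T|^2\,dV_f=\sum_i\mu_ix_i^2$ and the Cauchy-sequence argument are rigorous on the non-compact $M$: one cannot directly integrate by parts against the infinite eigen-expansion, so one must work with the finite partial sums $T_n$, which do lie in $W^{1,2}$ (indeed in the domain of $\Delta_f$) with $\Delta_f T_n=-\sum_{i\le n}\mu_i x_i U_i$, apply Lemma \ref{lem:ibp} to each $T_n$ (valid since $T_n,\Delta_f T_n\in L^2$), and pass to the limit using the completeness of the eigenbasis guaranteed by Proposition \ref{prop:dis}. Everything else—the higher-order step—is a formal consequence of Proposition \ref{prop:ellip1} and Proposition \ref{prop:dis}, together with the elementary inequality $\mu_i^{k-1}\le C(1+\mu_i^k)$ on the spectrum.
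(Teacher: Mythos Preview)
Your proposal is correct and follows essentially the same inductive scheme as the paper: the base cases $k=0,1$ via integration by parts on partial sums (the paper uses compactly supported approximations from Remark~\ref{rem:appr} and Fatou for the forward $k=1$ direction, and weak rather than strong $W^{1,2}$-convergence for the converse, but these are equivalent standard maneuvers), and the inductive step via Proposition~\ref{prop:ellip1} together with the identity $\|\Delta_f T\|_{D^{k-2}}=\|T\|_{D^k}$. The only cosmetic difference in the induction is that for the direction $T\in W^{k,2}\Rightarrow\|T\|_{D^k}<\infty$ the paper writes $\|T\|_{D^{2l}}^2=\|\Delta_f^l T\|_{L^2}^2$ and $\|T\|_{D^{2l+1}}^2=\|\Delta_f^l T\|_{D^1}^2$ directly rather than stepping down by one application of $\Delta_f$ as you do; both routes rely on the same estimate $\|\Delta_f T\|_{W^{k-2,2}}\lesssim\|T\|_{W^{k,2}}$, which needs Lemma~\ref{lem:esf} to absorb the unbounded $\nabla f$.
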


\begin{proof}
The case $k=0$ is trivially true. We next prove the case $k=1$.

First, we assume $T \in W^{1,2}$, then by Remark \ref{rem:appr} that there exists a sequence of smooth tensors $T_j$ with compact support such that $T_j \to T$ in $W^{1,2}$. We set the coefficients of $T_j$ with respect to $U_i$ to be $x_{j,i}$. Then we have
\begin{align*}
\sum_i \mu_i x_{j,i}^2=-\int \la \Delta_f T_j,T_j \ra \,dV_f=\int |\na T_j|^2 \,dV_f
\end{align*} 
for a constant $C$ independent of $j$. Since $x_{j,i} \to x_i$, we obtain
\begin{align*}
\|T\|^2_{D^1}= \sum_{i} \mu_i x_i^2 \le \int |\na T|^2 \,dV_f \le \|T\|_{W^{1,2}}^2.
\end{align*} 

Conversely, suppose 
\begin{align*}
\|T\|^2_{D^1}=\sum_{i} \mu_i x_i^2<\infty. 
\end{align*} 
We set $T_j=\sum_{i \le j} x_i U_i$, and it is clear that $T_j \to T$ in $L^2$. Again by Lemma \ref{lem:ibp}, we obtain
\begin{align*}
\int |\na T_j|^2 \,dV_f=-\int \la \Delta_f T_j,T_j \ra \,dV_f=\sum_{i \le j} \mu_i x_i^2 \le \|T\|^2_{D^1}.
\end{align*} 
In other words, $\|T_j\|_{W^{1,2}}$ is uniformly bounded. By taking a subsequence, we may assume $T_j \to T$ weakly in $W^{1,2}$. From this, it is immediate that 
\begin{align*}
\int |\na T|^2 \,dV_f \le \liminf_{j} \int |\na T_j|^2 \,dV_f \le \|T\|^2_{D^1}.
\end{align*}
In addition, since $\mu_i \to \infty$, it is clear that
\begin{align*}
\int |T|^2 \,dV_f =\sum x_i^2 \le \|T\|^2_{D^1}.
\end{align*}
Therefore, $\|T\|^2_{D^1}$ is comparable to $\|T\|_{W^{1,2}}$.

In general, suppose the conclusion holds for any integer $<k$. 

If $\|T\|_{D^k}^2<\infty$, it follows from Proposition \ref{prop:ellip1} that
\begin{align} \label{EY303ea}
\|T \|_{W^{k,2}}^2 \le C\lc \|\Delta_f T\|_{W^{k-2,2}}^2+\|T\|_{L^2}^2 \rc \le C\lc \|\Delta_f T\|_{D^{k-2}}^2+\|T\|_{L^2}^2 \rc.
\end{align}
where we have used our inductive assumption. Since $\Delta_f T=- \sum_i \mu_i x_i U_i$, we have
\begin{align*}
\|\Delta_f T\|_{D^{k-2}}^2=\sum \mu_i^{k-2}(\mu_i x_i)^2=\sum \mu_i^{k}x_i^2=\|T\|_{D^{k}}^2.
\end{align*}
Therefore, it follows from \eqref{EY303ea} that $\|T \|_{W^{k,2}} \le C\|T\|_{D^k}^2$.

Conversely, if $T \in W^{k,2}$, we divide into two cases.

\textbf{Case 1}: $k=2l$ is even.

In this case, it is clear that
\begin{align*}
\|T\|_{D^k}^2=\|\Delta^l_f T\|_{L^2}^2 \le C\|T \|_{W^{2l,2}}^2.
\end{align*}

\textbf{Case 2}: $k=2l+1$ is odd.

In this case, we have
\begin{align*}
\|T\|_{D^k}^2=\|\Delta^l_f T\|_{D^1}^2 \le C\|\Delta^l_f T \|_{W^{1,2}}^2 \le C\|T \|_{W^{2l+1,2}}^2.
\end{align*}

In sum, the conclusion holds for $k$, and the proof is complete by induction.
\end{proof}


For the rest of the section, we assume $|\Rm|$ is bounded and focus on operators acting on symmetric $2$-tensors or $1$-forms.

We first consider the operator $\PP=\df \df^*$ on $1$-forms on $M$. Notice that $\PP$ is a self-adjoint elliptic operator (cf. \cite[Lemma 2.1]{CM21b}). In addition, it follows from \cite[Lemma 4.2]{CM21b} that $\PP$ has a discrete spectrum such that each eigenspace is finitely dimensional and generated by eigenvectors of $\Delta_f$. In particular, if we denote the kernel space of $\PP$ in $L^2$ by $\KK_{\PP}$, then $\KK_{\PP}$ is finitely dimensional and each element lies in $W^{k,2}\cap C^{\infty}(T^*M)$ for any $k$. In particular, integration by parts for $\PP$ indicates that each element in $\KK_{\PP}$ is indeed a Killing form in the usual sense.

Next, we prove the following existence result, which is similar to \cite[Theorem 4.15]{CM21b}.
\begin{prop}
\label{P301}
For any $k \ge 1$, there exists a constant $C$ so that for any $h \in W^{k,2}(S^2(M))$, there exists a $1$-form $w \in W^{k+1,2}(T^*M)$ with $\PP w=\df h$ that is $L^2$-orthogonal to $\KK_{\PP}$ and satisfies 
\begin{align*}
\| w \|_{W^{k+1,2}} \leq C \| \df h \|_{W^{k-1,2}}.
\end{align*} 
\end{prop}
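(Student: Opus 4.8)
The plan is to solve $\PP w = \df h$ by working in the spectral decomposition of $\Delta_f$ on $1$-forms, exploiting the fact (recalled just before the statement) that $\PP$ has discrete spectrum with finite-dimensional eigenspaces generated by eigenvectors of $\Delta_f$, and that $\KK_\PP$ is finite-dimensional with smooth elements lying in all $W^{k,2}$. First I would check solvability: since $\PP$ is self-adjoint, the equation $\PP w = \df h$ is solvable in $L^2$ precisely when $\df h \perp \KK_\PP$. This orthogonality holds because every $\xi \in \KK_\PP$ is a Killing form, so $\df^* \xi = 0$, and hence $\langle \df h, \xi\rangle_{L^2} = \langle h, \df^* \xi\rangle_{L^2} = 0$ — here one must justify the integration by parts, which follows from Lemma \ref{lem:ibp} together with the polynomial-growth/$L^2$ membership of $h$, $\df h$, $\xi$ and their derivatives (using that $\xi \in W^{k,2}$ for all $k$ and, say, $h \in W^{k,2}$). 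So there is a unique $w \in L^2$ with $\PP w = \df h$ that is $L^2$-orthogonal to $\KK_\PP$, obtained by inverting $\PP$ on the orthogonal complement of its kernel.

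Next I would establish the quantitative estimate $\|w\|_{W^{k+1,2}} \le C\|\df h\|_{W^{k-1,2}}$. The natural route is a two-step bootstrap. Step one: a basic $W^{2,2}$-type estimate. Since $\PP = \df\df^*$, one computes $\PP$ in terms of $\Delta_f$ and lower-order curvature terms (the Weitzenböck-type identity for $\df\df^*$ on $1$-forms on a shrinker — this is the analogue of \cite[Lemma 2.1]{CM21b}, and the curvature terms are controlled because $|\Rm|$ is bounded). Writing $\Delta_f w = \PP w + (\text{l.o.t.})w = \df h + (\text{l.o.t.})w$ and using that $w \perp \KK_\PP$ (which is the full kernel of $\PP$ but strictly contains $E_0$ for $\Delta_f$ in general, so one needs the spectral gap for $\PP$ rather than for $\Delta_f$), I get a bound $\|w\|_{W^{1,2}} \le C\|\df h\|_{L^2}$ from the spectral gap of $\PP$ away from $0$, just as in Proposition \ref{prop:dis} but for $\PP$. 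Then Proposition \ref{prop:ellip1} applied to $\Delta_f w = \df h + (\text{l.o.t.})w$ upgrades this: $\|w\|_{W^{2,2}} \le C(\|\df h\|_{L^2} + \|w\|_{W^{1,2}} + \|w\|_{L^2}) \le C\|\df h\|_{L^2}$, proving the case $k=1$.

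Step two: induction on $k$. Assuming the estimate for $k-1$, I would apply the elliptic estimate of Proposition \ref{prop:ellip1} once more, now at order $k+1$: from $\Delta_f w = \df h + (\text{l.o.t.})w$ and the fact that the lower-order terms involve at most one derivative of $w$ together with bounded $|\Rm|$ (and, via Shi's estimates recalled at the start of Section 3, bounded $|\nabla^j \Rm|$), one gets
\begin{align*}
\|w\|_{W^{k+1,2}} \le C\left(\|\Delta_f w\|_{W^{k-1,2}} + \|w\|_{L^2}\right) \le C\left(\|\df h\|_{W^{k-1,2}} + \|w\|_{W^{k,2}} + \|w\|_{L^2}\right),
\end{align*}
and the inductive hypothesis $\|w\|_{W^{k,2}} \le C\|\df h\|_{W^{k-2,2}} \le C\|\df h\|_{W^{k-1,2}}$ closes the loop. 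Finally I note $w \in W^{k+1,2} \subset C^\infty$ by elliptic regularity of $\PP$, consistent with the claimed regularity. The main obstacle I anticipate is bookkeeping the commutator/lower-order terms in the Weitzenböck identity for $\df\df^*$ and verifying that each is genuinely lower order and controlled by bounded curvature (so that it can be absorbed or fed back into Proposition \ref{prop:ellip1}), and — more subtly — making sure the spectral-gap argument uses the kernel of $\PP$ (all Killing forms) rather than the kernel of $\Delta_f$, since a Killing form need not be parallel; this is exactly where orthogonality to $\KK_\PP$, not merely to $E_0$, is essential.
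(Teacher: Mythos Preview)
Your bootstrap has a genuine gap in the Weitzenb\"ock step. On a Ricci shrinker the identity relating $\PP$ and $\Delta_f$ on $1$-forms is
\[
2\PP w \;=\; -\,\Delta_f w \;-\; d(\df w) \;-\; \tfrac12\,w,
\]
so $\Delta_f w = -2\df h - d(\df w) - \tfrac12 w$. The remainder $d(\df w)$ is \emph{second order} in $w$, not lower order: $\|d(\df w)\|_{W^{k-1,2}}$ is controlled only by $\|w\|_{W^{k+1,2}}$, and feeding this into Proposition~\ref{prop:ellip1} gives
$\|w\|_{W^{k+1,2}} \le C\bigl(\|\df h\|_{W^{k-1,2}} + \|w\|_{W^{k+1,2}}\bigr)$,
which is circular. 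Controlling $d(\df w)$ independently would require estimating the scalar $\df w$, which satisfies $(\Delta_f+\tfrac12)(\df w) = -\df^2 h$; but $1/2$ may lie in the spectrum of $-\Delta_f$ on functions (e.g.\ on any cylinder), so this route also runs into a kernel obstruction.

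The paper avoids this entirely by working directly in the common eigenbasis. By \cite[Lemma 4.20]{CM21b} there is an orthonormal basis $\{U_i\}$ of $L^2(T^*M)$ with $\PP U_i=\lambda_i U_i$ and $\Delta_f U_i=-\tau_i U_i$, and the crucial eigenvalue inequality $0\le \tau_i\le \tfrac12+2\lambda_i$ (which is exactly the spectral shadow of the Weitzenb\"ock identity above, using that $-d\df\ge 0$). Writing $\df h=\sum x_iU_i$ and $w:=\sum_{i\ge1}\lambda_i^{-1}x_iU_i$, one gets in one line
\[
\|w\|_{D^{k+1}}^2=\sum_{i\ge1}\frac{\tau_i^{k+1}}{\lambda_i^2}\,x_i^2
=\sum_{i\ge1}\frac{\tau_i^{2}}{\lambda_i^{2}}\,\tau_i^{k-1}x_i^2
\le\Bigl(\tfrac{1}{2\lambda_1^2}+8\Bigr)\|\df h\|_{D^{k-1}}^2,
\]
since $\tau_i^2/\lambda_i^2\le(\tfrac12+2\lambda_i)^2/\lambda_i^2\le \tfrac{1}{2\lambda_1^2}+8$. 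Corollary~\ref{cor:sobo} then converts $D^k$ to $W^{k,2}$. So the spectral relation $\tau_i\le\tfrac12+2\lambda_i$ is the key ingredient your argument is missing; it replaces the failed ``lower-order'' claim and lets you trade two powers of $\lambda_i$ for two powers of $\tau_i$ directly.
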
 
\begin{proof}
It follows from \cite[Lemma 4.20]{CM21b} that there exists an orthonormal basis $\{U_i\}_{i \ge 0}$ of $L^2(T^*M)$ such that $\PP U_i =\lambda_i U_i$ for $0=\lambda_0<\lambda_1 \le \lambda_2 \le \cdots \to \infty$. Moreover, $\Delta_f U_i+\tau_i U_i=0$.

We set $\df h=\sum_{i \ge 0} x_i U_i$, where $x_i=\la \df h, U_i \ra_{L^2}$. Then it follows from our assumption and Corollary \ref{cor:sobo} that
\begin{align*} 
\| \df h \|^2_{D^{k-1}} =\sum_{i \ge 0} \tau_i^{k-1} x_i^2<\infty.
\end{align*} 

We define 
\begin{align*}
w:=\sum_{i \ge 1} \frac{x_i}{\lambda_i} U_i.
\end{align*} 
It is clear that $w$ is perpendicular to $\KK_{\PP}$ and $\PP w=\df h$. Moreover, we estimate
\begin{align*}
\sum_{i \ge 1} \tau_i^{k+1 }\frac{x_i^2}{\lambda_i^2}=\sum_{i \ge 1} \frac{\tau_i^2}{\lambda_i^2} \tau_i^{k-1} x_i^2 \le \lc \frac{1}{2\lambda_1^2}+8 \rc \sum_{i \ge 1}\tau_i^{k-1} x_i^2 \le \lc \frac{1}{2\lambda_1^2}+8 \rc \| \df h \|^2_{D^{k-1}},
\end{align*} 
where we have used $0 \le \tau_i \le 1/2+2\lambda_i$ from \cite[Lemma 4.20]{CM21b}. In other words, we obtain
\begin{align*}
\| w \|^2_{D^{k+1}} \le C \| \df h \|^2_{D^{k-1}}
\end{align*} 
and hence the conclusion follows from Corollary \ref{cor:sobo}. 
\end{proof}

\begin{rem}
It is proved in \emph{\cite[Theorem 4.15]{CM21b}} that there exists a $1$-form $w \in W^{1,2}(T^*M)$ with $\PP w=\df h$ that is $L^2$-orthogonal to $\KK_{\PP}$ and satisfies 
\begin{align*}
\| w \|_{W^{1,2}} \leq C \| \df h \|_{L^2}
\end{align*} 
without any curvature assumption.
\end{rem}

By using Proposition \ref{P301}, we immediately obtain the following orthogonal decomposition, which can be regarded as a non-compact version of Proposition \ref{prop:decom1}. 

\begin{prop} \label{prop:decom2}
For any $k \ge 1$, we have the orthogonal decomposition
\begin{align*}
W^{k,2}(S^2(M))=\ker \lc \df \vert_{W^{k,2}(S^2(M))} \rc \oplus \df^*\lc W^{k+1,2}(T^* M)\rc.
\end{align*}
\end{prop}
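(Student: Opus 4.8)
The plan is to mimic the compact case (Proposition \ref{prop:decom1}) using the non-compact existence result just established. Given $h \in W^{k,2}(S^2(M))$, I would first apply Proposition \ref{P301} to obtain a $1$-form $w \in W^{k+1,2}(T^*M)$ with $\PP w = \df \df^* w = \df h$, orthogonal to $\KK_{\PP}$ and satisfying $\|w\|_{W^{k+1,2}} \le C\|\df h\|_{W^{k-1,2}}$. Then I would set $h_1 := h - \df^* w$ and $h_2 := \df^* w$, so $h = h_1 + h_2$; by construction $h_2 \in \df^*(W^{k+1,2}(T^*M))$, and $\df h_1 = \df h - \df\df^* w = \df h - \PP w = 0$, so $h_1 \in \ker(\df\vert_{W^{k,2}(S^2(M))})$. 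This gives the direct-sum decomposition as a sum of subspaces.

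Next I would verify orthogonality and that the sum is direct. If $h_1 \in \ker(\df)$ and $h_2 = \df^* w' \in \df^*(W^{k+1,2}(T^*M))$, then $\la h_1, \df^* w'\ra_{L^2} = \la \df h_1, w'\ra_{L^2} = 0$ by the integration-by-parts formula (Lemma \ref{lem:ibp}, or rather the defining adjoint relation for $\df$ and $\df^*$ from Definition \ref{def:not}, which is legitimate here since both tensors lie in the appropriate Sobolev spaces and smooth compactly supported tensors are dense by Remark \ref{rem:appr}). In particular the intersection of the two subspaces is trivial: if $\df^* w' \in \ker(\df)$ as well, then $\|\df^* w'\|_{L^2}^2 = \la \df^* w', \df^* w'\ra_{L^2} = \la \df\df^* w', w'\ra_{L^2} = 0$, forcing $\df^* w' = 0$. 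Hence the decomposition is a genuine $L^2$-orthogonal direct sum.

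The one genuinely delicate point — and what I expect to be the main obstacle — is making the integration by parts rigorous in the weighted non-compact setting, i.e. justifying $\la \df T, S\ra_{L^2} = \la T, \df^* S\ra_{L^2}$ for tensors merely of Sobolev class rather than compactly supported. This is handled by the density statement in Remark \ref{rem:appr} (smooth compactly supported tensors are dense in $W^{k,2}$ when $|\Rm|$ is bounded, which is our standing assumption in this part of the section), together with the cutoff estimates \eqref{eq:cutoff} and the quadratic growth of $f$ from Lemma \ref{lem:pot}; the weight $e^{-f}$ decays fast enough that all boundary-type error terms from $\phi^r$ vanish as $r \to \infty$. Once this is in place the argument is a line-by-line transcription of the compact proof.

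\begin{proof}
Fix $k \ge 1$ and $h \in W^{k,2}(S^2(M))$. By Proposition \ref{P301}, there exists $w \in W^{k+1,2}(T^*M)$, $L^2$-orthogonal to $\KK_{\PP}$, with $\PP w = \df \df^* w = \df h$ and $\|w\|_{W^{k+1,2}} \le C\|\df h\|_{W^{k-1,2}}$. Set
\begin{align*}
h_1 := h - \df^* w \in W^{k,2}(S^2(M)), \qquad h_2 := \df^* w \in \df^*\lc W^{k+1,2}(T^* M)\rc.
\end{align*}
Then $\df h_1 = \df h - \df\df^* w = \df h - \PP w = 0$, so $h_1 \in \ker\lc \df\vert_{W^{k,2}(S^2(M))}\rc$, and $h = h_1 + h_2$. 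This proves $W^{k,2}(S^2(M)) = \ker\lc\df\vert_{W^{k,2}(S^2(M))}\rc + \df^*\lc W^{k+1,2}(T^*M)\rc$.

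It remains to show the sum is $L^2$-orthogonal; in particular this forces the intersection of the two summands to be $\{0\}$. Let $S \in \ker(\df)$ with $S \in W^{k,2}(S^2(M))$ and let $\df^* w' \in \df^*\lc W^{k+1,2}(T^*M)\rc$. Since $|\Rm|$ is bounded, smooth compactly supported tensors are dense in the relevant Sobolev spaces (Remark \ref{rem:appr}), and the weight $e^{-\bar f}$ together with the cutoff estimates \eqref{eq:cutoff} and Lemma \ref{lem:pot} make the adjoint relation
\begin{align*}
\la S, \df^* w' \ra_{L^2} = \la \df S, w' \ra_{L^2} = 0
\end{align*}
valid for these Sobolev-class tensors by approximation. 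Hence the two summands are orthogonal. Finally, if $\df^* w'$ also lies in $\ker(\df)$, then
\begin{align*}
\| \df^* w' \|_{L^2}^2 = \la \df^* w', \df^* w' \ra_{L^2} = \la \df \df^* w', w' \ra_{L^2} = \la \PP w', w' \ra_{L^2} = 0,
\end{align*}
so $\df^* w' = 0$. Therefore the decomposition is an $L^2$-orthogonal direct sum, as claimed.
\end{proof}
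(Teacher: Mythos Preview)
Your proof is correct and takes essentially the same approach as the paper: the paper simply states that the decomposition follows immediately from Proposition \ref{P301} as the non-compact analogue of Proposition \ref{prop:decom1}, and your argument fills in exactly those details (solve $\PP w=\df h$, split $h=(h-\df^*w)+\df^*w$, and check $L^2$-orthogonality via the adjoint relation). One trivial slip: you wrote $e^{-\bar f}$ where you mean $e^{-f}$, since this section treats a general non-compact Ricci shrinker $(M,g,f)$.
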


One cannot directly obtain the counterpart of Theorem \ref{TH202}, with stability spaces replaced by Sobolev spaces, since, in general, $\varphi_w^{*}(g+h) -g$ may not belong to $W^{k,2}$ even though $h \in W^{k,2}$ and $w \in W^{k+1,2}$. For this reason, we prove the following quantitative version of Theorem \ref{TH202}, which follows essentially from \cite[Lemma 9.9, Proposition 9.11]{CM21b}.

\begin{thm}
\label{thm:diff2}
Given integer $k$, there exists a positive constant $C_k$ satisfying the following property.

Let $h$ be a smooth symmetric $2$-tensor on $M$. For any $L \gg 1$, there exists a smooth $1$-form $w$ supported in $\{b \le L-1/2 \}$ such that for any $k \ge 4$,
\begin{align} \label{eq:esw1}
\|w\|_{C^{k,\alpha}} \le C_k L^{m+k+8} \sup_{b \le L}\| \df h\|_{C^{k-2,\alpha}}
\end{align}
and for $k=2,3$,
\begin{align} \label{eq:esw2}
\|w\|_{C^{k,\alpha}} \le C_k L^{m+k+8} \sup_{b \le L}\| \df h\|_{C^{2}}
\end{align}

Moreover, $2\PP w=\df h$ on $\{b \le L-1\}$ and for any $k \ge 2$,
\begin{align} \label{eq:esd}
&\sup_{b \le L-1} \|\df(\varphi_w^*(g+h)-g)\|_{C^{k,\alpha}} \notag \\
\le& C_k L^{m+k+11} \sup_{b \le L} \lc \|h\|_{C^{k+2,\alpha}} \|\df h\|_{C^{k,\alpha}}+L^{m+k+12}(1+\|h\|_{C^{k+3,\alpha}})\| \df h\|^2_{C^{k+1,\alpha}} \rc. 
\end{align}
\end{thm}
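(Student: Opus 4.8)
The plan is to produce $w$ in three steps: solve the elliptic equation $2\PP w=\df h$ with truncated data, cut the solution down to compact support, and then estimate the residual gauge term $\df(\varphi_w^*(g+h)-g)$ via a Taylor expansion of the pullback.

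\emph{Construction of $w$ and the bounds \eqref{eq:esw1}--\eqref{eq:esw2}.} Using the cutoffs $\phi^r$ of \cite[Section~3]{LW20} rewritten in the variable $b=2\sqrt f$, choose smooth functions $\chi,\psi$ with $\chi\equiv1$ on $\{b\le L-\tfrac12\}$, $\operatorname{supp}\chi\subset\{b\le L\}$, $\psi\equiv1$ on $\{b\le L-1\}$, $\operatorname{supp}\psi\subset\{b\le L-\tfrac12\}$, and $|\na^j\chi|+|\na^j\psi|\le C_j$ uniformly in $L$ (possible since $|\na b|\le1$ and, by \eqref{eq:cutoffaa}, $|\na^j b|$ is bounded because $|\Rm|$ is). Apply Proposition~\ref{P301} to the compactly supported $1$-form $\tfrac12\chi\,\df h$ --- up to subtracting its $\KK_\PP$-component, which by integration by parts is governed by $h$ on the annulus $\{b\sim L\}$, hence exponentially small in $L$ and negligible on $\{b\le L-1\}$ --- to obtain $w_0\perp\KK_\PP$ with $\|w_0\|_{W^{k+1,2}}\le C\|\chi\,\df h\|_{W^{k-1,2}}$ and $\PP w_0=\tfrac12\chi\,\df h$ modulo $\KK_\PP$. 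Set $w:=\psi w_0$; then $\operatorname{supp}w\subset\{b\le L-\tfrac12\}$, and since $\psi\equiv\chi\equiv1$ and the lower-order commutator terms $[\PP,\psi]w_0$ vanish on $\{b\le L-1\}$, we have $2\PP w=\df h$ there. For the pointwise estimates, bounded curvature together with no-local-collapsing (Lemma~\ref{lem:pot}, \cite[Theorem~23]{LW20}) give uniformly bounded geometry, so interior Schauder and $L^p$ estimates for the elliptic operator $\PP$ hold with center-independent constants; combining these with Sobolev embedding on unit balls, the volume bound of Lemma~\ref{lem:volume} (which costs a factor $\lesssim L^{m/2}$ each time a sup-norm over $\{b\le L\}$ is traded for an $L^2$-norm), and the uniform derivative bounds on $\chi,\psi$, one collects the stated powers of $L$ and obtains \eqref{eq:esw1}. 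For $k=2,3$, the fixed number of derivatives of $\df h$ needed to bootstrap $\|w\|_{C^0}$ is not covered by $\|\df h\|_{C^{k-2,\alpha}}$, so the bound is recorded with $\|\df h\|_{C^2}$, giving \eqref{eq:esw2}.

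\emph{The gauge residual \eqref{eq:esd}.} On $\{b\le L-1\}$, expand the pullback along the time-one flow of $w^{\sharp}$. Writing $\varphi_w^*g-g=-2\df^*w+\mathcal{Q}_1(w)$ and $\varphi_w^*h-h=\mathcal{Q}_2(w,h)$, where (schematically, with coefficients controlled by the bounded geometry) $\mathcal{Q}_1(w)=w*(1+\na)w+\cdots$ is quadratic in $w$ and $\mathcal{Q}_2(w,h)=L_{w^\sharp}h+\cdots=w*(1+\na)h+\cdots$ is bilinear in $(w,h)$, we get $\varphi_w^*(g+h)-g=h-2\df^*w+\mathcal{Q}_1(w)+\mathcal{Q}_2(w,h)$. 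Applying $\df$ and invoking $2\df\df^*w=2\PP w=\df h$ on $\{b\le L-1\}$ makes the linear terms cancel, leaving $\df(\varphi_w^*(g+h)-g)=\df(\mathcal{Q}_1(w)+\mathcal{Q}_2(w,h))$ there. Estimating the right-hand side in $C^{k,\alpha}$ --- the single $\df$ and the expansion together cost one, resp.\ two, extra derivatives on $w$ and $h$, which is why $\|h\|_{C^{k+2,\alpha}}$ and $\|h\|_{C^{k+3,\alpha}}$ appear --- and substituting for each factor of $w$ its bound from \eqref{eq:esw1}--\eqref{eq:esw2} produces a term quadratic in $\|\df h\|$ coming from $\mathcal{Q}_1$ and a term of the form $\|h\|\cdot\|\df h\|$ coming from $\mathcal{Q}_2$; keeping track of the accumulated powers of $L$ yields \eqref{eq:esd}.

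\emph{Main difficulty.} The essential work lies in this last step: the precise Taylor expansion of the pullback by the \emph{time-one} diffeomorphism (as opposed to its infinitesimal generator), extracting exactly the linear term so that it cancels against the equation $2\PP w=\df h$, and then bounding the quadratic-and-higher remainder in Hölder norms while honestly tracking both the derivative losses and the powers of $L$ on the non-compact neck. This is where \cite[Lemma~9.9, Proposition~9.11]{CM21b} is invoked, and it accounts for the generous exponents $m+k+8$, $m+k+11$, $m+k+12$ in the statement.
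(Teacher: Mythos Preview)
Your outline for the Taylor-expansion step \eqref{eq:esd} is essentially the paper's argument (the paper writes $g(1)=g(0)+g'(0)+\int_0^1\int_0^t g''(s)\,ds\,dt$, bounds $g''$ quadratically in $w$, and uses \cite[Lemma~9.3, (9.10)]{CM21b} for the linear term), so that part is fine.

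The genuine gap is in your derivation of the pointwise bounds \eqref{eq:esw1}--\eqref{eq:esw2}. You propose to pass from the weighted Sobolev bound $\|w_0\|_{W^{k+1,2}}$ of Proposition~\ref{P301} to a $C^{k,\alpha}$ bound via local Schauder and Sobolev embedding on unit balls, claiming the volume bound costs only a factor $L^{m/2}$. But the Sobolev norms here are taken with respect to $dV_f=e^{-f}\,dV_g$; to apply local elliptic estimates near a point $x$ with $b(x)\sim L$ you need the \emph{unweighted} $\int_{B(x,1)}|w_0|^2\,dV_g$, and the conversion factor is $e^{f(x)}\sim e^{L^2/4}$, not polynomial in $L$. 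Your argument therefore yields an exponential, not polynomial, dependence on $L$, which is useless for the contraction/extension scheme.

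The paper overcomes exactly this obstacle by invoking the Colding--Minicozzi growth estimate \cite[Theorem~4.11]{CM21b}: since $2\PP w_1=\eta\,\df h$, one gets the unweighted level-set bound $I_{w_1}(r)=r^{1-m}\int_{b=r}|w_1|^2|\nabla b|\le C L^7\sup_{b\le L}\|\df h\|_{C^2}^2$ for $r\in[L_2,L]$. Combining this via the coarea formula with interior Schauder estimates on balls of radius $\sim L^{-1}$ (the small radius is needed so that the first-order coefficient $\nabla f$, of size $\sim L$, becomes bounded after rescaling) then yields \eqref{eq:esw1}--\eqref{eq:esw2} with the stated polynomial powers. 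This growth estimate is the key analytic input you are missing, and indeed the introduction singles it out as the reason one can get a pointwise estimate of $w$ at all. Your identification of the Taylor expansion as the ``main difficulty'' is thus misplaced.
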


\begin{proof}
In the proof, all constants $C$ depend on $k$ and the given Ricci shrinker, which may differ line by line.

We fix a smooth cutoff function $\eta$ which is identically $1$ on $\{b \le L-1/2\}$ and supported on $\{b \le L\}$. It follows from Proposition \ref{P301} that there exists a smooth $1$-form $w_1$ such that
\begin{align} \label{ey301a}
2\PP w_1=\eta\df h
\end{align}
and 
\begin{align} \label{ey301b}
\|w_1\|^2_{W^{2,2}} \le C \|\eta \df h\|^2_{L^2}.
\end{align}

It follows from \cite[Theorem 4.11]{CM21b} that for any $r \ge L_1(m)$,
\begin{align} \label{ey301c}
I_{w_1}(r) \le C r^3 \lc \| \eta \df h\|^2_{L^2}+ \int b^{2-m} \lc |\eta \df h|^2+|\na \df (\eta\df h)|^2 \rc\,dV_g \rc,
\end{align}
where $I_{T}(r):=r^{1-m} \int_{b=r}|T|^2|\na b|$ for any tensor $T$. Indeed, it follows from \eqref{ey301a} and \cite[Theorem 4.11]{CM21b} that for any $r_2 >r_1>L_1(m)$,
\begin{align*}
I_{w_1}(r_2) \le C \lc \frac{r_2}{r_1} \rc^3 \lc I_{w_1}(r_1)+I_{\na \df w_1}(r_1)+ \int b^{2-m} \lc |\eta \df h|^2+|\na \df (\eta\df h)|^2 \rc\,dV_g \rc.
\end{align*}
By integrating $r_1$ on $[L_2,2L_2]$, we conclude from \eqref{ey301b} that \eqref{ey301c} holds for any $r \ge L_2:=2L_1$. It is clear from \eqref{ey301c} that for any $r \in [L_2,L]$,
\begin{align}\label{ey301d}
I_{w_1}(r) \le C L^7 \sup_{b \le L}\| \df h\|_{C^2}^2
\end{align}
since
\begin{align*}
\int b^{2-m} \lc | \eta \df h|^2+|\na \df (\eta\df h)|^2 \rc\,dV_g \le C \sup_{b \le L} \| \df h\|_{C^2}^2 \int_{b \le L} b^{4-m}\,dV_g \le C L^4 \sup_{b \le L}\| \df h\|_{C^2}^2,
\end{align*}
where we have used Lemma \ref{lem:volume}.

For any $x$ with $2L_2 \le b(x) \le L-1/2$, we consider the ball $B(x,2L^{-1})$, which has bounded geometry. The radius is chosen to be $2L^{-1}$ so that the coefficients of the elliptic system $\PP$ have uniformly bounded H\"older norms after rescaling. By the Schauder estimate (cf. \cite{DN55}\cite{GT01}), we obtain
\begin{align} \label{ey301e}
\sup_{B(x,L^{-1})} \|w_1\|^2_{C^{k+2,\alpha}} \le C L^{2k+4+2\alpha} \lc L^{-4}\sup_{B(x,2L^{-1})} \|\PP w_1\|^2_{C^{k,\alpha}}+L^m \int_{B(x,2L^{-1})} |w_1|^2 \,dV_g \rc
\end{align}

Since we assume the curvature is bounded, $|\na b|$ is almost equal to $1$ by \eqref{E101}, if $L_2$ is sufficiently large. Therefore, it follows from \eqref{ey301d} and the coarea formula that
\begin{align} \label{ey301f}
\int_{B(x,2L^{-1})} |w_1|^2 \,dV_g \le C L^{m-2} \sup_{r\in[b(x)-3L^{-1},b(x)+3L^{-1}]} I_{w_1}(r) \le CL^{m+5} \sup_{b \le L}\| \df h\|_{C^2}^2.
\end{align}

If $k \ge 2$, it follows from \eqref{ey301e} and \eqref{ey301f} that
\begin{align*}
\sup_{2L_2 \le b \le L-1/2} \|w_1\|_{C^{k+2,\alpha}} \le C L^{m+k+\alpha+9/2} \sup_{b \le L}\| \df h\|_{C^{k,\alpha}} \le CL^{m+k+10} \sup_{b \le L} \| \df h\|_{C^{k,\alpha}}.
\end{align*}
By using the Schauder estimate again for $b \le 2L_2$, we obtain
\begin{align} \label{ey302a}
\sup_{b \le L-1/2} \|w_1\|_{C^{k+2,\alpha}} \le CL^{m+k+10} \sup_{b \le L} \| \df h\|_{C^{k,\alpha}}.
\end{align}
We choose another cutoff function $\rho$ which is identically $1$ if $b \le L-1$ and supported in $\{b \le L-1/2\}$ and set $w:=\rho w_1$. It is clear from \eqref{ey302a} that 
\begin{align} \label{ey302b}
\|w\|_{C^{k+2,\alpha}} \le CL^{m+k+10} \sup_{b \le L} \| \df h\|_{C^{k,\alpha}}.
\end{align}
Moreover, $2\PP w=\df h$ if $b \le L-1$. Thus, \eqref{eq:esw1} is proved, and \eqref{eq:esw2} can be proved similarly.

We set $\varphi^t$ to be the family of diffeomorphisms generated by $w$ and $g(t):=(\varphi^t)^*(g+h)$ and consider the identity
\begin{align} \label{ey302c}
g(1)=g(0)+g'(0)+\int_0^1 \int_0^t g''(s)\,dsdt.
\end{align}
It is clear that $g''(s)$ is quadratic in $w$ and its first two derivatives with coefficients that depend on up to two derivatives of $h$. Moreover, 

From this, we obtain
\begin{align} \label{ey302d}
\sup_{b \le L-1/2} \|g''(s)\|_{C^{k,\alpha}} \le C (1+ \sup_{b \le L-1/2} \|h\|_{C^{k+2,\alpha}}) \|w\|^2_{C^{k+2,\alpha}}.
\end{align}

In addition, it follows from \cite[Lemma 9.3, (9.10)]{CM21b} that 
\begin{align} \label{ey302e}
\|\df(g'(0))+2\PP w\|_{C^{k,\alpha}} \le C L \sup_{b \le L-1/2} \|h\|_{C^{k+2,\alpha}} \|w\|_{C^{k+2,\alpha}}.
\end{align}

By taking $\df$ of \eqref{ey302c}, it follows from \eqref{ey302d}, \eqref{ey302e} and \eqref{ey302b} that if $k \ge 2$
\begin{align*} 
&\sup_{b \le L-1} \|\df (\varphi_w^{*}(g+h) -g) \|_{C^{k,\alpha}} \notag \\
=& \sup_{b \le L-1} \|\df(g(1)-g(0)-g'(0))+\df h+\df (g'(0)) \|_{C^{k,\alpha}} \notag \\
\le & CL \lc \sup_{b \le L} \|h\|_{C^{k+2,\alpha}} \|w\|_{C^{k+2,\alpha}}+(1+ \sup_{b \le L} \|h\|_{C^{k+3,\alpha}}) \|w\|^2_{C^{k+3,\alpha}} \rc \notag\\
\le & C L \sup_{b \le L} \lc L^{m+k+10} \|h\|_{C^{k+2,\alpha}} \|\df h\|_{C^{k,\alpha}}+L^{2(m+k+11)}(1+\|h\|_{C^{k+3,\alpha}})\| \df h\|^2_{C^{k+1,\alpha}} \rc.
\end{align*}
Therefore, \eqref{eq:esd} is proved.
\end{proof}

Next, we investigate the operator $\LL=\Delta_f+2\Rm$ on symmetric $2$-forms on $M$. It is clear that $\LL$ is a self-adjoint elliptic operator. Since $|\Rm|$ and its higher derivatives are bounded, it is easy to derive the following result from Proposition \ref{prop:ellip1}.

\begin{prop}
\label{prop:esti2}
For any $k \ge 2$, there exists a constant $C$ so that if $h \in L^2(S^2(M))$ and $\LL h \in W^{k-2,2}(S^2(M))$, then $h \in W^{k,2}(S^2(M))$ and
\begin{align*} 
\|h \|_{W^{k,2}} \le C \lc \|\LL h\|_{W^{k-2,2}}+\|h\|_{L^2} \rc.
\end{align*} 
\end{prop}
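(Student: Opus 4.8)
The plan is to deduce Proposition \ref{prop:esti2} directly from Proposition \ref{prop:ellip1} by absorbing the curvature term $2\Rm$ into lower-order contributions. First I would observe that since $\LL h = \Delta_f h + 2\Rm \ast h$, we have $\Delta_f h = \LL h - 2\Rm \ast h$. By hypothesis $\LL h \in W^{k-2,2}$, so if we already knew $h \in W^{k-2,2}$ we could conclude $\Delta_f h \in W^{k-2,2}$ (because $|\na^j \Rm|$ is bounded for all $j$, so multiplication by $\Rm$ and its covariant derivatives preserves $W^{k-2,2}$, using the Leibniz rule and the fact that each factor $\na^i \Rm$ is a bounded tensor), and then Proposition \ref{prop:ellip1} would upgrade $h$ to $W^{k,2}$ with the stated estimate. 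So the real content is a bootstrap in the regularity index.

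The bootstrap proceeds by induction on $k$. For the base case $k=2$: from $h \in L^2$ and $\LL h \in L^2$ we get $\Delta_f h = \LL h - 2\Rm\ast h \in L^2$ (here $\Rm \ast h \in L^2$ since $|\Rm|$ is bounded and $h \in L^2$), so Proposition \ref{prop:ellip1} gives $h \in W^{2,2}$ with $\|h\|_{W^{2,2}} \le C(\|\Delta_f h\|_{L^2} + \|h\|_{L^2}) \le C(\|\LL h\|_{L^2} + \|h\|_{L^2})$. For the inductive step, suppose the statement holds for $k-1$, and assume $h \in L^2$ with $\LL h \in W^{k-2,2}$. Since $W^{k-2,2} \subset W^{k-3,2}$, the inductive hypothesis (applied with index $k-1$) gives $h \in W^{k-1,2}$ together with $\|h\|_{W^{k-1,2}} \le C(\|\LL h\|_{W^{k-3,2}} + \|h\|_{L^2}) \le C(\|\LL h\|_{W^{k-2,2}} + \|h\|_{L^2})$. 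Now $\Delta_f h = \LL h - 2\Rm \ast h$; the first term is in $W^{k-2,2}$ by assumption, and for the second term I would expand $\na^j(\Rm \ast h) = \sum_{i=0}^{j} \na^i \Rm \ast \na^{j-i} h$ for $0 \le j \le k-2$, and since each $|\na^i \Rm|$ is a bounded function and $\na^{j-i} h \in L^2$ (because $h \in W^{k-1,2} \supset W^{k-2,2}$), each summand lies in $L^2$; hence $\Rm \ast h \in W^{k-2,2}$ with $\|\Rm \ast h\|_{W^{k-2,2}} \le C\|h\|_{W^{k-2,2}} \le C\|h\|_{W^{k-1,2}}$. Therefore $\Delta_f h \in W^{k-2,2}$ and Proposition \ref{prop:ellip1} yields $h \in W^{k,2}$ with
\begin{align*}
\|h\|_{W^{k,2}} \le C\big(\|\Delta_f h\|_{W^{k-2,2}} + \|h\|_{L^2}\big) \le C\big(\|\LL h\|_{W^{k-2,2}} + \|h\|_{W^{k-1,2}} + \|h\|_{L^2}\big) \le C\big(\|\LL h\|_{W^{k-2,2}} + \|h\|_{L^2}\big),
\end{align*}
using the inductive bound on $\|h\|_{W^{k-1,2}}$ in the last step.

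I do not expect any serious obstacle here: the argument is essentially an elliptic-bootstrap combined with the elementary fact that multiplication by a smooth bounded-with-all-derivatives tensor field is a bounded operator on each $W^{j,2}$. The only point requiring a little care is making sure the curvature hypotheses in force for the rest of the section — namely that $|\Rm|$ is bounded, which via Shi's estimates and the shrinker equation \eqref{E100} implies all $|\na^j \Rm|$ are bounded (as already noted after Lemma \ref{lem:pot}) — are genuinely enough to control the $W^{k-2,2}$ norm of $\Rm \ast h$; they are, because only finitely many derivatives of $\Rm$ (up to order $k-2$) enter. Hence a clean induction as above completes the proof.
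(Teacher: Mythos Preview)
Your proposal is correct and matches the paper's approach exactly: the paper simply states that since $|\Rm|$ and its higher derivatives are bounded, the result follows directly from Proposition~\ref{prop:ellip1}, and your inductive bootstrap is precisely the detailed version of that one-line argument.
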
 

From the same proof as Proposition \ref{prop:dis} and Corollary \ref{cor:sobo}, we have

\begin{prop}
\label{PX302}
$\LL$ has discrete eigenvalues $\mu_0\le \mu_1\le \cdots \to +\infty$ with finite-dimensional eigenspaces $E_{\mu_i} \subset W^{k,2}(S^2(M))$. Moreover, for $k \ge 2$ and any $h \in W^{k,2}(S^2(M))$, we have
\begin{align*}
\|h-\pi_{E_0} (h)\|_{W^{k,2}} \le C\| \LL h\|_{W^{k-2,2}}.
\end{align*}
Let $\{U_i\}$ be an orthonormal basis of $L^2(S^2(M))$ such that $\LL U_i+\mu_i U_i=0$. For any $h \in L^2(S^2(M))$, we set $x_i:=\la h,U_i\ra_{L^2}$. Then for any $k \ge 0$, $h \in W^{k,2}$ if and only if
\begin{align*}
\|h\|^2_{D^k}:=\sum_{i} \mu_i^k x_i^2<\infty. 
\end{align*} 
Moreover, the norms $\| \cdot\|_{D^k}$ and $\|\cdot\|_{W^{k,2}}$ are comparable.
\end{prop}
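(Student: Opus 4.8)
The plan is to mirror the proof of Proposition \ref{prop:dis} and Corollary \ref{cor:sobo} verbatim, replacing the operator $\Delta_f$ acting on $(r,s)$-tensors by $\LL = \Delta_f + 2\Rm$ acting on $S^2(M)$, and using Proposition \ref{prop:esti2} in place of Proposition \ref{prop:ellip1}. Concretely, I would first establish the Weitzenböck-type inequality that replaces Corollary \ref{cor:w1}: since $|\Rm|$ is bounded, $\|h\|_{W^{1,2}}^2 \le \ep \|\LL h\|_{L^2}^2 + C(\ep)\|h\|_{L^2}^2$ follows by integrating by parts $\int \la \LL h, h\ra \,dV_f = -\int |\na h|^2 + \int 2\Rm * h * h$, bounding the curvature term by $\sup|\Rm|\,\|h\|_{L^2}^2$, and absorbing. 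Combined with the Rellich compactness $W^{1,2}(S^2(M)) \hookrightarrow L^2(S^2(M))$ — which holds by the log-Sobolev/uniform-integrability argument already given in the proof of Proposition \ref{prop:dis}, applied to symmetric $2$-tensors — this yields discreteness of the spectrum of $\LL$ with finite-dimensional eigenspaces, bounded below by $-2\sup|\Rm|$ hence after relabeling $\mu_0 \le \mu_1 \le \cdots \to +\infty$.

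Next, for the estimate $\|h - \pi_{E_0}(h)\|_{W^{k,2}} \le C\|\LL h\|_{W^{k-2,2}}$: the $L^2$-bound $\|h - \pi_{E_0}(h)\|_{L^2} \le C\|\LL h\|_{L^2}$ is the standard eigendecomposition argument (split $h = \sum x_i U_i$, note $\LL h = -\sum \mu_i x_i U_i$, and use that $|\mu_i|$ is bounded away from $0$ for $\mu_i \ne 0$, combined with the bound $\mu_i \ge -2\sup|\Rm|$ so only finitely many $\mu_i$ are nonpositive — all of which lie in a controlled range). Upgrading to $W^{1,2}$ uses the Weitzenböck inequality above; upgrading to $W^{k,2}$ for $k \ge 2$ is exactly Proposition \ref{prop:esti2} applied to $h - \pi_{E_0}(h)$, noting that $\LL$ annihilates $\pi_{E_0}(h)$ in the zero-eigenvalue case (more precisely, one works with $h - \pi_{E_0}(h)$ which is $L^2$-orthogonal to $E_0$, so the elliptic estimate and the $L^2$-bound combine).

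For the characterization of $W^{k,2}$ via $\|h\|_{D^k}^2 = \sum_i \mu_i^k x_i^2$, I would reproduce the induction in the proof of Corollary \ref{cor:sobo} with no essential change, since that argument used only: Remark \ref{rem:appr} (density of compactly supported smooth tensors in $W^{k,2}$, valid for $S^2(M)$ under bounded curvature), integration by parts (Lemma \ref{lem:ibp}, which applies to any tensor bundle including $S^2$), weak lower semicontinuity of the $W^{1,2}$-norm, and the elliptic estimate (now Proposition \ref{prop:esti2}). One small point of care: since $\LL$ may have finitely many negative or zero eigenvalues, the quantity $\mu_i^k$ can be negative or zero for finitely many $i$; but this only affects finitely many terms in finite-dimensional spaces where all norms are equivalent, so $\|h\|_{D^k}$ is still comparable to $\|h\|_{W^{k,2}}$ after absorbing a finite-dimensional correction. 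Equivalently, one may replace $\mu_i$ by $\mu_i + 1 + 2\sup|\Rm|$ throughout (i.e., work with $\LL - (1+2\sup|\Rm|)\,\mathrm{Id}$, whose spectrum is strictly positive) and note the resulting $D^k$-norm is comparable to the original up to the finite-dimensional subspace — this is the cleanest way to handle it.

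The main obstacle, such as it is, is purely bookkeeping: verifying that the Rellich compactness proof for scalar functions in \cite{CHZ13} genuinely carries over to symmetric $2$-tensors (it does, via Kato's inequality $|\na|h|| \le |\na h|$ and the log-Sobolev inequality on Ricci shrinkers, exactly as sketched in the proof of Proposition \ref{prop:dis}), and tracking the finitely many non-positive eigenvalues of $\LL$ so that the $D^k$-norm comparison is stated correctly. No new analytic input beyond Proposition \ref{prop:esti2} and the boundedness of $|\Rm|$ and its derivatives is required, which is exactly why the statement is presented as following "from the same proof."
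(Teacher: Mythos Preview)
Your proposal is correct and follows essentially the same approach as the paper, which simply states that Proposition \ref{PX302} follows ``from the same proof as Proposition \ref{prop:dis} and Corollary \ref{cor:sobo}'' with $\LL$ in place of $\Delta_f$ and Proposition \ref{prop:esti2} in place of Proposition \ref{prop:ellip1}. Your extra care with the finitely many nonpositive eigenvalues of $\LL$ (via a spectral shift or finite-dimensional correction) is a point the paper glosses over, so your write-up is if anything more complete.
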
 

\begin{exmp} \label{ex:sr}
We consider $(M,g, f)=(\R^m,g_E,f_E)$, where $g_E$ is the flat metric and $f_E=|z|^2/4+m/2$ with coordinates $z$ on $\R^m$. In this case, $\LL=\Delta_{f_E}$ on symmetric $2$-tensors on $\R^m$. If $h$ is an eigentensor of $\Delta_{f_E}$, so is each of its components. It is well-known that all possible eigenvalues are $0,1/2,1,3/2,\cdots$, and the eigenspace with respect to $k/2$ consists of all $k$-th Hermite polynomials. Therefore, $E_{k/2}$ consists of all symmetric $m \times m$ matrices with $k$-th Hermite polynomials as components.
\end{exmp}

Notice that $\df$ can be defined on $L^2(S^2(M))$ in the distribution sense. Let $\ker(\df)$ be the kernel space of $\df$ in $L^2$, which is obviously a closed subspace. Based on Proposition \ref{prop:decom2} and Lemma \ref{lem:pre}, we have

\begin{lem} \label{lem:hperp}
For any $k \ge 0$, there exists a constant $C>1$ such that
\begin{align*} 
C^{-1} \|u+v\|^2_{W^{k,2}} \le \|u\|_{W^{k,2}}^2+\|v\|^2_{W^{k,2}} \le C\|u+v\|^2_{W^{k,2}}
\end{align*}
for any $u \in W^{k,2}(S^2(M)) \cap \ker(\df)$ and $v \in W^{k,2}(S^2(M)) \cap \ker(\df)^{\perp}$.
\end{lem}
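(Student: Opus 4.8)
The plan is to reduce everything to the orthogonal splitting $W^{k,2}(S^2(M)) = \ker(\df\vert_{W^{k,2}}) \oplus \df^*(W^{k+1,2}(T^*M))$ from Proposition \ref{prop:decom2}, and then to exploit that the two summands are each invariant under powers of a fixed self-adjoint elliptic operator. Concretely, since the problem is about comparing $\|u+v\|_{W^{k,2}}$ with $\|u\|_{W^{k,2}}^2+\|v\|_{W^{k,2}}^2$, and the upper bound $\|u\|_{W^{k,2}}^2+\|v\|_{W^{k,2}}^2 \le C\|u+v\|_{W^{k,2}}^2$ is the genuine content (the other direction is the triangle inequality together with $2ab\le a^2+b^2$), I would first observe that it suffices to exhibit a bounded linear projection $\pi\colon W^{k,2}(S^2(M))\to \ker(\df)\cap W^{k,2}$ whose complementary projection $1-\pi$ maps into $\ker(\df)^\perp$. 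Given such a $\pi$ with operator norm $\le C$ on $W^{k,2}$, one has $u = \pi(u+v)$ and $v = (1-\pi)(u+v)$ (using that the $L^2$-orthogonal decomposition $u\in\ker(\df)$, $v\in\ker(\df)^\perp$ is unique and agrees with the $W^{k,2}$-decomposition of Proposition \ref{prop:decom2}), and then $\|u\|_{W^{k,2}}, \|v\|_{W^{k,2}} \le C\|u+v\|_{W^{k,2}}$, which gives the claim.

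The next step is to produce that bounded projection using the eigenbasis machinery. By Proposition \ref{PX302} (or directly Proposition \ref{prop:decom2} combined with Corollary \ref{cor:sobo}), fix an $L^2$-orthonormal eigenbasis $\{U_i\}$ of $S^2(M)$ with $\LL U_i = -\mu_i U_i$; since $\LL$ preserves the decomposition $\ker(\df)\oplus\df^*(\cdots)$ by Lemma \ref{lem:pre}, each $U_i$ can be chosen to lie entirely in $\ker(\df)$ or entirely in $\ker(\df)^\perp$. Define $\pi$ to be the $L^2$-orthogonal projection onto the closed span of those $U_i$ lying in $\ker(\df)$. Then for $h=\sum x_i U_i$ one has $\pi h = \sum_{i\in A} x_i U_i$ where $A$ indexes the $\ker(\df)$-eigentensors, and consequently, for every $k$,
\begin{align*}
\|\pi h\|_{D^k}^2 = \sum_{i\in A}\mu_i^k x_i^2 \le \sum_i \mu_i^k x_i^2 = \|h\|_{D^k}^2.
\end{align*}
Because Corollary \ref{cor:sobo} / Proposition \ref{PX302} tells us $\|\cdot\|_{D^k}$ is comparable to $\|\cdot\|_{W^{k,2}}$ (here the bounded-curvature hypothesis in force for this part of the section is used), this yields $\|\pi h\|_{W^{k,2}} \le C\|h\|_{W^{k,2}}$ and likewise $\|(1-\pi)h\|_{W^{k,2}}\le C\|h\|_{W^{k,2}}$. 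Applying this with $h=u+v$ and invoking uniqueness of the decomposition (so that $\pi(u+v)=u$, $(1-\pi)(u+v)=v$) finishes the upper estimate, and hence the lemma.

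The one genuinely delicate point is the identification $\pi(u+v) = u$: I must know that the $L^2$-orthogonal decomposition into $\ker(\df)$ and $\ker(\df)^\perp$ coincides with the decomposition whose summands are spanned by the corresponding eigentensors, i.e. that $\ker(\df)\cap L^2$ really is the closed span of the $\LL$-eigentensors lying in $\ker(\df)$. This follows because $\LL$ is self-adjoint with discrete spectrum and preserves $\ker(\df)$ (Lemma \ref{lem:pre}), so its eigentensors split into those in $\ker(\df)$ and those in $\ker(\df)^\perp$, and their combined span is dense in $L^2(S^2(M))$; hence the span of the $\ker(\df)$-part is exactly $\ker(\df)\cap L^2$. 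I expect the main obstacle to be bookkeeping: making sure the $D^k$-norm comparability quoted from Corollary \ref{cor:sobo} is being applied on the correct space (symmetric $2$-tensors, via Proposition \ref{PX302}) and that no loss of derivatives sneaks in when passing between $\ker(\df)$ as an $L^2$-kernel and as a $W^{k,2}$-kernel — but Proposition \ref{prop:decom2} is stated exactly so as to handle this, so it is routine.
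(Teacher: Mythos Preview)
Your proposal is correct and follows essentially the same route as the paper: split an $\LL$-eigenbasis of $L^2(S^2(M))$ into eigentensors lying in $\ker(\df)$ and in $\ker(\df)^\perp$ (using Lemma \ref{lem:pre} and Proposition \ref{prop:decom2}), then use the additivity of the $D^k$-norm on this decomposition together with the $D^k \sim W^{k,2}$ comparability from Proposition \ref{PX302}. The paper phrases the final step as the equality $\|u+v\|_{D^k}^2 = \|u\|_{D^k}^2 + \|v\|_{D^k}^2$ rather than via a bounded projection, but the content is identical.
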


\begin{proof}
We claim that there exists an orthonormal basis $\{u_i,v_i\}$ of $L^2(S^2(M))$ such that $u_i \in \ker(\df)$ and $v_i \in \ker(\df)^{\perp}$. Moreover, $\LL u_i+\lambda_i u_i=0$ and $\LL v_i+\lambda'_i v_i=0$ for $\lambda_i,\lambda_i' \to +\infty$. Indeed, we consider an orthonormal basis $\{U_i\}$ such that $\LL U_i+\mu_i U_i=0$. Thanks to Proposition \ref{prop:decom2}, we may decompose 
\begin{align*} 
U_i=U_i^0+U_i^1,
\end{align*}
where $U_i^0 \in \ker(\df)$ and $U_i^1 \in \ker(\df)^{\perp} \cap \im(\df^*)$. From Lemma \ref{lem:pre}, we easily conclude that
\begin{align*} 
\LL U^0_i+\mu_i U^0_i=0 \quad \text{and} \quad \LL U^1_i+\mu_i U^1_i=0.
\end{align*}
In addition, $\{U^0_i\}$ and $\{U^1_i\}$ are bases of $\ker(\df)$ and $\ker(\df)^{\perp}$, respectively. By an orthonormalization, we obtain the desired $u_i$ and $v_i$.

For any $u \in W^{k,2}(S^2(M)) \cap \ker(\df)$ and $v \in W^{k,2}(S^2(M)) \cap \ker(\df)^{\perp}$, we set
\begin{align*} 
u=\sum x_i u_i \quad \text{and} \quad v=\sum y_i v_i.
\end{align*}
Therefore, we conclude from Proposition \ref{PX302} that
\begin{align*} 
\|u+v\|^2_{W^{k,2}} \le C \|u+v\|^2_{D^{k}}=C \sum \lambda_i^k x_i^2+\lambda_i'^k y_i^2 = C \lc \|u\|_{D^{k}}^2+\|v\|^2_{D^{k}} \rc \le C \lc \|u\|_{W^{k,2}}^2+\|v\|^2_{W^{k,2}}\rc.
\end{align*}
The other inequality can be proved similarly.
\end{proof}

From Lemma \ref{lem:pre} and Lemma \ref{lem:hperp}, it is easy to see for any $k \ge 2$ if $u \in \ker(\df) \cap W^{k,2}$ and $v \in \ker(\df)^{\perp} \cap W^{k,2}$, then $\LL u \in \ker(\df)$, $\LL v \in \ker(\df)^{\perp}$ and $\|\LL u\|_{W^{k-2,2}}^2+\|\LL v\|_{W^{k-2,2}}^2 \approx \|\LL(u+v)\|_{W^{k-2,2}}^2$.

We have the following definition similar to Definition \ref{def:ISD}.

\begin{defn}
The infinitesimal solitonic deformation space with respect to $(M, g, f)$ is defined as
\begin{align*}
\mathrm{ISD}:= \ker(\LL) \cap \ker(\df).
\end{align*}
\end{defn}
In particular, $\isd$ is finite-dimensional and each element belongs to $C^{\infty}(S^2(M)) \cap W^{k,2}(S^2(M))$ for any $k \ge 1$. 

Next, we prove the following estimate, which will play an essential role in deriving the rigidity inequality; see also \cite[Theorem 5.12]{CM21b} for the special case of cylinders.
\begin{thm}
\label{thm:est1}
For any $k \ge 2$, there exists a constant $C$ such that for any $h \in W^{k,2}(S^2 (M))$, we have
\begin{align} \label{EX306}
\| h - \pi_{\isd}(h) \|_{W^{k,2}} \leq C\lc \| \LL h\|_{W^{k-2,2}} + \| \df h \|_{L^2} \rc.
\end{align}
\end{thm}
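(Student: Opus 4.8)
The plan is to combine the spectral estimate for $\LL$ already established in Proposition \ref{PX302} with a correction supported in the finite-dimensional space $\ker(\LL)$, exploiting the fact that the part of $\ker(\LL)$ lying in $\ker(\df)$ is precisely $\isd$.

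First I would set $h_0:=\pi_{E_0}(h)$ with $E_0=\ker(\LL)$, and $h_1:=h-h_0$. By Proposition \ref{PX302}, $\|h_1\|_{W^{k,2}}\le C\|\LL h\|_{W^{k-2,2}}$. Since $\isd=\ker(\LL)\cap\ker(\df)\subseteq\ker(\LL)$ while $h_1$ is $L^2$-orthogonal to $\ker(\LL)$, we have $\pi_{\isd}(h)=\pi_{\isd}(h_0)$. Writing $V:=\ker(\LL)\cap\isd^{\perp}$ (orthogonal complement taken inside the finite-dimensional space $\ker(\LL)$) and decomposing $h_0=w+v$ with $w:=\pi_{\isd}(h)\in\isd$ and $v:=\pi_{V}(h_0)\in V$, we obtain $h-\pi_{\isd}(h)=h_1+v$, so it suffices to bound $\|v\|_{W^{k,2}}$.

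The main step is to observe that $\df$ is injective on $V$: if $v\in V$ and $\df v=0$, then $v\in\ker(\LL)\cap\ker(\df)=\isd$, hence $v\in\isd\cap\isd^{\perp}=\{0\}$. As $V$ is finite-dimensional, all norms on it are equivalent, so there is $c>0$ with $\|v\|_{W^{k,2}}\le c^{-1}\|\df v\|_{L^2}$ for every $v\in V$. Now $\df h=\df h_1+\df h_0=\df h_1+\df v$ because $w\in\isd\subset\ker(\df)$, and therefore $\|\df v\|_{L^2}\le\|\df h\|_{L^2}+\|\df h_1\|_{L^2}$. From the pointwise inequality $|\df T|\le|\na T|+|\na f|\,|T|$, together with $|\na f|^2\le f$ and Lemma \ref{lem:esf} in the case $k=1$, the operator $\df$ is bounded from $W^{1,2}(S^2(M))$ to $L^2(S^2(M))$, hence $\|\df h_1\|_{L^2}\le C\|h_1\|_{W^{1,2}}\le C\|h_1\|_{W^{k,2}}\le C\|\LL h\|_{W^{k-2,2}}$. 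Combining these estimates gives $\|v\|_{W^{k,2}}\le C(\|\df h\|_{L^2}+\|\LL h\|_{W^{k-2,2}})$, and then $\|h-\pi_{\isd}(h)\|_{W^{k,2}}\le\|h_1\|_{W^{k,2}}+\|v\|_{W^{k,2}}\le C(\|\LL h\|_{W^{k-2,2}}+\|\df h\|_{L^2})$, which is \eqref{EX306}.

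I do not expect a serious obstacle here: the genuine analytic input — discreteness of $\mathrm{spec}(\LL)$ and the bounded inverse of $\LL$ on the $L^2$-complement of $\ker(\LL)$ in the weighted Sobolev spaces — is exactly the content of Propositions \ref{prop:esti2} and \ref{PX302}. The only points requiring a little care are the identification $\ker(\LL)\cap\ker(\df)=\isd$, which isolates precisely the directions invisible to $\LL$ alone, and the fact that although $\df$ carries the unbounded weight $\na f$ it is nonetheless bounded $W^{1,2}\to L^2$; both are dispatched above using Lemma \ref{lem:esf}.
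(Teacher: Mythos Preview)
Your proof is correct and takes a genuinely different route from the paper. The paper decomposes $h=h^0+h^1$ along $\ker(\df)\oplus\ker(\df)^{\perp}$ by solving $\PP w=\df h$ (Proposition~\ref{P301}), then invokes Lemma~\ref{lem:hperp} (the $W^{k,2}$-orthogonality of this decomposition) together with Lemma~\ref{lem:pre} to separate $\|\LL h^0\|$ from $\|\LL h^1\|$, and finally applies Proposition~\ref{PX302} to $h^0$ and Proposition~\ref{prop:esti2} to $h^1$. You instead decompose along $\ker(\LL)\oplus\ker(\LL)^{\perp}$ directly via Proposition~\ref{PX302}, and handle the finite-dimensional piece $\ker(\LL)$ by the elementary observation that $\df$ is injective on $V=\ker(\LL)\cap\isd^{\perp}$, so equivalence of norms gives $\|v\|_{W^{k,2}}\le C\|\df v\|_{L^2}$. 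Your approach is lighter: it bypasses Proposition~\ref{P301}, Lemma~\ref{lem:pre}, and Lemma~\ref{lem:hperp} entirely, needing only Proposition~\ref{PX302} and the $W^{1,2}\to L^2$ boundedness of $\df$ (via Lemma~\ref{lem:esf}). The paper's approach, on the other hand, yields the explicit divergence-free/divergence decomposition of $h$ with separate $W^{k,2}$ control on each piece, which is structurally closer to how the estimate is used later in Section~4.
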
 

\begin{proof}
Without loss of generality, we assume additionally that $h$ is smooth with compact support. 

It follows from Proposition \ref{P301} that there exists a $1$-form $w\in W^{k+1,2}(T^*M)$ with $\PP w=\df h$ satisfying
\begin{align} \label{EX306a}
\| w \|_{W^{2,2}} \leq C \| \df h \|_{L^2}.
\end{align} 
Moreover, we set $h^0:=h-\df^* w$ and $h^1:=\df^* w$. It is clear from the definition that $h^0 \in \ker(\df)$ and $h^1 \in \ker(\df)^{\perp}$. In addition, it follows from Lemma \ref{lem:pre} and Lemma \ref{lem:hperp} that
\begin{align} \label{EX306b}
\| \LL h^0 \|_{W^{k-2,2}}^2+\| \LL h^1 \|_{W^{k-2,2}}^2 \le C \| \LL h \|_{W^{k-2,2}}^2.
\end{align} 

From Proposition \ref{prop:esti2}, \eqref{EX306a} and \eqref{EX306b} we obtain
\begin{align} \label{EX306f}
\|h^1\|_{W^{k,2}} \le C \lc \|h^1\|_{L^2}+\| \LL h^1 \|_{W^{k-2,2}} \rc \le C \lc \|\df h\|_{L^2}+\| \LL h \|_{W^{k-2,2}} \rc.
\end{align}

On the other hand, it is easy to see
\begin{align} \label{EX306g}
\pi_{\isd}(h)=\pi_{\isd}(h^0)+\pi_{\isd}(h^1)=\pi_{\isd}(h^0)=\pi_{E_0}(h^0).
\end{align} 
Therefore, it follows from Proposition \ref{PX302}, \eqref{EX306g} and \eqref{EX306b} that
\begin{align} \label{EX306h}
\|h^0-\pi_{\isd} (h)\|_{W^{k,2}} = \|h^0-\pi_{E_0} (h^0)\|_{W^{k,2}} \le C\|\LL h^0\|_{W^{k-2,2}} \le C\|\LL h\|_{W^{k-2,2}}
\end{align} 
Combining \eqref{EX306f} and \eqref{EX306h}, the proof of \eqref{EX306} is complete.
\end{proof}

For later applications, we consider $\Delta_f$ on $1$-forms and denote the $L^2$-kernel by $\KK_0$. For any $w \in \KK_0$, it follows from the integration by parts that $|\na w|=0$ and $(M,g)$ splits off a line if $w \ne 0$. Therefore, if we set $d=\text{dim}(\KK_0)$, then $(M,g)$ splits off a $\R^d$, and the $1$-forms $(dz_1,\cdots, dz_{d})$ of $\R^d$ form a basis of $\KK_0$. The following result is immediate from the definition of the eigenvalue (cf. \cite[Lemma 8.26]{CM21b}).

\begin{lem}[Poincar\'e inequality]
\label{lem:p1}
For any $w \in W^{1,2}(T^* M)$, we have
\begin{align*}
\|w-\pi_{\KK_0}(w)\|_{L^2} \le C \| \na w\|_{L^2}.
\end{align*}
\end{lem}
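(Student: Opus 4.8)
The plan is to identify the inequality as the Rayleigh--quotient characterization of the lowest positive eigenvalue of $\Delta_f$ on $1$-forms, after projecting away the parallel forms.

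First I would set up the spectral picture. Proposition \ref{prop:dis}, applied to $1$-forms, gives $\Delta_f$ on $T^*M$ a discrete spectrum with finite-dimensional eigenspaces and eigenvalues tending to $+\infty$; fix an $L^2$-orthonormal eigenbasis $\{U_i\}$ with $\Delta_f U_i + \mu_i U_i = 0$. By definition $\KK_0$ is the $(\mu=0)$-eigenspace, and as recalled in the text every element of $\KK_0$ is parallel. Since $\mu_i \to \infty$, only finitely many eigenvalues lie below any threshold, so among the positive ones there is a minimum $\lambda := \min\{\mu_i : \mu_i > 0\} > 0$. As $\pi_{\KK_0}(w)$ is parallel, $\na w = \na(w - \pi_{\KK_0}(w))$, so it is enough to prove
\begin{align*}
\|\na u\|_{L^2}^2 \ge \lambda \|u\|_{L^2}^2 \qquad \text{for every } u \in W^{1,2}(T^*M) \text{ with } u \perp \KK_0,
\end{align*}
and then the Lemma holds with $C = \lambda^{-1/2}$.

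For this bound, expand $u = \sum_i x_i U_i$ with $x_i = \la u, U_i \ra_{L^2}$; the orthogonality $u \perp \KK_0$ forces $x_i = 0$ whenever $\mu_i = 0$. For the partial sums $u_N := \sum_{i \le N} x_i U_i$, integration by parts (Lemma \ref{lem:ibp}, valid since $u_N, \Delta_f u_N \in L^2$) together with orthonormality gives
\begin{align*}
\|\na u_N\|_{L^2}^2 = -\int \la \Delta_f u_N, u_N \ra \, dV_f = \sum_{i \le N} \mu_i x_i^2 \ge \lambda \sum_{i \le N} x_i^2 = \lambda \|u_N\|_{L^2}^2,
\end{align*}
using $\mu_i \ge \lambda$ for every index with $x_i \neq 0$. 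Since $u \in W^{1,2}$ we have $\sum_i \mu_i x_i^2 < \infty$ (Corollary \ref{cor:sobo}), and letting $N \to \infty$ yields $\|\na u\|_{L^2}^2 \ge \lambda \|u\|_{L^2}^2$.

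No real obstacle is expected: the one technical point is the passage to the limit, i.e. that $\na u_N \to \na u$ in $L^2$. This follows because $\|\na(u_N - u_M)\|_{L^2}^2 = \sum_{M < i \le N} \mu_i x_i^2 \to 0$ makes $\{\na u_N\}$ Cauchy in $L^2$, and its limit must be $\na u$ by closedness of the weak gradient (equivalently, one may invoke the spectral description of $W^{1,2}$ in Corollary \ref{cor:sobo}, or more abstractly the spectral theorem for the self-adjoint operator $\Delta_f$). This is the argument of \cite[Lemma 8.26]{CM21b}.
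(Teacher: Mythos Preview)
Your proposal is correct and is exactly the standard Rayleigh-quotient argument that the paper has in mind: the paper gives no proof beyond the remark that the inequality ``is immediate from the definition of the eigenvalue (cf.\ \cite[Lemma 8.26]{CM21b})'', and you have simply written out that argument in detail, citing the same reference.
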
 

For any Ricci shrinker $(M,g,f)$, we may decompose $(M,g,f)=(M' \times \R^d,g' \times g_E,f' + f_E)$, where $d=\text{dim}(\KK_0)$ and hence $(M',g',f')$ does not split off any line. Next, we obtain

\begin{prop} \label{prop:iddd0}
Any non-flat, non-compact Ricci shrinker $(M,g,f)$ with $\isd=0$ is irreducible.
\end{prop}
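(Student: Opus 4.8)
The plan is to argue by contradiction. Suppose $(M,g,f)$ is reducible, so that $(M,g,f)=(M_1\times M_2, g_1\times g_2, f_1+f_2)$ as a nontrivial product of two Ricci shrinkers. Since $(M,g)$ is non-flat, at least one factor, say $(M_1,g_1,f_1)$, is non-flat. By the splitting off the maximal Euclidean factor, I may further assume $(M_1,g_1,f_1)$ is non-flat and splits off no line; note $M_1$ could be compact or non-compact, and the argument below should treat both uniformly. The goal is to exhibit a nonzero element of $\isd=\ker(\LL)\cap\ker(\df)$ on $M$, contradicting the hypothesis.

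The natural candidate, in analogy with the compact case (Lemma \ref{lem:prounst}, where $\Rc(g_1)$ and $\Rc(g_2)$ are eigentensors of $\LL$ for the eigenvalue $-1$), is to use the Ricci tensors of the factors. Recall from the remarks before Definition \ref{def:stable1} that on any Ricci shrinker one has $\df(\Rc)=0$ and $\LL(\Rc)=\Rc$, i.e. $\Rc$ is an eigentensor of $\LL$ with eigenvalue $-1$ in the sign convention $\LL(h)+\mu h=0$. The key point is that on the product, $\Rc(g)=\Rc(g_1)\oplus\Rc(g_2)$ decomposes, and the two pieces $h_1:=\Rc(g_1)\oplus 0$ and $h_2:=0\oplus\Rc(g_2)$ are \emph{separately} in $\ker(\df)$ and satisfy $\LL h_i+h_i=0$ on $M$ (the stability operator and weighted divergence on $M$ restrict correctly to tensors pulled back from a factor, since $\Rm$, $\nabla$, and $\nabla f$ all split). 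Then $h_1$ and $h_2$ span a two-dimensional $(-1)$-eigenspace of $\LL\vert_{\ker(\df)}$. However, this does not immediately produce an element of $\isd=\ker(\LL)\cap\ker(\df)$, i.e. the $0$-eigenspace; so a further step is needed to shift the eigenvalue to $0$.

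To convert a $(-1)$-eigentensor into a $0$-eigentensor, I would use the function $f$ itself together with Lemma \ref{lem:pre}, in the spirit of the computation in the proof of Lemma \ref{lem:Einstable} where $h=(\mu-1/2)ug+\nabla^2 u$ was shown to satisfy $\delta h=0$ and $\LL h+(\mu-1)h=0$ for an eigenfunction $u$ of $\Delta$ with eigenvalue $\mu$. Concretely, since $(M,g,f)$ splits off no line as a whole only after removing the $\R^d$ factor, but here I want to exploit the product structure: on $M_2$ (say the non-flat-or-flat complementary factor) one can build, from the potential function $f_2$ or from $\Rc(g_2)$ and $g_2$, a suitable linear combination living on $M$ that lies in $\ker(\df)\cap\ker(\LL)$. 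The cleanest route is: on the product, consider the tensor $h:=\Rc(g_1)\otimes(\text{something involving }f_2)$ or more simply use that on any Ricci shrinker the tensor $g_i-2\Rc(g_i)=2\nabla^2 f_i$ is $\df$-closed with a controlled $\LL$-image, and assemble a combination whose $\LL$-eigenvalue is $0$. I would carry this out by direct computation using $\nabla^2 f_i=\tfrac12 g_i-\Rc(g_i)$, $\Delta_f f_i=m_i/2-f_i$ (the factorwise version), and the splitting of $\LL$, solving a small linear system in the coefficients to kill the eigenvalue. The resulting tensor must be checked to be nonzero, which uses that the factor is non-flat (so $\Rc(g_i)$ is not a constant multiple of $g_i$, or $\nabla^2 f_i\not\equiv 0$).

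The main obstacle I anticipate is producing an element of $\ker(\LL)$ (eigenvalue exactly $0$) rather than merely an element of some negative eigenspace, and doing so in a way that is manifestly $L^2$ (with respect to $dV_f$) and nonzero. The $L^2$-integrability is handled by Lemma \ref{lem:pot} and Lemma \ref{lem:volume}, since $\Rc$, $g$, and $\nabla^2 f_i$ all have at most polynomial growth, hence lie in $L^2(S^2(M))$; so $\isd$ as defined in Definition \ref{def:ISD2} genuinely sees this tensor. The nonvanishing is where the non-flat hypothesis on $(M,g)$ is essential: if the combination vanished identically, it would force a factor to be Einstein with $\Rc=g/2$ \emph{and} $\nabla^2 f\equiv 0$ there, i.e. that factor is flat, and then unwinding the product shows $(M,g)$ is a flat space or contains an extra line, either contradicting non-flatness (after the $\R^d$-splitting) or the irreducibility setup. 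Thus the contradiction closes and $(M,g,f)$ must be irreducible. I would keep the presentation parallel to Lemma \ref{lem:prounst} and flag that the only new input over the compact case is the $L^2$-theory from Section 3 guaranteeing that the constructed eigentensor actually represents a class in the $\isd$ of Definition \ref{def:ISD2}.
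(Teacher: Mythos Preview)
Your overall strategy---produce a nonzero element of $\isd$ from the product structure---is exactly the paper's, and you correctly identify that $\Rc(g_i)$ alone lives in the $(-1)$-eigenspace of $\LL$, not the $0$-eigenspace. But the proposal has a genuine gap at the decisive step: you never actually write down the element of $\isd$. Phrases like ``something involving $f_2$'' and ``solve a small linear system in the coefficients'' do not constitute the construction, and the alternative route via $\nabla^2 f_i$ is an unnecessary detour.

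The missing observation is immediate from the identity you already quote, $\Delta_{f_2} f_2 = m_2/2 - f_2$: the function $u:=f_2 - m_2/2$ on $M_2$ satisfies $\Delta_{f_2} u + u = 0$, i.e.\ it is a $1$-eigenfunction. Then for $h:= u\,\Rc(g_1)$ one has, by the product splitting of $\LL$,
\[
\LL h = (\Delta_{f_2} u)\,\Rc(g_1) + u\,\LL_{M_1}\Rc(g_1) = -u\,\Rc(g_1) + u\,\Rc(g_1) = 0,
\]
and $\df h = u\,\delta_{f_1}\Rc(g_1) = 0$. Polynomial growth gives $h\in L^2$, so $h\in\isd$. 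The paper first disposes of an $\R^d$-factor by the same device, taking $u$ to be a quadratic Hermite polynomial on $\R^d$ (again a $1$-eigenfunction) times $\Rc$ of the non-flat complement; this yields $\KK_0=0$, so in any remaining splitting $M_1\times M_2$ both factors are non-flat and $\Rc(g_i)\ne 0$. Then $\isd=0$ forces $(f_2-m_2/2)\Rc(g_1)=0$ and $(f_1-m_1/2)\Rc(g_2)=0$, hence $f_i\equiv m_i/2$; both factors are compact Einstein and $M$ is compact, contradicting the hypothesis. No linear system is needed, and the nonvanishing discussion becomes a one-line dichotomy rather than the tangled case analysis you sketch.
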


\begin{proof}
We first show $\KK_0=0$. Suppose otherwise. We assume $(M,g,f)=(M' \times \R^d,g' \times g_E,f' + f_E)$ for $d=\text{dim}(\KK_0) \ge 1$. By our assumption, $(M',g')$ is non-flat. Therefore, it is clear that $u \Rc(g')$ lies in $\isd$, where $u$ is any quadratic Hermite polynomial on $\R^d$. In sum, we obtain a contradiction and hence $\KK_0=0$. Next, if we assume $(M,g,f)=(M^{m_1}_1 \times M^{m_2}_2, g_1\times g_2,f_1+f_2)$, then both $(M_1,g_1)$ and $(M_2,g_2)$ are non-flat. By direct computation, $(f_1-m_1/2) \Rc(g_2)$ and $(f_2-m_2/2) \Rc(g_1)$ lie in $\isd$. Therefore, $f_1=m_1/2$ and $f_2=m_2/2$. In other words, both $(M_1,g_1)$ and $(M_2,g_2)$ are compact Einstein manifolds. However, it implies that $M$ is compact, and we obtain a contradiction.
\end{proof}

\section{Rigidity inequality for generalized cylinders}

In this section, we focus on generalized cylinders. That is, we assume $(\bar M,\bar g)=(N^n \times \R^{m-n},g_N \times g_E )$, where $(N^n,g_N)$ is a closed Einstein manifold with $\Rc(g_N)=g_N/2$. It is clear that $(\bar M,\bar g, \bar f)$ is a Ricci shrinker with normalization \eqref{E101} if $\bar f:=|z|^2/4+n/2$, where $z$ is the coordinate on $\R^{m-n}$. It is easy to see all possible choices of $\bar f$ with normalization \eqref{E101} are $|z-a|^2/4+n/2$ for $a \in \R^{m-n}$. Therefore, we may assume $\bar f$ is uniquely determined after a translation. We add the subscript $N$ (resp. $E$) to denote notations and operators on $(N,g_N)$ (resp. $(\R^{m-n},g_E,f_E)$).

In addition, we assume $(N^n,g_N)$ satisfies the following conditions: 
\begin{align}
\begin{cases}
\text{$(N^n,g_N)$ has an obstruction of order $3$}; \tag{$\ast$} \label{EX301}\\
-\dfrac{k}{2} \notin \text{spec}(\LL_N \vert_{\text{TT}}), \quad \forall k \in \N^+.
\end{cases}
\end{align}

Notice that \eqref{EX301} is guaranteed if $(N,g_N)$ is linearly stable or $H$-stable (cf. Definition \ref{def:stable1} and Definition \ref{def:Hstable}). Next, we have the following result, which is more or less standard.

\begin{lem}
The spectrum of $\LL$ satisfies
\begin{align} \label{EX304a}
\mathrm{spec}(\LL) = (\mathrm{spec}(\LL_N)+\mathrm{spec}(\Delta^0_{f_E})) \cup(\mathrm{spec}(\LL_E)+\mathrm{spec}(\Delta^0_{N})) \cup (\mathrm{spec}(\Delta^1_{N})+\mathrm{spec}(\Delta^1_{f_E}).
\end{align}
Moreover, the eigentensor of $\LL$ can be expressed as the product of the corresponding eigenvectors. Here, the superscript $0$ (resp. $1$) means that the corresponding operator acts on functions (resp. $1$-forms).
\end{lem} 
\begin{proof}
Thanks to Proposition \ref{PX302}, the conclusion follows similarly to \cite[Propsition 4.1]{Kr15}; see also \cite[Lemma 3.1]{AM11}.
\end{proof}

\begin{prop} \label{PX303}
We have the orthogonal decomposition
\begin{align} \label{EX304b}
\isd= \isd_N \oplus \KK_1,
\end{align}
where $\KK_1 := \{ u g_{N} \mid u \in W^{1,2}(\R^{m-n}), \, \Delta_{f_E} u + u =0 \}$.
\end{prop}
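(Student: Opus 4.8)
The plan is to read off $\isd=\ker(\LL)\cap\ker(\df)$ (Definition \ref{def:ISD2}) from the spectral description in Lemma \ref{lem:decom2}, working block by block. First I would record the orthogonal splitting $S^2(\bar M)=S^2_{NN}\oplus S^2_{NE}\oplus S^2_{EE}$ of symmetric $2$-tensors according to the number of legs lying in $TN$ versus $T\R^{m-n}$. Since the full curvature tensor of $N\times\R^{m-n}$ is nonzero only on quadruples of indices lying in $N$ and $\R^{m-n}$ is flat, the operator $2\Rm$ preserves this splitting and annihilates $S^2_{NE}\oplus S^2_{EE}$; the weighted Laplacian of the product also preserves the splitting, so $\LL$ does. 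The three summands are then precisely the ones whose spectra yield the three terms of \eqref{EX304a}, so $\ker(\LL)=E_0(\LL)$ splits along the three blocks and it remains to intersect each block with $\ker(\df)$.

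The blocks $S^2_{NE}$ and $S^2_{EE}$ will contribute nothing. By Lemma \ref{lem:decom2} an $\LL$-eigentensor in $S^2_{NE}$ is a product $\alpha\otimes\beta$ with $\Delta^1_N\alpha+\mu_1\alpha=0$ and $\Delta^1_{f_E}\beta+\mu_2\beta=0$, where $\mu_1,\mu_2\ge 0$; moreover $\mu_1>0$, because a nonzero parallel $1$-form on $N$ is dual to a nonzero parallel vector field $X$, forcing $\Rc(X,X)=0$ and contradicting $\Rc(g_N)=g_N/2$. Hence $\mu_1+\mu_2>0$ and $E_0(\LL)\cap S^2_{NE}=0$. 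In $S^2_{EE}$ both eigenvalues must vanish, so $E_0(\LL)\cap S^2_{EE}$ consists of constant symmetric matrices $A$ on $\R^{m-n}$, and a direct computation gives $(\df A)_j=-\tfrac12\sum_i A_{ij}z_i$, which vanishes identically only for $A=0$. Since $\df$ maps $S^2_{NN}$ into $T^*N$ and $S^2_{EE}$ into $T^*\R^{m-n}$, which are pointwise orthogonal sub-bundles of $T^*\bar M$, the condition $\df h=0$ on $h\in E_0(\LL)$ forces its $S^2_{EE}$-component (and its already vanishing $S^2_{NE}$-component) to be zero, so everything reduces to $S^2_{NN}$.

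The core is the $S^2_{NN}$-block. By Lemma \ref{lem:decom2} an element of $E_0(\LL)\cap S^2_{NN}$ is a finite sum of products $h_N\otimes\psi$ with $\LL_N h_N+\lambda h_N=0$ and $\Delta_{f_E}\psi+(-\lambda)\psi=0$; since $\mathrm{spec}(\Delta^0_{f_E})=\{0,\tfrac12,1,\dots\}$ (Example \ref{ex:sr}), necessarily $\lambda=-k/2$ with $k\in\N_{\ge0}$. Using $\df(h_N\otimes\psi)=\psi\cdot\mathrm{div}_N(h_N)$ and the $L^2(\R^{m-n})$-orthogonality of eigenfunctions of $\Delta_{f_E}$ for distinct eigenvalues, I would group the sum by $\lambda$ and, invoking linear independence of the $\psi$'s inside each $\Delta_{f_E}$-eigenspace, conclude that every $h_N$ occurring is transverse on $N$, i.e.\ $h_N\in E_{-k/2}(\LL_N)\cap\ker(\mathrm{div}_N)$. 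The key claim is: for $k\in\N_{\ge0}$ the space $E_{-k/2}(\LL_N)\cap\ker(\mathrm{div}_N)$ is zero unless $k=0$, where it equals $\isd_N$, or $k=2$, where it equals $\R g_N$. Granting this, the $\lambda=0$ factors pair with constants $\psi\in E_0(\Delta^0_{f_E})$ and contribute exactly $\isd_N$, the $\lambda=-1$ factors pair with $\psi\in E_1(\Delta^0_{f_E})=\{u:\Delta_{f_E}u+u=0\}$ and contribute exactly $\KK_1$, and no other $k$ contributes; combined with the two vanishing blocks this gives $\isd=\isd_N\oplus\KK_1$. The sum is $L^2$-orthogonal: for $h_N\in\isd_N$ its trace $\tau:=\tr_{g_N}(h_N)$ satisfies $\Delta_N\tau+\tau=0$ (the pure-trace part of $\LL_N h_N=0$), hence $\int_N\tau\,dV_{g_N}=0$, and $\langle h_N,u g_N\rangle_{L^2(\bar M)}$ factors as $\bigl(\int_N\tau\,dV_{g_N}\bigr)$ times an integral over $\R^{m-n}$, so it vanishes.

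It remains to prove the claim, which is where the hypotheses \eqref{EX301} enter and which I expect to be the main obstacle. Given a transverse $h_N\in E_{-k/2}(\LL_N)$ with $k\ge1$, decompose $h_N=h^{\circ}_N+\tfrac1n\tau g_N$ into its traceless part $h^{\circ}_N$ and pure-trace part. Since $\Delta_N$ commutes with $\tr_{g_N}$ and $\LL_N g_N=g_N$ (because $\LL(\Rc)=\Rc$ and $\Rc(g_N)=g_N/2$), the operator $\LL_N$ preserves this decomposition, so $\LL_N h^{\circ}_N-\tfrac k2 h^{\circ}_N=0$ and $\Delta_N\tau+(1-\tfrac k2)\tau=0$, while transversality of $h_N$ reads $\mathrm{div}_N h^{\circ}_N+\tfrac1n\nabla\tau=0$. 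If $k=2$ then $\Delta_N\tau=0$, so $\tau$ is constant; if $k\ne2$ then $1-\tfrac k2$ is negative or equals $\tfrac12$, and in neither case can it be an eigenvalue of the nonnegative operator $-\Delta_N$ on nonconstant functions --- the delicate case $k=1$ uses the Lichnerowicz eigenvalue estimate, whereby every nonzero eigenvalue of $-\Delta_N$ is at least $\tfrac{n}{2(n-1)}>\tfrac12$ --- so $\tau\equiv0$. In either case $\nabla\tau=0$, hence $h^{\circ}_N$ is traceless and divergence-free, i.e.\ TT, and solves $\LL_N h^{\circ}_N-\tfrac k2 h^{\circ}_N=0$; the second line of \eqref{EX301}, excluding $-k/2$ from $\mathrm{spec}(\LL_N|_{\mathrm{TT}})$ for every $k\in\N^+$, then forces $h^{\circ}_N=0$. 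Thus $h_N=\tfrac1n\tau g_N$, which is $0$ when $k\ne2$ and a multiple of $g_N$ when $k=2$; conversely $g_N\in E_{-1}(\LL_N)\cap\ker(\mathrm{div}_N)$ and $\isd_N=E_0(\LL_N)\cap\ker(\mathrm{div}_N)$ by definition, which establishes the claim and completes the plan.
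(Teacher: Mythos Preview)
Your proposal is correct and follows essentially the same route as the paper's proof: both decompose $\ker(\LL)$ along the three blocks of Lemma~\ref{lem:decom2}, discard the $S^2_{NE}$ and $S^2_{EE}$ pieces for the same reasons, and then on $S^2_{NN}$ take the trace of $\LL_N h_N=\tfrac{k}{2}h_N$ to obtain $\Delta_N\tau+(1-\tfrac{k}{2})\tau=0$, use Lichnerowicz to rule out $k=1$, and invoke the second line of \eqref{EX301} to kill the TT part. The only cosmetic differences are that you set up the traceless/pure-trace splitting from the outset (the paper subtracts $c g_N/n$ after the fact) and that your orthogonality argument uses $\int_N\tau=0$ whereas the paper uses $\int u\,e^{-f_E}=0$; both are immediate.
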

\begin{proof}
For any symmetric $2$-tensor $h$ in the kernel of $\LL$, we obtain a decomposition according to \eqref{EX304a}
$$
h=h^1+h^2+h^3,
$$
where $h^i$ is generated by the product of the corresponding eigenvectors.
Notice that $\mathrm{spec}(\Delta^{1}_{N}) \ge 0$ and $0 \notin \mathrm{spec}(\Delta^{1}_{N})$ since otherwise the universal cover of $N$ splits off a line, which is impossible. Therefore, $h^3 \equiv 0$.
Since $\text{spec}(\LL_E)=\text{spec}(\Delta_{f_E}) \ge 0$ and $\text{spec}(\Delta^0_N) \ge 0$, $h^2=a_{ij}dz^i \otimes dz^j$ for some constants $a_{ij}$. From the types specified, it is clear that
\begin{align*}
\dbf(h^1)=\delta_N (h^1) \quad \text{and} \quad \dbf(h^2)=-h^2(\na f_E).
\end{align*}
Since $\dbf(h)=0$ and $\delta_N (h^1) \perp h^2(\na f_E)$, we conclude that
\begin{align} \label{EX305a}
\delta_N (h^1)=h^2(\na f_E)=0
\end{align}
and consequently $h^2 \equiv 0$, as each component of $h^2(\na f_E)$ is $a_{ij}z_j/2$.
We may assume $h^1=u h_N$, where $u \in W^{1,2}(\R^{m-n})$ and $h_N \in C^{\infty}(S^2(N))$ satisfying
\begin{align} \label{EX305aa}
\Delta_{f_E} u+\lambda u=0 \quad \text{and} \quad \LL_N(h_N)-\lambda h_N=0
\end{align}
for $\lambda \ge 0$. By Example \ref{ex:sr}, $\lambda=k/2$ for some $k \in \N$. Moreover, it follows from \eqref{EX305a} that $\delta_N (h_N)=0$.
If we set $H:=\tr_{g_N}(h_N)$, by taking the trace of \eqref{EX305aa}, we obtain
\begin{align} \label{EX305b}
\Delta_N H+(1-\lambda)H=0.
\end{align}
From \eqref{EX305b}, we conclude that $H=0$ if $1-\lambda<0$. In this case, $h_N$ belongs to the TT subspace and $\lambda=k/2$ for $k \ge 3$. However, this contradicts \eqref{EX301} by \eqref{EX305aa}.

If $1-\lambda \ge 0$, then $\lambda=0,1/2$ or $1$. If $\lambda=0$, then $u$ must be constant by \eqref{EX305aa}. In this case, $h \in \isd_N$. If $\lambda=1/2$, we obtain from \eqref{EX305b} that $\Delta_N H+H/2=0$. Since $1/2$ is not an eigenvalue of $\Delta_N$, we have $H=0$. However, this contradicts the fact that $-1/2 \notin \text{spec}(\LL_N \vert_{\text{TT}})$ from \eqref{EX301}.

If $\lambda=1$, it follows from \eqref{EX305b} that $H \equiv c$ for some constant $c$. Therefore, we obtain
\begin{align*}
\LL_N(h_N-cg_N/n)-(h_N-cg_N/n)=0,
\end{align*}
which implies that $h_N=cg_N/n$ since $-1 \notin \text{spec}(\LL_N \vert_{\text{TT}})$ by \eqref{EX301}. Consequently, $h \in \KK_1$.

In sum, we obtain the decomposition \eqref{EX304b}. The decomposition is orthogonal since the integral of $u$ vanishes.
\end{proof}

We consider the deformation on the Ricci shrinker $(\bar M,\bar g, \bar f)$. Unlike in the compact case, the deformation of $\bar f$ is not a priori determined by the deformation of $\bar g$, since for a complete non-compact Riemannian manifold (even with bounded geometry), the minimizer of $\boldsymbol{\mu}(\cdot,1)$ may not exist (cf. \cite{Zhang12}). For this reason, we consider a pair $\{g,f\}$ which is in a small neighborhood of $\{\bar g ,\bar f\}$ in $C^2(S^2(\bar M)) \times C^2(\bar M)$. Moreover, we define
\begin{defn} 
For any $\{g,f\}$ close to $\{\bar g, \bar f\}$, we define
\begin{align*}
\bphi(\{g,f\})&= \frac{g}{2} - \Rc(g) - \nabla_g^2 f.
\end{align*}
In particular, $\bphi(\{g,f\})=0$ if and only if $(\bar M,g,f)$ is a Ricci shrinker.
\end{defn} 

Similar to Lemma \ref{lem:ephi}, we have the following variational formula of $\bphi$.
\begin{lem} \label{lem:ephi2}
For any $\{h,\chi\} \in C^{2}(S^2(\bar M)) \times C^{2}(\bar M)$, we have
\begin{align*}
\bphi'(\{h,\chi\}) =\frac{1}{2}\LL(h) + \dbf^{*} \dbf h +\nabla^2 (\mathrm{Tr}_{\bar g}(h)/2-\chi).
\end{align*}
\end{lem}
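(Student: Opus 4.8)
The plan is to compute the first variation of $\bphi$ at $\{\bar g,\bar f\}$ directly, exactly as in the proof of Lemma \ref{lem:ephi} (cf. \cite[Lemma 2.5]{LZ22}); the computation is purely local, so the non-compactness of $\bar M$ is irrelevant, and the only difference from the compact case is that $\chi$ is now an unconstrained variation of the potential rather than the derivative of the minimizer $f_{\tilde g}$. Writing $\bphi(\{g,f\})=\tfrac12 g-\Rc(g)-\nabla_g^2 f$, we have $\bphi'(\{h,\chi\})=\tfrac12 h-\Rc'(h)-(\nabla^2_g\bar f)'(h)-\nabla^2_{\bar g}\chi$, so it suffices to linearize the first three terms.

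First I would invoke the standard formula for the linearization of the Ricci tensor, $\Rc'(h)_{ij}=-\tfrac12\Delta h_{ij}-\tfrac12\nabla_i\nabla_j(\tr_{\bar g}h)+\tfrac12\big(\nabla_i(\mathrm{div}\,h)_j+\nabla_j(\mathrm{div}\,h)_i\big)-R_{ikjl}h^{kl}+\tfrac12\big(R_{ik}h^k_j+R_{jk}h^k_i\big)$, where $(\mathrm{div}\,h)_j=\nabla^kh_{kj}$ is the unweighted divergence and the curvature-term signs are fixed by the convention $\LL(h)_{ij}=\Delta_f h_{ij}+2R_{ikjl}h_{kl}$ of Definition \ref{def:not}, together with the elementary variation of the Hessian at a fixed function, $(\nabla^2_g\bar f)'(h)_{ij}=-\tfrac12\big(\nabla_ih_{jk}+\nabla_jh_{ik}-\nabla_kh_{ij}\big)\nabla^k\bar f$, which comes from $\dot\Gamma^k_{ij}=\tfrac12(\nabla_ih_{j}{}^k+\nabla_jh_i{}^k-\nabla^kh_{ij})$. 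The main step is then a reorganization into the weighted operators: the $-\tfrac12\nabla_kh_{ij}\nabla^k\bar f$ term combines with $\tfrac12\Delta h_{ij}$ to give $\tfrac12\Delta_{\bar f}h_{ij}$; the terms $-\tfrac12\nabla_i(\mathrm{div}\,h)_j+\tfrac12\nabla_ih_{jk}\nabla^k\bar f$ (and their $i\leftrightarrow j$ symmetrization), rewritten via $(\dbf h)_j=(\mathrm{div}\,h)_j-h_{jk}\nabla^k\bar f$ and $(\dbf^*w)_{ij}=-\tfrac12(\nabla_iw_j+\nabla_jw_i)$, produce $(\dbf^*\dbf h)_{ij}$ together with a leftover $-\tfrac12\big(h_{jk}\nabla_i\nabla^k\bar f+h_{ik}\nabla_j\nabla^k\bar f\big)$.

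To dispose of this leftover and the two remaining zeroth-order groups I would use the Ricci shrinker equation $\nabla^2\bar f=\tfrac12\bar g-\Rc(\bar g)$: it converts $-\tfrac12(h_{jk}\nabla_i\nabla^k\bar f+h_{ik}\nabla_j\nabla^k\bar f)$ into $-\tfrac12 h_{ij}+\tfrac12(R_{ik}h^k_j+R_{jk}h^k_i)$, whose Ricci part cancels against the $R$-contractions coming from $-\Rc'(h)$, while $-\tfrac12 h_{ij}$ cancels $+\tfrac12 h_{ij}$ from the first term of $\bphi$. What survives is $R_{ikjl}h^{kl}$, which with $\tfrac12\Delta_{\bar f}h$ assembles into $\tfrac12\LL(h)$ by the definition of $\LL$; the surviving second-derivative terms are $\tfrac12\nabla^2(\tr_{\bar g}h)-\nabla^2\chi=\nabla^2(\tr_{\bar g}(h)/2-\chi)$. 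Collecting these gives the asserted identity. The only real difficulty is sign- and index-bookkeeping — in particular staying consistent with the convention $\Delta_f=\Delta-\langle\nabla\cdot,\nabla f\rangle$, with the minus sign built into $\dbf^*$, and with the Riemann-tensor convention in $\LL$ — and keeping straight which $\Rc$- and $\nabla^2\bar f$-contractions cancel which; no analytic input beyond these pointwise identities and the shrinker equation is needed.
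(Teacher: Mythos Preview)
Your proposal is correct and is exactly the approach the paper intends: it states that Lemma~\ref{lem:ephi2} is proved ``similar to Lemma~\ref{lem:ephi}'' (cf.\ \cite[Lemma 2.5]{LZ22}, \cite[Theorem 1.1]{CZ12}), i.e., by the same direct linearization of $\tfrac12 g-\Rc(g)-\nabla_g^2 f$ and regrouping via the shrinker equation that you carry out. Your bookkeeping of the $\dbf^*\dbf h$ term, the cancellation of the Ricci contractions against the leftover from $\nabla^2\bar f$, and the assembly into $\tfrac12\LL(h)$ are all accurate.
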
 

For any $\{g,f\}$ nearby $\{\bar g,\bar f\}$, we set $\{h,\chi\}:=\{g-\bar g, f-\bar f\}$. For the rest of the section, we assume that for a given integer $k \ge 2$,
\begin{align} \label{EX401a}
\delta:=\|h \|_{C^k}+\| \chi\|_{C^k}
\end{align}
for a sufficiently small parameter $\delta>0$. Moreover, we denote by $C$ a constant which depends only on the Ricci shrinker $(\bar M,\bar g, \bar f)$ and $k$, unless otherwise stated. In the following, $C$ may be different line by line.

From Proposition \ref{PX303}, we set $\pi_{\isd}(h)=h_1+ug_N$, where $h_1 \in \isd_{N}$ and $u$ is a quadratic Hermite polynomial on $\R^{m-n}$. Let $\{h_i\}$ and $\{f_i\}$ be two sequences of $2$-tensors and functions respectively induced by $h_1$ (see Definition \ref{def:ind} and Definition \ref{def:ind1}). 

We define 
\begin{align*}
g^0(s,t) &= \bar g+s u g_N + t h_1 + \frac{t^2}{2} h_2,\\
g(s,t) &= \bar g+s \zeta + t h_1 + \frac{t^2}{2} h_2, \\
f^0(s,t) &= \bar f+s \frac{n}{2} u + t f_1 + \frac{t^2}{2} f_{2}, \\
f(s,t) &= \bar f+s q + t f_1 + \frac{t^2}{2} f_{2}, 
\end{align*}
where $\zeta:=h-h_1-h_2/2$ and $q:=\chi-f_1-f_2/2$. Moreover, we define 
\begin{align*}
\zeta':= \zeta-ug_N, \quad q':=q-\frac{n}{2}u \quad \text{and} \quad Q:= \frac{1}{2} \tr_{\bar g}(\zeta)-q=\frac{1}{2} \tr_{\bar g}(\zeta')-q'. 
\end{align*}

We also define a center of mass vector $\mathcal B(h,\chi)=(\mathcal B_1(h,\chi),\cdots, \mathcal B_{m-n}(h,\chi))$ by
\begin{align} \label{eq:center}
\mathcal B_i(h,\chi) :=\int \la \partial_{z_i}, \na \lc \frac{1}{2} \tr_{\bar g} (h)-\chi \rc \ra \,\vf=\frac{1}{2}\int z_i \lc \frac{1}{2} \tr_{\bar g} (h)-\chi \rc \,\vf.
\end{align}

Since $h_1,h_2,f_1$ and $f_2$ depend only on $N$ and the integral of $z_i$ vanishes, it is clear from \eqref{eq:center} that
\begin{align*}
\mathcal B_i(h,\chi)=\mathcal B_i(\zeta',q')= \int \la \partial_{z_i}, \na Q \ra \,\vf.
\end{align*} 

By the notation in Lemma \ref{lem:p1}, it is clear that $C_N \la \mathcal B(h,\chi),dz \ra:=C_N \sum \mathcal B_i(h,\chi) dz_i$ is the projection of $\na Q$ onto $\KK_0$, where $C_N:=e^{n/2}/|N|$. It follows immediately from Lemma \ref{lem:p1} that
\begin{align} \label{eq:poin}
\|\na Q\|_{L^2} \le C \lc \| \na^2 Q\|_{L^2}+|\mathcal B(h,\chi)| \rc.
\end{align}

In addition, we define $\phi^0(s,t):=\bphi(\{g^0(s,t),f^0(s,t)\})$ and $\phi(s,t):=\bphi(\{g(s,t),f(s,t)\})$. It is clear from Lemma \ref{lem:ephi2} that at $t=s=0$,
\begin{align} \label{EX401b}
\phi^0_s=\phi^0_t=\phi_t=0.
\end{align}

For simplicity, we use the notation $[T]_k:=\sum_{0 \le i \le k}|\na^i T|$ for any tensor $T$ on $\bar M$.

\begin{lem} \label{lem:basices1}
Under the assumption \eqref{EX401a}, the following estimates hold.
\begin{enumerate}[label=\textnormal{(\alph{*})}]
\item $\|h_1\|_{C^k}+\|f_1\|_{C^k} \le C\delta$ and $\|h_2\|_{C^k}+\|f_2\|_{C^k} \le C\delta^2$.

\item $[u]_k \le C (1+|z|^2) \|u\|_{L^2} \le C(1+|z|^2) \delta$, where $z$ is the coordinate of $\R^{m-n}$.

\item $\|\zeta\|_{C^k}+\|q \|_{C^k} \le C\delta$.

\item For any $s,t \in [0,1]$, $\|g(s,t)-\bar g\|_{C^k}+\|f(s,t)-\bar f\|_{C^k} \le C\delta$.

\item For any constant $p >1$, 
\begin{align} 
[\zeta]_k^p \le C \lc \delta^{p-1}[\zeta']_k+[u]^p_k \rc \quad \text{and} \quad [\na q]_{k-1}^p \le C \lc \delta^{p-1} [\na q']_{k-1}+[u]_k^p \rc. \label{EX401ca}
\end{align}
\end{enumerate}
\end{lem}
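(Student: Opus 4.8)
The plan is to establish the five estimates in order, as each essentially depends only on the definitions of the tensors involved together with the earlier regularity results. For (a), I would invoke Proposition \ref{P01} and Proposition \ref{P01X}: the sequences $\{h_i\}$ and $\{f_i\}$ are induced by $h_1$, and by construction $\pi_{\isd}(h)=h_1+ug_N$ with $h_1\in\isd_N$; since $h_1$ is the $\isd_N$-component of $\pi_{\isd}(h)$, which lives in a fixed finite-dimensional space on which all norms are equivalent, we get $\|h_1\|_{C^k}\le C\|h_1\|_{C^\alpha}\le C\delta$ (the last step because $\|h\|_{C^k}\le\delta$ and $h_1$ is obtained from $h$ by a bounded projection). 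Then $\|h_2\|_{C^k}+\|f_2\|_{C^k}\le C\|h_1\|_{C^\alpha}^2\le C\delta^2$ and $\|f_1\|_{C^k}\le C\|h_1\|_{C^\alpha}\le C\delta$ follow directly from Propositions \ref{P01} and \ref{P01X}. For (b), since $u$ is a quadratic Hermite polynomial on $\R^{m-n}$ (Proposition \ref{PX303}), each of $u,\nabla u,\dots,\nabla^k u$ is a polynomial of degree $\le 2$ in $z$; because $u$ ranges over the finite-dimensional eigenspace $E_1$ of $\Delta_{f_E}$ on functions (Example \ref{ex:sr}), all norms on this space are equivalent, so $[u]_k\le C(1+|z|^2)\|u\|_{L^2}$, and $\|u\|_{L^2}\le C\|\pi_{\isd}(h)\|_{L^2}\le C\|h\|_{C^k}\le C\delta$ using that $ug_N$ and $h_1$ are $L^2$-orthogonal and the inclusion of $C^k$ into $L^2$ on a Ricci shrinker (Lemma \ref{lem:pot}, Lemma \ref{lem:volume}).

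For (c), recall $\zeta=h-h_1-h_2/2$ and $q=\chi-f_1-f_2/2$; the triangle inequality together with \eqref{EX401a} and part (a) gives $\|\zeta\|_{C^k}+\|q\|_{C^k}\le \|h\|_{C^k}+\|\chi\|_{C^k}+\|h_1\|_{C^k}+\tfrac12\|h_2\|_{C^k}+\|f_1\|_{C^k}+\tfrac12\|f_2\|_{C^k}\le C\delta$. For (d), from the definitions $g(s,t)-\bar g=s\zeta+th_1+\tfrac{t^2}{2}h_2$ and $f(s,t)-\bar f=sq+tf_1+\tfrac{t^2}{2}f_2$; since $s,t\in[0,1]$, parts (a) and (c) immediately bound both by $C\delta$.

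The only part requiring genuine care is (e), which I expect to be the main obstacle since it is the one estimate that is not a pure triangle-inequality manipulation. Here one uses $\zeta=\zeta'+ug_N$ (by definition $\zeta'=\zeta-ug_N$), so pointwise $[\zeta]_k\le [\zeta']_k+[ug_N]_k\le [\zeta']_k+C[u]_k$. Raising to the power $p>1$ and using the elementary inequality $(a+b)^p\le C_p(a^p+b^p)$ gives $[\zeta]_k^p\le C([\zeta']_k^p+[u]_k^p)$; the point is then to replace $[\zeta']_k^p$ by $\delta^{p-1}[\zeta']_k$, which holds because $[\zeta']_k\le[\zeta]_k+C[u]_k\le C\delta+C(1+|z|^2)\delta$ is not uniformly $\le C\delta$ — so one cannot crudely bound $[\zeta']_k^{p-1}$ by a constant. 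The fix is that $\zeta'=\zeta-ug_N$ and the term $[u]_k$ is already separated out: more carefully, $[\zeta']_k\le \|\zeta\|_{C^k}+C[u]_k\le C\delta$ only when $|z|$ is bounded, so instead one writes $[\zeta']_k^p=[\zeta']_k^{p-1}[\zeta']_k$ and bounds $[\zeta']_k^{p-1}\le (\,[\zeta]_k+C[u]_k\,)^{p-1}\le C(\delta^{p-1}+[u]_k^{p-1})$ using $\|\zeta\|_{C^k}\le C\delta$ from (c); distributing and absorbing the $[u]_k^{p-1}[\zeta']_k$ term against $[u]_k^p+[\zeta']_k\cdot(\text{something})$ via Young's inequality — or more simply noting $[u]_k^{p-1}[\zeta']_k\le [u]_k^p+[u]_k^{p-1}[u]_k+\dots$ — yields the stated bound $[\zeta]_k^p\le C(\delta^{p-1}[\zeta']_k+[u]_k^p)$. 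The estimate for $[\nabla q]_{k-1}^p$ is entirely analogous, using $q=q'+\tfrac{n}{2}u$ so that $\nabla q=\nabla q'+\tfrac n2\nabla u$, together with part (b) for $[\nabla u]_{k-1}\le[u]_k$ and part (c) for $\|\nabla q\|_{C^{k-1}}\le C\delta$. Throughout, the key structural facts being exploited are that $h_1,h_2,f_1,f_2$ depend only on $N$ (so are bounded on all of $\bar M$) while $u$ and its separated partners $\zeta',q'$ carry all the polynomial growth in $z$.
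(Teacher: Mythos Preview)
Your treatment of (a)--(d) matches the paper's proof essentially line for line: finite-dimensionality of $\isd_N$ and of the Hermite eigenspace, Propositions~\ref{P01} and~\ref{P01X}, and the triangle inequality do all the work.

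For (e) your idea is sound but the execution is more roundabout than necessary, and the last step (``distributing and absorbing\dots or more simply noting $[u]_k^{p-1}[\zeta']_k\le [u]_k^p+[u]_k^{p-1}[u]_k+\dots$'') is garbled as written. The paper avoids the detour through $[\zeta']_k^p$ entirely: instead of splitting $[\zeta]_k^p\le C([\zeta']_k^p+[u]_k^p)$ and then struggling with the fact that $[\zeta']_k$ has no uniform bound, it writes
\[
[\zeta]_k^p \le C\bigl([\zeta']_k+[u]_k\bigr)\,[\zeta]_k^{p-1},
\]
keeping the factor $[\zeta]_k^{p-1}$ (not $[\zeta']_k^{p-1}$), which \emph{is} uniformly bounded by $C\delta^{p-1}$ from (c). The first product $[\zeta']_k\,[\zeta]_k^{p-1}\le C\delta^{p-1}[\zeta']_k$ is then immediate, and for the second one Young's inequality gives $[u]_k\,[\zeta]_k^{p-1}\le C[u]_k^p+\tfrac12[\zeta]_k^p$, after which the $\tfrac12[\zeta]_k^p$ is absorbed on the left. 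Your route can be closed by applying Young to $[u]_k^{p-1}[\zeta']_k\le C_\epsilon[u]_k^p+\epsilon[\zeta']_k^p$ and absorbing, but the paper's one-line version is what you should write up.
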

\begin{proof}
(a) From our definition, $\|h_1\|_{L^2} \le \|\pi_{\isd}(h)\|_{L^2} \le \|h\|_{L^2} \le C\delta$. Since $\|h_1\|_{C^k}$ are comparable to $\|h_1\|_{L^2}$ as $\isd_N$ is finite-dimensional, we obtain the estimate for $\|h_1\|_{C^k}$. The estimates for $\|f_1\|_{C^k}$, $\|h_2\|_{C^k}$ and $\|f_2\|_{C^k}$ can be derived from Proposition \ref{P01} and Proposition \ref{P01X}.

(b) Since $\|u\|_{L^2} \le C\|\pi_{\isd}(h)\|_{L^2} \le C\|h\|_{L^2}$, the estimate follows from \cite[Lemma 5.17]{CM21b}.

(c) From (a) and (b), it is clear that $\|h_1\|_{C^k}+\|h_2\|_{C^k}+\|f_1\|_{C^k}+\|f_2\|_{C^k} \le C\delta$. Therefore, the conclusion follows from the definitions of $\zeta$ and $q$.

(d) This is immediate from (c).

(e) We compute
\begin{align*}
[\zeta]_k^p\le C([\zeta']_k+[u]_k) [\zeta]^{p-1}_k \le C\delta^{p-1} [\zeta']_k+C[u]_k [\zeta]^{p-1}_k \le C\delta^{p-1} [\zeta']_k+C[u]^p_k+\frac{1}{2} [\zeta]_k^p,
\end{align*}
where we have used (c) and Young's inequality. In other words, $[\zeta]_k^p \le C\delta^{p-1} [\zeta']_k+C[u]_k^p$. By the same proof, we obtain the inequality for $[\na q]_{k-1}^p$, and hence the proof of \eqref{EX401ca} is complete.
\end{proof} 
Lemma \ref{lem:basices1} (d) indicates that $g(s,t)$ is indeed a metric. Next, we compute all second derivatives of $\phi^0$ at $t=s=0$.

\begin{prop} \label{prop:2nd}
Evaluated at $t=s=0$, we have
\begin{enumerate}[label=\textnormal{(\roman{*})}]
\item $\displaystyle \phi^0_{tt}=\phi_{tt} \in \isd_N$.

\item $\displaystyle \phi^0_{ss}=-|\na u|^2 g_N-n u \na^2 u-\frac{n}{2} du \otimes du$. Moreover, $\phi^0_{ss} \perp \isd_N$.

\item $\phi^0_{st}=\dfrac{1}{2} u h_1-\dfrac{1}{2} \na^2(H_1u)+\dfrac{1}{4}du \boxtimes dH_1$, where $H_1:=\tr_{\bar g} (h_1)$ and $a \boxtimes b:= a \otimes b+b \otimes a$ denotes the symmetric tensor product. In addition, $\LL (\phi^0_{st})+\phi^0_{st}=0$. In particular, $\phi^0_{st} \perp \isd$. 
\end{enumerate}
\end{prop}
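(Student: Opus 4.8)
The plan is to reduce each of (i)--(iii) to blockwise computations on the product $\bar M=N\times\R^{m-n}$, using that only two kinds of infinitesimal directions appear: the ``$N$-block, $z$-independent'' directions $h_1,h_2$ together with the functions $f_1,f_2$ on $N$, and the ``warped'' direction $u g_N$ paired with the purely Euclidean potential variation $\tfrac n2 u$. For (i), observe first that $g(0,t)=g^0(0,t)$ and $f(0,t)=f^0(0,t)$, so $\phi(0,t)\equiv\phi^0(0,t)$ as curves in $t$, hence $\phi_{tt}=\phi^0_{tt}$. Now $\phi^0(0,t)=\bphi(\{\bar g+th_1+\tfrac{t^2}2h_2,\ \bar f+tf_1+\tfrac{t^2}2f_2\})$; since the metric stays a Riemannian product of the deformed $N$-metric $g_N(t):=g_N+th_1+\tfrac{t^2}2h_2$ with the flat $\R^{m-n}$, and $\bar f+tf_1+\tfrac{t^2}2f_2$ splits as $|z|^2/4$ on $\R^{m-n}$ plus $\tfrac n2+tf_1+\tfrac{t^2}2f_2$ on $N$, the Euclidean block and the mixed block of $\phi^0(0,t)$ vanish identically, while the $N$-block equals $\tfrac12g_N(t)-\Rc(g_N(t))-\na^2_{g_N(t)}\big(\tfrac n2+tf_1+\tfrac{t^2}2f_2\big)$. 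By \eqref{EX219c} this potential agrees on $N$ with the minimizer potential $f_{g_N(t)}$ up to order $t^3$, so the $N$-block coincides with $\Phi_N(g_N(t))$ up to order $t^3$; differentiating twice at $t=0$ yields $\Phi_N'(h_2)+M_2^N(h_1)$, which by Definition \ref{def:ind} equals $\pi_{\ker\Phi_N'}(M_2^N(h_1))$, and by Lemma \ref{lem:perp1} and Lemma \ref{lem:kphi} this lies in $\isd_N$. Hence $\phi_{tt}=\phi^0_{tt}\in\isd_N$.

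For (ii) and (iii) I would compute directly from warped-product formulas. Writing $g^0(s,0)=g_E\oplus\lambda^2g_N$ with $\lambda:=(1+su)^{1/2}$ — a warped product over the flat base $\R^{m-n}$ with fibre $(N,g_N)$ — and noting that $f^0(s,0)$ is a function of $z$ alone, the standard warped-product expressions give, blockwise: on the Euclidean block, $\Rc=-\tfrac n\lambda\na^2_E\lambda$ and $\na^2f^0=\na^2_Ef^0$; the mixed block of $\Rc$ and of $\na^2f^0$ vanishes; on the $N$-block, $\Rc=\tfrac12g_N-\big(\tfrac{\Delta_E\lambda}\lambda+(n-1)\tfrac{|\na_E\lambda|^2}{\lambda^2}\big)\lambda^2g_N$ and $\na^2f^0=\tfrac{\la\na_E\lambda,\na_Ef^0\ra}\lambda\,\lambda^2g_N$. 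Expanding in $s$ to second order (using $\lambda=1+\tfrac12su-\tfrac18s^2u^2+O(s^3)$) and invoking the eigenequation $\Delta_{f_E}u+u=0$, i.e.\ $\Delta_Eu-\tfrac12\la z,\na u\ra+u=0$, the first-order terms cancel — consistent with \eqref{EX401b} — and the coefficient of $\tfrac{s^2}2$ comes out to $-nu\,\na^2u-\tfrac n2\,du\otimes du$ on the Euclidean block, $0$ on the mixed block, and $-|\na u|^2g_N$ on the $N$-block, which is the asserted formula for $\phi^0_{ss}$. For $\phi^0_{st}$, since the only $st$-dependence of $\{g^0,f^0\}$ resides in products of the $s$-linear and $t$-linear parts, $\phi^0_{st}$ is the second-variation form $\bphi''$ (obtained by differentiating the formula of Lemma \ref{lem:ephi2} once more in the base point) evaluated on $(\{ug_N,\tfrac n2u\},\{h_1,f_1\})$; I would compute it by expanding $\bphi(\{\bar g+sug_N+th_1,\ \bar f+\tfrac n2su+tf_1\})$ to order $st$ (equivalently by polarization), using $\Rc_N=g_N/2$, $\delta_Nh_1=0$, $\LL_Nh_1=0$ and $f_1=H_1/2$, to obtain $\phi^0_{st}=\tfrac12uh_1-\tfrac12\na^2(H_1u)+\tfrac14\,du\boxtimes dH_1$.

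Finally I would establish the orthogonality assertions. In (ii), the terms $nu\,\na^2u$ and $\tfrac n2\,du\otimes du$ are supported on the Euclidean block, hence $L^2$-orthogonal to every element of $\isd_N$ (which lives on the $N$-block), while for $h\in\isd_N$ one has $\la|\na u|^2g_N,h\ra_{L^2}=e^{-n/2}\big(\textstyle\int_{\R^{m-n}}|\na u|^2e^{-|z|^2/4}\,dz\big)\int_N\tr_{g_N}h\,dV_{g_N}=0$, since tracing $\LL_Nh=0$ with $\Rc_N=g_N/2$ gives $\Delta_N(\tr_{g_N}h)+\tr_{g_N}h=0$, so $\tr_{g_N}h$ has vanishing integral over $N$; hence $\phi^0_{ss}\perp\isd_N$. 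In (iii) I would verify by a direct computation that $\LL(\phi^0_{st})+\phi^0_{st}=0$, using $\LL_Nh_1=0$, the eigenequations of $u$ and of $H_1$ on their respective factors, and the product structure of $\LL$ from Lemma \ref{lem:decom2}; thus $\phi^0_{st}$ lies in the $(-1)$-eigenspace of $\LL$. As $\phi^0_{st}$ has at most quadratic growth it is genuinely in $L^2(dV_{\bar f})$, and distinct eigenspaces of the self-adjoint operator $\LL$ are $L^2$-orthogonal (Proposition \ref{PX302}), so $\phi^0_{st}\perp\ker(\LL)\supseteq\isd$. The step I expect to be the main obstacle is the second-order warped-product bookkeeping in (ii) and (iii) — tracking all the $\la z,\na u\ra$, $|\na u|^2$, $u\,\Delta u$ and $u\,\na^2u$ contributions and seeing the precise first-order cancellation — and in particular the cross-term in (iii), where the conformal-in-$N$/warped-in-$z$ direction $ug_N$ interacts with the $N$-block direction $h_1$; the identity $\LL(\phi^0_{st})+\phi^0_{st}=0$ is similarly mechanical but delicate.
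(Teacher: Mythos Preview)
Your proposal is correct and follows essentially the same route as the paper. For (i) the argument is identical; for (ii) you carry out the warped-product expansion explicitly where the paper simply cites \cite[Proposition 7.2]{CM21b}, and your orthogonality argument via $\int_N\tr_{g_N}h=0$ matches the paper's; for (iii) the paper performs the same $st$-expansion in local coordinates (computing $(\Gamma^k_{ij})_{st}$, $(R_{ij})_{st}$, $(f_{ij})_{st}$ directly) and then verifies $\LL(\phi^0_{st})+\phi^0_{st}=0$ term by term exactly as you propose, concluding orthogonality to $\isd\subset\ker\LL$ by the eigenspace decomposition.
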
 

\begin{proof}
(i) We consider $\tilde g(t):=g_N+t h_1 + t^2 h_2/2$ and $\phi_N:=\Phi_N(\tilde g(t))$ as $2$-tensors on $N$. Then it follows from Definition \ref{def:ind}, Definition \ref{def:ind1} and \eqref{EX219c} that
\begin{align*} 
\phi_{tt}=(\phi_N)_{tt}=\Phi'_N(h_2)+M_2(h_1)=\pi_{\ker(\Phi'_N)}(M_2(h_1)) \in \isd_N,
\end{align*}
where we have used \eqref{E203} and the fact that $M_2(h_1) \perp \im(\delta_N^*)$ by Lemma \ref{lem:perp1}.

(ii) The detailed calculation for the expression of $\phi^0_{ss}$ can be found in \cite[Proposition 7.2]{CM21b}. For any $\xi \in \isd_N$, it is clear that
\begin{align*}
\int \la \phi^0_{ss},\xi \ra\,\vf=-\int |\na u|^2 \tr_{\bar g}(\xi) \,\vf=0
\end{align*}
since $\int_N \tr_{\bar g}(\xi)=0$ from $\Delta_N(\tr_{\bar g}(\xi))+\tr_{\bar g}(\xi)=0$.

(iii) In the proof, the underlying metric is $g(s,t)$, and the second derivatives are calculated under the normal coordinates of a point and evaluated at $t=s=0$. We first compute
\begin{align}
C_{ij}^k:=& (\Gamma_{ij}^k)_s= \frac{1}{2} g^{kl}\lc (g_s)_{jl,i}+(g_s)_{il,j}-(g_s)_{ij,l} \rc \notag \\
=&\frac{1}{2} g^{kl}\lc (ug_N)_{jl,i}+(ug_N)_{il,j}-(ug_N)_{ij,l} \rc \notag \\
=&\frac{1}{2} g^{kl} \lc u_i (g_N)_{jl}+u_j (g_N)_{il}-u_l (g_N)_{ij} \rc. \label{EX310b}
\end{align}
Similarly, we compute
\begin{align}
D_{ij}^k:= (\Gamma_{ij}^k)_t= \frac{1}{2} g^{kl}\lc (g_t)_{jl,i}+(g_t)_{il,j}-(g_t)_{ij,l}\rc \label{EX310c}
\end{align}
and hence
\begin{align}
(\Gamma_{ij}^k)_{st}=&\frac{1}{2} (g^{kl})_s \lc (g_t)_{jl,i}+(g_t)_{il,j}-(g_t)_{ij,l} \rc-g^{kl}C_{ij}^a (g_t)_{al} \notag \\
=& -\frac{1}{2} ug_N^{kl} \lc (h_1)_{jl,i}+(h_1)_{il,j}-(h_1)_{ij,l} \rc-g^{kl}C_{ij}^a (g_t)_{al},
\label{EX310d}
\end{align}
where we have used the fact that $\partial_s (g_t)_{jl,i}=-C_{ij}^a (g_t)_{al}-C_{il}^a (g_t)_{aj}$ since $g_{st}=0$. From \eqref{EX310b} and \eqref{EX310d} we have
\begin{align}
(\Gamma_{ij}^k)_{st}=-\frac{1}{2} u \lc (h_1)_{jk,i}+(h_1)_{ik,j}-(h_1)_{ij,k} \rc-\frac{1}{2}(u_i (h_1)_{jk}+u_j (h_1)_{ik}).
\label{EX310e}
\end{align}

Since $f_{st}=0$, we obtain
\begin{align}
(f_{ij})_{st}=&-(\Gamma_{ij}^k)_s (f_t)_k-(\Gamma_{ij}^k)_t (f_s)_k-(\Gamma_{ij}^k)_{st} f_k \notag \\
=&-\frac{1}{2} \lc u_i (g_N)_{jk}+u_j (g_N)_{ik}-u_k (g_N)_{ij} \rc(f_1)_k-\frac{n}{4} \lc (h_1)_{jk,i}+(h_1)_{ik,j}-(h_1)_{ij,k} \rc u_k-(\Gamma_{ij}^k)_{st} \bar f_k \notag \\
=&-\frac{1}{2} \lc u_i (f_1)_j+u_j (f_1)_i \rc=-\frac{1}{4} \lc u_i (H_1)_j+u_j (H_1)_i \rc, \label{EX310f}
\end{align}
where we have used the facts that $u$ and $\bar f$ depend only on $\R^{m-n}$ and $f_1=H_1/2$.

Recall the following expression of $\Rc$ under local coordinates (cf. \cite[1.43]{Be87})
\begin{align}\label{EX311a}
R_{ij}=\partial_l \Gamma_{ij}^l-\partial_i \Gamma_{lj}^l+\Gamma_{ij}^a \Gamma_{la}^l-\Gamma_{lj}^a \Gamma_{ia}^l.
\end{align}
and hence we have
\begin{align*}
(R_{ij})_{st}=& \partial_l (\Gamma_{ij}^l)_{st}-\partial_i (\Gamma_{lj}^l)_{st}+C_{ij}^a D_{la}^l+D_{ij}^a C_{la}^l-C_{lj}^a D_{ia}^l-D_{lj}^a C_{ia}^l.
\end{align*}

We compute by \eqref{EX310e} that
\begin{align}
\partial_l (\Gamma_{ij}^l)_{st}=&-\frac{1}{2} u\lc (h_1)_{jl,il}+(h_1)_{il,jl}-(h_1)_{ij,ll} \rc-\frac{1}{2} \lc u_i (h_1)_{jl,l}+u_j (h_1)_{il,l} \rc \notag \\
=& -\frac{1}{2} u\lc R_{lija} (h_1)_{al}+R_{ia}h_{ja}+R_{ljia} (h_1)_{al}+R_{ja}h_{ia}-(h_1)_{ij,ll} \rc=-\frac{1}{2} u(h_1)_{ij}, \label{EX311ba}
\end{align}
where we have used $\delta(h_1)=\delta_N(h_1)=0$, $\Delta_N h_1+2\Rm_N(h_1)=0$ and $\Rc_N=g_N/2$.

Similarly, we obtain
\begin{align}
\partial_i (\Gamma_{lj}^l)_{st}=&-\frac{1}{2} \lc u (H_1)_j+H_1 u_j \rc_i =-\frac{1}{2}(uH_1)_{ij}. \label{EX311bb}
\end{align}

A routined simplification by using \eqref{EX310b} and \eqref{EX310c} yields
\begin{align}
C_{ij}^a D_{la}^l=&\frac{1}{4} \lc u_i (g_N)_{ja}+u_j (g_N)_{ia}-u_a (g_N)_{ij} \rc (H_1)_a=\frac{1}{4} \lc u_i (H_1)_j +u_j (H_1)_i\rc. \notag \\
D_{ij}^a C_{la}^l =&\frac{n}{4} \lc (h_1)_{ja,i}+(h_1)_{ia,j}-(h_1)_{ij,a} \rc u_a=0.\notag \\
C_{lj}^a D_{ia}^l =&\frac{1}{4} \lc u_l (g_N)_{ja}+u_j (g_N)_{la}-u_a (g_N)_{lj} \rc \lc (h_1)_{al,i}+(h_1)_{il,a}-(h_1)_{ia,l} \rc=\frac{1}{4} u_j (H_1)_i. \notag \\
D_{lj}^a C_{ia}^l=&\frac{1}{4} \lc u_i (g_N)_{al}+u_a (g_N)_{il}-u_l (g_N)_{ia} \rc \lc (h_1)_{ja,l}+(h_1)_{la,j}-(h_1)_{lj,a} \rc=\frac{1}{4} u_i (H_1)_j. \label{EX311bc}
\end{align}

From \eqref{EX311ba}, \eqref{EX311bb} and \eqref{EX311bc}, we obtain
\begin{align}
(R_{ij})_{st}=\frac{1}{2} \lc (u H_1)_{ij}-u(h_1)_{ij} \rc. \label{EX311d}
\end{align}

From \eqref{EX310f} and \eqref{EX311d}, we conclude that
\begin{align}
\phi^0_{st}=\frac{1}{2} u h_1-\frac{1}{2} \na^2(H_1u)+\frac{1}{4} du \boxtimes dH_1. \label{EX3111a}
\end{align}

Next, by direct computation, we have
\begin{align*}
&\LL(u h_1)=(\ddf u) h_1+u (\LL(h_1))=-u h_1, \\
&\LL\lc \na^2(H_1u) \rc=\na^2 \lc \ddf (H_1u)+H_1u \rc=\na^2 \lc (\ddf H_1)u+H_1(\ddf u)+H_1u \rc=-\na^2(H_1u), \\
&\LL(du \boxtimes dH_1)=\ddf(du) \boxtimes dH_1+du\boxtimes \ddf(dH_1)=-du \boxtimes dH_1,
\end{align*}
where we have used \cite[Lemma 1.9, Corollary 1.34]{CM21b}. By these identities, it is clear from \eqref{EX3111a} that $\LL (\phi^0_{st})+\phi^0_{st}=0$.
\end{proof}

Next, we set
\begin{align*}
\alpha:=\|u\|_{L^2}, \quad \gamma=\gamma(z):=\alpha (1+|z|^2) \quad \text{and} \quad \beta:= \|h_1\|_{L^2}. 
\end{align*}
It follows from Lemma \ref{lem:basices1} that
\begin{align*}
\alpha+\beta \le C\delta,\quad \|h_2\|_{C^k}+\|f_2\|_{C^k} \le C \beta^2 \quad \text{and} \quad [u]_k \le C \gamma. 
\end{align*}
We next estimate the higher derivatives of $\phi$. For simplicity, we define $\la \cdot \ra_k:=[\cdot]_k+[\cdot]_{k-1}(1+|z|)$.

\begin{lem} \label{lem:esa}
There exists a constant $C$ such that for any $s,t \in [0,1]$,
\begin{align} 
& [\phi_{sss}]_{k-2}+[\phi_{stt}]_{k-2}+[\phi_{sst}]_{k-2} \notag \\
\le & C(1+|z|) \lc \delta [\zeta']^2_{k}+(\gamma^2+\beta^2)([\zeta']_{k}+[\na q']_{k-2})+\gamma^3+\beta^2 \gamma+\gamma^2 \beta \rc. \label{eq:esa1} \\
& [\phi_{sss}]_{k-2}+[\phi_{stt}]_{k-2}+[\phi_{sst}]_{k-2} \notag \\
\le & C \lc \delta^2\la \zeta'\ra_k+(\beta^2+\gamma^2) [\na q']_{k-2}+(1+|z|)(\gamma^3+\beta^2 \gamma+\gamma^2 \beta) \rc. \label{eq:esa11} \\
& [\phi_{ttt}]_{k-2} \le C(1+|z|) \beta^3. \label{eq:esa2} \\
& [\phi_{ssss}]_{k-2}+[\phi_{ssst}]_{k-2}+[\phi_{sstt}]_{k-2}+[\phi_{sttt}]_{k-2}+[\phi_{tttt}]_{k-2} \notag \\
\le& C\delta(1+|z|) \lc \delta [\zeta']^2_{k}+(\gamma^2+\beta^2)([\zeta']_{k}+[\na q']_{k-2})+\gamma^3+\beta^2 \gamma+\gamma^2 \beta \rc+(1+|z|) \beta^4. \label{eq:esa3} \\
& [\phi_{ssss}]_{k-2}+[\phi_{ssst}]_{k-2}+[\phi_{sstt}]_{k-2}+[\phi_{sttt}]_{k-2}+[\phi_{tttt}]_{k-2} \notag \\
\le& C\delta \lc \delta^2\la \zeta'\ra_k+(\beta^2+\gamma^2) [\na q']_{k-2}+(1+|z|)(\gamma^3+\beta^2 \gamma+\gamma^2 \beta) \rc+(1+|z|) \beta^4. \label{eq:esa4} 
\end{align}
\end{lem}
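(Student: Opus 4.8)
The plan is to estimate each mixed derivative of $\phi$ at $t=s=0$ by the same mechanism as in Proposition~\ref{prop:2nd}: differentiate the defining formula $\phi(s,t)=\tfrac12 g(s,t)-\Rc(g(s,t))-\na^2_{g(s,t)}f(s,t)$ the appropriate number of times and track, term by term, which $s$- or $t$-derivatives of $g$ and $f$ enter. The key structural point is that, evaluated at $t=s=0$, the $s$-derivative of $g$ is exactly $\zeta$ (not $\zeta'$), the $s$-derivative of $f$ is $q$, and the higher $t$-derivatives are the fixed tensors $h_1,h_2,f_1,f_2$ whose $C^k$-norms are controlled by $\delta,\beta,\beta^2$ via Lemma~\ref{lem:basices1}(a). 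So each mixed derivative of $\phi$ is a universal polynomial (with coefficients depending on $\bar g$) in $\zeta,q,h_1,h_2,f_1,f_2$ and their covariant derivatives up to order $k$, with each summand containing at least one factor of $\zeta$ or $q$ (since $\phi$ is at least first order in the $s$-variable in all these terms — there is no pure $t$-derivative among them except $\phi_{ttt},\phi_{tttt}$).

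First I would record the schematic form: for the degree-three terms, $\phi_{sss}$ is cubic in $\zeta,q$ (schematically $[\zeta]_k$-type factors, up to three of them, with at least one derivative falling on the Christoffel/Ricci part so the worst term looks like $\na f_E * [\zeta]_{k}$, giving the stray $(1+|z|)$ from $|\na\bar f|\sim|z|$); $\phi_{stt}$ and $\phi_{sst}$ are linear resp.\ quadratic in $\zeta,q$ and quadratic resp.\ linear in $\{h_1,h_2\}$, again with one $\na\bar f$ contraction producing $(1+|z|)$. Then I would substitute the splitting $\zeta=\zeta'+ug_N$, $q=q'+\tfrac n2 u$ from Lemma~\ref{lem:basices1}(e): a cubic expression $[\zeta]_k^3$ becomes, after Young's inequality, $\le C(\delta[\zeta']_k^2 + \gamma^2[\zeta']_k + \gamma^3)$ and similarly $[\zeta]_k^2[h_i]_k \le C\beta^2([\zeta']_k+\gamma) \cdot$(extra) etc., matching exactly the right-hand side of \eqref{eq:esa1}. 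The variant \eqref{eq:esa11} is the same computation but using the $\la\cdot\ra_k$ norm, i.e.\ absorbing one of the $(1+|z|)$ factors into the norm when the term already carries a $[\na q']$ or $[\zeta']$ factor; here I would be careful that the $Q$-type combination $\tfrac12\tr\zeta - q$ is what appears after the $\na^2$ in $\phi$, so the $q$-dependence enters only through $\na q'$ (one derivative), which is why the estimates feature $[\na q']_{k-2}$ rather than $[q']_{k-1}$.

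Next, \eqref{eq:esa2}: $\phi_{ttt}$ involves only $h_1,h_2$ and $f_1,f_2$, so it is a universal polynomial in those of total $t$-degree $3$; by homogeneity in $h_1$ (scaling $h_1\mapsto\lambda h_1$ scales $h_i\mapsto\lambda^i h_i$, $f_i\mapsto\lambda^i f_i$) every monomial has weight $3$, hence $[\phi_{ttt}]_{k-2}\le C[\ ]\beta^3$; the single $(1+|z|)$ again comes from the one unavoidable $\na\bar f$ contraction in the $\na^2 f$ piece (on $N$ alone there would be no such factor, but the $\na^2$ is the ambient one). For the degree-four estimates \eqref{eq:esa3}–\eqref{eq:esa4}, the new feature is one more differentiation: compared to the degree-three terms, each fourth derivative either adds one more factor of $\zeta,q$ (each bounded by $\delta$ in $C^k$, producing the outer $C\delta(\cdots)$ with the bracket literally the RHS of \eqref{eq:esa1}) or, in the purely-$t$ strand, adds one more $h$-factor giving the isolated $(1+|z|)\beta^4$; the cross terms mixing an extra $\zeta$ with the $t$-part are dominated by $\delta$ times the degree-three bound. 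So \eqref{eq:esa3} and \eqref{eq:esa4} follow formally from \eqref{eq:esa1} and \eqref{eq:esa11} by ``one more $\delta$, plus the $\beta^4$ term''.

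The main obstacle is purely bookkeeping: carrying the $(1+|z|)$ weights correctly through each contraction (since $\na\bar f$, $\na^2\bar f$ are the only unbounded coefficients, and $\na^k\bar f=0$ for $k\ge 3$, every monomial picks up at most one factor of $|z|$ from $\na\bar f$ and none from higher derivatives), and making sure that after the $\zeta=\zeta'+ug_N$ substitution the Young-inequality splitting lands exactly on the stated combination $\delta[\zeta']_k^2+(\gamma^2+\beta^2)([\zeta']_k+[\na q']_{k-2})+\gamma^3+\beta^2\gamma+\gamma^2\beta$ rather than something weaker. I do not expect any genuinely new analytic input here — it is the ``routine calculation'' strand that supports the subsequent sharp higher-order rigidity inequality — so in the write-up I would state the schematic expansion, invoke Lemma~\ref{lem:basices1} for the norm bounds on $h_i,f_i,\zeta,q$, invoke Lemma~\ref{lem:basices1}(e) and Young for the splitting, and leave the explicit coefficient combinatorics to the reader, exactly as \cite[Proposition~7.2, Lemma~8.x]{CM21b} does in the cylinder case.
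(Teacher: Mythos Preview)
Your plan is correct and matches the paper's proof: decompose $\phi=g/2-\Rc-\na^2 f$, bound each higher $(s,t)$-derivative schematically by products of $[\zeta]_k$, $\beta$, and $[\na q]_{k-2}$ with a single $(1+|z|)$ coming from the one $\na\bar f$ contraction in the $\Gamma^l_{ij}\partial_l f$ piece, then substitute $\zeta=\zeta'+ug_N$, $q=q'+\tfrac{n}{2}u$ via Lemma~\ref{lem:basices1}(e) and Young, and finally observe that one more $s$- or $t$-derivative buys an extra $\delta$ (plus the pure $\beta^4$ strand) for \eqref{eq:esa3}--\eqref{eq:esa4}. One small correction: the lemma asks for the bounds uniformly over $(s,t)\in[0,1]^2$, not just at the origin, but since $g_s,g_t,g_{tt},f_s,f_t,f_{tt}$ are constant or affine in $(s,t)$ and $\|g(s,t)-\bar g\|_{C^k}\le C\delta$ by Lemma~\ref{lem:basices1}(d), the same schematic computation goes through verbatim.
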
 

\begin{proof}
From the definition, $\phi=g/2-\Rc-\na^2 f$, where $f=f(s,t)$ and the underlying metric is $g(s,t)$. It is clear from the local expression of $\Rc$ (cf. \eqref{EX311a}) that
\begin{align}
& [\Rc_{sss}]_{k-2}+[\Rc_{sst}]_{k-2}+[\Rc_{stt}]_{k-2} \notag\\
\le & C [\zeta]^3_k+C \beta^2 [\zeta]_k \notag\\
\le & C([\zeta']_k^2+\gamma^2)[\zeta]_k+C \beta^2 [\zeta']_k+C \beta^2 \gamma \notag\\
\le & C\delta [\zeta']_k^2+C(\gamma^2+\beta^2)[\zeta']_k+C(\beta^2 \gamma+\gamma^3). \label{EZ401a}
\end{align}

By using Lemma \ref{lem:basices1} (e), we estimate alternatively
\begin{align}
& [\Rc_{sss}]_{k-2}+[\Rc_{sst}]_{k-2}+[\Rc_{stt}]_{k-2} \notag\\
\le & C [\zeta]^3_k+C \beta^2 [\zeta]_k \notag\\
\le & C (\delta^2 [\zeta']_k+\gamma^3)+C \beta^2 [\zeta']_k+C \beta^2 \gamma \notag\\
\le & C\delta^2 [\zeta']_k+C(\beta^2 \gamma+\gamma^3). \label{EZ401b}
\end{align}

Since $\na^2 f=\partial_i \partial_j f-\Gamma_{ij}^l \partial_l f$ under the local coordinates, we obtain $[\partial_i \partial_j f_{sss}]_{k-2}=[\partial_i \partial_j f_{sst}]_{k-2}=[\partial_i \partial_j f_{stt}]_{k-2}=0$ and estimate similarly that
\begin{align}
&[(\Gamma_{ij}^l \partial_l f)_{sss}]_{k-2}+[(\Gamma_{ij}^l \partial_l f)_{sst}]_{k-2}+[(\Gamma_{ij}^l \partial_l f)_{stt}]_{k-2} \notag\\
\le & C(1+|z|) ([\zeta]^3_{k-1}+\beta^2[\zeta]_{k-1})+C \beta([\zeta]_{k-1}^2+\beta[\zeta]_{k-1})+C \beta^2[\zeta]_{k-1}+C[\na q]_{k-2}([\zeta]_{k-1}^2+\beta^2) \notag \\
\le & C(1+|z|) \lc \delta [\zeta']^2_{k-1}+(\gamma^2+\beta^2)[\zeta']_{k-1} +\gamma^3+\beta^2 \gamma\rc \notag \\
&+ C\lc (\beta+\delta) [\zeta']_{k-1}^2+\beta^2([\zeta']_{k-1}+[\na q']_{k-2})+\gamma^2[\na q']_{k-2}+\gamma^3+\beta^2 \gamma+\gamma^2 \beta \rc \notag\\
\le & C(1+|z|) \lc \delta [\zeta']^2_{k-1}+(\gamma^2+\beta^2)([\zeta']_{k-1}+[\na q']_{k-2}) +\gamma^3+\beta^2 \gamma+\gamma^2 \beta\rc. \label{EZ401c}
\end{align}

Similarly, by using Lemma \ref{lem:basices1} (e), we also obtain
\begin{align}
&[(\Gamma_{ij}^l \partial_l f)_{sss}]_{k-2}+[(\Gamma_{ij}^l \partial_l f)_{sst}]_{k-2}+[(\Gamma_{ij}^l \partial_l f)_{stt}]_{k-2} \notag\\
\le & C(1+|z|) ([\zeta]^3_{k-1}+\beta^2[\zeta]_{k-1})+C \beta([\zeta]_{k-1}^2+\beta[\zeta]_{k-1})+C \beta^2[\zeta]_{k-1}+C[\na q]_{k-2}([\zeta]_{k-1}^2+\beta^2) \notag \\
\le & C(1+|z|) \lc \delta^2 [\zeta']_{k-1}+(\beta^2+\delta \beta)[\zeta']_{k-1} +\gamma^3+\beta^2 \gamma+\gamma^2 \beta\rc+\gamma^2 [\na q']_{k-2} \notag \\
\le & C(1+|z|) \lc \delta^2 [\zeta']_{k-1} +\gamma^3+\beta^2 \gamma+\gamma^2 \beta \rc+(\beta^2+\gamma^2) [\na q']_{k-2}. \label{EZ401d}
\end{align}

Since $[g_{sss}]_{k-2}=[g_{sst}]_{k-2}=[g_{stt}]_{k-2}=0$, \eqref{eq:esa1} follows from \eqref{EZ401a} and \eqref{EZ401c}, and \eqref{eq:esa11} follows from \eqref{EZ401b} and \eqref{EZ401d}.

The proof of \eqref{eq:esa2} is clear. Now, \eqref{eq:esa3} and \eqref{eq:esa4} follow from \eqref{eq:esa1} and \eqref{eq:esa11} respectively since taking another derivative with respect to $t$ or $s$ introduces a $\delta$.
\end{proof}

\begin{lem} \label{lem:esa1}
There exists a constant $C$ such that at $s=t=0$, the following estimates hold.
\begin{enumerate}[label=\textnormal{(\roman{*})}]
\item $[\phi_{st}-\phi^0_{st}]_{k-2}\le C \beta ( \la \zeta' \ra_k+[\na q']_{k-2}).$

\item $[\phi_{ss}-\phi_{ss}^0]_{k-2} \le C( \la \zeta' \ra^2_k+[\na q']^2_{k-2}).$

\item $[\phi_{ss}-\phi_{ss}^0]_{k-2} \le C(\delta+\gamma)( \la \zeta' \ra_k+[\na q']_{k-2})$.
\end{enumerate}
\end{lem}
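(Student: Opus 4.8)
The plan is to reduce everything to estimating the second variation $\bphi''$ of $\bphi$ at $\{\bar g,\bar f\}$ on two explicit pairs of directions. The key observation is that $g(s,t)-g^0(s,t)=s\zeta'$ and $f(s,t)-f^0(s,t)=sq'$, where $\zeta'$ and $q'$ are independent of $(s,t)$; interpolating along $\lambda\mapsto\{g^0(s,t)+\lambda s\zeta',\,f^0(s,t)+\lambda s q'\}$ and applying the fundamental theorem of calculus,
\begin{align*}
\phi(s,t)-\phi^0(s,t)=s\int_0^1 \bphi'_{\{g^0(s,t)+\lambda s\zeta',\,f^0(s,t)+\lambda s q'\}}\big(\{\zeta',q'\}\big)\,d\lambda .
\end{align*}
Differentiating at $s=t=0$ (the prefactor $s$ annihilates every term except the one in which $\partial_s$ is spent on it), and using $\partial_t g^0|_0=h_1$, $\partial_t f^0|_0=f_1$, $\partial_s g^0|_0=ug_N$, $\partial_s f^0|_0=\tfrac n2 u$, one gets at $s=t=0$
\begin{align*}
\phi_{st}-\phi^0_{st}=\bphi''\big(\{h_1,f_1\},\{\zeta',q'\}\big),\qquad
\phi_{ss}-\phi^0_{ss}=2\,\bphi''\big(\{ug_N,\tfrac n2 u\},\{\zeta',q'\}\big)+\bphi''\big(\{\zeta',q'\},\{\zeta',q'\}\big).
\end{align*}
Since $ug_N=\zeta-\zeta'$ and $\tfrac n2 u=q-q'$ as tensors, the second formula can also be written as $\phi_{ss}-\phi^0_{ss}=2\,\bphi''(\{\zeta,q\},\{\zeta',q'\})-\bphi''(\{\zeta',q'\},\{\zeta',q'\})$, which is the form I would use for the higher-order estimates.

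Next I would record the structure of $\bphi''$. From $\bphi(\{g,f\})=\tfrac12 g-\Rc(g)-\na_g^2 f$ and the Leibniz expansion of $\Gamma(g)$ one sees that $\bphi''(\{a,\alpha\},\{b,\beta\})$ is bilinear, of order $\le 2$ in each of $a,b$ and linear of order $\le 1$ in each of $\alpha,\beta$, with coefficients smooth in $\bar g$ and in $\na\bar f$. Because $\na\bar f$ is tangent to the $\R^{m-n}$ factor, grows only linearly in $|z|$, and enters $\bphi$ only through $\na^2_g f=\partial^2 f-\Gamma(g)\,\partial f$, every $|z|$-growing term multiplies at most first derivatives of the arguments; moreover, exactly as in the computation of Proposition \ref{prop:2nd}, these terms drop out entirely when one of the slots is pulled back from $N$. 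Combined with $\la\cdot\ra_k=[\cdot]_k+(1+|z|)[\cdot]_{k-1}$, this yields the schematic product estimates I need. For part (i): since $\{h_1,f_1\}$ is pulled back from $N$, the coefficients of $\bphi''(\{h_1,f_1\},\{\zeta',q'\})$ carry no $|z|$-weight, so $[\bphi''(\{h_1,f_1\},\{\zeta',q'\})]_{k-2}\le C\big([h_1]_k+[\na f_1]_{k-2}\big)\big(\la\zeta'\ra_k+[\na q']_{k-2}\big)$, and by Lemma \ref{lem:basices1}(a) together with finite-dimensionality of $\isd_N$ we have $[h_1]_k+[\na f_1]_{k-2}\le C\|h_1\|_{L^2}=C\beta$, which gives (i).

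For (iii) and (ii) I would use the rewritten identity $\phi_{ss}-\phi^0_{ss}=2\,\bphi''(\{\zeta,q\},\{\zeta',q'\})-\bphi''(\{\zeta',q'\},\{\zeta',q'\})$. The term $\bphi''(\{\zeta,q\},\{\zeta',q'\})$ is estimated using $[\zeta]_k+[\na q]_{k-1}\le C\delta$ from Lemma \ref{lem:basices1}(c), together with the $|z|$-weight $\na\bar f$ acting on the un-pulled-back slot, which produces the factor $\gamma$; this bounds it by $C(\delta+\gamma)(\la\zeta'\ra_k+[\na q']_{k-2})$. The term $\bphi''(\{\zeta',q'\},\{\zeta',q'\})$ is bounded directly by $C(\la\zeta'\ra_k^2+[\na q']_{k-2}^2)$, and also, bounding one factor by $[\zeta']_k\le C(\delta+\gamma)$ (Lemma \ref{lem:basices1}(c,b)), by $C(\delta+\gamma)(\la\zeta'\ra_k+[\na q']_{k-2})$. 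Adding up gives (iii). For (ii), I would again split off the quadratic term and, for the cross term, trade the $\gamma$-weight for a power of $\la\zeta'\ra_k$ by means of the pointwise identity $[ug_N]_j=[\zeta-\zeta']_j\le[\zeta]_j+[\zeta']_j\le C\delta+[\zeta']_j$: away from the base region $[\zeta']_k$ is already comparable to $[ug_N]_k\approx\gamma$, so $\gamma\la\zeta'\ra_k\lesssim\la\zeta'\ra_k^2$, while on the base region $\gamma\le C\delta$ is harmless.

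The routine but substantial part of the argument — and the step I expect to be the main obstacle — is the $|z|$-power bookkeeping in the product estimates: one must verify that every term produced by the Leibniz expansion of $\bphi''(\{a,\alpha\},\{b,\beta\})$ lands in the prescribed combination of the $\la\cdot\ra_k$ and $[\na\cdot]_{k-2}$ norms, in particular that the quadratic polynomial $u$ is always paired with sufficiently low-order derivatives of $\zeta',q'$ and that the growing coefficient $\na\bar f$ never lifts the total $|z|$-degree beyond what the $\la\cdot\ra_k$-norm absorbs. This is entirely parallel to the explicit second-order computation of Proposition \ref{prop:2nd} and to the estimates of \cite[Sections 5 and 7]{CM21b}, and I would carry it out in exactly the same bracket-counting style as in the proof of Lemma \ref{lem:esa}.
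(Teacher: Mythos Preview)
Your approach is essentially the paper's: both identify
\[
\phi_{st}-\phi^0_{st}=\bphi^{(2)}\big(\{\zeta',q'\},\{h_1,f_1\}\big),\qquad
\phi_{ss}-\phi^0_{ss}=\bphi^{(2)}\big(\{\zeta',q'\},\{\zeta',q'\}\big)+2\,\bphi^{(2)}\big(\{\zeta',q'\},\{ug_N,\tfrac n2 u\}\big)
\]
(your rewriting $2\bphi^{(2)}(\{\zeta,q\},\{\zeta',q'\})-\bphi^{(2)}(\{\zeta',q'\},\{\zeta',q'\})$ is algebraically the same) and then estimate the bilinear form schematically. Parts (i) and (iii) go through as you say.

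One point of imprecision in (i): the $\na\bar f$--weighted terms do \emph{not} drop out when only one slot is pulled back from $N$. In Proposition~\ref{prop:2nd}(iii) both slots are special (one conformal on $N$, one pulled back from $N$) and both structures are used; with $\{\zeta',q'\}$ in one slot, the contribution $-\tfrac12(\zeta')^{kl}\bar f_k\,\partial(h_1)$ survives and produces exactly the $(1+|z|)\beta[\zeta']_{k-1}$ term the paper records in \eqref{E403X1a}. This is harmless only because it is absorbed into $\la\zeta'\ra_k$, so your inequality is correct but the stated reason is not.

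For (ii) your ``base region / away from base region'' argument has a real gap. The inequality $[ug_N]_j\le C\delta+[\zeta']_j$ is fine, but your next step---that away from the base $[\zeta']_k\approx[ug_N]_k\approx\gamma$---fails: a quadratic Hermite polynomial $u$ can have $[u]_2\sim\alpha$ (not $\sim\gamma$) along null directions of its Hessian even for large $|z|$, so pointwise comparability to $\gamma$ is false; and on the base region ``$\gamma\le C\delta$ is harmless'' still leaves you with $C\delta(\la\zeta'\ra_k+[\na q']_{k-2})$, which is not dominated by $\la\zeta'\ra_k^2+[\na q']_{k-2}^2$. The paper does not attempt this trade; it simply records the cross-term bound $C\gamma(\la\zeta'\ra_k+[\na q']_{k-2})$ alongside the quadratic bound for the diagonal term and asserts (ii) and (iii). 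Whatever closes (ii), your dichotomy does not.
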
 

\begin{proof}
From our definition, we have at $s=t=0$ 
\begin{align*}
\phi_{st}-\phi^0_{st}=& \bphi^{(2)}\lc \{\zeta,q\},\{h_1,f_1\} \rc-\bphi^{(2)} \lc \{ug_N,nu/2\},\{h_1,f_1\} \rc= \bphi^{(2)}\lc \{\zeta',q'\},\{h_1,f_1\} \rc.
\end{align*}
As in the proof of Lemma \ref{lem:esa}, we obtain
\begin{align}
& [\bphi^{(2)}\lc \{\zeta',q'\},\{h_1,f_1\} \rc]_{k-2} \notag \\
\le & C \beta [\zeta']_k+C(1+|z|)\beta [\zeta']_{k-1}+C \beta([\zeta']_{k-1}+[\na q']_{k-2}) \le C\beta ( \la \zeta' \ra_k+[\na q']_{k-2}). \label{E403X1a}
\end{align}
From \eqref{E403X1a}, the proof of (i) is complete.

Similarly, we have
\begin{align*}
\phi_{ss}-\phi^0_{ss}=\bphi^{(2)}\lc \{\zeta',q'\},\{\zeta',q'\} \rc+2\bphi^{(2)}\lc \{\zeta',q'\},\{ug_N,nu/2\} \rc.
\end{align*}

It can be estimated as \eqref{E403X1a} that
\begin{align*}
[\bphi^{(2)}\lc \{\zeta',q'\},\{ug_N,nu/2\} \rc ]_{k-2} \le C \gamma ( \la \zeta' \ra_k+[\na q']_{k-2}). 
\end{align*}

One can estimate similarly that
\begin{align*}
& [\bphi^{(2)}\lc \{\zeta',q'\},\{\zeta',q'\} \rc]_{k-2} \\
\le & C[\zeta']_k^2+C(1+|z|) [\zeta']_{k-1}^2+C [\zeta']_{k-1} [\na q']_{k-2} \le C( \la \zeta' \ra^2_k+[\na q']^2_{k-2})
\end{align*}
and
\begin{align*}
& [\bphi^{(2)}\lc \{\zeta',q'\},\{\zeta',q'\} \rc]_{k-2} \\
\le & C[\zeta']_k^2+C(1+|z|) [\zeta']_{k-1}^2+C [\zeta']_{k-1} [\na q']_{k-2} \le C(\delta+\gamma) ( \la \zeta' \ra_k+[\na q']_{k-2}).
\end{align*}

Therefore, conclusions (ii) and (iii) are proved.
\end{proof}

\begin{prop} \label{prop:tay2}
There exists a constant $C$ such that 
\begin{align} 
\|\zeta'\|_{W^{k,2}}+\|\na q'\|_{W^{k-1,2}} \le C \lc \|\phi(1,1)\|_{W^{k-2,2}}+\|\dbf h\|_{W^{k-1,2}}+|\mathcal B(h,\chi)|+ \alpha^2+ \alpha \beta+\beta^3 \rc. \label{E402}
\end{align}
\end{prop}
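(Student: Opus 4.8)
The plan is to Taylor-expand $\phi(1,1)=\bphi(\{g,f\})$ along the diagonal segment $\tau\mapsto(\tau,\tau)$ in the $(s,t)$-plane, to recognize the linear term of that expansion as the genuine linearization $\bphi'(\{\zeta',q'\})$, and then to invert this linearization by a combined elliptic estimate for the pair $\{\zeta',q'\}$ assembled from Proposition \ref{P301}, Theorem \ref{thm:est1}, Lemma \ref{lem:hperp} and the Poincar\'e inequality \eqref{eq:poin}. I would first record two vanishing identities: $\bphi'(\{h_1,f_1\})=0$ (since $h_1\in\isd_N\subset\isd=\ker(\LL)\cap\ker(\dbf)$ by Proposition \ref{PX303} and Definition \ref{def:ISD2}, and $f_1=\tr_{\bar g}(h_1)/2$ by Definition \ref{def:ind1}, so the Hessian term in Lemma \ref{lem:ephi2} vanishes), and $\bphi'(\{ug_N,nu/2\})=0$ (since $ug_N\in\KK_1\subset\isd$, so $\LL(ug_N)=\dbf(ug_N)=0$, while $\tr_{\bar g}(ug_N)=nu$). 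By \eqref{EX401b}, $G(\tau):=\phi(\tau,\tau)$ satisfies $G(0)=0$, and by bilinearity $G'(0)=\partial_s\phi|_{(0,0)}+\partial_t\phi|_{(0,0)}=\bphi'(\{\zeta,q\})=\bphi'(\{\zeta',q'\})$. Taylor's formula with integral remainder then gives
\[
\phi(1,1)=\bphi'(\{\zeta',q'\})+\tfrac12 G''(0)+\tfrac16 G'''(0)+R_4,
\]
with $G''(0)=(\phi_{ss}+2\phi_{st}+\phi_{tt})|_{(0,0)}$, $G'''(0)=(\phi_{sss}+3\phi_{sst}+3\phi_{stt}+\phi_{ttt})|_{(0,0)}$, and $R_4=\tfrac16\int_0^1(1-\tau)^3\sum_{i=0}^4\binom4i(\partial_s^i\partial_t^{4-i}\phi)(\tau,\tau)\,d\tau$.

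The difficulty at this point is that $\phi_{tt}(0,0)=\phi^0_{tt}\in\isd_N$ (Proposition \ref{prop:2nd}(i)) is only of size $\beta^2$, which is not among the permitted error terms on the right-hand side of \eqref{E402}. The key observation is that $\bphi'(\{\zeta',q'\})\perp\isd_N$ in $L^2$: writing $\bphi'(\{\zeta',q'\})=\tfrac12\LL\zeta'+\dbf^*\dbf\zeta'+\na^2 Q$ with $Q=\tfrac12\tr_{\bar g}(\zeta')-q'$, the first summand is orthogonal to $\isd_N$ because $\LL$ is self-adjoint and $\isd_N\subset\ker(\LL)$, the second because $\im(\dbf^*)\perp\ker(\dbf)$, and the third because one integration by parts moves the derivatives onto $\xi\in\isd_N$, where $\dbf\xi=0$. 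Hence $\bphi'(\{\zeta',q'\})=(I-\pi_{\isd_N})\bphi'(\{\zeta',q'\})$, and since $\phi^0_{tt}\in\isd_N$ while $\phi^0_{ss}\perp\isd_N$ and $\phi^0_{st}\perp\isd$ by Proposition \ref{prop:2nd}(ii),(iii), applying $I-\pi_{\isd_N}$ to the displayed identity deletes $\tfrac12\phi^0_{tt}$ entirely. The remaining terms are estimated by Lemma \ref{lem:esa1} for $\phi_{ss}-\phi^0_{ss}$ and $\phi_{st}-\phi^0_{st}$, by Lemma \ref{lem:esa} (the primed estimates \eqref{eq:esa11}, \eqref{eq:esa4}) for $\phi_{sss},\phi_{sst},\phi_{stt}$ and for $R_4$, and by \eqref{eq:esa2} for $\phi_{ttt}$; Lemma \ref{lem:basices1} and Lemma \ref{lem:esf} convert the weighted pointwise bounds (the $\gamma=\alpha(1+|z|^2)$-, $[\cdot]_k$- and $\la\cdot\ra_k$-factors) into $L^2$-bounds in terms of $\|\zeta'\|_{W^{k,2}}$, $\|\na q'\|_{W^{k-1,2}}$ and powers of $\alpha,\beta$, and $\alpha^3,\alpha^2\beta,\alpha\beta^2,\beta^4$ are absorbed into $\alpha^2+\alpha\beta+\beta^3$ using $\alpha,\beta\le C\delta<1$. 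This yields
\[
\|\bphi'(\{\zeta',q'\})\|_{W^{k-2,2}}\le\|\phi(1,1)\|_{W^{k-2,2}}+C(\delta+\beta)\big(\|\zeta'\|_{W^{k,2}}+\|\na q'\|_{W^{k-1,2}}\big)+C(\alpha^2+\alpha\beta+\beta^3),
\]
the three error powers coming from $\phi^0_{ss}$, $\phi^0_{st}$ and $\phi^0_{ttt}$ respectively.

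It then remains to establish the elliptic inversion
\[
\|\zeta'\|_{W^{k,2}}+\|\na q'\|_{W^{k-1,2}}\le C\big(\|\bphi'(\{\zeta',q'\})\|_{W^{k-2,2}}+\|\dbf h\|_{W^{k-1,2}}+|\mathcal B(h,\chi)|\big),
\]
after which \eqref{E402} follows by inserting the previous display, using $\dbf\zeta'=\dbf h$, and absorbing $C(\delta+\beta)(\cdots)$ for $\delta$ small. To prove this, decompose $\zeta'=\zeta'_0+\zeta'_1$ with $\zeta'_0\in\ker(\dbf)$ and $\zeta'_1=\dbf^* w\in\im(\dbf^*)$ via Proposition \ref{prop:decom2}; since $\PP w=\dbf\zeta'=\dbf h$, Proposition \ref{P301} gives $\|\zeta'_1\|_{W^{k,2}}\le C\|\dbf h\|_{W^{k-1,2}}$, and $\pi_{\isd}(\zeta'_0)=\pi_{\isd}(\zeta')=0$ because $h-\pi_{\isd}(h)\perp\isd$, $h_2\in\isd^\perp$ and $\zeta'_1\in\im(\dbf^*)\perp\isd$. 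As $\na^2 Q=-\dbf^*(dQ)$ and $\dbf^*\dbf\zeta'$ both lie in $\im(\dbf^*)$, projecting $\bphi'(\{\zeta',q'\})=\tfrac12\LL\zeta'+\dbf^*\dbf\zeta'+\na^2 Q$ onto $\ker(\dbf)$ gives $\tfrac12\LL\zeta'_0=\pi_{\ker\dbf}\bphi'(\{\zeta',q'\})$, so Theorem \ref{thm:est1} applied to $\zeta'_0$ together with the boundedness of $\pi_{\ker\dbf}$ on $W^{k-2,2}$ (Lemma \ref{lem:hperp}) gives $\|\zeta'_0\|_{W^{k,2}}\le C\|\bphi'(\{\zeta',q'\})\|_{W^{k-2,2}}$. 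Projecting the same identity onto $\im(\dbf^*)$ gives $\na^2 Q=\pi_{\im\dbf^*}\bphi'(\{\zeta',q'\})-\tfrac12\LL\zeta'_1-\dbf^*\dbf\zeta'$, hence $\|\na^2 Q\|_{W^{k-2,2}}\le C(\|\bphi'(\{\zeta',q'\})\|_{W^{k-2,2}}+\|\dbf h\|_{W^{k-1,2}})$; finally \eqref{eq:poin} bounds $\|\na Q\|_{L^2}$ by $\|\na^2 Q\|_{L^2}+|\mathcal B(h,\chi)|$, and $\na q'=\tfrac12\na\tr_{\bar g}(\zeta')-\na Q$ then bounds $\|\na q'\|_{W^{k-1,2}}$, completing the inversion.

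I expect the conceptual crux to be the orthogonality $\bphi'(\{\zeta',q'\})\perp\isd_N$ of the second step, since this is exactly what allows the $O(\beta^2)$ term $\phi^0_{tt}\in\isd_N$ to be removed so that only $\beta^3$ survives on the right; note this is needed even when $(N,g_N)$ has an obstruction of order $2$, where $\phi^0_{tt}=\pi_{\isd_N}M_2(h_1)$ genuinely has size $\beta^2$. The technical obstacle is the bookkeeping in the second step: bounding the growing, $u$-generated contributions (the $\gamma$-weighted terms) of Lemmas \ref{lem:esa}--\ref{lem:esa1} by $\|\zeta'\|_{W^{k,2}}+\|\na q'\|_{W^{k-1,2}}$ and by $\alpha^2+\alpha\beta+\beta^3$ without losing derivatives, which is where the precise shape of the primed estimates (with an extra $\delta$ in front of the top-order $\la\zeta'\ra_k$-terms) and the weighted Sobolev inequality of Lemma \ref{lem:esf} are indispensable.
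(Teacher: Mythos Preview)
Your plan is essentially the paper's own argument, just organized along the diagonal: the orthogonality $\bphi'(\{\zeta',q'\})\perp\isd_N$ is precisely the paper's observation that ``$\phi_{tt}$ is perpendicular to $\LL(\zeta')$ and $\na^2 Q$'' (see \eqref{E402e}), and your elliptic inversion via Proposition~\ref{P301}, Lemma~\ref{lem:hperp}, Theorem~\ref{thm:est1} and \eqref{eq:poin} matches \eqref{E402f}--\eqref{E403gd} exactly.

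There is, however, a real gap in your displayed intermediate estimate
\[
\|\bphi'(\{\zeta',q'\})\|_{W^{k-2,2}}\le\|\phi(1,1)\|_{W^{k-2,2}}+C(\delta+\beta)\bigl(\|\zeta'\|_{W^{k,2}}+\|\na q'\|_{W^{k-1,2}}\bigr)+C(\alpha^2+\alpha\beta+\beta^3).
\]
Lemma~\ref{lem:esa1}(iii) contributes $\gamma\la\zeta'\ra_k=\alpha(1+|z|^2)\la\zeta'\ra_k$, and \eqref{eq:esa11} contributes $\gamma^2[\na q']_{k-2}$. Lemma~\ref{lem:esf} cannot convert these into $C\alpha\|\zeta'\|_{W^{k,2}}$ and $C\alpha^2\|\na q'\|_{W^{k-1,2}}$: to absorb the weight $(1+|z|^2)$ via Lemma~\ref{lem:esf} you must spend two Sobolev derivatives, so at the top order $j=k$ you would need $\|\zeta'\|_{W^{k+2,2}}$, which is unavailable. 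Consequently the absorption ``for $\delta$ small'' at the end cannot close. The paper resolves exactly this point in \eqref{E402d}--\eqref{E403bc} by invoking \cite[Lemma~8.27]{CM21b}: since $[\zeta']_k,[\na q']_{k-1}\le C\delta$ pointwise, one has an interpolation of the type $\int (1+|z|)^a|T|^2\,dV_{\bar f}\le C\|T\|_{L^2}^{2-\ep}$, which yields $C\alpha\|\zeta'\|_{W^{k,2}}^{1-\ep}$ in place of $C\alpha\|\zeta'\|_{W^{k,2}}$; then Young's inequality $\alpha X^{1-\ep}\le \tfrac12 X+C\alpha^{1/\ep}$ allows absorption (this is \eqref{E403bc}, and the extra $\alpha^{1/\ep}$ is swallowed by $\alpha^2$). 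Once you insert this interpolation step, your argument goes through and coincides with the paper's.
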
 
\begin{proof}
We apply the Taylor expansion for $\phi$ and obtain
\begin{align}
&[\phi(1,1) -\phi_s-\phi_t- \frac{1}{2}(\phi_{ss}+2\phi_{st}+\phi_{tt})]_{k-2} \notag \\
\le & C \lc \delta^2\la \zeta'\ra_k+(\beta^2+\gamma^2) [\na q']_{k-2}+(1+|z|)(\gamma^3+\beta^2 \gamma+\gamma^2 \beta+\beta^3) \rc, \label{E402a}
\end{align}
where we have used \eqref{eq:esa11} and \eqref{eq:esa2} and the derivatives are evaluated at $s=t=0$. Combined with Lemma \ref{lem:esa1} (i) (iii) and Proposition \ref{prop:2nd} (ii) (iii), we have
\begin{align}
&[\phi(1,1) - \phi_s-\frac{1}{2}\phi_{tt}]_{k-2} \notag\\
\le & C \lc \delta^2\la \zeta'\ra_k+(\beta^2+\gamma^2) [\na q']_{k-2}+(1+|z|)(\gamma^3+\beta^2 \gamma+\gamma^2 \beta+\beta^3) \rc \notag \\
&+ C(\beta+\delta+\gamma)(\la \zeta' \ra_k+[\na q']_{k-2})+C(\gamma^2+\gamma \beta), \label{E402bb}
\end{align}
where we have used $\phi_t=0$ from \eqref{EX401b}.

From Proposition \ref{prop:2nd}, $\phi_{tt}=\phi^0_{tt} \in \isd_N \subset \isd$. Moreover, we obtain from Lemma \ref{lem:ephi2} that
\begin{align}
\phi_s=\frac{1}{2}\LL(\zeta)+\dbf^* \dbf \zeta+\na^2 Q=\frac{1}{2}\LL(\zeta')+\dbf^* \dbf h+\na^2 Q.
\label{E402c}
\end{align}

Therefore, we obtain from \eqref{E402bb} and \eqref{E402c} that
\begin{align}
&\|\LL(\zeta'/2)+\na^2 Q+\phi_{tt}/2 \|_{W^{k-2,2}}^2 \notag \\
\le& C\|\phi(1,1)\|^2_{W^{k-2,2}}+C\int [\dbf^*\dbf h]^2_{k-2}+(1+|z|)^2( \gamma^3+\beta^2 \gamma+\gamma^2 \beta+\beta^3)^2+(\gamma^2+\gamma \beta)^2 \, \vf \notag \\
&+C\int \delta^2 (\la \zeta' \ra^2_k+[\na q' ]_{k-2}^2)+\gamma^2 \la \zeta'\ra^2_k+(\gamma+\gamma^2)^2[\na q']^2_{k-2} \, \vf \notag \\
\le &C\|\phi(1,1)\|_{W^{k-2,2}}^2+C \|\dbf h\|_{W^{k-1,2}}^2+C ( \alpha^2+\beta^3+\alpha\beta+\alpha^3+\alpha^2\beta+\alpha \beta^2)^2 \notag \\
&+C \delta^2 \lc \|\zeta'\|_{W^{k,2}}^2+\|\na q'\|_{W^{k-2,2}}^2 \rc+C\alpha^2 \lc \|(1+|z|)^3\eta'\|_{W^{k,2}}^2+\|(1+|z|)^4\na q'\|_{W^{k-2,2}}^2 \rc \notag\\
\le &C\|\phi(1,1)\|_{W^{k-2,2}}^2+C \|\dbf h\|_{W^{k-1,2}}^2+C ( \alpha^2+\beta^3+\alpha\beta)^2 \notag\\
&+C \delta^2 \lc \|\zeta'\|_{W^{k,2}}^2+\|\na q'\|_{W^{k-2,2}}^2 \rc +C\alpha^2 \lc \|\zeta'\|^{2-2\ep}_{W^{k,2}}+\|\na q'\|^{2-2\ep}_{W^{k-2,2}}\rc=: \mathfrak X^2,
\label{E402d}
\end{align}
where we have used Lemma \ref{lem:esf} and \cite[Lemma 8.27]{CM21b}. Since $\phi_{tt}$ is perpendicular to $\LL(\zeta')$ and $\na^2 Q$, it is clear from \eqref{E402d} that
\begin{align}
\|\phi_{tt} \|_{W^{k-2,2}} \le C\|\phi_{tt} \|_{L^2} \le C \mathfrak X,
\label{E402e}
\end{align}
where the first inequality holds since $\isd$ is finite-dimensional. From Proposition \ref{P301}, there exists a $1$-form $w$ satisfying $\PP w=\dbf h=\dbf \zeta'$ and 
\begin{align}
\|w\|_{W^{k+1,2}} \le C \|\dbf h\|_{W^{k-1,2}}.
\label{E402f}
\end{align}
Moreover, it follows from Lemma \ref{lem:pre} that $\LL(\zeta'-\dbf^*w) \in \ker(\dbf)$. Since $\na^2 Q \in \ker(\dbf)^{\perp}$, we conclude from Lemma \ref{lem:hperp} that
\begin{align*}
\|\LL(\zeta') \|_{W^{k-2,2}} \le& \|\LL(\zeta'-\dbf^*w) \|_{W^{k-2,2}}+\|\LL(\dbf^*w)\|_{W^{k-2,2}} \\
\le& C\|\LL(\zeta')+2\na^2 Q \|_{W^{k-2,2}}+C \|\dbf h\|_{W^{k-1,2}} \le C\mathfrak X,
\end{align*}
where we have used \eqref{E402d}, \eqref{E402e} and \eqref{E402f}. From our construction, $\pi_{\isd}(\zeta')=0$. By Theorem \ref{thm:est1} we obtain
\begin{align} \label{E402fa}
\|\zeta'\|_{W^{k,2}} \le& C\lc \|\LL(\zeta') \|_{W^{k-2,2}}+\|\dbf h\|_{L^2} \rc \le C\mathfrak X
\end{align}
since $\dbf \zeta'=\dbf h$. Similarly, we estimate
\begin{align}
\|\na^2 Q \|_{W^{k-2,2}} \le & \|\LL(\zeta'/2)+\na^2 Q \|_{W^{k-2,2}}+\|\LL(\dbf^*w/2) \|_{W^{k-2,2}} \le C\mathfrak X. \label{E402ga}
\end{align}

Therefore, it follows from \eqref{eq:poin} that
\begin{align} \label{E402gb}
\|\na Q \|_{L^2} \le C\| \na^2 Q\|_{L^2}+C|\mathcal B(h,\chi)| \le C(|\mathcal B(h,\chi)|+\mathfrak X).
\end{align}
Combining \eqref{E402fa}, \eqref{E402ga}, \eqref{E402gb} and the definition of $Q$, we obtain
\begin{align} \label{E403gc}
\|\na q'\|_{W^{k-1,2}} \le C\lc \|\zeta'\|_{W^{k,2}} +|\mathcal B(h,\chi)|+\mathfrak X \rc \le C\lc |\mathcal B(h,\chi)|+\mathfrak X \rc.
\end{align}

Consequently, it follows from \eqref{E402fa} and \eqref{E403gc} that
\begin{align} \label{E403gd}
\|\zeta'\|_{W^{k,2}}+\|\na q'\|_{W^{k-1,2}} \le C\lc |\mathcal B(h,\chi)|+\mathfrak X \rc.
\end{align}

Recall that
\begin{align}
\mathfrak X \le & C \lc \|\phi(1,1)\|_{W^{k-2,2}}+ \|\dbf h\|_{W^{k-1,2}} \rc+C (\alpha^2+\beta^3+\alpha \beta)+C\delta \lc \|\zeta'\|_{W^{k,2}}+\|\na q'\|_{W^{k-2,2}} \rc \notag \\
&+C\alpha \lc \|\zeta'\|^{1-\ep}_{W^{k,2}}+\|\na q'\|^{1-\ep}_{W^{k-1,2}}\rc. \label{E403b}
\end{align}

It follows from Young's inequality that
\begin{align} 
&\alpha \lc \|\zeta'\|^{1-\ep}_{W^{k,2}}+\|\na q'\|^{1-\ep}_{W^{k-2,2}} \rc \le \frac{1}{2} \lc \|\zeta'\|_{W^{k,2}}+\|\na q'\|_{W^{k-2,2}} \rc+C \alpha^{\frac 1 \ep}. \label{E403bc}
\end{align}

Combining \eqref{E403gd}, \eqref{E403b} and \eqref{E403bc}, we obtain
\begin{align}
&\|\zeta'\|_{W^{k,2}}+\|\na q'\|_{W^{k-1,2}} \notag \\
\le & C \lc \|\phi(1,1)\|_{W^{k-2,2}}+ \|\dbf h\|_{W^{k-1,2}}+|\mathcal B(h,\chi)| \rc+C (\alpha^2+\beta^3+\alpha \beta+\alpha^{\frac 1 \ep}) \notag \\
&+(C\delta+1/2) \lc \|\zeta'\|_{W^{k,2}}+\|\na q'\|_{W^{k-2,2}} \rc. \label{E403Xa}
\end{align}

After absorbing terms in \eqref{E403Xa}, the proof of \eqref{E402} is complete.
\end{proof}

Now, we prove the main result of the section, which is the rigidity inequality of mixed orders for generalized cylinders.

\begin{thm} \label{thm:sta1}
There exist positive constants $\delta, C,C_{\ep}$ such that if $\|h\|_{C^2}+\|\chi\|_{C^2} \le \delta$, then for any small $\ep>0$, 
\begin{align*}
\alpha^2+\beta^3 \le& C\lc \|(1+|z|)^2\phi(1,1)\|_{L^1}+\beta (\|\phi(1,1)\|_{L^2}+\|\dbf h\|_{W^{1,2}}+|\mathcal B(h,\chi)|)\rc \notag \\
&+C_{\ep} \lc \|\phi(1,1)\|^{2-\ep}_{L^2}+\|\dbf h\|^{2-\ep}_{W^{1,2}}+|\mathcal B(h,\chi)|^{2-\ep} \rc.
\end{align*}
\end{thm}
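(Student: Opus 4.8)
The plan is to carry the Taylor expansion of $\phi(1,1)=\bphi(\{g,f\})$ in $(s,t)$ about the origin one order further than in Proposition \ref{prop:tay2}, and then to extract the two essential pieces of the deformation by testing the expansion against two different objects: the finite-dimensional space $\isd_N$, which detects $\beta^3$ through the order-$3$ obstruction on $N$, and the single tensor $(1+|z|^2)g_N$, which detects $\alpha^2$ through the order-$2$ obstruction carried by $ug_N$ (as for round cylinders in \cite[Proposition 7.2]{CM21b}). Everything else must be shown to be of strictly higher order; that the order-$2$ inequality for $ug_N$ and the order-$3$ inequality for $h_1$ do not interfere is the analytic heart of the argument.

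First, write (all derivatives at $s=t=0$)
\begin{align*}
\phi(1,1)=\phi_s+\tfrac12\phi_{ss}+\phi_{st}+\tfrac12\phi_{tt}+\tfrac16\big(\phi_{sss}+3\phi_{sst}+3\phi_{stt}+\phi_{ttt}\big)+\mathcal R ,
\end{align*}
where $\phi_t=0$ by \eqref{EX401b}, $\|\mathcal R\|_{L^2}$ is controlled by the fourth-order bound \eqref{eq:esa4}, $\phi_{tt}=\phi^0_{tt}=\pi_{\isd_N}(M_2(h_1))$ by Proposition \ref{prop:2nd}(i), and --- arguing in the same way, since the third $t$-derivative lives on the $N$-factor --- $\phi_{ttt}=\phi^0_{ttt}=M_3(h_1,h_2)$. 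Projecting onto $\isd_N$: $\pi_{\isd_N}(\phi_s)=0$, because each summand of $\phi_s=\tfrac12\LL(\zeta')+\dbf^{*}\dbf h+\na^2Q$ is $L^2$-orthogonal to $\isd_N$ (using $\LL\xi=0$ and $\dbf\xi=0$ for $\xi\in\isd_N$ and integration by parts); $\pi_{\isd_N}(\phi^0_{ss})=\pi_{\isd_N}(\phi^0_{st})=0$ by Proposition \ref{prop:2nd}(ii),(iii); and the $s$-involving third derivatives are $\delta$ times lower-order quantities by \eqref{eq:esa11}. Hence
\begin{align*}
\pi_{\isd_N}(\phi(1,1))=\pi_{\isd_N}\Big(\tfrac{1}{2!}M_2(h_1)+\tfrac{1}{3!}M_3(h_1,h_2)\Big)+\mathcal E_1 ,
\end{align*}
with $\|\mathcal E_1\|_{L^2}\le C(\alpha+\beta+\delta)\,\mathcal P+C(\text{harmless higher powers of }\alpha,\beta)$, where $\mathcal P:=\|\zeta'\|_{W^{k,2}}+\|\na q'\|_{W^{k-1,2}}$ and the coupling contributions $\pi_{\isd_N}(\phi_{st}-\phi^0_{st})$, $\pi_{\isd_N}(\phi_{ss}-\phi^0_{ss})$ are estimated through Lemma \ref{lem:esa1}(i),(ii) after pairing against a fixed smooth basis of $\isd_N$ and integrating by parts. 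By the order-$3$ obstruction of $(N,g_N)$ (cf.~\eqref{EX301} and Definition \ref{def:kob}), $\big\|\pi_{\isd_N}\big(\tfrac1{2!}M_2(h_1)+\tfrac1{3!}M_3(h_1,h_2)\big)\big\|_{L^2}\ge c_0\beta^3$ for $\beta$ small; since $\isd_N$ has a bounded smooth basis, $\|\pi_{\isd_N}(\phi(1,1))\|_{L^2}\le C\|(1+|z|)^2\phi(1,1)\|_{L^1}$, and therefore $\beta^3\le C\|(1+|z|)^2\phi(1,1)\|_{L^1}+\|\mathcal E_1\|_{L^2}$.

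For $\alpha^2$, test the same expansion against $T:=(1+|z|^2)g_N$. One checks $\LL T=(2(m-n)+1)g_N$ and $\dbf T=0$, so $\langle\phi_s,T\rangle$ reduces to a multiple of $\langle\zeta',g_N\rangle$, which is $\le C\mathcal P$; moreover $\langle\phi^0_{tt},T\rangle=0$ and $\langle\phi^0_{st},T\rangle=0$, both because $\int_N\tr_{g_N}\xi\,dV_{g_N}=0$ for the relevant $\isd_N$- and $(-1)$-eigentensors, using the product form of $\phi^0_{st}$ in Proposition \ref{prop:2nd}(iii). The surviving main term is $\langle\phi^0_{ss},T\rangle$: from the formula of Proposition \ref{prop:2nd}(ii), $\tr_{g_N}(\phi^0_{ss})=-n|\na u|^2$ (the Hessian and $du\otimes du$ parts are tangent to $\R^{m-n}$), so $\int|\na u|^2\,\vf=\int u^2\,\vf$ gives $|\langle\phi^0_{ss},T\rangle|\ge c_1\alpha^2$ with $c_1>0$. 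Bounding the coupling errors $\langle\phi_{ss}-\phi^0_{ss},T\rangle$ and $\langle\phi_{st}-\phi^0_{st},T\rangle$ by Lemma \ref{lem:esa1} --- the worst piece being $\bphi^{(2)}(\{\zeta',q'\},\{ug_N,nu/2\})$, which after Cauchy--Schwarz against the Gaussian weight is $\le C\alpha\mathcal P$ --- and using $|\langle\phi(1,1),T\rangle|\le C\|(1+|z|)^2\phi(1,1)\|_{L^1}$, one gets $\alpha^2\le C\|(1+|z|)^2\phi(1,1)\|_{L^1}+C\big(\mathcal P^2+(\alpha+\beta)\mathcal P\big)+C(\text{higher powers})$.

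It remains to close the system. Feeding $\mathcal P\le C(D+\alpha^2+\alpha\beta+\beta^3)$ from Proposition \ref{prop:tay2}, with $D:=\|\phi(1,1)\|_{W^{k-2,2}}+\|\dbf h\|_{W^{k-1,2}}+|\mathcal B(h,\chi)|$, into the two estimates above, every error becomes either $(\delta+\alpha+\beta)(\alpha^2+\beta^3)$, a genuinely higher power ($\alpha^3,\alpha^2\beta,\alpha\beta^3,\beta^4,\dots$), or one of $\beta D$, $\alpha D$, $D^2$. The first two classes are absorbed into the left-hand side for $\delta$ small; for the $D$-terms one interpolates the higher Sobolev norms against the uniform bound $\|\phi(1,1)\|_{C^{k-2}}\le C\delta$ to trade $\|\phi(1,1)\|_{W^{k-2,2}}$, $\|\dbf h\|_{W^{k-1,2}}$ for $\|\phi(1,1)\|_{L^2}$, $\|\dbf h\|_{W^{1,2}}$ at the cost of an arbitrarily small exponent loss, and then applies Young's inequality; this is what produces the linear-in-$\beta$ term and the powers $2-\varepsilon$ with the constant $C_\varepsilon$. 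Collecting terms yields the stated inequality. The principal obstacle is precisely this error analysis: one must verify that every term coupling $h_1$ with $ug_N$ carries a small factor times $\alpha^2+\beta^3$ --- the "compatibility" of the two obstructions --- which rests on the orthogonality relations of Proposition \ref{prop:2nd} ($\phi^0_{ss}\perp\isd_N$, $\phi^0_{st}\perp\isd$, $\langle\phi^0_{st},(1+|z|^2)g_N\rangle=0$) together with control of the polynomially weighted quantities $\langle\zeta'\rangle_k$, $[\na q']_{k-2}$ and $\gamma=\alpha(1+|z|^2)$ of Lemmas \ref{lem:esa}, \ref{lem:esa1} in Gaussian-weighted $L^2$ and $L^1$ norms via Lemma \ref{lem:esf}, so that Proposition \ref{prop:tay2} can be applied.
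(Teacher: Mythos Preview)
Your treatment of the $\beta^3$ term is essentially the paper's: project onto $\isd_N$, use $\phi_s,\phi^0_{st}\perp\isd$ and $\phi^0_{ss}\perp\isd_N$ from Proposition~\ref{prop:2nd}, and invoke the order-$3$ obstruction on $N$. The difficulty is in your extraction of $\alpha^2$, where there is a genuine gap.

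You test against the \emph{fixed} tensor $T=(1+|z|^2)g_N$. As you correctly compute, $\LL T=(2(m-n)+1)g_N$, so $T\notin\ker\LL$; the culprit is the constant part $(1+2(m-n))g_N$ of $T$ (only $(|z|^2-2(m-n))g_N$ lies in $\KK_1\subset\isd$). Consequently your own calculation gives
\[
\int\langle\phi_s,T\rangle\,\vf \;=\;\tfrac12\int\langle\LL\zeta',T\rangle\,\vf\;=\;\tfrac{2(m-n)+1}{2}\int\langle\zeta',g_N\rangle\,\vf,
\]
which is only $O(\|\zeta'\|_{L^2})=O(\mathcal P)$, a \emph{linear} term in $\mathcal P$. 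This does not fit your claimed error $C(\mathcal P^2+(\alpha+\beta)\mathcal P)$; after substituting $\mathcal P\le C(D+\alpha^2+\alpha\beta+\beta^3)$ from Proposition~\ref{prop:tay2} you get $C\alpha^2$ on the right with a constant that is not small, and the inequality cannot be closed. (Note $g_N\notin\isd$, and $\zeta'$ is only known to be orthogonal to $\isd$, so there is no reason for $\int\langle\zeta',g_N\rangle$ to vanish.)

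The paper avoids this by testing against the $u$-dependent tensor $vg_N$ with $v:=\Delta_{g_E}|\nabla u|^2-|\nabla u|^2$. A short computation (using that $\nabla^2u$ is constant and the Bochner identity) shows $\Delta_{f_E}v+v=0$, so $vg_N\in\KK_1\subset\isd$. This single fact gives, for free, $\phi_s\perp vg_N$, $\phi^0_{st}\perp vg_N$, and $(\phi_{tt}+\tfrac13\phi_{ttt})\perp vg_N$ (the last because $\phi_{tt}+\tfrac13\phi_{ttt}$ lives on $N$ and $\int v\,dV_{f_E}=0$). The main term is then $\int\langle\phi^0_{ss},vg_N\rangle=-n\int v|\nabla u|^2\ge c\alpha^4$ by \cite[Lemma~5.17]{CM21b}; dividing the test tensor by $\alpha^2$ (noting $|v|\le C\alpha^2(1+|z|)^2$) extracts $\alpha^2$ with only the higher-order errors you expected. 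Replacing your $T$ by $vg_N/\alpha^2$ is precisely the missing idea.
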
 
\begin{proof}
We apply the Taylor expansion for $\phi$ and obtain
\begin{align*}
& \left|\phi(1,1) -\phi_s-\phi_t- \frac{1}{2}(\phi_{ss}+2\phi_{st}+\phi_{tt})-\frac{1}{6}(\phi_{ttt}+3\phi_{stt}+3\phi_{sst}+\phi_{sss}) \right| \notag \\
\le & C\delta(1+|z|) \lc \delta [\zeta']^2_{2}+(\gamma^2+\beta^2)([\zeta']_{2}+|\na q'|)+\gamma^3+\beta^2 \gamma+\gamma^2 \beta \rc+(1+|z|) \beta^4, 
\end{align*}
where we have used \eqref{eq:esa3} and the derivatives are evaluated at $s=t=0$. Combined with \eqref{eq:esa1}, it follows that
\begin{align}
& \left| \phi(1,1) -\phi_s-\phi_t- \frac{1}{2}(\phi_{ss}+2\phi_{st}+\phi_{tt})-\frac{1}{6}\phi_{ttt}\right| \notag \\
\le & C(1+|z|) \lc \delta [\zeta']^2_{2}+(\gamma^2+\beta^2)([\zeta']_{2}+|\na q'|)+\gamma^3+\beta^2 \gamma+\gamma^2 \beta+ \beta^4 \rc. \label{E404b}
\end{align}

From Lemma \ref{lem:esa1} (i) (ii), one can further derive from \eqref{E404b} that
\begin{align}
& \left| \phi(1,1) -\phi_s- \frac{1}{2}(\phi^0_{ss}+2\phi^0_{st}+\phi_{tt})-\frac{1}{6}\phi_{ttt}\right| \notag \\
\le & C \lc (1+|z|)(\gamma^2+\beta^2)+ \beta \rc ( \la \zeta' \ra_2+|\na q'|)+C( \la \zeta' \ra^2_2+|\na q'|^2)+C\delta(1+|z|)[\zeta']^2_{2} \notag \\
&+C(1+|z|) \lc\gamma^3+\beta^2 \gamma+\gamma^2 \beta+ \beta^4 \rc. \label{E404c}
\end{align}

Recall from \eqref{E402c} and Proposition \ref{prop:2nd} (ii) (iii), $\phi_s \perp \isd$, $\phi^0_{st} \perp \isd$ and $\phi^0_{ss} \perp \isd_N$. In addition, it follows from the fact that $(N,g_N)$ has an obstruction of order $3$ and \eqref{E205a} that
\begin{align} \label{E404d}
\|\pi_{\isd_N}(\phi_{tt}+\phi_{ttt}/3)\|_{L^2} \ge c_0 \|h_1\|^3_{L^2}=c_0 \beta^3
\end{align}
for some constant $c_0>0$. We define $\xi:=\pi_{\isd_N}(\phi_{tt}+\phi_{ttt}/3)/\|\pi_{\isd_N}(\phi_{tt}+\phi_{ttt}/3)\|_{L^2}$. Since $\isd_N$ is finite-dimensional, $\|\xi\|_{C^0}$ is bounded. By taking the inner product of the expression on the left side of \eqref{E404c} with $\xi$ and integrating, we obtain from \eqref{E404d} that
\begin{align*} 
\beta^3 \le& C \lc \|\phi(1,1)\|_{L^1}+\|\lc (1+|z|)^5 \alpha^2+(1+|z|)\beta^2+\beta\rc (\la \zeta' \ra_2+|\na q'|)\|_{L^1} \rc \notag \\
&+C \|\la \zeta' \ra^2_2+|\na q'|^2+\delta(1+|z|)[\zeta']^2_{2} \|_{L^1}+C \lc \alpha^3+\beta^2 \alpha+\alpha^2 \beta+ \beta^4 \rc \notag \\
\le& C \lc \|\phi(1,1)\|_{L^1}+(\alpha^2+\beta) \|(\la \zeta' \ra_2+|\na q'|)\|_{L^2} \rc \notag\\
&+C \|(\la \zeta' \ra_2+|\na q'|)\|^2_{L^2}+C\delta \| [\zeta']_2 \|^{2-\ep}_{L^2}+C \lc \alpha^3+\beta^2 \alpha+\alpha^2 \beta+ \beta^4 \rc, 
\end{align*}
where we have used Cauchy–Schwarz inequality and \cite[Lemma 8.27]{CM21b} for the last inequality. It follows from Proposition \ref{prop:tay2} that
\begin{align} 
\beta^3 \le& C\|\phi(1,1)\|_{L^1}+C(\alpha^2+\beta)\lc \|\phi(1,1)\|_{L^2}+\|\dbf h\|_{W^{1,2}}+|\mathcal B(h,\chi)|+ \alpha^2+ \alpha \beta+\beta^3 \rc \notag \\
&+C\lc \|\phi(1,1)\|^2_{L^2}+\|\dbf h\|^2_{W^{1,2}}+|\mathcal B(h,\chi)|^2+ \alpha^4+ \alpha^2 \beta^2+\beta^6 \rc \notag\\
&+C\delta \lc \|\phi(1,1)\|^{2-\ep}_{L^2}+\|\dbf h\|^{2-\ep}_{W^{1,2}}+|\mathcal B(h,\chi)|^{2-\ep}+ \alpha^{4-2\ep}+ \alpha^{2-\ep} \beta^{2-\ep}+\beta^{6-3\ep} \rc \notag\\
&+C \lc \alpha^3+\beta^2 \alpha+\alpha^2 \beta+ \beta^4 \rc. \label{eq:imp1}
\end{align}
where we have used Young's inequality.

On the other hand, if we set $v:=\Delta_{g_E}|\na u|^2-|\na u|^2$, then it follows from \cite[Lemma 5.17]{CM21b} that
\begin{align*}
\int \la \phi^0_{st}, vg_N \ra\,\vf=-n\int v|\na u|^2 \,\vf \ge c \|u\|^4_{L^2}
\end{align*}
and $|v| \le (1+|z|)^2 \alpha$. In addition, we claim that $\phi_{tt}+\phi_{ttt}/3 \perp \KK_1$ since $\phi_{tt}+\phi_{ttt}/3$ depends only on $N$ and the integral of $u$ vanishes. By taking the inner product of the expression on the left side of \eqref{E404c} with $vg_N/\alpha^2$ and integrating, we obtain
\begin{align*} 
\alpha^2 \le& C \lc \|(1+|z|)^2\phi(1,1)\|_{L^1}+\|\lc (1+|z|)^7 \alpha^2+(1+|z|)^3\beta^2+\beta(1+|z|)^2\rc (\la \zeta' \ra_2+[\na q']_0)\|_{L^1} \rc \notag \\
&+C \|(1+|z|)^2(\la \zeta' \ra^2_2+[\na q']^2_{0})+\delta(1+|z|)^3[\zeta']^2_{2} \|_{L^1}+C \lc \alpha^3+\beta^2 \alpha+\alpha^2 \beta+ \beta^4 \rc \notag \\
\le& C \lc \|\phi(1,1)\|_{L^1}+(\alpha^2+\beta) \|\la \zeta' \ra_2+[\na q']_0\|_{L^2} \rc \notag\\
&+C \|\la \zeta' \ra_2+[\na q']_0\|^{2-\ep}_{L^2}+C\delta \| [\zeta']_2 \|^{2-\ep}_{L^2}+C \lc \alpha^3+\beta^2 \alpha+\alpha^2 \beta+ \beta^4 \rc,
\end{align*}

Similar to \eqref{eq:imp1}, we conclude that
\begin{align} 
\alpha^2 \le& C\|(1+|z|)^2\phi(1,1)\|_{L^1}+C(\alpha^2+\beta)\lc \|\phi(1,1)\|_{L^2}+\|\dbf h\|_{W^{1,2}}+|\mathcal B(h,\chi)|+ \alpha^2+ \alpha \beta+\beta^3 \rc \notag \\
&+C\lc \|\phi(1,1)\|^{2-\ep}_{L^2}+\|\dbf h\|^{2-\ep}_{W^{1,2}}+|\mathcal B(h,\chi)|^{2-\ep}+ \alpha^{4-2\ep}+ \alpha^{2-\ep} \beta^{2-\ep}+\beta^{6-3\ep} \rc \notag\\
&+C \lc \alpha^3+\beta^2 \alpha+\alpha^2 \beta+ \beta^4 \rc. \label{eq:imp2}
\end{align}

Combining \eqref{eq:imp1} and \eqref{eq:imp2} and absorbing terms, the proof of \eqref{E402a} is complete.
\end{proof}

We conclude this section by deriving the rigidity inequality for general Ricci shrinkers with bounded curvature and $\isd=0$.

\begin{thm} 
Suppose $( M, g , f)$ is a Ricci shrinker with bounded curvature and $\isd=0$. Then for any integer $k \ge 2$, there exist constants $ \delta$ and $C$ depending on $k$ and the Ricci shrinker such that for any $(h,\chi) \in C^k(S^2( M)) \times C^k( M)$ with $\|h\|_{C^k}+\|\chi\|_{C^k} \le \delta$,
\begin{align*}
\|h\|_{W^{k,2}}+\|\na \chi\|_{W^{k-1,2}} \le C \lc \|\bphi(\{ g+h, f+\chi\})\|_{W^{k-2,2}}+\|\df h\|_{W^{k-1,2}}\rc.
\end{align*}
\end{thm}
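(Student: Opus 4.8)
The plan is to linearise and reduce to the elliptic estimates already established; the point is that when $\isd=0$ there is no obstruction to deal with, so this is essentially a simplified, purely first-order version of Proposition \ref{prop:tay2}. Writing $\phi:=\bphi(\{g+h,f+\chi\})$ and $Q:=\tfrac12\tr_{g}(h)-\chi$, the first step is the Taylor expansion
\begin{align*}
\phi=\tfrac12\LL(h)+\df^{*}\df h+\na^{2}Q+R,
\end{align*}
obtained from $\bphi(\{g,f\})=0$ and the analogue of Lemma \ref{lem:ephi2} on $(M,g,f)$, where $R$ collects all terms that are at least quadratic in $(h,\chi)$. The only non-routine point — and the step I expect to be the main obstacle — is the estimate $\|R\|_{W^{k-2,2}}\le C\delta\big(\|h\|_{W^{k,2}}+\|\na\chi\|_{W^{k-1,2}}\big)$. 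The subtlety is that $R$ contains, via $\na^{2}_{g+h}f-\na^{2}_{g}f$, terms schematically of the form $h*\na h*\na f$ in which $\na f$ has linear growth; I would handle these by pairing the bare $\na f$ with one copy of an $h$-factor and invoking $|\na f|^{2}\le f$ together with Lemma \ref{lem:esf} (which costs one derivative, harmless since the target norm is $W^{k-2,2}$ while $h$ is controlled in $W^{k,2}$), and by bounding all the remaining perturbation factors pointwise using $\|h\|_{C^{k}}+\|\chi\|_{C^{k}}\le\delta$. Every monomial of $R$ contains at least one $h$-factor because $\bphi$ is affine in $\chi$, so the $\chi$-dependence of $R$ enters only through $\na\chi$ (via the cross term $(\Gamma_{g+h}-\Gamma_{g})*\na\chi$), which is why $\|\na\chi\|_{W^{k-1,2}}$, rather than a norm of $\chi$ itself, is what appears.

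Granting this, it suffices to prove the inequality with $\phi$ replaced by $\bphi'(\{h,\chi\})=\tfrac12\LL(h)+\df^{*}\df h+\na^{2}Q$. Following Proposition \ref{prop:tay2}, I would use Proposition \ref{P301} to solve $\PP w=\df h$ with $\|w\|_{W^{k+1,2}}\le C\|\df h\|_{W^{k-1,2}}$, set $h^{0}:=h-\df^{*}w\in\ker(\df)$ and $h^{1}:=\df^{*}w$, and invoke Lemma \ref{lem:pre} to see that $\LL(h^{0})\in\ker(\df)$ while $\LL(h^{1})=\df^{*}(\Delta_{f}w+w/2)$, $\df^{*}\df h$ and $\na^{2}Q$ all lie in $\ker(\df)^{\perp}$. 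Splitting $\bphi'(\{h,\chi\})$ into its two orthogonal components and applying Lemma \ref{lem:hperp} bounds $\|\LL(h^{0})\|_{W^{k-2,2}}$ and $\|\tfrac12\LL(h^{1})+\df^{*}\df h+\na^{2}Q\|_{W^{k-2,2}}$ by $C\|\bphi'(\{h,\chi\})\|_{W^{k-2,2}}$; since $\|\LL(h^{1})\|_{W^{k-2,2}}+\|\df^{*}\df h\|_{W^{k-2,2}}\le C\|w\|_{W^{k+1,2}}\le C\|\df h\|_{W^{k-1,2}}$, this gives
\begin{align*}
\|\LL h\|_{W^{k-2,2}}+\|\na^{2}Q\|_{W^{k-2,2}}\le C\big(\|\bphi'(\{h,\chi\})\|_{W^{k-2,2}}+\|\df h\|_{W^{k-1,2}}\big).
\end{align*}

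To conclude: since $\isd=\ker(\LL)\cap\ker(\df)=0$, Theorem \ref{thm:est1} yields $\|h\|_{W^{k,2}}\le C(\|\LL h\|_{W^{k-2,2}}+\|\df h\|_{L^{2}})$, which by the previous display is at most $C(\|\bphi'(\{h,\chi\})\|_{W^{k-2,2}}+\|\df h\|_{W^{k-1,2}})$. For the potential I would write $\na\chi=\tfrac12\na\tr_{g}(h)-\na Q$; the first term is controlled by $\|h\|_{W^{k,2}}$, and for $\na Q$ I would exploit that the Hessian is symmetric, so $\|\na Q\|_{W^{k-1,2}}^{2}=\|\na Q\|_{L^{2}}^{2}+\|\na^{2}Q\|_{W^{k-2,2}}^{2}$, and then bound $\|\na Q\|_{L^{2}}$ by $\|\na^{2}Q\|_{L^{2}}$ using the Poincar\'e inequality Lemma \ref{lem:p1} — here $\isd=0$ is used again, since it forces $\KK_{0}=0$ (Proposition \ref{prop:iddd0}), so the Poincar\'e inequality carries no $\KK_{0}$-correction term. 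Putting the pieces together gives
\begin{align*}
\|h\|_{W^{k,2}}+\|\na\chi\|_{W^{k-1,2}}\le C\big(\|\phi\|_{W^{k-2,2}}+\|\df h\|_{W^{k-1,2}}\big)+C\delta\big(\|h\|_{W^{k,2}}+\|\na\chi\|_{W^{k-1,2}}\big),
\end{align*}
and shrinking $\delta$ absorbs the last term, which is the claimed estimate. Apart from the remainder bound in the first paragraph, every step is linear and is either a direct citation (Proposition \ref{P301}, Lemma \ref{lem:pre}, Lemma \ref{lem:hperp}, Theorem \ref{thm:est1}, Lemma \ref{lem:p1}) or an elementary manipulation; structurally it is exactly the argument of Proposition \ref{prop:tay2} with the higher-order Taylor terms and the $\isd\neq0$ machinery deleted, so I expect the write-up to be short, with most of the care going into the bookkeeping of the weighted quadratic terms in $R$.
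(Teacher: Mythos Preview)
Your proposal is correct and follows essentially the same approach as the paper: linearise via Lemma \ref{lem:ephi2}, control the quadratic remainder using the $C^k$-smallness together with Lemma \ref{lem:esf} for the $\na f$-weighted terms, separate $\LL h$ from $\na^2 Q$ via the $\ker(\df)\oplus\ker(\df)^{\perp}$ decomposition as in Proposition \ref{prop:tay2}, then invoke Theorem \ref{thm:est1} (with $\isd=0$) and Lemma \ref{lem:p1} (with $\KK_0=0$ from Proposition \ref{prop:iddd0}). The paper's proof is in fact a terse sketch that just says ``estimate as in the proof of Proposition \ref{prop:tay2}'' and cites Lemma \ref{lem:p1}; your write-up is a faithful and more detailed unpacking of exactly those steps.
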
 

\begin{proof}
The proof is similar to Proposition \ref{prop:tay2} but much simpler. We only sketch it.

As before, we define $\delta:=\|h\|_{C^k}+\|\chi\|_{C^k}$ and
\begin{align*} 
g(t):= g+th,\quad f(t):= f+t \chi, \quad Q:=\frac{1}{2} \tr_{ g}(h)-\chi \quad \text{and} \quad \phi:=\bphi(\{g(t),f(t)\}).
\end{align*}
Applying the Taylor expansion for $\phi$, we have
\begin{align}\label{E406b}
[\phi(1) -\phi_t]_{k-2} \le C\delta (\la h \ra_k+[\na \chi ]_{k-1}).
\end{align}
From Lemma \ref{lem:ephi2}, 
\begin{align}
\phi_t=\frac{1}{2}\LL(h)+\df^* \df h+\na^2 Q.
\label{E406c}
\end{align}
Combining \eqref{E406b} and \eqref{E406c}, one can estimate as in the proof of Proposition \ref{prop:tay2} that
\begin{align*}
\|h\|_{W^{k,2}}+\|\na^2 Q\|_{W^{k-2,2}} \le C \lc \|\phi(1)\|_{W^{k-2,2}}+\|\df h\|_{W^{k-1,2}} \rc.
\end{align*}
Therefore, the proof is completed by Lemma \ref{lem:p1} by noting that $\KK_0=0$ from Proposition \ref{prop:iddd0}.
\end{proof}

\section{Proof of the main theorem}

We first investigate the effect of a diffeomorphism on the center of mass (cf. \cite[Lemma 9.5]{CM21b}). More precisely,

\begin{lem}
\label{lem:centerdiff}
Let $(M^m,g,f)$ be a Ricci shrinker. For any $h \in C^{\infty}(S^2(M))$, $w \in C^{\infty}(T^*(M))$ and $\chi \in C^{\infty}(M)$ supported in $\{b \le L\}$, 
we set 
\begin{align*}
Q(t):=\frac{1}{2} \tr_g \lc (\varphi^t)^*(g+h)-g \rc-\lc (\varphi^t)^*(f+\chi)-f \rc,
\end{align*}
where $\varphi^t$ is the family of diffeomorphisms generated by the dual vector field of $w$. Then
\begin{align} 
& |\pi_{\KK_0}(\na Q(1)-\na Q(0)+w/2)| \notag \\
\le & C \sup_i \int | w||\df(z_ih) |+|z_i| |w||\na Q(0)| \,dV_f+C L(1+\|h\|_{C^2}+\|\chi\|_{C^2}) \|w\|^2_{C^{2}}, \label{E500}
\end{align}
where $\{dz_i\}$ is a basis of $\KK_0$.
\end{lem}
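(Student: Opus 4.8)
The plan is to reduce \eqref{E500} to a weighted $L^2$-pairing against a basis of $\KK_0$, replace $Q(t)$ by an \emph{exact} decomposition into a piece that is essentially $t$ times the first variation plus an explicit remainder, and then use the rapid decay of $dV_f$ to control the errors. If $\KK_0=0$ the left-hand side of \eqref{E500} vanishes, so assume $d:=\dim\KK_0\ge1$; then $(M,g,f)$ splits isometrically as $(M'\times\R^d,g'\times g_E,f'+f_E)$ with $f_E=|z|^2/4+d/2$, and $\{dz_1,\dots,dz_d\}$ is an $L^2$-orthogonal basis of $\KK_0$ consisting of parallel $1$-forms. Since $\pi_{\KK_0}$ is a fixed finite-rank orthogonal projection, it suffices to bound $\big|\la\na Q(1)-\na Q(0)+\tfrac12 w,\,dz_i\ra_{L^2}\big|$ for each $i$. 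We record three facts: (i) $f_E\ge|z|^2/4$, so $|z_i|\le|z|\le 2\sqrt f=b$; (ii) since $w$ is supported in $\{b\le L\}$, $|\na b|\le1$, and $\varphi^t$ displaces points a distance $\le\|w\|_{C^0}$ for $t\in[0,1]$ (we may assume $\|w\|_{C^2}$ small), all integrands below are supported in $\{b\le L+1\}\subset\{b\le2L\}$, where $|z_i|\le2L$; (iii) by Lemma~\ref{lem:pot} and Lemma~\ref{lem:volume}, $V_k:=\int_M b^k\,dV_f<\infty$ for every $k\ge0$. Finally, since $\df(dz_i)=-z_i/2$ (using $\na dz_i=0$ and $\la df,dz_i\ra=z_i/2$), integration by parts gives $\la\na u,dz_i\ra_{L^2}=-\int u\,\df(dz_i)\,dV_f=\tfrac12\int z_iu\,dV_f$ for any compactly supported $u$; in particular $\la\na Q(t),dz_i\ra_{L^2}=\tfrac12\int z_iQ(t)\,dV_f$.

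\textbf{An exact decomposition of $Q(t)$.} Using $\tr_{(\varphi^t)^*g}\big((\varphi^t)^*S\big)=(\tr_g S)\circ\varphi^t$ for symmetric $2$-tensors $S$, one checks that
\begin{align*}
Q(t)=\big(f-f\circ\varphi^t\big)+Q(0)\circ\varphi^t+\mathcal E(t),\qquad \mathcal E(t):=\tfrac12\Big(\tr_g\big((\varphi^t)^*(g+h)\big)-\tr_{(\varphi^t)^*g}\big((\varphi^t)^*(g+h)\big)\Big).
\end{align*}
Set $A_t:=(\varphi^t)^*g-g=\int_0^t(\varphi^s)^*(\mathcal L_{w^\sharp}g)\,ds$. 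From $\|(\varphi^s)^*T-T\|_{C^0}\le C\big(\|w\|_{C^0}\|T\|_{C^1}+\|w\|_{C^1}\|T\|_{C^0}\big)$ we get $A_t=t\,\mathcal L_{w^\sharp}g+O(\|w\|_{C^2}^2)$ uniformly in $t\in[0,1]$, and expanding $\mathcal E(t)=\tfrac12\la g^{-1}-((\varphi^t)^*g)^{-1},\,(\varphi^t)^*(g+h)\ra$ in powers of $A_t$ yields
\begin{align*}
\mathcal E(t)=t\,\mathrm{div}(w^\sharp)+t\,h^{ab}\na_aw_b+O\big(\|w\|_{C^2}^2(1+\|h\|_{C^2})\big).
\end{align*}
Likewise, Taylor expansion along the flow of $w^\sharp$ gives $f-f\circ\varphi^t=-t\,\la\na f,w\ra+R_f$ and $Q(0)\circ\varphi^t=Q(0)+t\,\la\na Q(0),w\ra+R_Q$, where the second $\varphi$-derivatives of $f$ and $Q(0)$ are controlled: $|R_f|\le C\|w\|_{C^2}^2(1+b)$ (using $|\na^2 f|=|g/2-\Rc|$ bounded on $\{b\le2L\}$—or, for a general Ricci shrinker, a change of variables avoiding $\na^2f$—together with $|\na f|\le b$) and $|R_Q|\le C(\|h\|_{C^2}+\|\chi\|_{C^2})\|w\|_{C^2}^2$.

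\textbf{Pairing, cancellation, and conclusion.} Evaluating at $t=1$, pairing with $z_i\,dV_f$, and using $\la\na Q(t),dz_i\ra_{L^2}=\tfrac12\int z_iQ(t)\,dV_f$, two integrations by parts are the crux. First, since $dz_i$ is parallel,
\begin{align*}
\int z_i\,\mathrm{div}(w^\sharp)\,dV_f=-\int\la\na z_i,w\ra\,dV_f+\int z_i\la\na f,w\ra\,dV_f=-\la w,dz_i\ra_{L^2}+\int z_i\la\na f,w\ra\,dV_f,
\end{align*}
and the last term cancels \emph{exactly} the term $-\tfrac12\int z_i\la\na f,w\ra\,dV_f$ arising from $f-f\circ\varphi^1$. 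Second, with $\df(z_ih)=z_i\,\df h+h(\na z_i,\cdot)$,
\begin{align*}
\int z_i\,h^{ab}\na_aw_b\,dV_f=-\int\la\df(z_ih),w\ra\,dV_f.
\end{align*}
Collecting the main terms, the $Q(0)$-term of the decomposition recovers $\la\na Q(0),dz_i\ra_{L^2}$ on the right, the leading part of $\mathcal E(1)$ produces $-\tfrac12\la w,dz_i\ra_{L^2}$, and the contributions involving $f$ and $\na f$ drop out, leaving
\begin{align*}
\la\na Q(1)-\na Q(0)+\tfrac12 w,\,dz_i\ra_{L^2}=\tfrac12\int z_i\la\na Q(0),w\ra\,dV_f-\tfrac12\int\la\df(z_ih),w\ra\,dV_f+\tfrac12\int z_i\big(R_f+R_Q+\mathcal E_{\mathrm{err}}\big)\,dV_f,
\end{align*}
where $\mathcal E_{\mathrm{err}}=O(\|w\|_{C^2}^2(1+\|h\|_{C^2}))$ is the remainder of $\mathcal E(1)$. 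The first two terms are bounded by $\int|z_i||w||\na Q(0)|\,dV_f+\int|w||\df(z_ih)|\,dV_f$; since $|z_i||R_f|\le C(b+b^2)\|w\|_{C^2}^2$ the corresponding integral is $\le C(V_1+V_2)\|w\|_{C^2}^2$, while $\int|z_i|\big(|R_Q|+|\mathcal E_{\mathrm{err}}|\big)\,dV_f\le CL(1+\|h\|_{C^2}+\|\chi\|_{C^2})\|w\|_{C^2}^2$ (one factor $L$ from $|z_i|\le2L$, the weighted volume $V_0$ being finite). Summing over $i$ and absorbing dimensional constants into $C$ gives \eqref{E500}.

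\textbf{The main difficulty.} The delicate point is the cancellation in the third step: each of $\mathcal E(t)$, $f-f\circ\varphi^t$, and the integration by parts of $\int z_i\,\mathrm{div}(w^\sharp)$ separately produces a term of the shape $\int z_i\la\na f,w\ra\,dV_f$, in which $|\na f|\sim b$ is \emph{unbounded} on the support $\{b\le2L\}$; estimating these pieces individually would cost one or more extra powers of $L$ and break the bound \eqref{E500}. It is therefore essential to keep them together until the $\la\na f,w\ra$-contributions cancel identically, and to observe that the only residual $\na f$-dependence sits inside the quadratic remainder $R_f$, which is harmless precisely because it is integrated against the Gaussian-type weight $dV_f$, for which $\int_M b^k\,dV_f<\infty$ for all $k$. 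Everything else is routine bookkeeping of diffeomorphism distortions, each of which carries an extra factor $\|w\|_{C^0}$ or $\|w\|_{C^1}$ and is thus quadratic in $w$.
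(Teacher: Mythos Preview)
Your proof is correct and arrives at the same key identity as the paper, but the route is organized a bit differently. The paper computes $Q'(0)$ directly from the Lie-derivative formulas (obtaining $Q'(0)=-\tr_g(\df^*w)-\la\df^*w,h\ra+\la w,\na Q(0)\ra-\la w,\na f\ra$), integrates $\int z_iQ'(0)\,dV_f$ by parts in one stroke (the $\la\na f,w\ra$ cancellation appears when $\df(z_ig)=dz_i-z_i\,df$ meets the explicit $-\la w,\na f\ra$ in $Q'(0)$), and then controls the full remainder $Q(1)-Q(0)-Q'(0)=\int_0^1\!\int_0^tQ''(s)\,ds\,dt$ by a single uniform $C^0$ bound on $Q''$. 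You instead split $Q(t)$ exactly as $(f-f\circ\varphi^t)+Q(0)\circ\varphi^t+\mathcal E(t)$ and Taylor-expand each piece; the same cancellation then shows up between the first-order term of $f-f\circ\varphi^t$ and the boundary term in $\int z_i\,\mathrm{div}(w^\sharp)\,dV_f$. Both derivations land on the identical expression $\tfrac12\int z_i\la\na Q(0),w\ra\,dV_f-\tfrac12\int\la\df(z_ih),w\ra\,dV_f$ plus a quadratic remainder, so there is no substantive difference in content. Your decomposition makes the origin of the $\la\na f,w\ra$ cancellation more transparent, at the cost of tracking three remainders instead of one; the paper's version is terser. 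Note that your appeal to $|\na^2f|$ bounded on $\{b\le2L\}$ (for $R_f$) is exactly the same implicit assumption the paper makes when bounding $Q''$ uniformly, and in the only application (the generalized cylinder) curvature is bounded, so this is harmless.
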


\begin{proof}
It follows from \cite[Lemma 9.3]{CM21b} that at $t=0$,
\begin{align*}
\frac{d}{dt} (\varphi^t)^*(g+h)_{ij}=&-2(\df^* w)_{ij}+w_{k,i}h_{kj}+w_{k,j}h_{ki}+w_k h_{ij,k}, \\
\frac{d}{dt} (\varphi^t)^*(f+\chi)=& \la w, \na (f+\chi)\ra.
\end{align*}
Therefore, it is easy to compute 
\begin{align} \label{E500a}
Q'(0)=-\tr_g(\df^* w)-\la \df^* w,h \ra+\la w ,\na ( \tr_g(h)/2-\chi) \ra-\la w,\na f \ra.
\end{align}
By direct computation from \eqref{E500a}
\begin{align}
\int z_i Q'(0) \,dV_f=&\int -\la z_ig,\df^* w \ra-\la \df^* w,z_ih \ra+z_i\la w ,\na ( \tr_g(h)/2-\chi) \ra-z_i\la w,\na f \ra \,dV_f \notag \\
=& \int -\la \df(z_ig), w \ra-\la w,\df(z_ih) \ra+z_i\la w ,\na ( \tr_g(h)/2-\chi) \ra-z_i\la w,\na f \ra \,dV_f \notag \\
=& \int -\la dz_i, w \ra-\la w,\df(z_ih) \ra+z_i\la w ,\na Q(0) \ra \,dV_f. \label{E500b}
\end{align}

We consider
\begin{align} \label{E500bb}
Q(1)=Q(0)+Q'(0)+\int_0^1 \int_0^t Q''(s)\,dsdt.
\end{align}
It is clear that $Q''(s)$ is quadratic in $w$ and its first two derivatives with coefficients that depend on up to two derivatives of $h$ and $\chi$. From this, we obtain
\begin{align} \label{E500c}
\|Q''(s)\|_{C^0} \le C (1+\|h\|_{C^2}+\|\chi\|_{C^2}) \|w\|^2_{C^{2}}.
\end{align}

By projection onto $\KK_0$, it follows from \eqref{E500b}, \eqref{E500bb} and \eqref{E500c} that \eqref{E500} holds.
\end{proof}

To prove the rigidity of the Ricci shrinker, we apply the method of contraction and extension established in \cite{CIM15} and \cite{CM21b}.

Given a Ricci shrinker $(\bar M,\bar g, \bar f)=(N^n \times \R^{m-n},g_N \times g_E, |z|^2/4+n/2)$ with condition \eqref{EX301}, we consider the following two notions for any nearby Ricci shrinker $(M,g,f)$: 
\begin{itemize}
\item[$(\star_{L})$] There is a diffeomorphism $\Phi_{L}$ from a subset of $\bar M$ to the subset $\{b \le L\}$ of $M$ so that $[\bar{g} - \Phi_{L}^{*} g]_5 + [ \bar{f} - \Phi^{*}_{L} f]_5 \leq e^{\frac{\bar f}{4}-\frac{L^2}{16}}$. 
\item[$(\dagger_{L})$] There is a diffeomorphism $\Psi_{L}$ from the subset of $\bar M$ to $\{b \le L\}$ of $M$ so that 
$|\bar{g} - \Psi^{*}_{L} g| + |\bar{f} - \Psi^{*}_{L} f | \leq e^{-\frac{L^2}{33}}$ and for each $k\ge 1$ the $C^k$-norms are bounded by $D_k L^k$.
\end{itemize}
Here, the sequence of constants $\{D_k\}$ depends only on $(\bar M,\bar g, \bar f)$.

We first prove the following improvement if we shrink the domain slightly.

\begin{thm}
\label{thm:501}
There exists $L_1>0$ so that if $(\dagger_{L})$ holds and $L > L_1$, then $(\star_{L-2})$ holds.
\end{thm}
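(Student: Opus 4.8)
The plan is to upgrade the rough ($\dagger_L$)-closeness to the sharp Gaussian-weighted ($\star_{L-2}$)-closeness by: first fixing a gauge on $\{b \le L\}$ using Theorem \ref{thm:diff2}, then applying the mixed-order rigidity inequality of Theorem \ref{thm:sta1} to a cutoff of $\{h,\chi\}$, and finally using the smallness of the weight $e^{-\bar f}$ on the neck region to absorb all cutoff errors. Concretely, start from the diffeomorphism $\Psi_L$ provided by ($\dagger_L$), set $h := \Psi_L^* g - \bar g$ and $\chi := \Psi_L^* f - \bar f$ on (most of) $\{b \le L\}$, so $|h| + |\chi| \le e^{-L^2/33}$ with $C^k$-norms controlled by $D_k L^k$. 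Since both $g$ and $\bar g$ are Ricci shrinkers, $\bphi(\{\bar g + h, \bar f + \chi\}) = 0$ wherever $\Psi_L$ is defined; after multiplying by a cutoff supported in $\{b \le L-1\}$ (equal to $1$ on $\{b \le L-3/2\}$), the resulting error in $\bphi$, $\dbf h$, and in the center of mass $\mathcal B$ is supported on the neck $\{L - 3/2 \le b \le L\}$ and is pointwise of size $O(L^{O(1)} e^{-L^2/33})$, hence weighted by $e^{-\bar f} \approx e^{-L^2/4}$ on that set.

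Next I would fix the gauge. Apply Theorem \ref{thm:diff2} to $h$ with the chosen $L$: this produces a $1$-form $w$ supported in $\{b \le L - 1/2\}$ with $2\PP w = \dbf h$ on $\{b \le L-1\}$ and with $C^{k,\alpha}$-bounds on $w$ polynomial in $L$ times $\sup_{b \le L}\|\dbf h\|$, and with $\sup_{b \le L-1}\|\dbf(\varphi_w^*(\bar g + h) - \bar g)\|$ bounded by $L^{O(1)}$ times (quadratic expressions in $h$ and $\dbf h$). Replace $\{h,\chi\}$ by $\{\varphi_w^*(\bar g + h) - \bar g, \varphi_w^*(\bar f + \chi) - \bar f\}$; after this $\dbf h$ is as small as we like (of size $O(L^{O(1)} e^{-L^2/33})$ in the relevant norms), and by Lemma \ref{lem:centerdiff} the center of mass $\mathcal B(h,\chi)$ changes by a controlled amount. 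In fact, since $\bar f$ is itself a Ricci shrinker potential and the exact equation holds, one can additionally translate the $\R^{m-n}$-factor (use the freedom $\bar f \mapsto |z-a|^2/4 + n/2$) to arrange $\mathcal B(h,\chi) = 0$, or at least $|\mathcal B(h,\chi)| = O(L^{O(1)} e^{-L^2/33})$; this is the analogue of the center-of-mass normalization in \cite{CM21b}. Now cut off again to land in the setup of Theorem \ref{thm:sta1}, and feed in the estimates: $\|(1+|z|)^2 \phi(1,1)\|_{L^1}$, $\|\phi(1,1)\|_{L^2}$, $\|\dbf h\|_{W^{1,2}}$, $|\mathcal B(h,\chi)|$ are all $\le C L^{O(1)} e^{-L^2/33}$ because the only contributions are from the neck where $e^{-\bar f} \le e^{-(L-3/2)^2/4}$, and the polynomial loss $L^{O(1)}$ (plus the $(1+|z|)^2$ and Sobolev-weight factors, each contributing further polynomial-in-$L$ factors by Lemma \ref{lem:esf} and Lemma \ref{lem:volume}) is swallowed by the Gaussian gap since $L^2/4 > L^2/16$. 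This yields $\alpha^2 + \beta^3 \le C L^{O(1)} e^{-2L^2/33 \cdot (1-\ep)} \cdot (\text{stuff})$, hence $\alpha, \beta \le e^{-L^2/16 - c}$ for large $L$, and then Proposition \ref{prop:tay2} (or Theorem \ref{thm:sta2} in the components where $\isd$ vanishes) gives $\|\zeta'\|_{W^{k,2}} + \|\na q'\|_{W^{k-1,2}} \le C L^{O(1)} e^{-L^2/33}$, which combined with the bounds on $\|h_1\|, \|h_2\|, \|f_1\|, \|f_2\|$ (all controlled by $\alpha + \beta$ via Lemma \ref{lem:basices1}) bounds $\|h\|_{W^{k,2}} + \|\na \chi\|_{W^{k-1,2}}$.

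Finally, convert the Sobolev estimate on $\{b \le L - 2\}$ into the pointwise weighted estimate $[\bar g - \Phi_{L-2}^* g]_5 + [\bar f - \Phi_{L-2}^* f]_5 \le e^{\bar f/4 - (L-2)^2/16}$ required by ($\star_{L-2}$). Here I would use: (i) the elliptic estimates of Section 3 (Proposition \ref{prop:ellip1}, Proposition \ref{prop:esti2}) together with the shrinker equation bootstrapping to control high derivatives; (ii) a weighted Sobolev-to-pointwise embedding — more precisely, since all norms above carry a $e^{-\bar f}$ weight, on any fixed unit ball centered at a point with $b = \bar b \le L - 2$ the unweighted local $W^{k,2}$-norm of $\{h,\chi\}$ is at most $e^{\bar b^2/8}$ times the global weighted norm, hence $\le C e^{\bar b^2/8} L^{O(1)} e^{-L^2/33}$, and local elliptic/Sobolev embedding on that ball upgrades this to a pointwise $C^5$ bound with the same exponential weight; (iii) choosing $L_1$ large enough that $\frac{\bar b^2}{8} - \frac{L^2}{33} + O(\log L) \le \frac{\bar b^2}{4} - \frac{(L-2)^2}{16}$ for all $\bar b \le L-2$ and $L \ge L_1$ — which holds since $-1/33 + 1/16 > 0$ gives room as $L \to \infty$ uniformly in $\bar b$. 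Then $\Phi_{L-2}$ is taken to be $\varphi_w \circ \Psi_L$ restricted to the relevant subset. The main obstacle I anticipate is bookkeeping the gauge-fixing step cleanly: one must verify that after the (only approximately solenoidal) diffeomorphism $\varphi_w$ the error terms $\dbf h$, $\bphi(\{g,f\})$, and $\mathcal B(h,\chi)$ are all genuinely of neck-size — in particular that $\varphi_w$ maps the relevant sublevel set of $\bar M$ into $\{b \le L\}$ in $M$ without distorting $b$ by more than $O(1)$ (using the pointwise bound \eqref{eq:esw1}–\eqref{eq:esw2} on $w$ and the bounded geometry on $\{b \le L\}$), and that the quadratic error \eqref{eq:esd} in $\dbf(\varphi_w^*(g+h)-g)$ does not reintroduce a term that is merely $e^{-L^2/33}$ rather than genuinely quadratically small; this is where the polynomial-in-$L$ factors must be tracked carefully against the fixed exponential gap $1/16 < 1/33 \cdot$(something), and it is essentially the place where the precise exponents in ($\star_L$) and ($\dagger_L$) were chosen to make the iteration close.
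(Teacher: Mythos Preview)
Your overall architecture matches the paper's, but there is a genuine quantitative gap: a \emph{single} application of the gauge-fixing Theorem~\ref{thm:diff2} is not enough. After one pass, the estimate \eqref{eq:esd} gives on the interior only
\[
\sup_{\bar b\le L-1}\|\dbf h'\|\ \lesssim\ L^{O(1)}\bigl(\|h\|\,\|\dbf h\|+\|\dbf h\|^2\bigr)\ \lesssim\ e^{-L^2/17},
\]
and Lemma~\ref{lem:centerdiff} likewise leaves $|\mathcal B|\lesssim e^{-L^2/17}$ (the translation you propose does not kill $\mathcal B$ exactly; it has the same quadratic residual). Hence $\|\dbf h\|_{W^{1,2}}+|\mathcal B|\approx e^{-L^2/17}$. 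Plugging this into Theorem~\ref{thm:sta1}, the dominant term $\|\dbf h\|_{W^{1,2}}^{2-\epsilon}\approx e^{-2L^2/17}$ yields only $\beta\lesssim e^{-2L^2/51}$, which is strictly weaker than the required $\beta\le e^{-L^2/16}$; and Proposition~\ref{prop:tay2} then gives only $\|\zeta'\|_{W^{2,2}}\lesssim e^{-L^2/17}$. After the local Sobolev step this becomes $|\zeta'|\lesssim e^{\bar f/2-L^2/17}$, which \emph{fails} to be $\le e^{\bar f/4-L^2/16}$ near the origin (take $\bar f$ bounded: you would need $1/17\ge 1/16$). Your check in (iii) has the inequality the wrong way around: with a weighted Sobolev bound of order $e^{-L^2/33}$ (or even $e^{-L^2/17}$) you need $1/33\ge 1/16$, not $1/16\ge 1/33$.

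The paper closes this gap by \emph{iterating the gauge-fixing step six times}, shrinking the domain by~$1$ at each pass. Each pass multiplies the interior pointwise bound on $\dbf h$ by roughly $\|h\|_{C^k}\lesssim e^{-L^2/34}$, so after the iterations one reaches
\[
\sup_{\bar b\le L-6}\|\dbf h_0\|+|\mathcal B(h_0,\chi_0)|\ \le\ C\,L^{n_2}\,e^{-(L-6)^2/8-L^2/34},
\]
hence $\|\dbf h_0\|_{W^{1,2}}+|\mathcal B|\le Ce^{-L^2/8-L^2/35}$. Only with this stronger input does Theorem~\ref{thm:sta1} give $\beta\le Ce^{-L^2/16-L^2/70}$ and Proposition~\ref{prop:tay2} give $\|\zeta'\|_{W^{2,2}}\le Ce^{-L^2/8-L^2/35}$; the local interpolation then produces $|\zeta'|\le Ce^{\bar f/2-L^2/8-c}$, and the elementary inequality $\bar f/2-L^2/8\le \bar f/4-L^2/16$ (valid since $\bar f\le L^2/4$ on the relevant set) yields $(\star_{L-2})$. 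So the missing idea in your outline is precisely this finite iteration of Theorem~\ref{thm:diff2} together with the $\mathcal B$-correction, which is what pushes the exponents past the $1/8$ threshold that the Sobolev-to-pointwise conversion demands.
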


\begin{proof}
In the proof, all constants $C$, $C_k$, $n_i$ and $m^k_i$ depend on the Ricci shrinker $(\bar M,\bar g, \bar f)$ and the sequence $\{D_k\}$. The constants $C$ and $C_k$ may be different line by line. 

From $(\dagger_{L})$, there exists a diffeomorphism $\Psi_L:\Omega \subset \bar M \to \{b \le L\} \subset M$ so that $\partial \Omega \subset \{L-1/100 <\bar b <L+1/100\}$ if $L$ is sufficiently large. We fix a smooth cutoff function $\eta$ which is identically $1$ on $\{\bar b \le L-1/2\}$ and supported on $\Omega$ and set $\{h,\chi\}:=\{\eta(\Psi^{*}_{L}g-\bar g), \eta(\Psi^{*}_{L} f -\bar f)\}$. By $(\dagger_{L})$ and the interpolation (cf. \cite[Appendix B]{CM15}) on each unit ball, we have for any $k \ge 0$
\begin{align} \label{E501a}
\|h\|_{C^{k,\frac 1 2}}+\|\chi \|_{C^{k,\frac 1 2}} \le C_k L^{m^k_1} e^{-\frac{L^2}{34}}.
\end{align}
In particular, for any $k \ge 0$,
\begin{align} \label{E501aa}
\|\dbf h\|_{C^{k,\frac 1 2}} \le C_k L^{m^k_2} e^{-\frac{L^2}{34}}.
\end{align}

As in the proof of Theorem \ref{thm:diff2}, there exists a smooth $w_1$ with $2\PP w_1=\eta \dbf h$ such that $w_1$ is perpendicular to all Killing forms and satisfies
\begin{align*}
\|w_1\|_{W^{2,2}} \le C \|\eta \dbf h\|_{L^2} \le C L^{m^0_2}e^{-\frac{L^2}{34}}
\end{align*}
and for any $k \ge 2$,
\begin{align*} 
\sup_{\bar b \le L-1/2} \|w_1\|_{C^{k,\frac 1 2}} \le CL^{m+k+8} \| \dbf h\|_{C^{k-2,\frac 1 2}} \le C_k L^{m^k_3} e^{-\frac{L^2}{34}}
\end{align*}
by \eqref{E501a} and \eqref{E501aa}.

Next, we define $w_2:=w_1+\sum_i \lambda_i dz_i$, where $\lambda_i=2 \mathcal B_i(\eta h,\eta \chi)$. Then it is clear from the definition \eqref{eq:center} of $\mathcal B$ that 
\begin{align*}
\sup_{\bar b \le L-1/2} \|w_2\|_{C^{k,\frac 1 2}} \le & CL^{m^k_3}e^{-\frac{L^2}{34}}+C|\mathcal B(\eta h,\eta \chi)| \le C_kL^{m^k_3}e^{-\frac{L^2}{34}}+Ce^{-\frac{L^2}{33}} \le C_k L^{m^k_3}e^{-\frac{L^2}{34}}
\end{align*}
for any $k \ge 2$ and
\begin{align} \label{E501da}
\|w_2\|_{W^{2,2}} \le \|w_1\|_{W^{2,2}} +C|\mathcal B(\eta h, \eta \chi)| \le C L^{m^0_2}e^{-\frac{L^2}{34}}.
\end{align}

We choose another cutoff function $\rho$ which is identically $1$ if $\bar b \le L-1$ and supported in $\{\bar b< L-1/2\}$. Moreover, we set $w=\rho w_2$ and denote by $\varphi_w$ the time-one diffeomorphism generated by $w$. Then for any $k \ge 2$,
\begin{align} \label{E501ee}
\|w\|_{C^{k,\frac 1 2}} \le C_k L^{m^k_3}e^{-\frac{L^2}{34}}
\end{align}
and it follows from the same proof of \eqref{eq:esd} that for any $k \ge 2$,
\begin{align}
&\sup_{\bar b \le L-1} \|\dbf(\varphi_w^*(\bar g+h)-\bar g)\|_{C^{k,\frac 1 2}} \notag \\
\le & CL^{m+k+11} \lc \sup_{\bar b \le L} \|h\|_{C^{k+2,\alpha}} \|w\|_{C^{k+2,\alpha}}+L^{m+k+12}(1+ \sup_{\bar b \le L} \|h\|_{C^{k+3,\alpha}}) \|w\|^2_{C^{k+3,\alpha}} \rc \le C_k L^{m^k_4} e^{-\frac{L^2}{17}}. \label{E501f}
\end{align}

We set $\{h',\chi'\}:=\{\eta(\varphi_w^*(\bar g+h)-\bar g), \eta(\varphi_w^*(\bar f +\chi)-\bar f)\}$ and $\{h'',\chi''\}:=\{\varphi_w^*(\bar g+\eta h)-\bar g, \varphi_w^*(\bar f +\eta \chi)-\bar f\}$. We claim that $\{h', \chi'\}=\{h'',\chi''\}$. Indeed, by definition, for any $x$ with $\bar b(x) \le L-1/2$,
\begin{align*} 
\{h'(x),\chi'(x)\}=\{(\varphi_w^*(\bar g+h)-\bar g)(x),(\varphi_w^*(\bar f +\chi)-\bar f)(x)\}=\{h''(x),\chi''(x)\}
\end{align*}
since $\eta(\varphi_w(x))=1$. Moreover, for any $x$ with $\bar b(x) > L-1/2$,
\begin{align*}
\{ h'(x), \chi'(x)\}=\{\eta(x)h(x),\eta(x)\chi(x)\}=\{h''(x),\chi''(x)\}.
\end{align*}

Since $\varphi_w$ preserves the domain $\{\bar b \le L-1/2\}$ and fixes points in $\{\bar b > L-1/2\}$, it follows from \eqref{E501a} that for any $k \ge 0$,
\begin{align} \label{E501g}
\|h'\|_{C^{k,\frac 1 2}}+\|\chi' \|_{C^{k,\frac 1 2}} \le C_k L^{m^k_5} e^{-\frac{L^2}{34}}.
\end{align}

We define $Q(0):=\tr_{\bar g}(\eta h)/2-\eta \chi$ and $Q(1):=\tr_{\bar g}(h'')/2-\chi''$, then it follows from Lemma \ref{lem:centerdiff} that
\begin{align} 
& |\pi_{\KK_0}(\na Q(1)-\na Q(0)+w/2)| \notag \\
\le & C \sup_i \int | w||\dbf(z_i \eta h) |+|z_i| |w||\na Q(0)| \,dV_{\bar f}+C L(1+\|\eta h\|_{C^2}+\|\eta \chi\|_{C^2}) \|w\|^2_{C^{2}} \le CL^{n_1} e^{-\frac{L^2}{17}}, \label{E502b}
\end{align}
where we have used \eqref{E501a} and \eqref{E501ee}.

From our construction of $w_2$, $\pi_{\KK_0}(\na Q(0)-w_2/2)=0$ and hence
\begin{align} 
&|\pi_{\KK_0}(\na Q(0)-w/2)|=|\pi_{\KK_0}(w_2-w)|/2 \notag\\
\le& C\int |\na (w_2-w) | \,\vf = C\int_{\bar b \ge L-1} |\na (w_2-w) | \,\vf \notag\\
\le& C\lc \int |\na (w_2-w) |^2 \,\vf \rc^{\frac 1 2} \lc \int_{\bar b \ge L-1} 1 \,\vf \rc^{\frac 1 2} \le C L^{m^0_2}e^{-\frac{L^2}{34}-\frac{(L-1)^2}{8}}, \label{E502ba}
\end{align}
where we have used \eqref{E501da}. Combining \eqref{E502b} and \eqref{E502ba}, we obtain
\begin{align} 
|\mathcal B(h', \chi')|=|\mathcal B(h'',\chi'')|=|\pi_{\KK_0}(\na Q(1))| \le CL^{n_1} e^{-\frac{L^2}{17}}. \label{E502c}
\end{align}

Based on \eqref{E501f}, \eqref{E501g} and \eqref{E502c}, we repeat the above process for $5$ more times so that there exists a diffeomorphism $\Phi$ from $\{\bar b \le L\}$ to a subset of $M$ such that for $\{h_0,\chi_0\}:=\{\eta(\Phi^{*}g-\bar g), \eta(\Phi^{*} f -\bar f)\}$ and any $k \ge 0$,
\begin{align} 
\|h_0\|_{C^{k,\frac 1 2}}+\|\chi_0\|_{C^{k,\frac 1 2}} \le& C_k L^{m^k_6} e^{-\frac{L^2}{34}}, \label{E503a} \\
\sup_{\bar b \le L-6} \|\dbf h_0 \|_{C^{2,\frac 1 2}}+ |\mathcal B(h_0,\chi_0)| \le& CL^{n_2} e^{-\frac{(L-6)^2}{8}-\frac{L^2}{34}}. \label{E503b}
\end{align}

We assume $\pi_{\isd}(h_0)=ug_N+h_1$, where $h_1 \in \isd_N$ and $u$ is a quadratic Hermite polynomial on $\R^{m-n}$. As in the last section, we decompose
\begin{align} \label{E503c}
h_0=ug_N+h_1+\frac{h_2}{2}+\zeta' \quad \text{and} \quad \chi_0=\frac{nu}{2}+f_1+\frac{f_2}{2}+q',
\end{align}
where $\{h_i\}$ and $\{f_i\}$ are sequences induced by $h_1$. Then it follows from Theorem \ref{thm:sta1} that
\begin{align}
\alpha^2+\beta^3 \le& C\lc \|(1+|z|)^2\phi\|_{L^1}+\beta (\|\phi\|_{L^2}+\|\dbf h_0\|_{W^{1,2}}+|\mathcal B(h_0,\chi_0)|)\rc \notag \\
&+C \lc \|\phi\|^{2-\ep}_{L^2}+\|\dbf h_0\|^{2-\ep}_{W^{1,2}}+|\mathcal B(h_0,\chi_0)|^{2-\ep} \rc, \label{E504a}
\end{align}
where $\phi=\bphi(\{\bar g+h_0,\bar f+\chi_0\})$, $\alpha=\|u\|_{L^2}$ and $\beta=\|h_1\|_{L^2}$. Since $\bphi(\{g,f\})=0$, it is clear from \eqref{E503a} that
\begin{align}
\|(1+|z|)^2\phi\|_{L^1}+ \|\phi\|^{2-\ep}_{L^2}=& \int_{\bar b \ge L-1} (1+|z|)^2 \phi \,\vf+\lc \int_{\bar b \ge L-1} |\phi|^2 \,\vf \rc^{1-\ep/2} \notag \\
\le& CL^{n_3}\lc e^{-\frac{(L-1)^2}{4}-\frac{L^2}{34}}+e^{-(1-\ep/2) \lc\frac{(L-1)^2}{4}+\frac{L^2}{17}\rc} \rc\le Ce^{-\frac{L^2}{4}-\frac{L^2}{35}} \label{E504b}
\end{align}
if $\ep$ is sufficiently small and $L$ is sufficiently large. Moreover, by \eqref{E503a} and \eqref{E503b} we have
\begin{align}
\|\dbf h_0\|_{W^{1,2}} \le C e^{-\frac{L^2}{8}-\frac{L^2}{35}} \quad \text{and} \quad\|\dbf h_0\|^{2-\ep}_{W^{1,2}}+|\mathcal B(h_0,\chi_0)|^{2-\ep} \le C e^{-\frac{L^2}{4}-\frac{L^2}{18}}.\label{E504c}
\end{align}

Combining \eqref{E503b}, \eqref{E504a}, \eqref{E504b} and \eqref{E504c}, we obtain
\begin{align*}
\alpha^2+\beta^3 \le& C\lc e^{-\frac{L^2}{4}-\frac{L^2}{35}}+ e^{-\frac{L^2}{8}-\frac{L^2}{35}} \beta \rc
\end{align*}
and consequently
\begin{align}
\alpha \le Ce^{-\frac{3 L^2}{32}-\frac{3L^2}{140}} \quad \text{and} \quad \beta \le Ce^{-\frac{L^2}{16}-\frac{L^2}{70}}. \label{E504d}
\end{align}

In addition, it follows from Proposition \ref{prop:tay2} that
\begin{align}
\|\zeta'\|_{W^{2,2}}+\|\na q'\|_{W^{1,2}} \le C \lc \|\phi\|_{L^2}+\|\dbf h\|_{W^{1,2}}+|\mathcal B(h_0,\chi_0)|+ \alpha^2+ \alpha \beta+\beta^3 \rc. \label{E504e}
\end{align}

Therefore, it follows from \eqref{E503b}, \eqref{E504b}, \eqref{E504c} and \eqref{E504e} that
\begin{align*}
\|\zeta'\|_{W^{2,2}}+\|\na q'\|_{W^{1,2}} \le C e^{-\frac{L^2}{8}-\frac{L^2}{35}}.
\end{align*}

For any $x$ with $\bar b(x) \le L-1$, we consider the ball $B(x,2L^{-1})$ and apply the interpolation (cf. \cite[Appendix B]{CM15}) to obtain
\begin{align}
&|\zeta'(x)|^2+|\na \zeta'(x)|^2+|\na^2 \zeta'(x)|^2+|\na q'(x)|^2+|\na^2 q'(x)|^2 \notag \\
\le & C e^{\bar f(x)} L^{n_5} e^{-\frac{L^2}{4}-\frac{2L^2}{35}} \le Ce^{\bar f(x)-\frac{L^2}{4}-\frac{L^2}{18}}, \label{E504g}
\end{align}
where we have used the facts that $|\bar f-\bar f(x)| \le C$ on $B(x,2L^{-1})$ and \eqref{E503a}.

Moreover, since $R(g)+|\na_g f|^2=f$ and $R(\bar g)+|\na \bar f|^2=\bar f$ by the normalization \eqref{E101}, it is clear from \eqref{E503c} and \eqref{E504g} that if $\bar b (x) \le L-1$,
\begin{align}\label{E504h}
|q'(x)| \le C L^2\lc \alpha+\beta+e^{\frac{\bar f(x)}{2}-\frac{L^2}{8}-\frac{L^2}{36}} \rc.
\end{align}

Combining \eqref{E503c}, \eqref{E504d}, \eqref{E504g} and \eqref{E504h}, we obtain that if $\bar b (x) \le L-1$,
\begin{align}\label{E504i}
|h_0(x)|+|\chi_0(x)| \le C \lc e^{-\frac{L^2}{16}-\frac{L^2}{71}}+e^{\frac{\bar f(x)}{2}-\frac{L^2}{8}-\frac{L^2}{37}} \rc \le Ce^{\frac{\bar f(x)}{4}-\frac{L^2}{16}-\frac{L^2}{71}}.
\end{align}

It is clear from \eqref{E504i}, \eqref{E503a} and the interpolation at the scale of $L^{-1}$ that
\begin{align} \label{E505b}
[h_0]_5+[\chi_0]_5 \le e^{\frac{\bar f}{4}-\frac{L^2}{16}}
\end{align}
on $\{\bar b \le L-3/2\}$ and consequently, $(\star_{L-2})$ holds.
\end{proof}

Next, we prove the following result about the extension process, whose proof is slightly different from \cite[Theorem 6.1]{CM21b}. Before that, we recall that any Ricci shrinker $(M,g,f)$ can be considered a self-similar solution to the Ricci flow. Let ${\psi^t}: M \to M$ be a family of diffeomorphisms generated by $\dfrac{1}{1-t}\nabla f$ and $\psi^{0}=\textrm{Id}$.
In other words, we have
\begin{align} 
\frac{\partial}{\partial t} {\psi^t}(x)=\frac{1}{1-t}\nabla f\left({\psi^t}(x)\right). \label{EX501a}
\end{align}
It is well-known that the rescaled pull-back metric $g(t)\coloneqq (1-t) (\psi^t)^*g$ satisfies the Ricci flow equation for any $-\infty <t<1$,
\begin{align*} 
\partial_t g=-2 \Rc(g(t)) \quad \text{and} \quad g(0)=g. 
\end{align*}
For simplicity, we define $z^t:=\psi^t(z)$ for any point $z \in M$.

\begin{thm}
\label{thm:502}
There exist positive constants $L_2$ and $\theta$ so that if $(\star_{L})$ holds with $L \geq L_2$, then $(\dagger_{(1+\theta)L})$ holds.
\end{thm}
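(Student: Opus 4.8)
The goal is the extension step: from $(\star_L)$ on a region of size $L$ we want to produce, after enlarging the region by a definite factor, a diffeomorphism with the weaker $C^k$-polynomial bounds of $(\dagger_{(1+\theta)L})$. The strategy follows \cite[Theorem 6.1]{CM21b} but with the modification announced in the introduction: rather than directly propagating estimates along the flow, I would first use the pseudolocality theorem for Ricci shrinkers (\cite[Theorem 24]{LW20}) to conclude that a slightly larger region $\{b\le (1+2\theta)L\}$ in $M$ has bounded curvature (and hence, by Shi's estimates, bounded $|\na^k\Rm|$ and bounded geometry). This is what gives us room to work past the original radius $L$. The hypothesis $(\star_L)$ gives a diffeomorphism $\Phi_L$ with exponentially small $C^5$-closeness on $\{b\le L\}$, so in particular the necks $\{L-1<\bar b<L\}$ in $\bar M$ and the corresponding region in $M$ are geometrically very close.

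The key steps, in order, are as follows. First, glue: using $\Phi_L$ on the neck $\{L-1<\bar b<L\}$, interpolate between $g$ (pulled back) and $\bar g$ to build a complete metric $g'$ (and function $f'$) on a manifold $M'$ which agrees with $(M,g,f)$ on $\{b\le L-1\}$ and with $(\bar M,\bar g,\bar f)$ outside $\{\bar b\le L\}$, so that $(M',g',f')$ is an \emph{almost} Ricci shrinker — $\|\bphi(\{g',f'\})\|$ is supported on the neck and is $\le e^{-L^2/16}$-small with all derivatives, by $(\star_L)$. Second, eigenfunctions: consider the first $m-n$ nonconstant eigenfunctions $u_i$ of $\Delta_{f'}$ on $(M',g',f')$. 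Because $(\bar M,\bar g)$ splits off $\R^{m-n}$ and the model has eigenvalue $1/2$ with eigenfunctions $z_i$, a perturbation argument (almost being a Ricci shrinker, plus the spectral gap from $\isd=\isd_N\oplus\KK_1$ of Proposition \ref{PX303}) forces the corresponding eigenvalues $\lambda_i$ to be within $e^{-cL^2}$ of $1/2$; hence the $u_i$ are almost splitting maps. Third, growth estimates: apply the Colding–Minicozzi-type growth estimate for eigentensor-like objects on $(M',g',f')$ — exactly the version pushed to general smooth metric measure spaces in Appendix \ref{app:A} — to $u_i$ and $\na^2 u_i$. This controls $|u_i-z_i|$ and $|\na^2 u_i|$ on the larger ball $\{b'\le (1+\theta)L\}$ with only a polynomial-in-$L$ loss. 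Fourth, build $\Psi_{(1+\theta)L}$: use the map $z\mapsto(u_1,\dots,u_{m-n})$ together with the identification of the $N$-factor coming from the almost-splitting (the level sets of $(u_i)$ are almost-isometric to $(N,g_N)$) to produce the diffeomorphism from a subset of $\bar M$ onto $\{b\le(1+\theta)L\}\subset M$, and verify $|\bar g-\Psi^*g|+|\bar f-\Psi^*f|\le e^{-((1+\theta)L)^2/33}$ with $C^k$-norms $\le D_kL^k$ — the degradation of the exponent from $1/16$ to $1/33$ absorbs both the polynomial-in-$L$ losses from the growth estimate and the passage from radius $L$ to $(1+\theta)L$, provided $\theta$ is small enough and $L$ large.

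The main obstacle is the third step combined with the size of $\theta$: the growth estimate for the $u_i$ and $\na^2 u_i$ degrades by a factor polynomial in $L$ (as in Theorem \ref{thm:diff2}, powers like $L^{m+k+\cdots}$ appear), and one must check that after enlarging the region by the multiplicative factor $(1+\theta)$ the Gaussian weight $e^{-b^2/\cdot}$ still beats these polynomial factors so that the target exponent $\tfrac{1}{33}$ (strictly weaker than the input $\tfrac{1}{16}$, but applied at the larger radius $(1+\theta)L$) comes out with room to spare. Concretely one needs $(1+\theta)^2/33<1/16$ with a definite gap, i.e. $\theta$ bounded above by an explicit constant, and then the polynomial losses are swallowed once $L\ge L_2$. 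A secondary technical point is ensuring the gluing in step one does not destroy the bounded-geometry bounds needed to run the elliptic and growth estimates on $(M',g',f')$ uniformly in $L$; this is where the pseudolocality input and the cutoff estimates \eqref{eq:cutoff}, \eqref{eq:cutoffaa} are used, and where one must be careful that all constants depend only on $(\bar M,\bar g,\bar f)$ and not on $L$.
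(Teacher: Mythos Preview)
Your plan has the right overall architecture --- pseudolocality, gluing, eigenfunctions, growth estimates, reconstruction --- but the \emph{order} of the first two steps is wrong, and this is a genuine gap rather than a cosmetic issue.

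You glue at the neck $\{L-1<\bar b<L\}$ using $\Phi_L$, so that $(M',g',f')$ agrees with $(M,g,f)$ only on $\{b\le L-1\}$ and equals $(\bar M,\bar g,\bar f)$ beyond radius $L$. The eigenfunctions $u_i$ you then produce live on $M'$; the growth estimates of Appendix~\ref{app:A} indeed control them out to $\{b'\le (1+\theta)L\}$, but on that larger annulus $M'$ is literally $\bar M$, not $M$. So the ``extension'' you obtain is an extension on $M'$, which carries no information whatsoever about $(M,g,f)$ in the region $\{L-1<b\le(1+\theta)L\}$ you are trying to reach. The diffeomorphism $\Psi_{(1+\theta)L}$ you construct in step four would map into $M'$, not into $M$, and there is no mechanism in your plan to transfer it back.

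The paper's proof resolves this by inserting a preliminary extension (Claim~2) \emph{before} the gluing: using the flow of $\na b/|\na b|^2$ together with the drift equation $\Delta_f\Rm=\Rm+\Rm*\Rm$ (which, with the curvature bounds from Claim~1, gives $|\la\na\Rm,\na f\ra|\le C$), one first pushes the diffeomorphism $\Phi_L$ out to a rough diffeomorphism $\Psi_1$ on $\{\bar b\le(1+\theta_2)L\}$ with crude $C^2$-closeness $\delta_L$. Only then is the gluing performed, at the \emph{outer} neck $\{(1+\theta_3)L<b<(1+\theta_2)L\}$, so that $(M',g',f')=(M,g,f)$ on the full enlarged region $\{b\le(1+\theta_3)L\}$. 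Now the eigenfunctions on $M'$ genuinely live on $M$ up to that radius, and the growth estimates do the sharpening you describe. Your plan needs this preliminary Claim~2--type step.

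Two smaller points. First, your justification for $|\mu_i-1/2|\le e^{-cL^2}$ via ``the spectral gap from $\isd=\isd_N\oplus\KK_1$ of Proposition~\ref{PX303}'' is misplaced: that proposition concerns $\LL$ on symmetric $2$-tensors, not $\Delta_{f'}$ on functions. The paper gets the upper bound by min--max with cut-off test functions $\eta\,\bar w_i\circ\Phi_L^{-1}$ and the lower bound via Bochner together with the growth estimate (to control the gluing-region contribution). Second, your numerical constraint ``$(1+\theta)^2/33<1/16$'' is too loose; the square-root loss in passing from the $L^2$ Hessian bound $e^{-L^2/16}$ to the pointwise bound $e^{-L^2/32}$ via the growth estimate means the real constraint is roughly $(1+\theta)^2/33<1/32$, forcing $\theta$ much smaller.
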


\begin{proof}
In the proof, all constants $C$, $C_k$, $m_i$, $m_{ij}$, $r_i$ and $\theta_i$ depend on the Ricci shrinker $(\bar M,\bar g, \bar f)$, and the underlying metric is $g$ unless otherwise stated. The constants $C$ and $C_k$ may be different line by line. 

\textbf{Claim 1}: There exists a positive constant $\theta_1<1/1000$ such that on $\{b \le (1+\theta_1) L\}$,
\begin{align} \label{E506a}
|\Rm| \le C \quad \text{and} \quad |\na^k \Rm| \le C_k, \quad \forall k\ge 1.
\end{align}

\emph{Proof of Claim 1}: From our assumption $(\star_{L})$, there exists a small constant $r_0 \in (0,1)$ such that for any ball $B_g(x,r_0) \subset \{ b<L\}$, we have 
\begin{align*}
|B_g(x,r_0)| \ge \frac{\omega_m}{2} r_0^m \quad \text{and} \quad |\Rm| \le r_0^{-2}
\end{align*}
on $B_g(x,r_0)$, where $\omega_m$ is the volume of the unit ball in $\R^m$. It follows from the pseudolocality on Ricci shrinkers \cite[Theorem 24]{LW20}, there exists $\ep=\ep(m)>0$ such that
\begin{align} \label{E506b}
|\Rm(x,t)| \le (\ep r_0)^{-2}
\end{align}
if $0 \le t \le (\ep r_0)^2$. In particular, \eqref{E506b} holds for any $x$ with $b(x)=L-1$, since $|\na b| \le 1$. In this case, we compute
\begin{align} \label{E506c}
\frac{d}{dt} f(x^t)=\frac{|\na f|^2(x^t)}{1-t}=\frac{f(x^t)-R(x^t)}{1-t}
\end{align}
from \eqref{EX501a} and \eqref{E101}. Since $0 \le R(x^t)=(1-t)R(x,t) \le C$ by \eqref{E506b}, we obtain from \eqref{E506c}
\begin{align*}
\frac{f(x^t)-C}{1-t} \le \frac{d}{dt} f(x^t) \le \frac{f(x^t)}{1-t}.
\end{align*}
By solving the ODEs, we conclude that for $0 \le t \le (\ep r_0)^2$,
\begin{align} \label{E506d}
\frac{L^2/4-tC}{1-t}=\frac{f(x)-tC}{1-t} \le f(x^t) \le \frac{f(x)}{1-t}=\frac{L^2}{4(1-t)}.
\end{align}
From \eqref{E506b}, it is clear that there exists $\theta_1$ such that $\{b \le (1+\theta_1)L\} \subset \{x^t \mid b(x)=L-1,\,0 \le t \le (\ep r_0)^2\}$, if $L$ is sufficiently large. Indeed, it follows from $(\star_{L})$ and \eqref{E101} that $\{b \in [L-1,L]\}$ contains no critical point of $f$ and hence the assertion follows from \eqref{E506d}. Therefore, it follows from $(\star_{L})$ and \eqref{E506b} that for any $x \in \{b \le (1+\theta_1)L\}$,
\begin{align*}
|\Rm(x)| \le C.
\end{align*}
From Shi's local estimate \cite{Shi89A}, we may choose a smaller $\theta_1$ to obtain the estimate for $|\na^k \Rm|$. In sum, Claim 1 is proved.

By our assumption $(\star_{L})$, there exists a diffeomorphism $\Phi_L$ from a subset $\Omega \subset \bar M$ onto $\{b \le L \}\subset M$ such that
\begin{align} \label{EA512a}
[\bar{g} - \Phi_L^* g]_5 + [ \bar{f} - \Phi_L^*f]_5 \leq e^{\frac{\bar f}{4}-\frac{L^2}{16}}.
\end{align}

\textbf{Claim 2}: There exists a diffeomorphism $\Psi_1$ from $\{\bar b \le (1+\theta_1)L\}$ onto $\{b \le (1+\theta_1) L\}$ such that
\begin{align} 
&[\bar{g} - \Psi_1^{*} g]_4 + [ \bar{f} - \Psi_1^{*} f]_4 \leq e^{\frac{\bar f}{4}-\frac{L^2}{16}} \quad \text{on}\quad \{\bar b \le L-2\}, \label{EA512b} \\
&[\bar{g} - \Psi_1^{*} g]_4 + [ \bar{f} - \Psi_1^{*} f]_4 \leq e^{-CL} \quad \text{on}\quad \{L-2 \le \bar b \le L-1\}, \label{EA512ba} \\
&[\bar{g} - \Psi_1^{*} g]_2 \leq C \theta_2+Ce^{-CL} \quad \text{and} \quad \bar{f} =\Psi_1^{*} f \quad \text{on}\quad \{L-1 \le \bar b \le (1+\theta_2)L\}, \label{EA512bb}
\end{align}
where $\theta_2 \le \theta_1$ is a positive constant to be determined.

\emph{Proof of Claim 2}: We define $\varphi^t$ and $\bar \varphi^t$ as the families of diffeomorphisms generated by $V=\na b/|\na b|^2$ and $\bar V=\na_{\bar g} \bar b/|\na_{\bar g} \bar b|^2$, respectively, with $\varphi^0=\bar \varphi^0=\textrm{Id}$. Moreover, we define a map $\psi_1: \{L-2 \le \bar b \le L-1\} \to \{L-2 \le b \le L-1\}$ by
\begin{align} \label{EA512d}
\psi_1(x):=\varphi^{\bar b(x)-b(\Phi_L(x))} (\Phi_L(x))
\end{align}
for any $x$ with $L-2 \le \bar b(x) \le L-1$. Since $|\na b|$ and $|\na_{\bar g} \bar b|$ are almost equal to $1$ when $L$ is sufficiently large, it follows from \eqref{EA512a} that $\bar b$ and $\Phi_L^* b$ are $C^5$-close, and $V$ and $\bar V$ are $C^4$-close under $d\Phi_L$ on $\{\bar b \le L-1\}$. Moreover, from \eqref{E506a} and the Ricci shrinker equation, we deduce that $|\na^k V| \le C_k$ for any $k \ge 0$. Therefore, from \eqref{EA512d}, we conclude that $\psi_1$ is a diffeomorphism such that
$\bar f=\psi_1^* f$ and
\begin{align*}
[\bar{g} - \psi_1^{*} g]_4 \le Ce^{-CL}.
\end{align*}
Next, we define a cutoff function $\rho$ on $\R$ such that $\rho=1$ on $(-\infty,L-2]$ and $\rho=0$ on $[L-3/2,+\infty)$. Then, we define a map $\psi_2: \{\bar b \le L-1\} \to \{b \le L-1\}$ by
\begin{align} \label{EA512f}
\psi_2(x):=\rho(\bar b(x)) \Phi_L(x)+\lc 1-\rho(\bar b(x)) \rc \psi_1(x)
\end{align}
for any $x$ with $\bar b(x) \le L-1$. Since $\psi_1$ is close to $\Phi_L$ in $C^5$ sense on $\{L-2 \le \bar b \le L-1\}$, we may assume that $\psi_1(x)$ and $x$ lie in a chart of a fixed coordinate system for any $x$ with $L-2 \le \bar b(x) \le L-1$. Therefore, \eqref{EA512f} is well-defined. Moreover, we conclude that $\psi_2$ is a diffeomorphism such that 
\begin{align} \label{EA512fa}
[\bar{g} - \psi_2^{*} g]_4 + [ \bar{f} - \psi_2^{*} f]_4 \leq e^{\frac{\bar f}{4}-\frac{L^2}{16}}
\end{align}
on $\{\bar b \le L-2\}$ and 
\begin{align} \label{EA512fb}
[\bar{g} - \psi_2^{*} g]_4 + [ \bar{f} - \psi_2^{*} f]_4 \leq Ce^{-CL}
\end{align}
on $\{L-2 \le \bar b \le L-1\}$. 

Next, we set $\Sigma=\{b=L-1\}$ and denote the induced metric of $g$ on $\Sigma$ by $g_{\Sigma}$. We define a diffeomorphism $\psi_3:\Sigma \times [L-1,(1+\theta_2)L] \to \{L-1 \le b \le (1+\theta_2)L\}$ by
\begin{align*}
\psi_3( w, r)=\varphi^{r-L+1}(w)
\end{align*}
for $(w, r) \in \Sigma \times [L-1,(1+\theta_2)L]$. We claim that on $\Sigma \times [L-1,(1+\theta_2)L]$,
\begin{align} \label{EA512xx2}
\|\psi_3^* g- (dr^2+g_{\Sigma})\|_{C^2} \le C \theta_2.
\end{align}
Indeed, on $\{L-1 \le b \le (1+\theta_2)L\}$, we have
\begin{align*} 
\frac{1}{|\na b|^2}=\frac{f}{|\na f|^2}=\frac{f}{f-R}.
\end{align*}
Consequently, it follows from \eqref{E506a} that
\begin{align*}
|\psi_3^* g(\partial_r, \partial_r)-1| \le CL^{-2},
\end{align*}
and hence we have
\begin{align*}
|\psi_3^* g-(dr^2+g_r)| \le CL^{-2},
\end{align*}
where $g_r$ is the restriction of $\psi_3^* g$ on $\Sigma \times \{r\}$. On the other hand, using the Ricci shrinker equation $L_{\na f} g=2\na^2 f=g-2\Rc$ and \eqref{E506a}, we find that
\begin{align*}
|\partial_r g_r| \le C L^{-1},
\end{align*}
and hence, by integration, for $r \in [L-1,(1+\theta_2)L]$,
\begin{align*} 
|g_r-g_{\Sigma}| \le C \theta_2.
\end{align*}
Therefore, we obtain the $C^0$-estimate in \eqref{EA512xx2}. The higher-order estimates can be derived similarly by applying the higher-order curvature estimates from \eqref{E506a}.

Now, we extend $\psi_2$ to a diffeomorphism $\Psi_1: \{\bar b \le (1+\theta_1)L\} \to \{b \le (1+\theta_1)L\}$ by
\begin{align} \label{EA512g}
\Psi_1(x):=\varphi^{b(x)-L+1} \circ \psi_1 \circ \bar \varphi^{L-1-\bar b(x)} (x),
\end{align}
for any $x$ with $L-1 \le \bar b(x) \le (1+\theta_1)L$. From the definition \eqref{EA512g}, it follows that $\bar{f}=\Psi_1^{*}f$ on $\{L-1 \le \bar b \le (1+\theta_2)L\}$.

By our construction, 
\begin{align*}
\|\Psi_1^* g-\bar g \|_{C^2} \le Ce^{-CL}
\end{align*}
on $\{\bar b \le L-1\}$. Therefore, combining this with \eqref{EA512xx2}, we obtain the estimate on $\{L-1\le \bar b \le (1+\theta_2)L\}$:
\begin{align} \label{EA513f}
\|\Psi_1^{*} g-\bar g \|_{C^2} \le C \theta_2+Ce^{-CL}.
\end{align}
To summarize, \eqref{EA512b} and \eqref{EA512ba} follow directly from \eqref{EA512fa} and \eqref{EA512fb}, respectively, while \eqref{EA512bb} follows from \eqref{EA513f} and our construction. Therefore, the proof of Claim 2 is complete.

Next, we set $\theta_3=\theta_2/2$ and fix a cutoff function $\rho_1$ on $\R$ such that $\rho_1=1$ on $(-\infty,(1+\theta_3)L]$ and $\rho_1=0$ on $[(1+\theta_2)L,+\infty)$. Moreover, we define a couple $(g', f')$ on $\bar M$ by
\begin{align} \label{EA515a}
g':=\rho_1(\bar b) \Psi_1^* g+\lc 1-\rho_1(\bar b) \rc \bar g, \quad f':=\rho_1(\bar b) \Psi_1^* f+\lc 1-\rho_1(\bar b) \rc \bar f.
\end{align}
Notice that by our construction, $(g', f')=(\bar g, \bar f)$ on $\{\bar b \ge (1+\theta_2)L\}$. Moreover, it follows from Claim 2 that
\begin{align} \label{EA515axx}
\|g'-\bar g\|_{C^2} \le C (e^{-CL}+\theta_2) \quad \text{and}\quad \|f'-\bar f\|_{C^4} \le C e^{-CL}.
\end{align}
In the following, we set $b':=2\sqrt{f'}$, and the underlying metric is $g'$ by default.

We denote the eigenvalues of $\Delta_{f'}$ on $L^2$ (with respect to $dV_{f'}=e^{-f'}dV_{g'}$) functions by $0 <\mu_1 \le \mu_2 \le \cdots$ counted with multiplicities. We choose $m-n$ orthonormal eigenfunctions $v_i$ with $\Delta_{f'} v_i+\mu_i v_i=0$ for $1 \le i \le m-n$. 

\textbf{Claim 3}: For any $1 \le i \le m-n$,
\begin{align} \label{E507a}
\|\nabla^2 v_i\|_{L^2}^2 +|\mu_i -1/2| \leq C e^{-\frac{L^2}{16}}.
\end{align}

\emph{Proof of Claim 3}: \eqref{E507a} can be proved almost identically to \cite[Lemma 6.9]{CM21b}, and we sketch the proof for readers' convenience.

Let $\bar{w_0}=\bar{c}$ and $\bar{w}_i = \frac{\bar{c}}{{\sqrt{2}}} z_i$ for $1 \leq i \leq m-n$, where the constant $\bar c$ is chosen so that $\int_M \bar c^2 \,\vf=1$. In other words, $\{\bar w_0,\cdots, \bar w_{m-n}\}$ are first $m-n+1$ eigenfunctions on $\bar M$ with respect to $\ddf$. By direct computation, we have
\begin{align}
\left|\delta_{ij} - \int_{\bar{b}< r} \bar{w}_i \bar{w}_j e^{-\bar{f}} \right | \leq& C r^{m-n} e^{-\frac{r^2}{4}} \textrm{ for } i,j \ge 0, \label{E507b}\\
\left |\frac{1}{2}\delta_{ij} - \int_{\bar{b}<r} \langle \nabla \bar{w}_i, \nabla \bar{w}_j \rangle e^{-\bar{f}} \right| \leq& C r^{m-n} e^{-\frac{r^2}{4}} \textrm{ for } i,j \geq 1.\label{E507c}
\end{align}
We fix a cutoff function $\eta$ which is identically $1$ on $\{b' \le L-1\}$ and supported on $\{b' <L\}$. We set $w_i = \eta \bar{w}_i$ and define the symmetric matrices $a_{ij} $ and $b_{ij}$ for $0 \le i,j \le m-n$ by
\begin{align*}
a_{ij} := \int w_i w_j \,dV_{f'} \quad \text{and} \quad b_{ij} := \int \langle \nabla w_i, \nabla w_j \rangle \,dV_{f'}. 
\end{align*}
It follows from $(\star_{L})$, \eqref{E507b} and \eqref{E507c} that
\begin{align} \label{E507d}
\sup_{i,j \ge 0} |a_{ij}-\delta_{ij}|+\sup_{i+j \ge 1}|2b_{ij}-\delta_{ij}|+|b_{00}| \le Ce^{-\frac{L^2}{16}}.
\end{align}
From the min-max principle, we conclude from \eqref{E507d} that
\begin{align} \label{E507e}
\sum_{i=1}^{m-n} \mu_i=\sum_{i=0}^{m-n} \mu_i \leq \sum_{i,j} a^{ij} b_{ij} \leq \frac{m-n}{2} + C e^{-\frac{L^2}{16}},
\end{align}
where $a^{ij}$ is the inverse matrix of $a_{ij}$. In particular, $|\mu_i| \le C$. Next, we obtain
\begin{align} \label{EA516a}
\Delta_{f'} v_i^2=2v_i\Delta_{f'} v_i+2|\na v_i|^2=-2\mu_i v_i^2+2|\na v_i|^2.
\end{align}
By using the cutoff functions similar to \eqref{eq:cutoff} and applying the integration by parts to \eqref{EA516a}, we have
\begin{align} \label{EA516b}
\int |\na v_i|^2\,dV_{f'}=\mu_i \int v_i^2 \,dV_{f'}=\mu_i.
\end{align}
Moreover, it follows from Bochner's formula that
\begin{align} 
\frac{1}{2} \Delta_{f'} |\na v_i|^2=& |\na^2 v_i|^2+(\Rc(g')+\na^2 f')(\na v_i,\na v_i)+\la \na v_i,\na(\Delta_{f'}v_i) \ra \notag \\
=&|\na^2 v_i|^2-h_i(\na v_i,\na v_i), \label{EA516c}
\end{align}
where $h_i:=-\Rc(g')-\na^2 f'+\mu_i g'$. Notice that by our construction \eqref{EA515a}, $h_i=(\mu_i-1/2)g'$ on $S:=\{b' \le (1+\theta_3)L\} \cup \{b' \ge (1+\theta_2)L \}$. In addition, $|h_i| \le C \theta_2+Ce^{-CL} \le C$ on $S^c=\{(1+\theta_3)L< b' < (1+\theta_2) L\}$ by \eqref{EA512bb}.

By using the integration by parts for \eqref{EA516c} again, we obtain
\begin{align} 
\int |\na^2 v_i|^2 \,dV_{f'}=&\int h_i(\na v_i,\na v_i) \,dV_{f'} \notag \\
=&(\mu_i-1/2) \int_S |\na v_i|^2 \,dV_{f'}+\int_{S^c} h_i(\na v_i,\na v_i)\,dV_{f'}. \label{EA516d}
\end{align}

By direct computation, we have
\begin{align*}
\Delta_{f'} \na v_i=-h_i(\na v_i)
\end{align*}
and hence $\la \Delta_{f'} \na v_i, \na v_i \ra=-h_i(\na v_i,\na v_i) \ge -C|\na v_i|^2$ and $\Delta_{f'} \na v_i \in L^2$.

Therefore, it follows from Proposition \ref{prop:ap1}, Remark \ref{rem:inp} and \eqref{EA516b} that
\begin{align} \label{EA516e}
I_{\na v_i}(r) =r^{1-m}\int_{b'=r} |\na v_i|^2 |\na b'| \le CL^{m_0} e^{C\theta_2L^2} \mu_i
\end{align}
for any $r \in [(1+\theta_3)L,(1+\theta_2)L]$. Notice that the assumptions of Proposition \ref{prop:ap1} are satisfied by  \eqref{EA515axx}; see Remark \ref{rem:inp}.

Since $|\na b'|$ is almost equal to $1$ on $S^c$, it follows from \eqref{EA516e} and the coarea formula that
\begin{align} \label{EA516f}
\int_{S^c} |\na v_i|^2\,dV_{f'} \le CL^{m_0+1} e^{-\frac{(1+\theta_3)^2L^2}{4}+C\theta_2L^2} \mu_i.
\end{align}

Combining \eqref{EA516b}, \eqref{EA516d} and \eqref{EA516f}, we obtain
\begin{align*}
\mu_i-1/2 \ge -CL^{m_0+1} e^{-\frac{(1+\theta_3)^2L^2}{4}+C\theta_2L^2}.
\end{align*}

By \eqref{E507e}, if $\theta_2$ is sufficiently small, we conclude
\begin{align} \label{EA516h}
|\mu_i-1/2| \le C e^{-\frac{L^2}{16}}.
\end{align}

From \eqref{EA516d}, \eqref{EA516f} and \eqref{EA516h}, we immediately have 
\begin{align} \label{EA516i}
\|\nabla^2 v_i\|_{L^2}^2 \le C \mu_i (\mu_i-1/2)+CL^{m_0+1} e^{-\frac{(1+\theta_3)^2L^2}{4}+C\theta_2L^2} \le C e^{-\frac{L^2}{16}}.
\end{align}

In sum, the proof of Claim 3 is complete by \eqref{EA516h} and \eqref{EA516i}.

\textbf{Claim 4}: There exist a constant $m_1>0$ and $m-n$ functions $u_i$ so that $\int u_i \,dV_{f'} = 0$ and on $\{ b' \le (1+\theta_3)L-1\}$,
\begin{align}
\label{E508}
|\delta_{ij} - \langle \na u_i ,\na u_j \rangle| + \| \nabla^2 u_i \|_{C^3} + |2 \langle \nabla u_i, \nabla f' \rangle -u_i| \leq CL^{m_1} e^{-\frac{L^2}{32}+C\theta_2 L^2}.
\end{align}
Furthermore, $\|u_i\|_{C^{0}} \leq CL^{m_1} e^{C \theta_2 L^2}$ and $\|\na^2 u_i\|_{C^{k}} \leq C_k L^{k}$ for each $k \ge 4$ on $\{ b' \le (1+\theta_3)L-1\}$.

\emph{Proof of Claim 4}: Recall $I_{v_i}(r) := r^{1-n} \int_{b'=r} v_i^2 |\nabla b'|$. Then it follows from \eqref{E507a} and Proposition \ref{prop:ap1} that
\begin{align*}
I_{v_i}(s) \leq C \left (\frac{s}{t} \right )^{m_{1,1}} e^{C\theta_2 s^2} I_{v_i}(t)
\end{align*}
for any $s \ge t \ge r_1$. By integration, we conclude that
\begin{align*}
I_{v_i}(r) \le C L^{m_{1,1}} e^{C\theta_2 r^2}
\end{align*}
for any $r \in [2r_1, (1+\theta_3)L]$. We consider $B_{g'}(x,L^{-1})$ with $b'(x) \in [2r_1+L^{-1}, (1+\theta_3)L-L^{-1}]$. The standard Schauder estimate yields
\begin{align} \label{E508c}
\sup_{B_{g'}(x,L^{-1}/2)} \|v_i\|^2_{C^{k,\frac 1 2}} \le& C_k L^{2k+m+1} \int_{B_{g'}(x,L^{-1})} |v_i|^2 \,dV_{g'} \notag \\
\le & C_k L^{2k+2m-1} \sup_{r\in[b'(x)-L^{-1},b'(x)+L^{-1}]} I_{v_i}(r) \le C_k L^{2k+m_{1,2}}  e^{C\theta_2 L^2}
\end{align}
for any $k \ge 2$. Here, the second inequality holds since $|\na b'|$ is close to $1$ by Claim 1 and \eqref{E101} if $r_2$ is large. Notice that \eqref{E508c} also holds if $b'(x) \le 2r_1+L^{-1}$ by the first inequality above. Therefore, we have on $\{b' \le (1+\theta_3)L-L^{-1}\}$,
\begin{align} \label{E508d}
\|v_i\|_{C^{k}} \le C_k L^{k+m_{1,2}/2} e^{C\theta_2 L^2}
\end{align}
for any $k \ge 2$. Furthermore, we have, by direct computation,
\begin{align*} 
\Delta_{f'} (\na^2 v_i)=\na^2(\Delta_{f'} v_i+v_i)-2\Rm(\na^2 v_i)=(1-\mu_i) \na^2 v_i-2\Rm(\na^2 v_i).
\end{align*}
Therefore, we have on $\bar M$,
\begin{align*} 
\la \Delta_{f'} \na^2 v_i,\na^2 v_i \ra \ge -C |\na^2 v_i|^2.
\end{align*}
since by our definition \eqref{EA515a}, the curvature of $g'$ is bounded.

From Proposition \ref{prop:ap1} again, we obtain
\begin{align}\label{E508e}
I_{\na^2 v_i}(s) \leq C \lc\frac{s}{t} \rc^{m_{1,2}} e^{C\theta_2 s^2} I_{\na^2 v_i}(t)
\end{align}
for any $s \ge t\ge r_2$, where $r_2$ is a constant such that $I_{\na^2 v_i}(r_2) \le Ce^{-\frac{L^2}{16}}$. Similar to \eqref{E508d}, we obtain from \eqref{E508e} that for any $k \ge 0$,
\begin{align}\label{E508f}
\|\na^2 v_i\|_{C^k} \le C_k L^{k+m_{1,3}} e^{-\frac{L^2}{32}+C\theta_2 L^2}
\end{align}
on $\{b' \le (1+\theta_3)L-L^{-1}\}$.

Moreover, it follows from \eqref{E507a}, \eqref{E508f} and $\Delta_{f'} v_i+\mu_i v_i=0$ that on $\{b' \le (1+\theta_3)L-L^{-1}\}$,
\begin{align}\label{E508ff}
|2\la \na v_i,\na f' \ra-v_i|=|2\Delta v_i+2\mu_i-1| \le CL^{m_{1,4}} e^{-\frac{L^2}{32}+C\theta_2 L^2}.
\end{align}

From our construction \eqref{EA515a}, $\Delta_{f'} f'$ is almost equal to $m/2-f'$. Therefore, by the same proof of Lemma \ref{lem:esf}, we conclude that
\begin{align}
\label{E508g}
\frac{s^2}{4} \int_{b' \geq s} (v_i^2 + |\nabla v_i|^2) \,dV_{f'} \leq C.
\end{align}
It follows from \eqref{E508g} that there is a fixed $\bar s \ll L$ independent of $L$ so that the matrix $Q_{ij} = \int_{b'<\bar s} \langle \nabla v_i,\nabla v_j \rangle dV_{f'}$ is invertible with $|Q-\mathrm{Id}|< 0.1$. We choose an invertible symmetric linear transformation $\tilde{Q}$ so that $\tilde Q Q \tilde Q^T=\mathrm{Id} \int_{b'<\bar s} 1 \,dV_{f'}$ and set $u_i := \tilde{Q}_{ij} v_j$. It is clear that $\int u_i \,dV_{f'}= 0$ and 
\begin{align}\label{E508h}
\int_{b' \le\bar s} \la \na u_i,\na u_j \ra-\delta_{ij} \,dV_{f'}=0.
\end{align}
Combining \eqref{E508d}, \eqref{E508f} and \eqref{E508h}, we conclude 
\begin{align}
\label{E508i}
\sup_{\{ b' \le (1+\theta_3) L -L^{-1}\}} \lc |\delta_{ij} - \langle \nabla u_i,\nabla u_j \rangle|+\|\na^2 u_i\|_{C^3} \rc \leq C L^{m_{1,5}} e^{-\frac{L^2}{32}+C\theta_2 L^2}.
\end{align} 
Moreover, it follows from \eqref{E508ff} that on $\{ b' \le (1+\theta_3) L -L^{-1}\}$
\begin{align}\label{E508j}
|2\la \na u_i,\na f' \ra-u_i \ra| \le CL^{m_{1,4}} e^{-\frac{L^2}{32}+C\theta_2 L^2}.
\end{align}
Therefore, \eqref{E508} follows from \eqref{E508i} and \eqref{E508j}. Now, the last statement in Claim 4 follows from \eqref{E508d} and \eqref{E508f}. In sum, the proof of Claim 4 is complete.

Next, we define
\begin{align*}
f_0 :=\frac{n}{2}+\frac{1}{4}\sum_{i=1}^{m-n} u_i^2.
\end{align*}

\textbf{Claim 5}: There exist a constant $m_2>0$ such that on $\{ b' \le (1+\theta_3)L-2\}$,
\begin{align}
\label{E509}
|f'-f_0| \le C L^{m_2} e^{-\frac{L^2}{32}+C\theta_2 L^2}
\end{align}
and for any $k \ge 1$, $\|f'-f_0\|_{C^k} \le C_k L^k$.

\emph{Proof of Claim 5}: We set
\begin{align*}
V_i:=\partial_{z_i} \quad \text{and} \quad f_1:=\frac{n}{2}+\frac{1}{4}\sum_i z_i^2,
\end{align*}
where $z_i$ are coordinate functions on the $\R^{m-n}$ factor of $\bar M$. It is clear from \eqref{E508} and \eqref{EA512a} that there exists an orthogonal matrix $Q'$ such that on $\{f' \le 2n\}$,
\begin{align}\label{E509d}
\|W_i-\na u_i \|_{C^0} \le C L^{m_1} e^{-\frac{L^2}{32}+C\theta_2 L^2},
\end{align}
where $W_i:=Q'_{ij}V_j$. Indeed, \eqref{E509d} follows from the fact that $\{\na u_i\}$ and $\{V_i\}$ form two sets of parallel orthogonal vector fields on $\{f' \le 2n\}$, up to an error of $O(L^{m_1} e^{-\frac{L^2}{32}+C\theta_2 L^2})$. 

We set $z_i':=Q'_{ij}z_j$. Since $|W_i-\na z'_i| \le C e^{-\frac{L^2}{16}}$ on $\{f' \le 2n\}$, we have
\begin{align}\label{E509dd}
|\na z'_i-\na u_i| \le C L^{m_1} e^{-\frac{L^2}{32}+C\theta_2 L^2}.
\end{align}

Since $|2\la \na z'_i,\na f' \ra-w_i|\le Ce^{-\frac{L^2}{16}}$ on $\{f' \le 2n\}$ by \eqref{EA512a}, we obtain from \eqref{E508} and \eqref{E509dd} that
\begin{align} \label{E509ddxx}
|z'_i-u_i| \le C L^{m_1} e^{-\frac{L^2}{32}+C\theta_2 L^2}
\end{align}
and consequently
\begin{align}\label{E509f}
|f_0-f'| \le |f_0-f_1|+|f_1-f'| \le C L^{m_1} e^{-\frac{L^2}{32}+C\theta_2 L^2}
\end{align}
on $\{f' \le 2n\}$. On the other hand, it follows from \eqref{E508} that on $\{ b' \le (1+\theta_3)L-1\}$,
\begin{align}\label{E509g}
|f_0-|\na f_0|^2-n/2|\le \frac{1}{4}\lc \sum_i u_i^2|1-|\na u_i|^2|+\sum_{i \ne j}|u_i||u_j| |\la \na u_i,\na u_j \ra | \rc \le C L^{m_{2,1}} e^{-\frac{L^2}{32}+C\theta_2 L^2}
\end{align}
and
\begin{align}\label{E509h}
|f_0-n/2-\la \na f_0,\na f' \ra|\le \frac{1}{4}\sum_i |u_i||2\la \na u_i,\na f' \ra-u_i| \le C L^{m_{2,2}} e^{-\frac{L^2}{32}+C\theta_2 L^2}.
\end{align}
Combining \eqref{E509g} and \eqref{E509h}, we obtain
\begin{align}\label{E509i}
|\la \na(f_0-f'),\na f_0 \ra|\le C L^{m_{2,3}} e^{-\frac{L^2}{32}+C\theta_2 L^2}
\end{align}
on $\{ b' \le (1+\theta_3)L-1\}$. Notice that by \eqref{E509f} and \eqref{E509g}, $|\na f_0|^2 \ge f_0/2$ on $\{ b' \le (1+\theta_3)L-1\} \setminus \{f_0 \le n\}$. Therefore, we can estimate from \eqref{E509f} and \eqref{E509i} that
\begin{align}\label{E509j}
|f'-f_0| \le C (\log L)L^{m_{2,3}} e^{-\frac{L^2}{32}+C\theta_2 L^2}+C L^{m_1} e^{-\frac{L^2}{32}+C\theta_2 L^2} \le C L^{m_{2,4}} e^{-\frac{L^2}{32}+C\theta_2 L^2}
\end{align}
on $\{ b' \le (1+\theta_3)L-2\}$. In addition, it follows from Claim 1, the equation of the Ricci shrinker and Claim 4 that for any $k \ge 1$,
\begin{align}\label{E509jj}
\|f'-f_0\|_{C^k} \le C_k L^k
\end{align}
on $\{ b' \le (1+\theta_3)L-2\}$. Therefore, Claim 5 is proved by \eqref{E509j} and \eqref{E509jj}.

Next, we set
\begin{align*}
N_0:=\{f_0=n/2\}.
\end{align*}
Moreover, we denote the restriction of $g$ on $N_0$ by $g_{N_0}$. It is clear by \eqref{E508} that $N_0$ is a smooth $n$-dimensional submanifold of $\bar M$. In addition, it follows from \eqref{E506a} and \eqref{E508} that
\begin{align}\label{E510aa}
\|\Rc(g_{N_0})\|_{C^k} \le C_k.
\end{align}

Let $\psi_i^t$ be the family of diffeomorphisms generated by $\na u_i$. Then we define a map $\Psi_2: N_0 \times \R^{m-n} \to \bar M$ by
\begin{align*}
\Psi_2(x,z)=\psi_1^{z_1} \circ \cdots \circ \psi_{m-n}^{z_{m-n}}(x),
\end{align*}
for any $(x,z) \in N_0 \times \R^{m-n}$, where $z=(z_1,\cdots, z_{m-n})$. If we set $\theta_4=\theta_3/2$ and $\theta_5=\theta_3/4$, then it follows immediately from Claim 3 that $\Psi_2$ is a diffeomorphism from $B_{g_E}(0,(1+\theta_4)L)$ onto its image such that
\begin{align*}
\{b' \le (1+\theta_5)L\} \subset \Psi_2\lc N_0 \times B_{g_E}(0,(1+\theta_4)L)\rc \subset \{b' \le (1+\theta_3)L-2\}.
\end{align*}
Moreover, we have on $N_0 \times B_{g_E}(0,(1+\theta_4)L)$,
\begin{align}\label{E510b}
\|\Psi_2^*g'-g_{N_0}\times g_E\|_{C^3} \le C L^{m_3} e^{-\frac{L^2}{32}+C\theta_2 L^2}
\end{align}
and for any $k \ge 4$,
\begin{align}\label{E510bb}
\|\Psi_2^*g'-g_{N_0}\times g_E\|_{C^k} \le C_k L^k.
\end{align}

Next, we define a map $\varphi_1: N \to N_0$ by 
\begin{align*}
\varphi_1(y):=\pi_{N_0} \circ \Psi_2^{-1} \circ \iota(y)
\end{align*}
for any $y \in N$, where $\iota$ is the natural embedding $N \to N \times \{\vec{0}\} \subset N \times \R^{m-n}=\bar M$, and $\pi_{N_0}$ is the projection onto the first factor of $N_0 \times B_{g_E}(0,(1+\theta_4)L)$.

Notice that \eqref{EA512a} guarantees that there exists a local foliation of $N$ given by $\Phi_L$ on $\{f \le 2n\}$ such that $\Phi_L^*g$ is close to $g_{N} \times g_E$ in $C^3$ up to an error of $O(e^{-\frac{L^2}{16}})$. Moreover, it follows from \eqref{E509d} that two sets of normal vectors $\{\na u_i\}$ and $\{W_i\}$ are close in $C^4$ up to an error of $O(L^{m_1} e^{-\frac{L^2}{32}+C\theta_2 L^2})$. Therefore, we conclude that $\varphi_1$ is a diffeomorphism from $N$ onto $N_0$ and satisifies
\begin{align}\label{E510c}
\|\varphi_1^*g_{N_0}-g_{N}\|_{C^3} \le C L^{m_1} e^{-\frac{L^2}{32}+C\theta_2 L^2}.
\end{align}
Indeed, the $C^0$-estimate of \eqref{E510c} follows from the fact that two sets of normal vectors $\{\na u_i\}$ and $\{W_i\}$ are $C^0$-close up to an error of $O(L^{m_1} e^{-\frac{L^2}{32}+C\theta_2 L^2})$. Plus, the higher order estimates of \eqref{E510c} can be derived since the second fundamental forms of $N_0$ and $N \times \{\vec{0}\}$ with respect to $g'$ are close to $0$ in $C^2$, up to an error of $O(L^{m_1} e^{-\frac{L^2}{32}+C\theta_2 L^2})$.

In addition, it follows from Theorem \ref{TH202} that there exists another diffeomorphism $\varphi_2:N \to N_0$ such that
\begin{align}\label{E510d}
\|\varphi_2^*g_{N_0}-g_{N}\|_{C^{2,\frac 1 2}} \le C L^{m_1} e^{-\frac{L^2}{32}+C\theta_2 L^2} \quad \text{and} \quad \delta_{g_N}(\varphi_2^*g_{N_0})=0.
\end{align}
From the ellipticity of $\Rc(g_N)$ on $\ker(\delta_{g_N})$ (cf. \cite[Theorem 1.174(d)]{Be87}) and \eqref{E510aa}, standard elliptic estimates yield
\begin{align}\label{E510e}
\|\varphi_2^*g_{N_0}-g_{N}\|_{C^{k}} \le C_k
\end{align}
for any $ k\ge 3$. 

Now, we define $\Psi: N \times B_{g_E}(0,(1+\theta_4)L) \to M$ by
\begin{align*}
\Psi(x,t):=\Psi_1 \circ \Psi_2( \varphi_2(x),t),
\end{align*}
where $\Psi_1$ is the map obtained in Claim 2.

Then it is clear from \eqref{E510b}, \eqref{E510bb}, \eqref{E510d} and \eqref{E510e} that 
\begin{align*}
|\Psi^* g-\bar g| \le C L^{m_1+m_3} e^{-\frac{L^2}{32}+C\theta_2 L^2}
\end{align*}
and its $C^k$-norms are bounded by $C_k L^k$ for any $k \ge 1$. Moreover, it follows from \eqref{E509} and \eqref{E509ddxx} that 
\begin{align*}
|\Psi^* f-\bar f| \le C L^{m_1+m_3+1} e^{-\frac{L^2}{32}+C\theta_2 L^2}.
\end{align*}
Moreover, $\|\Psi^* f-\bar f\|_{C^k} \le C_k L^k$.

In sum, by choosing a small $\theta_2$, we have proved that $(\dagger_{(1+\theta_5)L})$ holds with respect to $\Psi$.
\end{proof}

Combining Theorem \ref{thm:501} and Theorem \ref{thm:502} with the weak-compactness theory developed in \cite{LLW21}, the rigidity of the generalized cylinder is an immediate consequence.

\begin{thm}\label{thm:rigid1}
Let $(N^n,g_N)$ be a closed Einstein manifold satisfying $\Rc(g_N)=g_N/2$ and \eqref{EX301} and let $(\bar M,\bar g,\bar f)$ be a Ricci shrinker with $ \bar M=N^n \times \R^{m-n}$, $\bar g=g_N \times g_E$ and $\bar f=|z|^2/4+n/2$. Then any $m$-dimensional Ricci shrinker $(M,g,f,p)$ which is close to $(\bar M,\bar g,\bar f)$ in the pointed-Gromov-Hausdorff sense must be isometric to $(\bar M,\bar g)$.
\end{thm}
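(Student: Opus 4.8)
The plan is to run a contradiction/continuity argument using the two dichotomy statements $(\star_L)$ and $(\dagger_L)$ together with the weak-compactness theory, mirroring the scheme of \cite{CIM15,CM21b}. Suppose $(\bar M,\bar g,\bar f)$ is not rigid. Then there is a sequence of Ricci shrinkers $(M_i,g_i,f_i,p_i)$, none isometric to $(\bar M,\bar g)$, converging to $(\bar M,\bar g,\bar f,\bar p)$ in the pointed-Gromov-Hausdorff sense. By the weak-compactness theory of \cite{LLW21} (and \cite{HLW21}), since $\bar M$ is smooth the convergence improves to pointed-Cheeger-Gromov, i.e. smooth on compact sets; consequently for every fixed $L$, the hypothesis $(\dagger_L)$ holds for $(M_i,g_i,f_i)$ once $i$ is large (the required $C^k$-bounds $D_kL^k$ come from smooth convergence plus the a priori curvature and derivative estimates on $\bar M$, which are polynomial in $b\sim 2\sqrt{\bar f}$).

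Fix $L$ large, in particular $L>\max\{L_1,L_2\}$, and choose $\theta=\theta(L_2)>0$ from Theorem~\ref{thm:502}. The core step is the following bootstrap: if $(\dagger_{L_0})$ holds for some $L_0\ge L$, then by Theorem~\ref{thm:501} we get $(\star_{L_0-2})$, and then by Theorem~\ref{thm:502} we get $(\dagger_{(1+\theta)(L_0-2)})$. Since $(1+\theta)(L_0-2)>L_0$ once $L_0$ is large enough (depending only on $\theta$), iterating this produces $(\dagger_{L_j})$ for a sequence $L_j\to\infty$ — so for the \emph{fixed} Ricci shrinker $(M_i,g_i,f_i)$ we conclude $(\star_{L'})$ holds for all large $L'$. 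Taking $L'\to\infty$, the diffeomorphisms $\Phi_{L'}$ and the estimate $[\bar g-\Phi_{L'}^*g_i]_5+[\bar f-\Phi_{L'}^*f_i]_5\le e^{\bar f/4-L'^2/16}\to 0$ on exhausting compact sets force, after passing to a subsequential limit of the $\Phi_{L'}$ (using the uniform $C^6$-type bounds to extract a smooth limit diffeomorphism $\Phi_\infty:\bar M\to M_i$), that $\Phi_\infty^*g_i=\bar g$ and $\Phi_\infty^*f_i=\bar f$. Hence $(M_i,g_i,f_i)$ is isometric to $(\bar M,\bar g)$, contradicting the choice of the sequence. (One must also check that $\Phi_\infty$ is surjective: the domains $\Phi_{L'}^{-1}(\{b\le L'\})$ exhaust $\bar M$ and their images $\{b_i\le L'\}$ exhaust $M_i$ by Lemma~\ref{lem:pot}.)

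The main obstacle is verifying the hypotheses of Theorem~\ref{thm:502} persist along the iteration — specifically, that $(\star_{L_j})$ at each stage feeds a \emph{fixed} constant package into the extension step so that $\theta$ does not degrade. This is exactly why $(\star_L)$ is stated with the sharp exponential weight $e^{\bar f/4 - L^2/16}$ and $(\dagger_L)$ with polynomial-in-$L$ $C^k$-control: Theorem~\ref{thm:501} upgrades polynomial closeness to exponential closeness with a \emph{loss of only $2$} in the radius, and Theorem~\ref{thm:502} converts exponential closeness back to the $(\dagger)$-package with a \emph{multiplicative gain} $(1+\theta)$ in the radius, and crucially $\theta,L_2$ depend only on $(\bar M,\bar g,\bar f)$ and the fixed sequence $\{D_k\}$, not on the stage. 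So once the initial $(\dagger_L)$ is established from weak compactness with that universal $\{D_k\}$, the induction closes. A secondary technical point is the extraction of the limiting isometry $\Phi_\infty$: this uses that the $\Phi_{L'}$ (for $L'$ ranging over the iteration) are uniformly bounded in $C^{k}$ on each compact set — which follows from the $C^{k,1/2}$-estimates built into the proof of Theorem~\ref{thm:501} (cf. \eqref{E505b}) — together with Arzel\`a–Ascoli and the fact that pointwise limits of isometric embeddings of the exponentially-pinched metrics are isometric embeddings.
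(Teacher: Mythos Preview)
Your proposal is correct and follows essentially the same approach as the paper: a contradiction argument upgrading pointed-Gromov-Hausdorff convergence to smooth convergence via \cite{LLW21}, then iterating Theorems~\ref{thm:501} and~\ref{thm:502} to propagate $(\star_L)$/$(\dagger_L)$ to all large $L$ and conclude an isometry. You supply somewhat more detail than the paper does on why the iteration closes (the fixed $\theta$, $L_1$, $L_2$, $\{D_k\}$) and on extracting the global limiting diffeomorphism $\Phi_\infty$, but the strategy is identical.
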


\begin{proof}
Suppose the conclusion fails. Then there exists a sequence of Ricci shrinkers $(M^m_i,g_i,f_i,p_i)$ converging to $(\bar M,\bar g,\bar f)$ in the pointed-Gromov-Hausdorff sense such that no $(M_i,g_i)$ is isometric to $(\bar M, \bar g)$. Since $M_i$ and $\bar M$ have the same Hausdorff dimension $m$, it follows from \cite[Theorem 1.1]{LLW21} that the convergence above can be improved to the pointed-$C^{\infty}$-Cheeger-Gromov sense.

Applying Theorem \ref{thm:501} and Theorem \ref{thm:502} repeatedly, we conclude that if $i$ is sufficiently large, there exists a sequence $L_j \to \infty$ such that $(\star_{L_j})$ holds for $(M_i,g_i,f_i)$. Consequently, it implies that $(M_i,g_i)$ is isometric to $(\bar M,\bar g)$, which is a contradiction.
\end{proof}

Next, we consider a Ricci shrinker $(\hat M,\hat g,\hat f)$ which is a finite quotient of $(\bar M,\bar g, \bar f)=(N^n \times \R^{m-n},g_N \times g_E, |z|^2/4+n/2)$. That is, there exists a finite subgroup $\Gamma$ of the isometry group of $(\bar M,\bar g)$ acting freely on $M$ such that $(\hat M, \hat g)$ is isometric to the quotient of $(\bar M,\bar g)$ by $\Gamma$. Moreover, we define the map $\pi: \bar M \to \hat M$ to be the quotient map. In particular, $\pi^* \hat f=\bar f$ and $\{ \hat b \le r\}$ is the quotient of $\{ \bar b \le r\}$ by $\Gamma$, for any $r>0$. Our next goal is to prove

\begin{thm}\label{thm:rigid2}
Any $m$-dimensional Ricci shrinker $(M,g,f,p)$ which is close to $(\hat M, \hat g,\hat f,\hat p)$ in the pointed-Gromov-Hausdorff sense must be isometric to $(\hat M, \hat g)$.
\end{thm}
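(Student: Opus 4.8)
The plan is to mimic the proof of Theorem~\ref{thm:rigid1}, with $(\hat M,\hat g,\hat f)$ in place of $(\bar M,\bar g,\bar f)$. For a nearby Ricci shrinker $(M,g,f)$ one introduces, with $\hat b:=2\sqrt{\hat f}$, the quotient analogues $(\hat\star_L)$ and $(\hat\dagger_L)$ of $(\star_L)$ and $(\dagger_L)$: the defining diffeomorphisms now map subsets of $\hat M$ onto $\{b\le L\}\subset M$, with the constants $D_k$ those of $(\hat M,\hat g,\hat f)$. One then proves the two counterparts: (i) \emph{(contraction)} there is $L_1>0$ such that $(\hat\dagger_L)$ with $L>L_1$ implies $(\hat\star_{L-2})$; and (ii) \emph{(extension)} there are $L_2>0$, $\theta>0$ such that $(\hat\star_L)$ with $L\ge L_2$ implies $(\hat\dagger_{(1+\theta)L})$. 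Granting (i) and (ii), the argument of Theorem~\ref{thm:rigid1} applies verbatim: if $(M_i,g_i,f_i,p_i)\to(\hat M,\hat g,\hat f)$ in the pointed-Gromov-Hausdorff sense without being isometric to the limit, then by \cite[Theorem~1.1]{LLW21} (the Hausdorff dimensions agree) the convergence is pointed-$C^\infty$-Cheeger-Gromov, so $(\hat\dagger_L)$ holds for $(M_i,g_i,f_i)$ with some $L=L(i)\to\infty$; alternating (i) and (ii) forces $(M_i,g_i)$ to be isometric to $(\hat M,\hat g)$, a contradiction.

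Step (i) is local and parallels Theorem~\ref{thm:501}. Since $\hat M$ is itself a complete non-compact Ricci shrinker with bounded curvature, the results of Sections~3--4 apply on $\hat M$ directly: Proposition~\ref{P301}, the quantitative gauge-fixing Theorem~\ref{thm:diff2}, and Lemma~\ref{lem:centerdiff}. The one genuinely global input is the rigidity inequality of mixed orders. Here one uses that tensors and forms on $\hat M$ are the same as $\Gamma$-invariant ones on $\bar M$, and that $\LL,\dbf,\PP,\Delta_f$ all commute with the isometric, free $\Gamma$-action; hence the spectral data of these operators on $\hat M$ are the $\Gamma$-invariant parts of those on $\bar M$. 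In particular $\isd(\hat M)=(\isd_N)^\Gamma\oplus(\KK_1)^\Gamma$ and $\KK_0(\hat M)=\big((\R^{m-n})^*\big)^\Gamma$; moreover the obstruction of order $3$ passes to the $N$-part, because for $\Gamma$-invariant $h_1$ the sequences $\{h_i\},\{f_i\}$ induced by $h_1$ are $\Gamma$-invariant by uniqueness, so $\pi_{\isd_N}$ coincides with $\pi_{(\isd_N)^\Gamma}$ on the $M_l$-terms (and likewise $ug_N$ keeps its order-$2$ rigidity inequality). Consequently Theorem~\ref{thm:sta1} holds on $\hat M$ with the center of mass valued in $\KK_0(\hat M)$, and the cutoff-and-diffeomorphism argument of Theorem~\ref{thm:501}, together with Lemma~\ref{lem:centerdiff}, goes through word for word with $\hat b,\hat M$ replacing $\bar b,\bar M$.

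Step (ii) follows the proof of Theorem~\ref{thm:502}. Claim~1 (pseudolocality on $M$, via \cite[Theorem~24]{LW20}, giving bounded curvature and derivatives on $\{b\le(1+\theta_1)L\}$) and Claim~2 (extending the diffeomorphism by the gradient flows of $b$ and $\hat b$, and gluing to form the complete almost-Ricci-shrinker $(M',g',f')$ with $M'\cong\hat M$) are purely local and carry over unchanged. The reconstruction of the product structure, Claims~3--5, cannot be run directly on $M'$: the $1/2$-eigenspace of $\Delta_{\hat f}$ is the space of $\Gamma$-invariant linear functions on $\R^{m-n}$, in general of dimension strictly less than $m-n$. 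The remedy is to pass to the $|\Gamma|$-fold cover $\widetilde{M'}\to M'$ associated to $\bar M\to\hat M$. Then $\widetilde{M'}$ is diffeomorphic to $\bar M$, is a complete almost-Ricci-shrinker equal to $(\bar M,\bar g,\bar f)$ outside a compact set, and over $\{b\le(1+\theta_2)L\}$ carries the pullback of $(g,f)$, which --- since $\Phi_L$ is $C^5$-close to a local isometry and hence lifts --- is $C^5$-close to the corresponding region of $\bar M$; thus $\widetilde{M'}$ satisfies the $\bar M$-version of the hypothesis of Claims~3--5 on $\{\tilde b\le(1+\theta_2)L\}$. Running those claims on $\widetilde{M'}$ reconstructs a product diffeomorphism from a region of $\bar M$ onto $\{\tilde b\le(1+\theta_5)L\}\subset\widetilde{M'}$ matching $\bar g,\bar f$ with exponentially small error. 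The deck group $\Gamma$ acts isometrically on $\widetilde{M'}$, agrees with the standard $\Gamma$-action on $\bar M$ away from a compact set, and acts on the span of the low eigenfunctions by a small perturbation of its standard representation on $(\R^{m-n})^*$; choosing the normalizing linear transformation of Claim~4 and the reference submanifold $N_0$ $\Gamma$-equivariantly, the reconstructed diffeomorphism is $\Gamma$-equivariant and descends to the diffeomorphism on $\hat M$ realizing $(\hat\dagger_{(1+\theta_5)L})$.

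The main obstacle is this loss of low eigenfunctions on the quotient: the Colding--Minicozzi reconstruction of the Euclidean factor no longer applies to $M'$ directly, so one must lift to the $\Gamma$-cover, check that the near-base region of the lift is genuinely $C^5$-close to the corresponding region of $\bar M$ (using the gluing and the lift of the near-isometry $\Phi_L$), and propagate $\Gamma$-equivariance through the whole reconstruction so that the final diffeomorphism descends to $\hat M$. The transfer of the rigidity inequality of mixed orders to $\hat M$ by restriction to $\Gamma$-invariants, which makes step (i) work, is routine once one observes that all operators involved commute with $\Gamma$, but it should be recorded carefully.
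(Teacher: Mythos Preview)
Your approach is correct but takes a genuinely different route from the paper. You define the closeness conditions $(\hat\star_L)$, $(\hat\dagger_L)$ via diffeomorphisms from $\hat M$, which forces you to (a) establish the rigidity inequality of mixed orders directly on $\hat M$ (via restriction to $\Gamma$-invariants on $\bar M$), and (b) in the extension step, lift to the $\Gamma$-cover and propagate $\Gamma$-equivariance through the entire reconstruction so that the final diffeomorphism descends. The paper instead defines the conditions $(\star_L')$, $(\dagger_L')$ via \emph{quotient maps} from subsets of $\bar M$ onto $\{b\le L\}\subset M$ induced by a free $\Gamma$-action. With this formulation, both contraction and extension reduce to running the proofs of Theorems~\ref{thm:501} and~\ref{thm:502} on the pulled-back data $(\Psi_L^*g,\Psi_L^*f)$ on $\bar M$: no re-proof of Theorem~\ref{thm:sta1} on $\hat M$ is needed, and no equivariance has to be arranged, since the new quotient map is simply the composition of the diffeomorphism produced on $\bar M$ with the old quotient map. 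The paper's endgame also differs from yours: the iteration is used only to show that the $(M_i,g_i)$ have uniformly bounded curvature and hence $M_i\cong\hat M$; one then lifts $(M_i,g_i,f_i)$ to complete Ricci shrinkers on $\bar M$, observes they converge to $(\bar M,\bar g,\bar f)$, and invokes Theorem~\ref{thm:rigid1} directly. Your route buys a self-contained argument at the quotient level at the price of the extra verifications (a) and (b); the paper's buys economy by reducing everything to the already-established $\bar M$ case.
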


Similar to the proof of Theorem \ref{thm:rigid1}, one needs to formulate two notions of closeness as $(\star_{L})$ and $(\dagger_{L})$ and prove the contraction and extension improvements as Theorem \ref{thm:501} and Theorem \ref{thm:502}.

For this reason, we consider the following two notions for any nearby Ricci shrinker $(M,g,f)$: 
\begin{itemize}
\item[$(\star_{L}')$] There is a Riemannian submersion $\Phi_L$ from a subset of $\bar M$ onto $\{b \le L\}\subset M$ such that $[\bar{g} - \Phi_{L}^{*} g]_5 + [ \bar{f} - \Phi_L^{*} f]_5 \leq e^{\frac{\bar f}{4}-\frac{L^2}{16}}$.

\item[$(\dagger_{L}')$] There is a Riemannian submersion $\Psi_{L}$ from the subset of $\bar M$ onto $\{b \le L\} \subset M$ so that $|\bar{g} - \Psi^{*}_{L} g| + |\bar{f} - \Psi^{*}_{L} f | \leq e^{-\frac{L^2}{33}}$ and for each $k\ge 1$ the $C^k$-norms are bounded by $D'_k L^k$.
\end{itemize}

We first prove the following analog of Theorem \ref{thm:501}.

\begin{thm}
\label{thm:501X}
There exists $L_1'>0$ so that if $(\dagger_{L}')$ holds and $L > L_1'$, then $(\star_{L-2}')$ holds.
\end{thm}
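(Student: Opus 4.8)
The plan is to carry over the proof of Theorem \ref{thm:501} essentially verbatim, working $\Gamma$-equivariantly throughout. Since $(\dagger_L')$ produces a quotient map $\Psi_L$ induced by a free $\Gamma$-action, every tensor appearing in the argument (the perturbation $\{h,\chi\}=\{\eta(\Psi_L^*g-\bar g),\eta(\Psi_L^*f-\bar f)\}$, the auxiliary $1$-form $w_1$ solving $2\PP w_1=\eta\dbf h$, the Killing corrections $dz_i$, etc.) can be taken $\Gamma$-invariant. The key observation is that all the operators involved — $\PP$, $\LL$, $\ddf$, $\dbf$, and the Ricci shrinker operator $\bphi$ — commute with the $\Gamma$-action, so one may simply restrict the whole analysis to the $\Gamma$-invariant subspaces of the relevant Sobolev and H\"older spaces. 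In particular the existence result Proposition \ref{P301}, the elliptic estimates of Section 3, and the diffeomorphism-gauge construction of Theorem \ref{thm:diff2} all have $\Gamma$-invariant versions with identical constants, because invariant data produces invariant solutions by uniqueness.

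First I would fix a $\Gamma$-invariant cutoff $\eta$ on $\bar M$ (equivalently a cutoff on $\hat M$ pulled back by $\pi$) identically $1$ on $\{\bar b\le L-1/2\}$ and supported in the domain of $\Psi_L$, set $\{h,\chi\}$ as above, and note $\Gamma$-invariance together with the interpolation estimate gives \eqref{E501a} and \eqref{E501aa} exactly as before. Then I would run the gauge-fixing step: solve $2\PP w_1=\eta\dbf h$ with $w_1$ perpendicular to the (finite-dimensional, $\Gamma$-invariant) kernel $\KK_\PP$, obtaining the $W^{2,2}$ and weighted Schauder bounds on $w_1$; correct by $\sum_i\lambda_i dz_i$ using the center-of-mass $\mathcal B_i(\eta h,\eta\chi)$, where now the sum runs over a $\Gamma$-invariant basis of $\KK_0$ (the components of $\R^{m-n}$ that descend to $\hat M$); cut off by $\rho$ and form $w=\rho w_2$ and the time-one diffeomorphism $\varphi_w$, which is $\Gamma$-equivariant since $w$ is invariant. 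The identity $\{h',\chi'\}=\{h'',\chi''\}$ and the estimate \eqref{E501f} go through unchanged, as does the center-of-mass control via Lemma \ref{lem:centerdiff}, because that lemma only used pointwise integral identities which are insensitive to quotienting. Iterating the gauge step a fixed number of times yields a quotient map $\Phi$ with the bounds \eqref{E503a}--\eqref{E503b}.

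The heart of the matter is the rigidity inequality of mixed orders, Theorem \ref{thm:sta1}, applied to $\{h_0,\chi_0\}$. Here I would use that the $\isd$ decomposition $\isd=\isd_N\oplus\KK_1$ of Proposition \ref{PX303} restricts to the $\Gamma$-invariant parts: $\Gamma$ acts on $N$ and on $\R^{m-n}$ separately, so $\pi_{\isd}(h_0)=ug_N+h_1$ with $h_1$ a $\Gamma$-invariant element of $\isd_N$ and $u$ a $\Gamma$-invariant quadratic Hermite polynomial on $\R^{m-n}$. The obstruction of order $3$ for $(N,g_N)$ and the order-$2$ obstruction for $ug_N$ only need to be tested against invariant tensors, so the conclusion of Theorem \ref{thm:sta1} holds with $\alpha=\|u\|_{L^2}$, $\beta=\|h_1\|_{L^2}$ measured on $\bar M$ (or equivalently, up to a factor $|\Gamma|$, on $\hat M$). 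Feeding in $\bphi(\{g,f\})=0$ on the image region and the exponential smallness of $\dbf h_0$, $\mathcal B(h_0,\chi_0)$, and $\phi$ away from the neck, exactly as in \eqref{E504a}--\eqref{E504d}, gives $\alpha,\beta\le Ce^{-c L^2}$ with the same exponents; Proposition \ref{prop:tay2} then controls $\zeta',q'$, interpolation upgrades to $C^5$ bounds on $\{\bar b\le L-3/2\}$, and one reads off \eqref{E505b}, i.e. $(\star_{L-2}')$.

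The main obstacle, such as it is, is purely bookkeeping: one must check that the free $\Gamma$-action is compatible with every construction — in particular that $\varphi_w$ and the glued metrics remain $\Gamma$-equivariant (immediate since the defining data are invariant), that the eigenspaces and kernels used ($\KK_\PP$, $\KK_0$, $\isd$, $\isd_N$) are $\Gamma$-modules so one can work with their invariant subspaces without changing any estimate, and that $\Gamma$ being finite means the volume of the annular region $\{\bar b\ge L-1\}$ in $\bar M$ and its image in $M$ differ only by the bounded factor $|\Gamma|$, so the weighted $L^1$ and $L^2$ integrals over the neck carry the same exponential decay. Once these compatibilities are noted, the proof is line-for-line identical to that of Theorem \ref{thm:501}, so I would present it as ``the proof is identical to that of Theorem \ref{thm:501}, carried out $\Gamma$-equivariantly, using the $\Gamma$-invariant versions of Proposition \ref{P301}, Theorem \ref{thm:diff2}, Proposition \ref{PX303}, Theorem \ref{thm:sta1} and Proposition \ref{prop:tay2}; we only indicate the changes.''
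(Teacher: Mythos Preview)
Your approach is correct but more elaborate than the paper's. The paper observes that since $\Psi_L$ is a covering map, the pullback $(\Omega,\Psi_L^*g,\Psi_L^*f)$ is itself a compact manifold with boundary satisfying the Ricci shrinker equation \eqref{E100}, close to $(\bar M,\bar g,\bar f)|_\Omega$ in exactly the sense of $(\dagger_L)$. One can therefore run the proof of Theorem~\ref{thm:501} verbatim on this data --- with no mention of $\Gamma$ at all --- to obtain a diffeomorphism $\Phi$ from $\{\bar b\le L-3/2\}$ onto a subset of $\Omega$ with the required $C^5$ closeness. Restricting $\Phi$ to $\Omega':=\Phi^{-1}\{\Psi_L^*b\le L-2\}$ and composing, $\Psi_L\circ\Phi:\Omega'\to\{b\le L-2\}$ is the desired quotient map; the free $\Gamma$-action on the $\Gamma$-invariant set $\{\Psi_L^*b\le L-2\}$ pulls back via $\Phi$ to $\Omega'$.

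The two routes are in fact equivalent: the auxiliary $1$-forms $w$ you construct are automatically $\Gamma$-invariant by uniqueness (the right-hand sides are invariant and the solution is uniquely determined in $\KK_\PP^\perp$), so your equivariant $\Phi$ coincides with the paper's. The paper's framing simply saves the bookkeeping of verifying invariance at every step, and avoids the side claim that $\Gamma$ acts on $N$ and $\R^{m-n}$ separately (which is not needed, and not obviously true in general).
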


\begin{proof}
From $(\dagger'_{L})$, there exists a Riemannian submersion $\Psi_L:\Omega \subset \bar M \to \{b \le L\} \subset M$. In particular, $(\Omega, \Psi_L^* g,\Psi_L^* f)$ is a compact manifold with boundary satisfying the Ricci shrinker equation \eqref{E100}. By the same proof of Theorem \ref{thm:501}, we obtain a diffeomorphism $\Phi$ from $\{\bar b \le L-3/2\}$ onto a subset of $\Omega$ such that
\begin{align*}
[\Phi^* \Psi_L^* g-\bar g]_5+[\Phi^* \Psi_L^* f-\bar f]_5 \le e^{\frac{\bar f}{4}-\frac{L^2}{16}}
\end{align*}
Therefore, if we restrict $\Phi$ on a subset $\Omega' \subset \bar M$ with $\Phi(\Omega')=\{\Psi_L^* b \le L-2\}$, we obtain $(\star_{L-2}')$ by choosing the Riemannian submersion to be $\Psi_L \circ \Phi$. 
\end{proof}

Next, we prove an analog of Theorem \ref{thm:502}, whose proof can be adapted.
\begin{thm}
\label{thm:502X}
There exist positive constants $L_2'$ and $\theta'$ so that if $(\star'_{L})$ holds with $L \geq L_2'$, then $(\dagger'_{(1+\theta')L})$ holds.
\end{thm}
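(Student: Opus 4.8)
The plan is to show that the proof of Theorem \ref{thm:502} is entirely local away from a small neck, so it descends to the quotient. The key observation is that all of the constructions in Theorem \ref{thm:502} — the curvature estimate of Claim 1 via pseudolocality, the gluing of Claim 2, the eigenfunction analysis of Claims 3--5, and the final reconstruction of the product structure — are carried out in regions $\{b \le (1+\theta)L\}$ for suitable $\theta$, and the pseudolocality theorem \cite[Theorem 24]{LW20} together with Shi's estimates are valid on any compact Ricci shrinker region with boundary. So first I would lift: given $(\star'_L)$, we have a quotient map $\Phi_L$ from a subset of $\bar M$ onto $\{b\le L\}\subset M$ induced by a free $\Gamma$-action, and hence $(\{b\le L\},g,f)$ is itself the quotient by $\Gamma$ of a region $(\tilde\Omega,\tilde g,\tilde f)\subset \bar M$ on which $\Phi_L$ pulls back to a diffeomorphism satisfying $[\bar g - \Phi_L^*g]_5 + [\bar f - \Phi_L^*f]_5 \le e^{\bar f/4 - L^2/16}$.

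The second step is to run the entire argument of Theorem \ref{thm:502} on a $\Gamma$-invariant region. Claim 1 (bounded curvature on $\{b\le(1+\theta_1)L\}$, with all derivatives) holds verbatim since the pull-back metric $g'(t)=(1-t)(\psi^t)^*g$ from the Ricci-flow self-similarity is $\Gamma$-equivariant (as $\na f$ is $\Gamma$-invariant, being the gradient of a $\Gamma$-invariant function), and pseudolocality plus Shi's estimates are curvature estimates insensitive to the global topology. Claim 2's gluing is done using the flows of $\na b/|\na b|^2$, again $\Gamma$-invariant vector fields, so the glued manifold $(M',g',f')$ can be taken $\Gamma$-equivariant — or equivalently one glues upstairs and then quotients. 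For Claims 3--5, instead of the first $m-n$ eigenfunctions of $\Delta_{f'}$ on $M'$, one works with the first $m-n$ \emph{$\Gamma$-invariant} eigenfunctions (the functions $z_i$ on $\R^{m-n}$ are not in general $\Gamma$-invariant, but their span is $\Gamma$-invariant, and on the quotient the relevant eigenspace is exactly the $\Gamma$-invariant part of the $\frac12$-eigenspace upstairs). The estimates $\|\na^2 v_i\|_{L^2}^2 + |\mu_i - 1/2| \le Ce^{-L^2/16}$ and the Schauder/growth estimates of Claim 4 are proved identically, using the growth estimate of Appendix \ref{app:A} which is stated for general smooth metric measure spaces and hence applies on the quotient. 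Then $f_0 = n/2 + \frac14\sum u_i^2$ reconstructs the potential, $N_0 = \{f_0 = n/2\}$ is a smooth submanifold, and the product structure $N_0\times\R^{m-n}$ is recovered by flowing along $\na u_i$ exactly as before; the final diffeomorphism $\Psi_2$ and the normalization of the cross-section metric via Theorem \ref{TH202} go through unchanged since $N_0$ is a small perturbation of (a piece of) $N$, not of $\hat N$ — the cross section $N$ is unaffected by the quotient, which acts only on the $\R^{m-n}$ factor (or by a combination whose action on $N$ is free and finite, so $N_0$ is diffeomorphic to $N$ in either reading).

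The third step is bookkeeping: the diffeomorphism $\Psi: N\times B_{g_E}(0,(1+\theta_4)L)\to M$ produced upstairs descends to a quotient map $\hat\Psi$ from a subset of $\hat M = \bar M/\Gamma$ onto $\{b\le(1+\theta_5)L\}\subset M$, because the construction is $\Gamma$-equivariant by the above; the estimates $|\hat\Psi^*g - \hat g|\le CL^{m_3+1}e^{-L^2/32}$, $|\hat\Psi^*f - \hat f|\le CL^{m_3+3}e^{-L^2/32}$, and the $C^k$-bounds $\le C_kL^k$ are inherited verbatim, and $e^{-L^2/32}\ll e^{-L^2/33}$ for $L$ large, so $(\dagger'_{(1+\theta')L})$ holds with $\theta' := \theta_5$. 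The main obstacle — and the only place requiring genuine care rather than transcription — is verifying that the eigenspace analysis in Claims 3--4 is consistent with the $\Gamma$-action: one must check that the $(m-n)$-dimensional space of almost-splitting functions upstairs is $\Gamma$-invariant (so that the $u_i$ can be chosen to transform correctly and $N_0$ comes out $\Gamma$-invariant), and that the matrix normalizations $\tilde Q$ and the Ebin--Palais gauge fixing in \eqref{E510d} can be performed equivariantly. This follows from the uniqueness built into those constructions (the $\frac12$-eigenspace is canonically defined, hence $\Gamma$-stable, and the gauge-fixing diffeomorphism of Theorem \ref{TH202} is unique and depends analytically on the metric, hence commutes with the $\Gamma$-action on a $\Gamma$-invariant metric), but it is the step I would write out in full detail.
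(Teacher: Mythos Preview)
Your overall instinct---lift to the cover and reuse Theorem \ref{thm:502}---is exactly what the paper does, but the paper's execution is considerably simpler than your proposal, and the simplification matters. The paper does \emph{not} track $\Gamma$-equivariance of the eigenfunctions or of any of the intermediate constructions. Instead, after establishing the curvature bounds on $\{b\le(1+\theta_1)L\}\subset M$ via pseudolocality, it first extends the quotient map $\Phi_L$ to a quotient map $\tilde\Phi_L$ from a larger $\tilde\Omega\subset\bar M$ onto $\{b\le(1+\theta_1)L\}$ (this is possible because $\{b\le L\}$ and $\{b\le(1+\theta_1)L\}$ are diffeomorphic once $f$ has no critical points in the neck). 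It then sets $\tilde g:=\tilde\Phi_L^*g$, $\tilde f:=\tilde\Phi_L^*f$ on $\tilde\Omega$, glues $(\tilde\Omega,\tilde g,\tilde f)$ with the end of $(\bar M,\bar g,\bar f)$ to obtain a complete $(M',g',f')$ diffeomorphic to $\bar M$, and runs the proof of Theorem \ref{thm:502} \emph{verbatim} on $M'$. This produces a diffeomorphism $\Psi:N\times B_{g_E}(0,(1+\theta_4)L)\to M'$ satisfying \eqref{E517c}. The required quotient map for $(\dagger'_{(1+\theta_5)L})$ is then simply the composition $\Psi':=\tilde\Phi_L\circ\Psi$.

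The point is that the $\Gamma$-equivariance you worry about is not needed: the eigenfunctions $v_i$ on $M'$ are just the ordinary first $m-n$ eigenfunctions of $\Delta_{f'}$, with no invariance requirement, and $\Psi$ need not intertwine any $\Gamma$-action---all that structure is carried by $\tilde\Phi_L$ alone. Your proposed route through ``$\Gamma$-invariant eigenfunctions'' is not only unnecessary but potentially a trap: if $\Gamma$ acts nontrivially on the $\R^{m-n}$ factor (say by a rotation with no fixed nonzero vector), the $\Gamma$-invariant part of the $\tfrac12$-eigenspace upstairs is strictly smaller than $(m-n)$-dimensional, so you would not have enough almost-splitting functions to reconstruct the product structure on the quotient. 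The paper's ``lift, run upstairs, compose'' approach sidesteps this entirely.
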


\begin{proof}
By the same reason as Claim 1 in the proof of Theorem \ref{thm:502}, we obtain that on $\{b \le (1+\theta_1) L\}$,
\begin{align} \label{E512a}
|\Rm_g| \le C \quad \text{and} \quad |\na^k \Rm_g| \le C_k, \quad \forall k\ge 1.
\end{align}

By our assumption $(\star'_{L})$, there exists a Riemannian submersion $\Phi_{L}$ from a subset $\Omega $ of $\bar M$ onto $\{b \le L\}$ so that 
\begin{align} \label{E512c}
[\bar{g} - \Phi_{L}^{*} g]_5 + [ \bar{f} - \Phi^{*}_{L} f]_5 \leq e^{\frac{\bar f}{4}-\frac{L^2}{16}}.
\end{align}

If $L$ is sufficiently large, it follows from \eqref{E512a} and \eqref{E101} that $\{b \le L\}$ is diffeomorphic to $\{b \le (1+\theta_1)L\}$. Therefore, we can extend $(\Omega, \Phi_L)$ to $(\tilde \Omega,\tilde \Phi_L)$ such that $\tilde \Phi_L$ is a Riemannian submersion from $\tilde \Omega$ onto $\{b \le (1+\theta_1)L\}$. For simplicity, we set
\begin{align*}
\tilde g:=(\tilde \Phi_L)^*g, \quad \tilde f:=(\tilde \Phi_L)^*f,\quad \tilde b:=2\sqrt{\tilde f}.
\end{align*}
It is clear from \eqref{E512c} that on $\{\tilde b \le L\}$ we have
\begin{align*}
[\bar{g} - \tilde g]_5 + [ \bar{f} - \tilde f]_5 \leq e^{\frac{\bar f}{4}-\frac{L^2}{16}}.
\end{align*}

As in the proof of Theorem \ref{thm:502}, we obtain a gluing $(\bar M,g',f')$ of $(\tilde \Omega,\tilde g,\tilde f)$ and $(\bar M, \bar g, \bar f)$ through a diffeomorphism. Following the same strategy, there exists a diffeomorphism $\Psi: N \times B_{g_E}(0,(1+\theta_4)L) \to \bar M$ such that
\begin{align}\label{E517c}
|\Psi^* g'-\bar g|+|\Psi^* f'-\bar f| \le C L^{m_4}e^{-\frac{L^2}{32}+C\theta_4 L^2}
\end{align}
and for each $k \ge 1$, the $C^k$-norms are bounded by $C_k L^k$. Therefore, if we define $\Sigma:=\Psi^{-1}(\{b' \le (1+\theta_5)L\})$ and consider the map $\Psi':=\tilde \Phi_L \circ \Psi$ from $\Sigma$ onto $\{b \le (1+\theta_5)L\} \subset M$, then $(\dagger'_{(1+\theta_5)L})$ holds with respect to $\Psi'$ by \eqref{E517c}.

In sum, the proof Theorem \ref{thm:502X} is complete.
\end{proof}

With the help of Theorem \ref{thm:501X} and Theorem \ref{thm:502X}, the proof of Theorem \ref{thm:rigid2} is straightforward.

\emph{Proof of Theorem \ref{thm:rigid2}}: Suppose the conclusion fails. Then there exists a sequence of Ricci shrinkers $(M^m_i,g_i,f_i,p_i)$ converging to $(\hat M,\hat g,\hat f)$ in the pointed-$C^{\infty}$-Cheeger-Gromov sense by \cite[Theorem 1.1]{LLW21} such that no $(M_i,g_i)$ is isometric to $(\hat M, \hat g)$.

We fix a constant $L=2\max\{L_1',L_2'\}$, where $L_1'$ and $L_2'$ are the constants in Theorem \ref{thm:501X} and Theorem \ref{thm:502X}, respectively. By our assumption, there exists a large $I$ such that for any $i \ge I$, there is a diffeomorphism $\varphi_i$ from a subset $\Omega_i$ of $\hat M$ onto $\{b_i \le L \}\subset M_i$ such that
\begin{align} \label{E518a}
[\hat{g} - \varphi_i^{*} g_i]_5 + [ \hat{f} - \varphi_i^{*} f_i]_5 \leq e^{\frac{\hat f}{4}-\frac{L^2}{16}}.
\end{align}
By considering the map $\varphi_i\circ\pi:\bar M \to \{b_i \le L\}$, it is clear from \eqref{E518a} that $(\star_{L}')$ holds for $(M_i,g_i,f_i)$. Applying Theorem \ref{thm:501X} and Theorem \ref{thm:502X} repeatedly, there exists a sequence $L_j \to \infty$ such that $(\star_{L_j}')$ holds for $(M_i,g_i,f_i)$ for any $j$ and $i \ge I$. In particular, it implies that all $(M_i,g_i)$ have uniformly bounded curvature. Consequently, it follows from \eqref{E101} that $M_i$ is diffeomorphic to $\hat M$ for any $i \ge I$. Therefore, we obtain a covering map $\pi_i: \bar M \to M_i$. By our assumption, it is clear that the sequence of Ricci shrinkers $(\bar M,\pi_i^*g_i,\pi_i^* f_i,\pi_i^{-1}(p_i))$ converge to $(\bar M,\bar g, \bar f)$ in the pointed-$C^{\infty}$-Cheeger-Gromov sense. Thus, it follows from Theorem \ref{thm:rigid1} that if $i$ is sufficiently large, $(\bar M,\pi_i^*g_i,\pi_i^* f_i)=(\bar M, \bar g, \bar f)$. Therefore, we conclude that $(M_i,g_i)$ is isometric to $(\hat M, \hat g)$ for sufficiently large $i$. However, this contradicts our assumption.

Based on Theorem \ref{thm:rigid1} and Theorem \ref{thm:rigid2}, we obtain the following rigid property of the $(m-3)$-th eigenvalue for Ricci shrinkers.

\begin{cor}
Let $(M^m,g,f)$ be a Ricci shrinker with $\mmu(g,1) \ge -A$. Then there exists a small constant $\ep=\ep(m,A)>0$ such that if the $(m-3)$-th eigenvalue $\lambda_{m-3}$ of $\Delta_f$ satisfies
\begin{align*}
\lambda_{m-3} \le \frac{1}{2} +\ep,
\end{align*}
then $(M, g)$ is isometric to one of the following spaces with standard metric:
\begin{align*}
S^3/\Gamma \times \R^{m-3}, \quad S^2 \times \R^{m-2}, \quad (S^2 \times \R)/\Z_2 \times \R^{m-3}, \quad \R^m,
\end{align*}
where $\Gamma \le \mathrm{O}(4)$ is a finite subgroup acting freely on $S^3$.
\end{cor} 

\begin{proof}
Suppose, for contradiction, that the conclusion does not hold. Then there exists a sequence of Ricci shrinkers $(M_i^m,g_i,f_i)$ with $\mmu(g_i,1) \ge -A$ and $\lambda_{i, m-3} \searrow 1/2$, where $\lambda_{i,m-3}$ is the $(m-3)$-th eigenvalue of $\Delta_{f_i}$. 

By taking subsequence if necessary, it follows from \cite{LLW21} that
\begin{align}
(M_i, g_i, f_i,p_i) \longright{pointed-\hat{C}^{\infty}-Cheeger-Gromov} (M_{\infty}, g_{\infty}, f_{\infty},p_{\infty}), 
\label{E520a} 
\end{align}
where $p_i$ is a minimum point of $f_i$, and $(M_{\infty}, g_{\infty},f_{\infty}, p_{\infty})$ is a singular Ricci shrinker with singularities of codimension at least $4$. Moreover, the convergence \eqref{E520a} is smooth away from the singularities of $M_{\infty}$.

From the Ricci shrinker equation, the metric measure space $(M_i,g_i,e^{-f_i}dV_{g_i})$ is an $\textrm{RCD}(1/2,\infty)$ space, and the convergence \eqref{E520a} is also in measured-Gromov-Hausdorff sense (cf. \cite{Stu06}). By \cite[Theorem 7.8]{GMS15}, the $(m-3)$-th eigenvalue of $\Delta_{f_{\infty}}$ is $1/2$. From \cite[Theorem 1.1]{GKKO20}, we conclude that $(M_{\infty}, g_{\infty})$ splits off an $\R^{m-3}$ factor and is therefore a smooth Ricci shrinker. Using the classification of three-dimensional Ricci shrinkers, $(M_{\infty},g_{\infty})$ must be isometric to one of: $S^3/\Gamma \times \R^{m-3}$, $S^2 \times \R^{m-2}$, $(S^2 \times \R)/\Z_2 \times \R^{m-3}$ and $\R^m$, where $\Gamma \le \mathrm{O}(4)$ is a finite subgroup acting freely on $S^3$.

By Theorem \ref{thm:rigid1} and Theorem \ref{thm:rigid2}, it follows that $(M_i, g_i)$ must also be isometric to one of these spaces for sufficiently large $i$, contradicting the assumption. Therefore, the conclusion holds.
\end{proof}

\section{Uniqueness of the tangent flow}

As another application of the rigidity of $(\bar M, \bar g, \bar f)=(N^n \times \R^{m-n},g_N \times g_E, |z|^2/4+n/2)$ and the related techniques, we prove the uniqueness of the tangent flow if one tangent flow is $(\bar M, \bar g, \bar f)$. In the case of the mean curvature flow, the uniqueness of the tangent flow at each cylindrical singular point is proved in \cite{CM15}\cite{CM19}. Moreover, under the type-I assumption of the curvature, the corresponding uniqueness of tangent flow in the setting of the Ricci flow is also proved; see \cite[Theorem 9.21]{CM21b}. 

In this section, we mainly focus on compact Ricci flows and their tangent flows. Throughout this section, we assume $(M^m,g(t))_{t \in [-T,0)}$ is an $m$-dimensional Ricci flow on a closed manifold $M$ such that $0$ is the singular time. We first recall the following definition of a tangent flow at the singular time, which is based on the theory of $\IF$-convergence established by Bamler \cite{Bam20a, Bam20b, Bam20c}. We refer the readers to \cite[Definition 3.1]{Bam20a} and \cite[Definition 3.20, 3.57, 5.1, 5.8]{Bam20b} for the precise definitions of the related concepts.

\begin{defn}\label{def:tangent}
Let $(\mu_t)_{t \in [-T,0)}$ be a conjugate heat flow on $(M^m,g(t))_{t \in [-T,0)}$ such that for any $t \in [-T,0)$,
\begin{align}\label{E601aa}
\emph{\Var}_t(\mu_t) \le H_m |t|,
\end{align}
where $H_m:=(m-1)\pi^2/2+4$. A tangent flow at $(\mu_t)$ is a metric soliton that is the $\IF$-limit of a sequence of metric flow pairs $(\XX^i, (\mu^i_t)_{t \in I})$, where $\XX^i$ is a metric flow over $I:=(-2,-1/2]$ induced by $(M,|t_i|^{-1} g(t|t_i|))$ and $\mu^i_t=\mu_{t|t_i|}$ for a sequence $t_i \nearrow 0$.
\end{defn}

Notice that for any sequence $t_i \nearrow 0$, if we define the metric flow pair $(\XX^i, (\mu^i_t)_{t \in I})$ as in the above definition, it follows from the $\IF$-compactness theorem (\cite[Theorem 7.6]{Bam20b}) that by taking a subsequence, 
\begin{equation} \label{Fconv}
(\XX^i, (\mu^i_t)_{t \in I}) \xrightarrow[i \to \infty]{\quad \IF, \CF \quad} (\XX^\infty, (\mu^\infty_t)_{t \in I}),
\end{equation}
for a correspondence $\CF$ (\cite[Definition 5.4]{Bam20b}) between the metric flows $\XX^i$, $i \in \mathbb N \cup \{\infty\}$. The limiting metric flow pair $(\XX^\infty, (\mu^\infty_t)_{t \in I})$ is future continuous (\cite[Definition 4.25]{Bam20b}) and $H_m$-concentrated (\cite[Definition 3.30]{Bam20b}) over $I$.

Since all metric flows $\XX^i$ are induced from a given compact Ricci flow $(M,g(t))$, the corresponding Nash entropies of $\mu^i_t$ are uniformly bounded (cf. \cite[Section 2.3]{Bam20c}). Therefore, it follows from \cite[Theorem 2.3, 2.4]{Bam20c} that there exists a decomposition
\begin{equation*}
\XX^{\infty}=\mathcal R \sqcup \mathcal S, 
\end{equation*}
such that $\mathcal R$ is given by a Ricci flow spacetime $(\RR,g_t^{\infty})_{t \in I}$ in the sense of \cite[Definition 9.1]{Bam20b} and $\text{dim}_{\mathcal M^*}(\mathcal S) \le m-2$, where $\text{dim}_{\mathcal M^*}$ denotes the $*$-Minkowski dimension in \cite[Definition 3.42]{Bam20b}. 

Moreover, it follows from \cite[Theorem 2.6]{Bam20c} that the limiting metric flow pair $(\XX^\infty, (\mu^\infty_t)_{t \in I})$ must be a metric soliton (\cite[Definition 3.57]{Bam20b}). Roughly speaking, it means that all the time-slices $(\XX^{\infty}_t,|t|^{-\frac 1 2}d^{\infty}_t,\mu^{\infty}_t)$ are isometric as metric measure spaces. In addition, it follows from \cite[Addendum 2.7] {Bam20c} that on the regular part $\RR_t$ of the time-slice $\XX^{\infty}_t$,
\begin{align*} 
\Rc(g_t^{\infty})+\na^2_{g_t^{\infty}} f_t^{\infty}=\frac{g_t^{\infty}}{2|t|},
\end{align*}
where $f_t^{\infty}$ is determined by $\mu^{\infty}_t=(4\pi|t|)^{-\frac{m}{2}}e^{-f_t^{\infty}} \,dV_{g_t^{\infty}}$. Furthermore, the restriction of $d^{\infty}_t$ on $\RR_t$ agrees with the length metric of $g^{\infty}_t$ (cf. \cite[Theorem 2.4]{Bam20c}), and $\MS_t$ has the Minkovski dimension at most $m-4$ by \cite[Theorem 2.18]{Bam20c}. Intuitively speaking, $(\XX^\infty, (\mu^\infty_t)_{t \in I})$ is a self-similar metric flow pair such that $\XX^\infty_{-1}$ is a Ricci shrinker with mild singularities.

Since $\XX^\infty$ is continuous (see \cite[Definition 4.25]{Bam20b}), we may assume that the convergence \eqref{Fconv} is uniform over $J:=\{-1\}$, by \cite[Theorem 7.6]{Bam20b}.

With these preparations, we prove the main result of this section.

\begin{thm}\label{thm:601}
Given a conjugate heat flow $(\mu_t)_{t \in [-T,0)}$ satisfying \eqref{E601aa} on $(M,g(t))_{t \in [-T,0)}$, if a tangent flow at $(\mu_t)$ is $(\bar M, \bar g,\bar f)$ as a metric soliton, then any tangent flow at $(\mu_t)$ is.
\end{thm}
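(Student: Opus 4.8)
The plan is to argue by contradiction using the structure theory of $\IF$-limits together with the local contraction--extension machinery of Section 5. Suppose there are two tangent flows at $(\mu_t)$: one is $(\bar M, \bar g, \bar f)$, arising as the $\IF$-limit along $t_i \nearrow 0$, and another, call it $(\XX', (\mu'_t))$, arising along $s_i \nearrow 0$, which is not isometric to $(\bar M, \bar g, \bar f)$. Since the rescaled flows all come from the fixed compact Ricci flow $(M, g(t))$, the Nash entropies are uniformly bounded, so every $\IF$-limit is a metric soliton whose $(-1)$-time-slice is a Ricci shrinker with a singular set of codimension at least $4$; moreover the convergence is uniform over $J = \{-1\}$. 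The first step is a continuity/connectedness argument in the scale parameter: using that $t \mapsto (\XX_t, |t|^{-1/2} d_t, \mu_t)$ varies continuously in the pointed-Gromov-Hausdorff (indeed $\IF$) sense along the flow, and that the set of scales whose blow-up limit is (close to) $(\bar M,\bar g,\bar f)$ is both open and closed within the set of scales where some blow-up limit is a metric soliton, one deduces that in fact \emph{every} blow-up limit along a sequence $r_i \nearrow 0$ that lies ``between'' the $t_i$-scales and the $s_i$-scales must converge to $(\bar M,\bar g,\bar f)$. Concretely: pick a sequence of intermediate scales interpolating between $\{t_i\}$ and $\{s_i\}$ and show that if the blow-up limit ever fails to be $(\bar M,\bar g,\bar f)$, one can extract a limit that is a metric soliton arbitrarily close to $(\bar M,\bar g,\bar f)$ in the $\IF$-sense but not equal to it.

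The second step is to upgrade this $\IF$-closeness to smooth closeness on large compact sets near the base point. Because the $(-1)$-slice of each such metric soliton $(M_j, g_j, f_j)$ is a Ricci shrinker with $\mmu$ bounded below (entropy bound) and with singular set of codimension $\geq 4$, the weak-compactness theory of \cite{LLW21}\cite{HLW21} applies: a subsequence converges in the pointed-$C^\infty$-Cheeger-Gromov sense on the regular part to a Ricci shrinker conifold, which by the previous step must be $(\bar M,\bar g,\bar f)$ (a smooth manifold). Hence, for $j$ large, there is a diffeomorphism from a large ball in $\bar M$ onto $\{b_j \le L\}$ in $M_j$ realizing the condition $(\star_L)$ from Section 5, for $L = L(j) \to \infty$. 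Here is where the pseudolocality theorem for Ricci shrinkers \cite[Theorem 24]{LW20} is essential: each $M_j$ a priori has only the regularity guaranteed by weak compactness and may carry singularities far from the base point, so one cannot directly invoke Theorem \ref{thm:rigid1}; instead one works locally, exactly as in Theorem \ref{thm:rigid2}, extending $(\star_L)$ outward.

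The third step is the iteration: apply Theorem \ref{thm:501} (contraction: $(\dagger_L) \Rightarrow (\star_{L-2})$) and Theorem \ref{thm:502} (extension: $(\star_L) \Rightarrow (\dagger_{(1+\theta)L})$) alternately to $(M_j, g_j, f_j)$. Since $L(j) \to \infty$, for each fixed $j$ large enough the hypotheses $L > L_1, L \ge L_2$ are met, and the alternation produces scales $L_k \to \infty$ with $(\star_{L_k})$ holding for $(M_j, g_j, f_j)$. This forces $(M_j, g_j)$ to have globally bounded curvature and then to be isometric to $(\bar M, \bar g)$, contradicting the assumption that the second tangent flow is not $(\bar M,\bar g,\bar f)$. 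Translating back: the metric soliton $(\XX', (\mu'_t))$ must be $(\bar M, \bar g, \bar f)$ as a metric soliton, and since the convergence is smooth this identification is genuine, so the tangent flow is unique.

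\textbf{Main obstacle.} The hardest part is the first step: making precise the ``continuity in scale'' argument within the $\IF$-framework and extracting, from the failure of uniqueness, a sequence of metric solitons that $\IF$-converge to $(\bar M,\bar g,\bar f)$ yet are not isometric to it. One must be careful that the intermediate blow-ups are genuinely metric solitons (they are, by \cite[Theorem 2.6]{Bam20c}, once entropy bounds are in hand) and that ``close in $\IF$-distance but not isometric'' is stable enough to feed into the weak-compactness machinery. The subsequent smoothing (Step 2) and the local contraction--extension iteration (Step 3) are, by contrast, direct adaptations of Theorem \ref{thm:rigid2} and its proof, with pseudolocality doing the work of localizing away from the singular set; the only new wrinkle there is checking that the pseudolocality and growth estimates (Appendix \ref{app:A}) remain valid for metric solitons that arise as tangent flows of a compact Ricci flow, which follows because each such soliton is a genuine (possibly singular) Ricci shrinker with the required entropy and no-local-collapsing bounds.
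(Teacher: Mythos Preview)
Your overall architecture---continuity in the scale parameter to produce a sequence of metric solitons $\IF$-close to $(\bar M,\bar g,\bar f)$ but not isometric to it, then regularity near the base point, then the local contraction--extension iteration---matches the paper's proof. Step~1 and Step~3 are essentially correct as you describe them, and you correctly identify that the solitons may carry singularities away from the base point so that Theorem~\ref{thm:rigid1} cannot be invoked globally.

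The gap is in Step~2. You invoke the weak-compactness theory of \cite{LLW21}\cite{HLW21} to upgrade $\IF$-closeness to smooth closeness, but those results take as input \emph{smooth} Ricci shrinkers in $\MM_n(A)$ and produce a Ricci shrinker conifold limit. Here the $(-1)$-slices of the metric solitons $(\XX^{\infty,j})$ are already \emph{singular} objects, so they are not in the hypothesis of those theorems; you have no a~priori reason why the $H$-center $z_j$ sits inside the regular part, let alone why a ball of fixed radius $D$ around it is smooth with bounded curvature. The paper fills this gap with a different mechanism drawn entirely from Bamler's $\IF$-theory: one argues by contradiction that if a singular point $x_j$ lay within $B(z_j,D)$, then by lifting $x_j$ back to the approximating compact Ricci flows (via strict convergence within the correspondence) and using continuity of the pointed Nash entropy under $\IF$-convergence \cite[Theorem~2.10]{Bam20c}, the Nash entropy $\NN_{x_j}(\delta)$ would be nearly zero. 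This contradicts the $\ep$-regularity theorem \cite[Theorem~10.2]{Bam20a}, \cite[Theorem~15.45(b)]{Bam20c}, which forces such a point to be regular with bounded curvature. Once this Claim is established, the pointed-$C^\infty$-Cheeger-Gromov convergence \eqref{E604a} follows, and your Step~3 proceeds as you outline.
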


\begin{proof}
For any $s \in [-T,0)$, we set $(\XX^s,(\mu^s_t)_{t \in I})$ to be the metric flow pair over $I$ induced by $(M,|s|^{-1}g(st))$ and $\mu^s_{t}:=\mu_{st}$. By our assumption, there exists a sequence $t_i \nearrow 0$ such that
\begin{align} \label{E602a}
\lim_{i \to \infty} d_{\IF}^J \lc (\XX^{t_i},(\mu^{t_i}_t)_{t \in I}), (\XX^{\infty},(\mu^{\infty}_t)_{t \in I})\rc=0,
\end{align}
where $(\XX^{\infty},(\mu^{\infty}_t)_{t \in I})$ is the metric soliton such that $\XX^{\infty}_{-1}=(\bar M, \bar g)$ and $\mu^{\infty}_{-1}=(4\pi)^{-\frac{m}{2}}e^{-\bar f-c}$. For the definition of the $\IF$-distance, see \cite[Definition 5.8]{Bam20b}. Here, $c=\boldsymbol{\mu}(\bar g,1)$ so that $\mu^{\infty}_{-1}$ is a probability measure (cf. \eqref{EX201b}). In particular, $\XX^{\infty}$ is smooth everywhere.

Suppose the conclusion fails. Then there exists another sequence $t_i' \nearrow 0$ such that
\begin{align*} 
\lim_{i \to \infty} d_{\IF}^J \lc (\XX^{t_i'},(\mu^{t_i'}_t)_{t \in I}), (\XX^{\infty,\prime},(\mu^{\infty,\prime}_t)_{t \in I})\rc=0,
\end{align*}
and for some constant $\ep_0>0$,
\begin{align*} 
d_{\IF}^J \lc (\XX^{\infty},(\mu^{\infty}_t)_{t \in I}), (\XX^{\infty,\prime},(\mu^{\infty,\prime}_t)_{t \in I})\rc \ge 2\ep_0.
\end{align*}

Without loss of generality, we assume for any $i \ge 1$,
\begin{align*}
d_{\IF}^J \lc ((\XX^{t_i},(\mu^{t_i}_t)_{t \in I}), (\XX^{t'_i},(\mu^{t'_i}_t)_{t \in I}))\rc \ge \ep_0.
\end{align*}

We define $\ep_j=2^{-j} \ep_0$. For any fixed $j \ge 1$, we choose by the continuity of the Ricci flow a time $s_{i,j}$ in between $t_i$ and $t_i'$ such that
\begin{align} \label{E602cc}
d_{\IF}^J \lc ((\XX^{t_i},(\mu^{t_i}_t)_{t \in I}), (\XX^{s_{i,j}},(\mu^{s_{i,j}}_t)_{t \in I}))\rc=\ep_j.
\end{align}
As discussed above, for any fixed $j \ge 1$, $(\XX^{s_{i,j}},(\mu^{s_{i,j}}_t)_{t \in I})$ $\IF$-subconverges to a metric soliton. By taking a diagonal sequence, we may assume that for any fixed $j \ge 1$, 
\begin{align} \label{E602d}
\lim_{i \to \infty} d_{\IF}^J \lc ((\XX^{s_{i,j}},(\mu^{s_{i,j}}_t)_{t \in I}), (\XX^{\infty,j},(\mu^{\infty,j}_t)_{t \in I})\rc=0,
\end{align}
where $(\XX^{\infty,j},(\mu^{\infty,j}_t)_{t \in I}$ is a metric soliton. In particular, it follows from \eqref{E602cc} and \eqref{E602d} that
\begin{align*}
d_{\IF}^J \lc (\XX^{\infty,j},(\mu^{\infty,j}_t)_{t \in I}), (\XX^{\infty},(\mu^{\infty}_t)_{t \in I})\rc=\ep_j \xrightarrow{j \to \infty} 0.
\end{align*}

Next, we set $d_j^i=|s_{i,j}|^{-\frac 1 2}d_{g(s_{i,j})}$ and denote the distance functions on $\XX^{\infty,j}_{-1}$ and $\XX^{\infty}_{-1}$ by $d^{\infty}_j$ and $d^{\infty}$ respectively.

\textbf{Claim}: For any constant $D>0$, there exists $J_D$ such that for any $j \ge J_D$, $B_{d^{\infty}_j}(z_j,D)$ contains no singularity, where $z_j \in \XX^{\infty,j}_{-1}$ is an $H$-center (cf. \cite[Definition 3.10]{Bam20b}) of $\mu^{\infty,j}_{-1}$. Moreover, the curvature on $B_{d^{\infty}_j}(z_j,D)$ is uniformly bounded.

\emph{Proof of Claim}: Suppose otherwise. By taking a subsequence, we assume that $x_j \in B_{d^{\infty}_j}(z_j,D)$ is a singular point. We set $z_{i,j} \in \XX^{s_{i,j}}_{-1}$ to be an $H$-center of $\mu^{s_{i,j}}_{-1}$ and assume that the $\IF$-convergence \eqref{E602d} is within a correspondence $\CF^j$ (cf. \cite[Theorem 6.12]{Bam20b}). Since $\mu^{s_{i,j}}_{-1}$ converges in the $W_1$-Wasserstein sense to $\mu^{\infty, j}_t$ within $\CF^j$, 
for any $j$, there exists a sequence of points $x_{i,j} \in \XX^{s_{i,j}}_{-1}$ such that $x_{j,i}$ strictly converge to $x_j$ within $\CF^j$ as $i \to \infty$, respectively (cf. \cite[Definition 6.22]{Bam20b}). Moreover, it follows from \cite[Lemma 3.37]{Bam20b} that there exists a constant $D'$ independent of $i$ and $j$ such that if $i$ is sufficiently large,
\begin{align*}
x_{i,j} \in B_{d^i_j}(z_{i,j},D').
\end{align*}
Combined with the continuity of the pointed Nash entropy under the $\IF$-convergence (cf. \cite[Theorem 2.10]{Bam20c}), for each $j$, there exists $i_j \ge j$ such that
\begin{align} \label{E603b}
x_{i_j,j} \in B_{d^{i_j}_j}(z_{i_j,j},D')
\end{align}
and for a small constant $\delta>0$ to be determined later,
\begin{align} \label{E603bb}
|\NN_{x_j}(\delta)-\NN_{x_{i_j,j}}(\delta)|<\delta,
\end{align}
where $\NN$ denotes the pointed Nash entropy (cf. \cite[Section 1.3]{Bam20c}). To ease the notations, we set $s_j=s_{i_j,j}$, $x_j'=x_{i_j,j}$ and $z_j'=z_{i_j,j}$. Then it follows from \eqref{E602a} and \eqref{E602cc} that
\begin{align} \label{E603c}
\lim_{j \to \infty} d_{\IF}^J \lc (\XX^{s_j},(\mu^{s_j}_t)_{t \in I}), (\XX^{\infty},(\mu^{\infty}_t)_{t \in I})\rc=0.
\end{align}
Since $\XX^{\infty}$ is smooth everywhere, it follows from \cite[Theorem 9.31]{Bam20b} and \cite[Theorem 2.5]{Bam20c} that the convergence in \eqref{E603c} is smooth in the usual sense. In other words, 
\begin{align*}
\lc M,|s_j|^{-1}g(s_jt),(z'_j,-1) \rc_{t \in I} \longright{pointed-{C}^{\infty}-Cheeger-Gromov} (\bar M, \bar g(t),(z,-1))_{t \in I},
\end{align*}
where $(\bar M, \bar g(t))$ is the induced Ricci flow by $(\bar M, \bar g, \bar f)$ and $z \in \bar M$. Moreover, by \eqref{E603b} we may assume $x_j'$ strictly converges to $x \in \XX^{\infty}_{-1}$ within some correspondence (\cite[Theorem 9.31(d)]{Bam20b}).

Since $(\bar M, \bar g(t))_{t \in I}$ has uniformly bounded curvature, it follows from \cite[Theorem 10.4]{Bam20a} that $\NN_x (\delta) \ge -\ep/4$, where $\ep=\ep(m)$ is a dimensional constant in \cite[Theorem 10.2]{Bam20a}. By \cite[Theorem 2.10]{Bam20c} again, we conclude that $\NN_{x'_j}(\delta) \ge -\ep/2$ for sufficiently large $j$. Combined with \eqref{E603bb}, 
\begin{align*}
\NN_{x_j}(\delta)>-\ep/2-\delta
\end{align*}
if $j$ is sufficiently large. However, if $\delta$ is sufficiently small, it follows from the $\ep$-regularity theorem (cf. \cite[Theorem 10.2]{Bam20a} \cite[Theorem 15.45(b)]{Bam20c}) that $x_j$ is a regular point at which the curvature is uniformly bounded. However, this contradicts our assumption if $j$ is sufficiently large. In sum, the proof of Claim is complete.

Since $\XX^{\infty,j}$ is a self-similar metric soliton, it follows from the Claim that 
\begin{align} \label{E604a}
\lc \XX^{\infty,j}_{-1},g_j, f_j, z_j \rc \longright{pointed-{C}^{\infty}-Cheeger-Gromov} (\bar M, \bar g, \bar f),
\end{align}
where $\mu^{\infty,j}_{-1}=(4\pi)^{-\frac{m}{2}} e^{-f_j-c_j}$ and the constant $c_j$ is chosen so that \eqref{E101} holds, and $g_j$ is the metric on the regular part $\RR_j$ of $\XX^{\infty,j}_{-1}$. Notice that $(\RR_j,g_j, f_j)$ satisfies the Ricci shrinker equation \eqref{E100} with normalization \eqref{E101}. In order to derive a contradiction and finish the proof, one must show if $j$ is sufficiently large, $\XX^{\infty,j}_{-1}=\RR_j$ and $(\XX^{\infty,j}_{-1},g_j, f_j)$ is isometric to $(\bar M, \bar g, \bar f)$. 

If $\XX^{\infty,j}_{-1}$ contains no singularity, then the conclusion follows immediately from Theorem \ref{thm:rigid1}. Notice that even though we have the convergence \eqref{E604a}, $\XX^{\infty,j}_{-1}$ may have singularities far away from $z_j$. In fact, we will prove that this cannot happen by using the technique of contraction and extension again.

As in the last section, we consider a possibly incomplete Ricci shrinker $(M,g,f)$ and define
\begin{itemize}
\item[$(\star_{L}'')$] There is a diffeomorphism $\Phi_{L}$ from a subset of $\bar M$ to the subset $\{b \le L\}$ of $M$ so that $[\bar{g} - \Phi_{L}^{*} g]_5 + [ \bar{f} - \Phi^{*}_{L} f]_5 \leq e^{\frac{\bar f}{4}-\frac{L^2}{16}}$. 

\item[$(\dagger_{L}'')$] There is a diffeomorphism $\Psi_{L}$ from the subset of $\bar M$ to $\{b \le L\}$ of $M$ so that 
$|\bar{g} - \Psi^{*}_{L} g| + |\bar{f} - \Psi^{*}_{L} f | \leq e^{-\frac{L^2}{33}}$ and for each $k\ge 1$ the $C^k$-norms are bounded by $D''_k L^k$.
\end{itemize}
The following two statements holds for any $(\RR_j,g_j, f_j)$:
\begin{enumerate}[label=(\alph*)]
\item There exists $L''_1>0$ so that if $(\dagger_{L}'')$ holds and $L > L''_1$, then $(\star_{L-2}'')$ holds.

\item There exist positive constants $L_2''$ and $\theta'$ so that if $(\star''_{L})$ holds with $L \geq L_2''$, then $(\dagger''_{(1+\theta'')L})$ holds.
\end{enumerate}

Indeed, (a) can be proved verbatim as Theorem \ref{thm:501}. Here, the completeness of $(\RR_j,g_j, f_j)$ is not needed. To prove (b), we can follow the same strategy of the proof of Theorem \ref{thm:502}, and we only sketch the proof. One first applies the pseudolocality theorem to extend the domain. Notice that the pseudolocality theorem can be used since $\XX^{\infty,j}$ is the $\IF$-limit of a sequence of compact Ricci flows, and the convergence is smooth on the regular part. Then one glues the larger domain and the end of $\bar M$ to obtain a complete Riemannian manifold which is diffeomorphic to $\bar M$ and almost a Ricci shrinker. By using the first $m-n$ nontrivial eigenfunctions and the growth estimate in Proposition \ref{prop:ap1}, we obtain $(\dagger''_{(1+\theta'')L})$.

Once we know (a) and (b) are true, an iteration argument as before yields that $\XX^{\infty,j}_{-1}=\RR_j$ and $(\XX^{\infty,j}_{-1},g_j, f_j)=(\bar M, \bar g, \bar f)$ if $j$ is sufficiently large. By the self-similarity of $(\XX^{\infty,j},(\mu^{\infty,j}_t)_{t \in I})$, we conclude that $(\XX^{\infty,j},(\mu^{\infty,j}_t)_{t \in I})=(\XX^{\infty},(\mu^{\infty}_t)_{t \in I})$ as metric flow pairs and hence obtain a contradiction.

In sum, we have proved that any tangent flow at $(\mu_t)$ is $(\bar M, \bar g, \bar f)$.
\end{proof}

\begin{rem}
Theorem \ref{thm:601} also holds for any finite quotient of $(\bar M, \bar g, \bar f)$ by the same proof. We leave the details to the interested readers. On the other hand, one can extend Theorem \ref{thm:601} for complete non-compact Ricci flow $(M,g(t))_{t \in [-T,0)}$ with bounded curvature on compact time-intervals and bounded Nash entropy, or the Ricci flow induced by a Ricci shrinker (cf. \emph{\cite{Bam21}} and \emph{\cite{LW23a}}).
\end{rem}

\begin{rem}
If the Ricci flow $(M,g(t))_{t \in [-T,0)}$ has a smooth and closed tangent flow at $(\mu_t)$, then the tangent flow at $(\mu_t)$ is unique (cf. \emph{\cite{CMZ21a}} and \emph{\cite{SW15}}) by using the \L{}ojaciewicz inequality.
\end{rem}

Similarly, one can consider a complete ancient solution to the Ricci flow $(M^m,g(t))_{t \in (-\infty,0]}$ with bounded curvature on compact time-intervals and bounded entropy. More precisely, for any $T>0$, $\sup_{M \times [-T,0]} |\Rm|<\infty$. Moreover, we have
\begin{align} \label{E604xxa}
\inf_{t \le 0,\tau>0}\mmu(g(t),\tau) >-\infty.
\end{align}

Notice that \eqref{E604xxa} is equivalent to a bounded Nash entropy at any spacetime point and scale (cf. \cite{CMZ21b}). We fix a point $(\bar p, \bar t) \in M \times (-\infty,0]$ and set $(\mu_t)_{t \in (-\infty,\bar t]}$ to be the conjugate heat flow based at $(\bar p, \bar t)$. Then for any sequence $\tau_i \to +\infty$, by taking a subsequence,
\begin{equation*}
(\XX^i, (\mu^i_t)_{t \in I}) \xrightarrow[i \to \infty]{\quad \IF, \CF \quad} (\XX^\infty, (\mu^\infty_t)_{t \in I}),
\end{equation*}
where $\XX^i$ is a metric flow over $I=(-2,-1/2]$ induced by $(M,|\tau_i|^{-1} g(t|\tau_i|+\bar t))$ and $\mu^i_t=\mu_{t|\tau_i|+\bar t}$, and $(\XX^\infty, (\mu^\infty_t)_{t \in I})$ is a metric soliton that is called a tangent flow at infinity. Notice that the limiting metric soliton depends only on the sequence $\{\tau_i\}$ and is independent of the based point $(\bar p, \bar t)$ (cf. \cite[Theorem 2.5]{CMZ21a}). By the same proof of Theorem \ref{thm:601}, we obtain the following result; see also \cite[Theorem 1.3]{BCDMZ22} for the $4$-dimensional steady soliton singularity models.
\begin{thm}\label{thm:602}
Let $(M^m,g(t))_{t \in (-\infty,0]}$ be a complete ancient solution to the Ricci flow with bounded curvature on compact time-intervals and bounded entropy. If a tangent flow at infinity is $(\bar M, \bar g,\bar f)$ as a metric soliton, then any tangent flow at infinity is.
\end{thm}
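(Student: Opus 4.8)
The plan is to reduce Theorem \ref{thm:602} to the compact case Theorem \ref{thm:601} by following its proof almost verbatim, taking advantage of the fact that, as recorded in the discussion preceding the theorem, the entire $\IF$-convergence machinery applies to ancient solutions with bounded curvature on compact time-intervals and bounded entropy \eqref{E604xxa}. First I would fix a reference spacetime point $(\bar p,\bar t)$ and let $(\mu_t)_{t \in (-\infty,\bar t]}$ be the conjugate heat flow based there; by \cite[Theorem 2.5]{CMZ21a} the tangent flow at infinity obtained along a sequence $\tau_i \to +\infty$ is independent of $(\bar p,\bar t)$, so it suffices to work with this fixed basepoint. Suppose a tangent flow at infinity is $(\bar M,\bar g,\bar f)$ along $\tau_i \nearrow \infty$, and suppose for contradiction that along another sequence $\tau_i' \nearrow \infty$ the limit is a metric soliton $(\XX^{\infty,\prime},(\mu^{\infty,\prime}_t)_{t \in I})$ with $d_{\IF}^J$-distance at least $2\ep_0$ from $(\XX^\infty,(\mu^\infty_t)_{t \in I})$.

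The core of the argument is the same dyadic interpolation and diagonalization used in the proof of Theorem \ref{thm:601}. Between $\tau_i$ and $\tau_i'$ I would choose rescaling parameters $s_{i,j}$ so that the rescaled metric flow pairs sit at $\IF$-distance exactly $\ep_j := 2^{-j}\ep_0$ from the sequence converging to $(\bar M,\bar g,\bar f)$, take a diagonal subsequence so that for each fixed $j$ the pairs $\IF$-subconverge to a metric soliton $(\XX^{\infty,j},(\mu^{\infty,j}_t)_{t \in I})$ with $d_{\IF}^J$-distance $\ep_j \to 0$ from $(\XX^\infty,(\mu^\infty_t))$. The Claim — that on balls $B_{d^\infty_j}(z_j,D)$ around an $H$-center there are no singularities and the curvature is uniformly bounded — goes through word for word: it uses only the $\ep$-regularity theorem \cite[Theorem 10.2]{Bam20a} together with the continuity of the pointed Nash entropy \cite[Theorem 2.10]{Bam20c} and the lower Nash-entropy bound $\NN_x(\delta) \ge -\ep/4$ coming from the bounded curvature of the model Ricci flow $(\bar M,\bar g(t))$ via \cite[Theorem 10.4]{Bam20a}; none of this is specific to the compact setting once \eqref{E604xxa} is assumed. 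Consequently one gets pointed-$C^\infty$-Cheeger-Gromov convergence $(\XX^{\infty,j}_{-1},g_j,f_j,z_j) \to (\bar M,\bar g,\bar f)$, with $(\RR_j,g_j,f_j)$ a possibly incomplete Ricci shrinker.

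The final step is the contraction-and-extension iteration applied to $(\RR_j,g_j,f_j)$, exactly as in the proof of Theorem \ref{thm:601}: statement (a) (if $(\dagger_L'')$ holds for $L>L_1''$ then $(\star_{L-2}'')$ holds) is proved verbatim as Theorem \ref{thm:501} and does not use completeness, while statement (b) (if $(\star_L'')$ holds for $L \ge L_2''$ then $(\dagger_{(1+\theta'')L}'')$ holds) follows the proof of Theorem \ref{thm:502} — one applies pseudolocality (legitimate because $\XX^{\infty,j}$ is an $\IF$-limit of compact, or now ancient-with-bounded-curvature, Ricci flows with smooth convergence on the regular part), glues the extended domain to the end of $\bar M$, and uses the first $m-n$ eigenfunctions together with the growth estimate of Proposition \ref{prop:ap1}. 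Iterating yields $\XX^{\infty,j}_{-1} = \RR_j$ isometric to $(\bar M,\bar g,\bar f)$ for $j$ large, hence $(\XX^{\infty,j},(\mu^{\infty,j}_t)_{t \in I}) = (\XX^\infty,(\mu^\infty_t)_{t \in I})$ as metric flow pairs, contradicting $d_{\IF}^J = \ep_j \ne 0$. I expect the only genuine point requiring care — and hence the main obstacle — is verifying that each input from Bamler's theory (the $\IF$-compactness, the smooth convergence on the regular part via \cite[Theorem 9.31]{Bam20b} and \cite[Theorem 2.5]{Bam20c}, the entropy monotonicity and $\ep$-regularity) remains valid for ancient solutions that are merely locally-in-time curvature bounded rather than uniformly so; this is exactly what the hypothesis \eqref{E604xxa} together with the references \cite{Bam21}, \cite{CMZ21b}, \cite{LW23a} are there to guarantee, so the proof amounts to checking that no step secretly invoked compactness of $M$ or a global curvature bound.
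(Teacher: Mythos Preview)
Your proposal is correct and matches the paper's own approach exactly: the paper simply states that Theorem \ref{thm:602} follows ``by the same proof of Theorem \ref{thm:601}'', and your outline faithfully reproduces that argument while correctly identifying that the only point to verify is the applicability of Bamler's $\IF$-convergence machinery under the hypotheses of bounded curvature on compact time-intervals and bounded entropy \eqref{E604xxa}.
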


\newpage

\appendixpage
\addappheadtotoc
\appendix
\section{Growth of eigentensors on perturbed Ricci shrinkers} 
\label{app:A} 
In this appendix, we slightly generalize the growth control of eigentensors on Ricci shrinkers as established by Colding and Minicozzi \cite[Section 3]{CM21b}.

Let $(M^n,g,dV_f=e^{-f}dV_g)$ be a smooth metric measure space, where $(M^n,g)$ is an $n$-dimensional complete Riemannian manifold, and $f$ is a smooth, proper, and positive real-valued function on $M$. Define $b=2\sqrt f$. Consider a smooth tensor field $T$ on $M$ that does not vanish outside any compact set and satisfies the inequality
\begin{align} \label{apeq:100}
\la \Delta_f T,T \ra \ge -\lambda |T|^2,
\end{align}
where $\lambda$ is a positive constant. 

For $r>0$, define the following quantities:
\begin{align*}
I(r):=r^{1-n} \int_{b=r} |T|^2 |\na b|,\quad D(r):=\frac{r^{2-n}}{2} \int_{b=r} \la \na |T|^2,\frac{\na b}{|\na b|} \ra \quad \text{and} \quad U(r)=\frac{D(r)}{I(r)}
\end{align*}
where the integrals are computed using the corresponding Hausdorff measures, with the volume forms omitted for simplicity.

Additionally, define
\begin{align}
R_1:=\frac{n}{2}-\Delta f \quad \text{and} \quad R_2:= f-|\na f|^2. \label{apeq:101a}
\end{align}
Notice that for Ricci shrinkers, both $R_1$ and $R_2$ represent the scalar curvature. We further assume the following growth condition:
\begin{align} \label{apeq:102a}
|R_1|+|R_2| \le \theta b^2+\Lambda,
\end{align}
where $\theta$ and $\Lambda$ are nonnegative constants.

The following growth control of $I(r)$ is analogous to \cite[Theorem 3.4]{CM21b}.
\begin{prop} \label{prop:ap1}
Let $\lambda>0$, $\Lambda \ge 0$, and $\theta \in [0,1/1000)$. Suppose that \eqref{apeq:101a} and \eqref{apeq:102a} hold, and that $T$ satisfies \eqref{apeq:100}, with $T, \Delta_f T \in L^2$ with respect to $dV_f$. Then, for any $r_2>r_1>100(\sqrt{\Lambda}+\sqrt{\lambda}+\sqrt{n})$, we have the inequality
\begin{align} 
I(r_2) \le& I(r_1) \lc \frac{r_2}{r_1} \rc^{8\lambda+8\Lambda+n} e^{4\theta(r_2^2-r_1^2)}. \label{apeq:102}
\end{align}
\end{prop}
\begin{proof}
From \eqref{apeq:101a} and \eqref{apeq:102a}, we have
\begin{align}\label{EAX:001}
||\na b|^2-1|=\left|\frac{|\na f|^2}{f}-1 \right|=\frac{|R_2|}{f} \le 4\theta+\frac{\Lambda}{f}.
\end{align}
By assumption, if $b \ge 10 \sqrt{\Lambda}$, it follows from \eqref{EAX:001} that
\begin{align}\label{EAX:002}
\frac{1}{2} \le |\na b| \le 2.
\end{align}

Additionally, using \eqref{apeq:101a} and direct computation, we obtain the following relations:
\begin{align}
b\Delta b=& n-|\na b|^2-2R_1, \label{apeq:103a} \\
|\na b|^2 =& 1-4b^{-2}R_2. \label{apeq:103b}
\end{align}

From the definition of $I(r)$, we compute:
\begin{align}
I(r)=& \int_{b<r} \textrm{div}(|T|^2 b^{1-n} \na b) \notag \\
=&\int_{b<r} b^{1-n} \left \{ \la \na|T|^2,\na b \ra+|T|^2 \lc \Delta b-\frac{(n-1)|\na b|^2}{b} \rc \right\} \notag \\
=&\int_{b<r} b^{1-n} \left \{ \la \na|T|^2,\na b \ra+\frac{|T|^2}{b^3} (4nR_2-2b^2R_1) \right\} \label{EA:001}
\end{align}
where the last equality holds because, by \eqref{apeq:103a} and \eqref{apeq:103b},
\begin{align*}
b\Delta b-(n-1)|\na b|^2=n(1-|\na b|^2)-2R_1=b^{-1}(4nR_2-2b^2R_1).
\end{align*}

Using \eqref{EA:001} and the coarea formula, we find:
\begin{align*}
I'(r)=r^{1-n} \int_{b=r} \la \na |T|^2, \frac{\na b}{|\na b|} \ra+r^{-2-n} \int_{b=r} \frac{|T|^2}{|\na b|} (4nR_2-2b^2R_1)
\end{align*}
and consequently:
\begin{align}
r(\log I(r))'=2U(r)-\frac{2r^{1-n}}{I(r)} \int_{b=r } \frac{|T|^2}{|\na b|} (R_1-2nr^{-2}R_2 ).
\label{EA:002a}
\end{align}

If $r \ge 10 \sqrt{\Lambda}$, it follows from \eqref{apeq:102a}, \eqref{EAX:002} and \eqref{EA:002a} that
\begin{align} \label{EA:002b}
r(\log I(r))' \le 2U(r)+8\theta r^2+8\Lambda+n.
\end{align}

On the other hand, from
\begin{align*}
D(r)=\frac{r^{2-n}e^{\frac{r^2}{4}}}{2} \int_{b<r} (\Delta_f |T|^2) e^{-f}
\end{align*}
and the coarea formula, we compute:
\begin{align}
D'(r)=&\frac{2-n}{r} D(r)+\frac{r}{2} D(r)+\frac{r^{2-n}}{2} \int_{b=r} \frac{\Delta_f |T|^2}{|\na b|} \notag \\
\ge & \frac{2-n}{r} D(r)+\frac{r}{2} D(r)+r^{2-n} \int_{b=r} \frac{|\na T|^2-\lambda |T|^2}{|\na b|}, \label{EA:003}
\end{align}
where we have used \eqref{apeq:100} for the last inequality.

If $D(r)>0$, it follows from \eqref{EA:003} that
\begin{align}
r (\log D(r))' \ge & 2-n+\frac{r^2}{2}+\frac{r^{3-n}}{D(r)} \int_{b=r} \frac{|\na T|^2}{|\na b|}-\frac{\lambda r^{3-n}}{D(r)} \int_{b=r} \frac{|T|^2}{|\na b|} \notag \\
=& 2-n+\frac{r^2}{2}-\frac{\lambda r^2}{U(r)}+\frac{r^{3-n}}{D(r)} \int_{b=r} \frac{|\na T|^2}{|\na b|}-\frac{4\lambda r^{1-n}}{D(r)} \int_{b=r} \frac{|T|^2R_2}{|\na b|}, \label{EA:003a}
\end{align}
where the last equality holds because, by \eqref{apeq:103b},
\begin{align*}
\frac{|T|^2 b^2}{|\na b|}=|T|^2 b^2|\na b|+4\frac{|T|^2R_2}{|\na b|}.
\end{align*}

By Cauchy-Schwarz inequality, we have
\begin{align*}
D^2(r)=\lc \frac{r^{2-n}}{2} \int_{b=r} \la \na |T|^2,\frac{\na b}{|\na b|} \ra \rc^2 \le I(r) r^{3-n} \int_{b=r} \frac{|\na T|^2}{|\na b|}.
\end{align*}

From \eqref{EA:003a}, if $D(r)>0$, it follows that
\begin{align}
r (\log D(r))' \ge 2-n+\frac{r^2}{2}+U(r)-\frac{\lambda r^2}{U(r)}-\frac{4 \lambda r^{1-n}}{D(r)} \int_{b=r} \frac{|T|^2R_2}{|\na b|}. \label{EA:003b}
\end{align}
Combining \eqref{EA:002a} and \eqref{EA:003b}, we obtain that if $U(r)>0$,
\begin{align}
r (\log U(r))' \ge& 2-n+\frac{r^2}{2}-U(r)-\frac{\lambda r^2}{U(r)} \notag \\
&+\frac{2r^{1-n}}{I(r)} \int_{b=r} \frac{|T|^2}{|\na b|} \left \{ R_1-\lc 2nr^{-2}+\frac{2\lambda}{U(r)} \rc R_2 \right \}. \label{EA:004}
\end{align}

If $U(r)>4 \lambda$ and $r >10\sqrt{\Lambda}+10 \sqrt{n}$, then from the assumptions on $R_1$ and $R_2$,
\begin{align*}
\left|R_1-\lc 2nr^{-2}+\frac{2\lambda}{U(r)} \rc R_2 \right| \le |R_1|+|R_2| \le \theta r^2+\Lambda.
\end{align*}
Substituting this into \eqref{EA:004}, we find:
\begin{align}\label{EA:005}
(\log U(r))' \ge& \frac{2-n-U(r)}{r}+r \lc \frac{1}{2}-\frac{\lambda}{U(r)} \rc-8\theta r-8\frac{\Lambda}{r} \notag \\
 \ge & \frac{2-n-U(r)-8\Lambda}{r}+ (1/4-8\theta) r. 
\end{align}

\textbf{Claim}: If $U(r_0)>4 \lambda$ for some $r_0 >100(\sqrt{\Lambda}+\sqrt{\lambda}+\sqrt{n})$, then
\begin{align*}
U(r) \ge r^2/5
\end{align*}
for sufficiently large $r$.

\emph{Proof of the Claim}: If $4 \lambda< U(r) < r^2/5$ for some $r >100(\sqrt{\Lambda}+\sqrt{\lambda}+\sqrt{n})$, then from \eqref{EA:005},
\begin{align*}
(\log U(r))' \ge (2-n-8\Lambda)r^{-1}+(1/4-8\theta-1/5) r \ge \frac{r}{100},
\end{align*}
which implies $U(r) \ge r^2/5$ for large $r$.

From \eqref{EA:003b} and \eqref{apeq:102a}, we have:
\begin{align} \label{EA:007}
r (\log D(r))' \ge 2-n+\frac{r^2}{2}+U(r)-\frac{\lambda r^2}{U(r)}-\frac{16 \lambda}{U(r)} (\theta r^2+\Lambda)
\end{align}

Using the claim and \eqref{EA:007}, if $U(r) >4 \lambda$ for some $r \ge 100(\sqrt{\Lambda}+\sqrt{\lambda}+\sqrt{n})$, then
\begin{align*}
r (\log D(r))' \ge \frac{3}{5} r^2
\end{align*}
for sufficiently large $r$. For any $t>s$ sufficiently large, this implies
\begin{align}
D(t) \ge  D(s)e^{\frac{3(t^2-s^2)}{10}}. \label{EA:008}
\end{align}
From \eqref{EA:008}, we conclude that
\begin{align} \label{EA:009}
2\int_{b \le t}(|\na T|^2+\la \Delta_f T,T \ra) e^{-f}=\int_{b \le t} \Delta_f |T|^2 e^{-f}=2e^{-\frac{t^2}{4}}t^{n-2}D(t) \to \infty \quad \text{as } t \to \infty.
\end{align}

By our assumption that $T,\Delta_f T \in L^2$, the same proof of Lemma \ref{lem:ibp} indicates that $|\na T| \in L^2$. However, this contradicts \eqref{EA:009}. Therefore, for $r > 100(\sqrt{\Lambda}+\sqrt{\lambda}+\sqrt{n})$, we must have $U(r) \le 4 \lambda$. Combining this with \eqref{EA:002b}, the conclusion \eqref{apeq:102} follows immediately.
\end{proof}

\begin{rem} \label{rem:inp}
As an application, Proposition \ref{prop:ap1} can be applied to a small perturbation of a Ricci shrinker with uniformly bounded Ricci curvature. Specifically, let $(M^n, \bar g, \bar f )$ be a Ricci shrinker with normalization \eqref{E101} and bounded Ricci curvature.

Consider a smooth symmetric $(0,2)$-tensor $h$ and a smooth function $\psi$ on $M$ satisfying
\begin{align*}
\|h\|_{C^1}+\|\psi\|_{C^2} \le \theta,
\end{align*}
where $\theta$ is a small constant. For the perturbed metric $g:=\bar g+h$ and function $f:=\bar f+\psi$, it is straightforward to estimate:
\begin{align*}
| n/2-\Delta f | \le C\theta b+C \quad \text{and} \quad |f-|\na f|^2| \le C\theta b^2+C,
\end{align*}
where $b:=2\sqrt f$, and $C$ is a constant depending on the dimension $n$ and the Ricci curvature bound of $\bar g$. Therefore, Proposition \ref{prop:ap1} is applicable to $(M,g,f)$.
\end{rem}

    \vskip10pt
    
Yu Li, Institute of Geometry and Physics, University of Science and Technology of China, No. 96 Jinzhai Road, Hefei, Anhui Province, 230026, China; Hefei National Laboratory, No. 5099 West Wangjiang Road, Hefei, Anhui Province, 230088, China; E-mail: yuli21@ustc.edu.cn. \\
    
    Wenjia Zhang, School of Mathematical Sciences, University of Science and Technology of China, No. 96 Jinzhai Road, Hefei, Anhui Province, 230026, China; wj12345678@mail.ustc.edu.cn.\\

    \end{document}